\newtheorem{theorem}{Theorem}[section]
\newtheorem{lemma}[theorem]{Lemma}
\newtheorem{remark}[theorem]{Remark}
\newtheorem{proposition}[theorem]{Proposition}
\newtheorem{definition}[theorem]{Definition}  
\newtheorem{corollary}[theorem]{Corollary}
\newcommand{\vertiii}[1]{{\left\vert\kern-0.25ex\left\vert\kern-0.25ex\left\vert #1 
    \right\vert\kern-0.25ex\right\vert\kern-0.25ex\right\vert}}
\newcommand\numberthis{\addtocounter{equation}{1}\tag{\theequation}}
\newtheorem{assumptionF}{Assumption} 
\newtheorem{assumptionO}{Assumption}
\title{A probabilistic study of the kinetic Fokker-Planck equation in cylindrical domains}
\author[1,2]{Tony Lelièvre\thanks{E-mail: tony.lelievre@enpc.fr}} 
\author[1,2]{Mouad Ramil\thanks{E-mail: mouad.ramil@enpc.fr}}
\author[1]{Julien Reygner\thanks{E-mail: julien.reygner@enpc.fr}}
\affil[1]{CERMICS, Ecole des Ponts, Marne-la-Vallée, France}
\affil[2]{MATHERIALS, Inria, Paris, France} 
\date{\today} 
\begin{document}
\maketitle

\begin{abstract}
  We consider classical solutions to the kinetic Fokker-Planck
  equation on a bounded domain~$\mathcal O \subset~\mathbb{R}^d$ in 
position, and we obtain a probabilistic representation of the solutions
  using the Langevin diffusion process with absorbing boundary conditions on
  the boundary of the phase-space cylindrical domain $D = \mathcal O \times \mathbb{R}^d$. Furthermore, a Harnack inequality, as well as a maximum principle, are provided on $D$ for solutions to this kinetic Fokker-Planck equation, together with the existence of a smooth transition 
density for the associated absorbed Langevin process. This transition
density is shown to satisfy an explicit Gaussian upper-bound. Finally,
the continuity and positivity of this transition density at the
boundary of $D$ are also studied. All these results are in particular crucial to
study the behavior of the Langevin diffusion process when it is
trapped in a metastable state defined in terms of positions.

\medskip
\noindent\textbf{Mathematics Subject Classification.} 82C31, 35B50, 35B65, 60H10. 

\medskip
\noindent\textbf{Keywords.} Langevin process, kinetic Fokker-Planck equation, transition density, Harnack inequality, maximum principle, Gaussian upper-bound.
\end{abstract}

  \section{Introduction and motivation}

In statistical physics, the evolution of a molecular system at a given temperature is
typically modeled by the Langevin process:
\begin{equation}\label{eq:Langevin_intro}
  \left\{
    \begin{aligned}
        &\mathrm{d}q_t=M^{-1} p_t \mathrm{d}t , \\
        &\mathrm{d}p_t=F(q_t) \mathrm{d}t -\gamma M^{-1} p_t \mathrm{d}t +\sqrt{2\gamma\beta^{-1}} \mathrm{d}B_t ,
    \end{aligned}
\right.  
\end{equation}
where $d=3N$ for a number $N$ of particles, $(q_t,p_t) \in \mathbb{R}^d \times \mathbb{R}^d$ denotes the respective vectors of positions and momenta of the
particles, $M \in \mathbb{R}^{d \times d}$ is the mass matrix,
$F:\mathbb{R}^d \to \mathbb{R}^d$ is the force acting on the particles, $\gamma >0$ is
the friction parameter, and $\beta^{-1}= k_B T$ with $k_B$
the Boltzmann constant and $T$
the temperature of the system. Such dynamics are used in particular to
compute thermodynamic and dynamic quantities, with numerous
applications in biology, chemistry and materials science. In practice, the system remains trapped for very long times
in subsets of the phase space, called metastable states, see for
example~\cite[Sections 6.3 and 6.4]{LelSto16}. Typically, these states
are defined in terms of positions only, and are thus cylinders of the
form $D=\mathcal{O} \times \mathbb{R}^d$ with positions living in an open set $\mathcal{O}$ of $\mathbb{R}^d$, and momenta in $\mathbb{R}^d$.

In order to understand
the behavior of the stochastic process in such a metastable state, it
is important to study the Langevin diffusion with absorbing boundary
conditions when leaving $D$. This is for example useful to
define the quasi-stationary distribution which can be seen as a ``local
stationary distribution'' within the metastable state, see~\cite{LelRamRey2},~\cite{Ram} and~\cite[Chapter 4]{RamPhD}. This distribution is the
cornerstone of the so-called accelerated dynamics algorithms to sample
metastable processes over long times, see for
example~\cite{perez-uberuaga-voter-15,lelievre-18}. Studying this process is
also important to identify the stationary distribution of the entry and exit points of the process in $D$, see~\cite[Chapter 5]{RamPhD}, which can then be employed to build unbiased estimators of the mean
transition time between metastable states~\cite{baudel-guyader-lelievre-20}. 

However, if the Langevin process is very much used in practice, a complete theory for the related kinetic Fokker-Planck equation, with boundary conditions, has yet to be established. In the literature, weak solutions in a domain have been studied in~\cite{Carillo,Mourrat,nier-18,Har}, as well as classical solutions in~\cite{Vel} for the case $d=1$ with $F=0$ and $\gamma=0$, later extended for $d=2,3$ in~\cite{Hwang}. The main issue for the study of solutions with boundary conditions lies in the fact that, unlike the elliptic case, solutions exhibit a loss of regularity close to a subset of the boundary called singular set, see~\cite{Vel,Hwang} for more details. This work consists in an extension of the framework of classical solutions for a domain in the multi-dimensional case for the kinetic Fokker-Planck operator related to the Langevin process.

The objective of this work is to provide an ensemble of crucial
properties on the absorbed Langevin process and the related kinetic Fokker-Planck equation. In particular, we
will obtain: 
\begin{enumerate}[label=(\roman*)]
  \item a Feynman-Kac type formula to represent
probabilistically the classical solution to a partial differential equation
associated with the Langevin process on a cylindrical domain, which is usually called the kinetic Fokker-Planck equation in the
partial differential equation literature; 
  \item a Harnack inequality as
well as a maximum principle for this partial differential equation;
  \item the existence of a smooth transition density for the absorbed
process, continuous up to the boundary, which satisfies an explicit Gaussian upper bound.
\end{enumerate}
 As will be explained
below, such results are standard for elliptic diffusions (overdamped
Langevin process), but were not proven for the Langevin process 
(which is not elliptic but only hypoelliptic). The
non-ellipticity requires in particular a careful treatment of the
boundary conditions (determining precisely the set of exit
points). The proofs rely on a combination of tools from stochastic
analysis (in particular a parametrix method, inspired by~\cite{Menozzi})
and analysis of partial differential equations (in particular a generalization
of~\cite{Har}). 

\medskip
\textbf{Outline.} In Section~\ref{sec:main_res}, we give the main results, which are then proven
in the subsequent sections. More precisely, Section~\ref{section 2} is
devoted to the proof of the existence of a classical solution to the
kinetic Fokker-Planck equation, as well as its probabilistic
representation. Section~\ref{Section 3} gives the proof of the Harnack
inequality and the maximum principle. In Section~\ref{Section 4}, we provide the proofs of the
existence of a smooth transition density of the absorbed Langevin process as well as
Gaussian upper bounds on the latter. Finally, we prove in Section~\ref{Section Adjoint process
  and compactness} the continuity of this transition density up to the boundary of~$D$, using a so-called adjoint process and time-reversibility arguments. The proofs of intermediate or technical results are postponed to several Appendix sections.

\medskip
\textbf{Notation.} Let us conclude this introductory section with some notation that will
be used in the following. We denote by $x=(q,p)$ generic elements of $\mathbb{R}^{2d}$. The Euclidean norm is denoted by $|\cdot|$, indifferently on $\mathbb{R}^d$ and on $\mathbb{R}^{2d}$, and the scalar product between vectors $\xi$ and $\zeta$ of $\mathbb{R}^d$ or $\mathbb{R}^{2d}$ is denoted by $\xi \cdot \zeta$. The open ball centered at $\xi$ with radius $\rho$ is denoted by $\mathrm{B}(\xi,\rho)$. The distance between a point $\xi$ (resp. a subset $A$) and a subset $B$ is denoted, and defined, by $\mathrm{d}(\xi,B) := \inf_{\zeta \in B} |\xi-\zeta|$ (resp. $\mathrm{d}(A,B) := \inf_{\xi \in A, \zeta \in B} |\xi-\zeta|$).

For a subset $A$ of $\mathbb{R}^d$ or $\mathbb{R}^{2d}$, we denote by:
\begin{enumerate}[label=(\roman*),ref=\roman*]
  \item $\overline{A}$ the closure of $A$, $\partial A$ its boundary and $A^c$ its complement,
  \item $\mathcal{B}(A)$ the Borel $\sigma$-algebra on $A$,
  \item $|A|$ the Lebesgue measure of $A$ (if $A$ is measurable).
\end{enumerate}

For a subset $A$ of $\mathbb{R}^d$, $\mathbb{R}^{2d}$, $\mathbb{R}_+^* \times \mathbb{R}^{2d}$ or $\mathbb{R}_+^* \times \mathbb{R}^{2d} \times \mathbb{R}^{2d}$, we denote by:
\begin{enumerate}[label=(\roman*),ref=\roman*]
    \item for $1 \le p\le \infty$, $\mathrm{L}^p(A)$ the set of
      $\mathrm{L}^p$ scalar-valued functions on $A$ and $\Vert\cdot\Vert_{\mathrm{L}^p(A)}$ the associated norm, 
    \item $\mathcal{C}(A)$ (resp. $\mathcal{C}^b(A)$) the set of
      scalar-valued continuous (resp. continuous and bounded)
      functions on $A$,
    \item for $1 \le k\le \infty$, $\mathcal{C}^k(A)$ (resp.  $\mathcal{C}_c^k(A)$)
      the set of scalar-valued $\mathcal{C}^k$ (resp. $\mathcal{C}^k$ with
      compact support) functions on $A$,
    \item if $A \subset \mathbb{R}^d \text{ or } \mathbb{R}^{2d}$,  $\mathcal{C}^{1,2}(\mathbb{R}_+^{*}\times A)$ the set of
      scalar-valued functions $u(t,\xi)$ on $\mathbb{R}_+^{*}\times A$ such that $u$,
      $\partial_t u$, $\nabla_{\xi} u$ and $\nabla^2_{\xi} u$ exist and are continuous on $\mathbb{R}_+^{*}\times A$.
\end{enumerate}
When we work with vector-valued functions, we use such notations as $\mathcal{C}^\infty(\mathbb{R}^d,\mathbb{R}^d)$ or $\mathcal{C}([0,T],\mathbb{R}^{2d})$. For bounded functions $\phi$, we shall also use the notation $\|\phi\|_\infty$ as a shorthand for the $\mathrm{L}^\infty$ norm.

For $a,b \in \mathbb{R}$, we use the notation $a \wedge b = \min(a,b)$ and $a \vee b = \max(a,b)$. We write $\mathbb{N}=\{0,1,2,\ldots\}$ and $\mathbb{N}^*=\{1,2,\ldots\}$. Integer intervals are denoted by $\llbracket a, b\rrbracket$.

\section{Main results}\label{sec:main_res}

This section presents the main results we obtained. As a motivation, we
first recall in
Section~\ref{subsection 1} some well-known results for
parabolic equations and overdamped Langevin processes, which we then extend to our hypoelliptic and degenerate
framework: the existence of a classical solution to the
kinetic Fokker-Planck equation, as well as its probabilistic
representation using the absorbed Langevin process in Section~\ref{sec:kFP_Lang}; the existence of a
transition density and Gaussian upper bounds for the Langevin process (without absorption) in Section~\ref{sec:gauss_lang}; the existence of a
transition density which is smooth in the domain and continuous up to the boundary, as well as Gaussian upper bounds for the absorbed Langevin process in Section~\ref{sec:gauss_lang_abs}.

\subsection{Parabolic equations and the overdamped Langevin process}
\label{subsection 1}

As an introduction to our results, we briefly review standard material on the probabilistic interpretation, and a few properties of the associated diffusion process, of Initial-Boundary Value Problems for parabolic equations on bounded domains. The prototypical example of such a problem writes
\begin{equation}\label{FIBVP}
  \left\{\begin{aligned}
    \partial_t \overline{u}(t,q) &= \overline{\mathcal{L}}\overline{u}(t,q), && t > 0, \quad q \in \mathcal{O},\\
    \overline{u}(0,q) &= \overline{f}(q), && q \in \mathcal{O},\\
    \overline{u}(t,q) &= \overline{g}(q), && t > 0, \quad q \in \partial \mathcal{O},
  \end{aligned}\right.
\end{equation}
where $\overline{\mathcal{L}}$ is the second-order differential operator
\begin{equation}\label{OL L}
  \overline{\mathcal{L}} = F \cdot \nabla + \frac{\sigma^2}{2} \Delta
\end{equation}
for some vector field $F : \mathbb{R}^d \to \mathbb{R}^d$ and
$\sigma>0$; $\mathcal{O}$ is an open, regular and bounded subset of
$\mathbb{R}^d$; $\overline{f} : \mathcal{O} \to \mathbb{R}$,
$\overline{g}: \partial\mathcal{O} \to \mathbb{R}$ are given initial
and boundary conditions. Under smoothness as well as compatibility
assumptions on the data $\overline f$ and $\overline g$, the following
standard reasoning can be followed: 
\begin{enumerate}
  \item weak solutions $\overline{u}$ can be constructed by variational approach;
  \item parabolic regularization implies that weak solutions are actually smooth;
  \item the smoothness of $\overline{u}$ allows to apply the Itô formula to obtain the probabilistic representation 
  \begin{equation}\label{eq:baru}
    \overline{u}(t,q) = \mathbb{E}\left[ \mathbb{1}_{\overline{\tau}^q_{\partial}>t}
  \overline{f}(\overline{q}^q_t)+\mathbb{1}_{\overline{\tau}^q_{\partial}\leq t}\overline{g}(\overline{q}^q_{\overline{\tau}^q_{\partial}}) \right],
  \end{equation}
  where $(\overline{q}^q_t)_{t \geq 0}$ is the so-called overdamped Langevin process, defined by the stochastic differential equation
  \begin{equation}\label{eq:OL}
    \left\{\begin{aligned}
      &\mathrm{d}\overline{q}^q_t=F(\overline{q}^q_t) \mathrm{d}t+\sigma \mathrm{d}B_t , \\
      &\overline{q}^q_0=q ,
    \end{aligned}\right. 
  \end{equation}
  and
  \begin{equation*}
    \overline{\tau}^q_{\partial} := \inf\{t > 0: \overline{q}^q_t \not\in \mathcal{O}\}.
  \end{equation*}
  This representation implies in particular the uniqueness of classical solutions to~\eqref{FIBVP}.
\end{enumerate}
We refer for example to Evans~\cite[Section~7.1]{Evans} for the first two results, and Friedman~\cite{F,FF,FFF} for the last result. These references also present a Harnack inequality and a maximum principle for~\eqref{FIBVP}. 
In addition, the following facts are closely related with the probabilistic representation formula~\eqref{eq:baru}: 
\begin{enumerate}[label=(\roman*),ref=\roman*]
  \item for any $q \in \mathcal{O}$, the nonnegative measure $\overline{\mathrm{P}}^\mathcal{O}_t(q,\cdot) := \mathbb{P}(\overline{q}^q_t \in \cdot, \overline{\tau}^q_\partial > t)$ has a smooth density $\overline{\mathrm{p}}^\mathcal{O}_t(q,q')$ with respect to the Lebesgue measure on $\mathcal{O}$;
  \item this transition density satisfies the backward and forward Kolmogorov equations
  \begin{equation*}
    \partial_t \overline{\mathrm{p}}^\mathcal{O}_t(q,q') = \mathcal{L}_q \overline{\mathrm{p}}^\mathcal{O}_t(q,q'), \qquad \partial_t \overline{\mathrm{p}}^\mathcal{O}_t(q,q') = \mathcal{L}^*_{q'} \overline{\mathrm{p}}^\mathcal{O}_t(q,q'),
  \end{equation*}
  where $\mathcal{L}^*$ is the formal $\mathrm{L}^2(\mathrm{d} x)$ adjoint of $\mathcal{L}$ and the subscripts $q$, $q'$ in the notation $\mathcal{L}_q$, $\mathcal{L}^*_{q'}$ indicate the variable on which the operator acts;
  \item for all $t > 0$, the function $\overline{\mathrm{p}}^\mathcal{O}_t$ is positive on $\mathcal{O} \times \mathcal{O}$ and has a continuous extension to $\overline{\mathcal{O}} \times \overline{\mathcal{O}}$ which vanishes on $\partial(\mathcal{O} \times \mathcal{O})$.
\end{enumerate}

The aim of this work is to obtain similar results for the Langevin
process~\eqref{eq:Langevin_intro} rather than the overdamped Langevin
process~\eqref{eq:OL}. A technical tool on which several of our results crucially relies is the fact that the transition density of the Langevin process is bounded from above by an explicit Gaussian transition density (see Theorem~\ref{borne densite thm} below). This fact is a natural extension of Baldi's results~\cite[Théorème~4.2]{Baldi} 
for the overdamped Langevin process, based on the so-called parametrix method.

\subsection{Kinetic Fokker-Planck equation and Langevin process}
\label{sec:kFP_Lang}

\subsubsection{The kinetic Fokker-Planck equation}

From now on, we fix $\gamma \in \mathbb{R}$ and $\sigma>0$, and let $F : \mathbb{R}^d \to \mathbb{R}^d$ be a vector field satisfying the following 
\begin{assumptionF}\label{hyp F1}
  $F\in\mathcal{C}^\infty(\mathbb{R}^{d},\mathbb{R}^{d})$. 
\end{assumptionF}

 The kinetic Fokker-Planck operator $\mathcal{L}_{F,\gamma,\sigma}$, simply denoted by $\mathcal{L}$ when there is no ambiguity, writes for $(q,p)\in\mathbb{R}^d\times\mathbb{R}^d$,
\begin{equation}\label{generateur Langevin}
    \mathcal{L} = \mathcal{L}_{F,\gamma,\sigma}=  p\cdot\nabla_q +F(q)\cdot\nabla_p -\gamma  p\cdot\nabla_p+\frac{\sigma^2}{2}\Delta_p . 
\end{equation}
  The operator $\mathcal L$ is the infinitesimal generator of the
  Langevin process~\eqref{eq:Langevin_intro}, in which we
consider the mass to be identity without loss of generality (see
the change of variables in~\cite[Equation~(3.117)]{LelRouSto10}).
As explained in the introduction, in the case $\gamma>0$ and
$\sigma^2=2 \gamma \beta^{-1}$ with $\beta^{-1}=k_BT$ this process is used to describe the
behavior of particles moving in a thermal bath at temperature $T$ and a rate $\gamma$ and subject to the force field $F$. Let us emphasize that in
the following, we consider the general case $\gamma\in\mathbb{R}$ and
$\sigma>0$ not necessarily related to $\gamma$.


Let $\mathcal{O} \subset \mathbb{R}^d$ satisfy
\begin{assumptionO}\label{hyp O}
  $\mathcal{O}$ is open, $\mathcal{C}^2$ and bounded,  
\end{assumptionO}
and consider the following cylindrical domain of $\mathbb{R}^{2d}$:
 $$D:=\mathcal{O}\times\mathbb{R}^d.$$ This
is the natural phase space domain of the Langevin process absorbed when
leaving the set of positions in $\mathcal{O}$. For $q\in\partial\mathcal{O}$, let
$n(q)\in\mathbb{R}^d$ be the unitary outward normal vector to
$\mathcal{O}$ at $q \in \partial\mathcal{O}$. Let us introduce the following partition of $\partial D$:
$$ \Gamma^0=\{ (q,p)\in\partial\mathcal{O}\times\mathbb{R}^d : p\cdot n(q)=0 \} ,$$
$$ \Gamma^+=\{ (q,p)\in\partial\mathcal{O}\times\mathbb{R}^d : p\cdot n(q)>0 \} ,$$
$$ \Gamma^-=\{ (q,p)\in\partial\mathcal{O}\times\mathbb{R}^d : p\cdot n(q)<0 \} .$$

The kinetic Fokker-Planck equation on the domain $D$ with initial condition~$f$ and boundary condition~$g$ is the Initial-Boundary Value Problem
\begin{equation}\label{kFP pb}
  \left\{\begin{aligned}
    \partial_t u(t,x) & =\mathcal{L}u(t,x) && t>0, \quad x\in D ,\\
    u(0,x) &=f(x) && x\in D ,\\
    u(t,x) &=g(x)  && t>0, \quad x\in\Gamma^+ .
  \end{aligned}\right. 
\end{equation}
Notice that, in contrast with the Initial-Boundary Value Problem~\eqref{FIBVP} associated with the overdamped Langevin process~\eqref{eq:OL}, the boundary condition only applies on the subset
$\Gamma^+$ of the boundary $\partial D$. We refer the reader
to~\cite[Chapter 11]{FF} for a study of boundary conditions
for diffusions degenerating at the boundary of the domain $D$. Even if
these results do not apply in our case, since the diffusion is
degenerated everywhere, they still provide useful intuition on the
behavior of the stochastic process at the boundary. 

\begin{remark}\label{rk:kFPname}
  Our convention to call the Initial-Boundary Value
  Problem~\eqref{FIBVP} `kinetic Fokker-Planck equation' is the same
  as that in the work by Armstrong and Mourrat~\cite{Mourrat}. As we shall see in Theorem~\ref{Solution PDE} below, in the homogeneous case $g=0$ the solution $u$ describes the evolution of the semigroup of the Langevin process absorbed at $\partial D$.
  
  In the literature, it seems more standard to reserve this denomination for the dual equation describing the evolution of the law of the Langevin process absorbed at $\partial D$~\cite{Vel,Hwang,Carillo,Har}. This equation writes
  \begin{equation}\label{eq:kFPstar}
        \left\{\begin{aligned}
            \partial_t v(t,x) & =\mathcal{L}^*v(t,x) && t>0, \quad x\in D ,\\
            v(0,x) &=f(x) && x\in D ,\\
            v(t,x) &=0  && t>0, \quad x\in\Gamma^-,
        \end{aligned}\right. 
  \end{equation}
  where $f$ now denotes the initial distribution of the process and $\mathcal{L}^*$ is the formal adjoint of $\mathcal{L}$ in $\mathrm{L}^2(\mathrm{d}x)$, given by 
\begin{equation}\label{generateur adjoint}
    \mathcal{L}^*=-p\cdot\nabla_q-F(q)\cdot\nabla_p +\gamma \mathrm{div}_p(p  \cdot )+\frac{\sigma^2}{2}\Delta_p .
\end{equation} 
  The boundary condition is set on $\Gamma^-$ because, in this probabilistic interpretation, $v$ denotes the density of a system of particles which are allowed to escape the domain $D$ but cannot re-enter it.
  
  We also note that, in the case where $F=-\nabla V$, $\gamma>0$ and $\sigma=\sqrt{2\gamma\beta^{-1}}$, the denomination `kinetic Fokker-Planck equation' may refer to the variant of~\eqref{eq:kFPstar} in which the operator $\mathcal{L}^*$ in~\eqref{eq:kFPstar} is replaced by 
  \begin{equation*}
    \mathcal{L}^\dagger:=-p\cdot\nabla_q+\nabla V(q)\cdot\nabla_p -\gamma p . \nabla_p+\gamma \beta^{-1}\Delta_p,
  \end{equation*}
  and which describes the evolution of the density of the Langevin process with respect to the invariant measure $\exp(-\beta(V(q)+\frac{|p|^2}{2}))$ rather than the Lebesgue measure. See for instance~\cite{DMS} and the discussion in~\cite[Section~7]{Villani}.
  
  In any case, there is a natural link between the solutions to~\eqref{kFP pb} and~\eqref{eq:kFPstar}, as it is easily checked that if $u$ solves~\eqref{kFP pb} with operator $\mathcal{L}=\mathcal{L}_{F,\gamma,\sigma}$ and homogeneous boundary condition $g=0$, then the function $v$ defined by 
  \begin{equation*}
      v(t,(q,p)) := \mathrm{e}^{-d\gamma t} u(t,(q,-p))
  \end{equation*}
  solves~\eqref{eq:kFPstar} with operator $\mathcal{L}^*=\mathcal{L}^*_{F,-\gamma,\sigma}$. Therefore, our results for the equation~\eqref{kFP pb} also apply to~\eqref{eq:kFPstar}.
\end{remark} 

For the sake of clarity, let us make precise the standard notions of
solutions we will need in the following.
\begin{definition}[Classical solutions]\label{def:class}
A function $u$ is a classical solution to~\eqref{kFP pb} if $u \in
   \mathcal{C}^{1,2}(\mathbb{R}_+^*\times D) \cap \mathcal{C}((\mathbb{R}_+\times(
    D\cup\Gamma^+))\setminus(\{0\}\times\Gamma^+)) $ and $u$ satisfies~\eqref{kFP pb}.
  \end{definition}
  Notice that the regularity on $u$ in this definition
is required for the boundary value and initial
condition in~\eqref{kFP pb} to hold in a classical sense. We will also
use the notion of distributional solutions to
\begin{equation}\label{eq:kfp_dist}
  \partial_t u=\mathcal{L}u \qquad \text{on $\mathbb{R}_+^*\times D$},
\end{equation}
without neither initial condition nor boundary value.

\begin{definition}[Distributional solutions]\label{def:dist}
A distribution $u$ on $\mathbb{R}_+^*\times D$  is a distributional
solution of~\eqref{eq:kfp_dist} if for all
$\Phi\in\mathcal{C}_c^\infty(\mathbb{R}_+^*\times D)$,
$$\iint_{\mathbb{R}^{*}_+\times
D} u(t,x)\left(\partial_t\Phi(t,x)+\mathcal{L}^*\Phi(t,x)
\right)\mathrm{d}t \, \mathrm{d}x=0, $$
where the operator $\mathcal{L}^*$ is defined in~\eqref{generateur adjoint}.  
\end{definition}
A distributional solution to~\eqref{eq:kfp_dist} differs from a
classical solution to~\eqref{kFP pb} in two ways: interior regularity,
and boundary regularity (required to define boundary and initial
conditions in~\eqref{kFP pb}). On the one hand, additional regularity is necessary to properly define the initial and boundary values of distributional solutions, see for example the works~\cite{nier-18,Mourrat,Carillo,Vel}. On the other hand, concerning interior regularity,
it is actually known that distributional solutions
of~\eqref{eq:kfp_dist} are $\mathcal{C}^{\infty}(\mathbb{R}_+^* \times
D)$ by hypoellipticity. Let us recall these standard results, see~\cite{HM}.
\begin{definition}[Hypoellipticity]\label{def:hypoell}
A differential operator $\mathcal{G}$ is said to be hypoelliptic on
an open set $A\subset\text{$\mathbb{R}^d$, $\mathbb{R}^{2d}$ or $\mathbb{R}_+^* \times \mathbb{R}^{2d}$}$ if for all
$f\in\mathcal{C}^\infty(A)$ and $u$ a distributional solution to
$\mathcal{G}u=f$ on $A$  then $u\in\mathcal{C}^\infty(A)$. 
\end{definition}
It is well known that under Assumption \ref{hyp F1} the operators
$\mathcal{L}$ and  $\mathcal{L}^*$ (resp. $\partial_t-\mathcal{L}$ and
$\partial_t-\mathcal{L}^*$)  are hypoelliptic on $D$ (resp. on
$\mathbb{R}_+^*\times D$), see for example~\cite[Section
2.3.1]{LelSto16} and references therein.  

\subsubsection{Probabilistic representation of classical solution}

In this work, we are interested in the existence and uniqueness of classical solutions of~\eqref{kFP pb}, see Theorem~\ref{Solution PDE} below. Besides we will show that this solution admits a
probabilistic representation in terms of the Langevin process $(X^x_t=(q^x_t,p^x_t))_{t\geq0}$, described by its
position $q^x_t$ and velocity $p^x_t$ at time $t$ and defined by the
following SDE: 
\begin{equation}\label{Langevin}
  \left\{
    \begin{aligned}
        &\mathrm{d}q^x_t=p^x_t \mathrm{d}t , \\
        &\mathrm{d}p^x_t=F(q^x_t) \mathrm{d}t -\gamma  p^x_t \mathrm{d}t+\sigma \mathrm{d}B_t ,\\
        &(q^x_0,p^x_0)=x .
    \end{aligned}
\right.
\end{equation}
Let $\tau^x_{\partial}$ be the first exit time from $D$ of the process
$(X^x_t)_{t\geq0}$, i.e.  $$\tau^x_{\partial}=\inf \{t>0: X^x_t\notin
D\}= \inf \{t>0: q^x_t\notin \mathcal O\}.$$

Under Assumption~\ref{hyp F1}, the drift coefficient $(q,p)\mapsto
(p,F(q)-\gamma p)$ in \eqref{Langevin} is locally Lipschitz continuous on $\mathbb{R}^{2d}$, therefore~\eqref{Langevin} admits a unique strong solution $(X^x_t)_{t \geq 0}$, which is {\em a priori} only defined
up to some explosion time $\tau^x_\infty$ by~\cite[Theorem IV.3.1]{Ikeda}. Under the additional Assumption~\ref{hyp O}, this drift coefficient is globally Lipschitz continuous on $D$, and thus the process exits the set $D$ before the explosion time almost surely, so that the solution $(X^x_t)_{t \geq 0}$ is at least well-defined until $\tau^x_{\partial}$. Since observing the process only up to the time $\tau^x_\partial$ amounts to imposing an absorbing boundary condition on $\partial D$, this justifies the following definition.

\begin{definition}[Absorbed Langevin process]\label{defi:absorbed}
  Under Assumptions~\ref{hyp F1} and~\ref{hyp O}, the process $(X^x_t)_{0 \leq t \leq \tau^x_\partial}$ is called the absorbed Langevin process.
\end{definition}

Since $(X^x_t)_{0 \leq t \leq \tau^x_\partial}$ is a solution to the SDE~\eqref{Langevin}, it is a continuous-time Markov process with almost surely continuous sample paths. Besides, since the coefficients in~\eqref{Langevin} are locally bounded on $\mathbb{R}^{2d}$, then $(X^x_t)_{0 \leq t \leq \tau^x_\partial}$ satisfies the strong Markov property, see~\cite[Theorem 4.20 p. 322]{Karatzas}. 

\begin{remark}\label{solution dans D}
  Friedman's uniqueness result~\cite[Theorem 5.2.1.]{F}
ensures that the trajectories $(X^x_t)_{0 \le t \le \tau^x_\partial}$ do not depend on the values of $F$ outside of $\mathcal{O}$. Therefore, whenever we are interested in quantities which only depend on the absorbed Langevin process, there is no loss of generality in modifying $F$ outside of $\mathcal{O}$ so that it satisfies the following strengthening of Assumption~\ref{hyp F1}:
\begin{assumptionF}\label{hyp F}
$F\in\mathcal{C}^\infty(\mathbb{R}^{d},\mathbb{R}^{d})$ and $F$ is bounded and globally Lipschitz continuous on $\mathbb{R}^d$.
\end{assumptionF} 
\end{remark}

Under Assumption~\ref{hyp F}, the drift coefficient $(q,p) \mapsto (p, F(q)-\gamma p)$ in \eqref{Langevin} is globally Lipschitz continuous, with a Lipschitz constant which we shall denote by $C_\mathrm{Lip}$, and therefore the strong solution $(X^x_t)_{t \geq 0}$ (without absorbing boundary condition) is defined globally in time.

In order to describe the probabilistic representation of the classical
solution to~\eqref{kFP pb}, we first state a trajectorial result on
the solution $(q_t^x,p_t^x)_{t \geq 0}$ of~\eqref{Langevin} for $x\in
\mathbb{R}^{2d}\setminus \Gamma^0$. We prove that, almost surely,
the process $(q_t^x,p_t^x)_{t \geq 0}$ does not reach the set $\Gamma^0$ in finite time. In other words, the set $\Gamma^0$ is non attainable in the sense of~\cite[Chapter~11.1]{FF}.

\begin{proposition}[Non-attainability of $\Gamma^0$]\label{non attainability}
  Under Assumptions~\ref{hyp O} and~\ref{hyp F}, for all $x\in\mathbb{R}^{2d}\setminus \Gamma^0$, 
  \begin{equation*}
    \mathbb{P}\left(\exists t > 0: X^x_t \in \Gamma^0\right) = 0.
  \end{equation*}
\end{proposition}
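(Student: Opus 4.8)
The plan is to show that the set $\Gamma^0$ is a.s.\ never reached by a localization-and-Itô argument, reducing the problem to a one-dimensional analysis of the signed distance to $\partial\mathcal O$. First I would use a stopping-time localization: for the process started at $x\notin\Gamma^0$, it suffices to prove that it does not hit $\Gamma^0$ before exiting any small neighborhood, and by the strong Markov property and a covering argument it is enough to rule out hitting $\Gamma^0$ starting from points near $\Gamma^0$ itself (away from $\partial\mathcal O$ the statement is trivial since $X^x$ has continuous paths and stays in the interior for a while). Near a boundary point $q_0\in\partial\mathcal O$, the $\mathcal C^2$ regularity of $\mathcal O$ (Assumption~\ref{hyp O}) lets me introduce the signed distance function $\delta(q)$ to $\partial\mathcal O$, which is $\mathcal C^2$ in a tubular neighborhood, with $\nabla\delta(q_0)=-n(q_0)$.

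\textbf{The key computation.} Set $Y_t := \delta(q^x_t)$ and $Z_t := \nabla\delta(q^x_t)\cdot p^x_t$, so that $(Y_t,Z_t)$ tracks the normal position and normal velocity; a point of $\Gamma^0$ corresponds to $Y=0$ and $Z=0$ simultaneously. Since $\mathrm d q^x_t = p^x_t\,\mathrm d t$, the chain rule gives $\mathrm d Y_t = Z_t\,\mathrm d t$, so $Y$ has finite variation and $\dot Y_t = Z_t$. Applying Itô's formula to $Z_t$ using~\eqref{Langevin} yields
\begin{equation*}
  \mathrm d Z_t = \left( (p^x_t)^\top \nabla^2\delta(q^x_t)\, p^x_t + \nabla\delta(q^x_t)\cdot F(q^x_t) - \gamma\, Z_t \right)\mathrm d t + \sigma\,\nabla\delta(q^x_t)\cdot \mathrm d B_t .
\end{equation*}
On a bounded time interval and before exiting a fixed small tubular neighborhood (where $|p^x_t|$ is controlled by a further stopping time $\tau_R$ at which $|p^x_t|$ first exceeds $R$), the drift of $Z$ is bounded by a constant $K=K(R)$ and the diffusion coefficient $\sigma\,\nabla\delta(q^x_t)$ has norm bounded above and also \emph{bounded below} by some $c>0$ (since $|\nabla\delta|=1$ in the tube). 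Thus $Z_t$ behaves like a non-degenerate one-dimensional Itô process, and the pair $(Y,Z)$ is, up to these stopping times, a kinetic/Langevin-type process in the normal direction: $\dot Y = Z$, $\mathrm d Z = (\text{bounded drift})\mathrm d t + (\text{non-degenerate})\mathrm d B$.

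\textbf{Ruling out the hitting of the origin.} It remains to show that such a process $(Y_t,Z_t)$, started from $(Y_0,Z_0)=(\delta(q),\nabla\delta(q)\cdot p)\neq(0,0)$, a.s.\ never hits $(0,0)$. The clean way is to work with $Z$ alone on the (random) set of times when $Y$ is near $0$: whenever $Y_t=0$ one needs $Z_t=0$ as well to be in $\Gamma^0$, but at any time $t_0$ where $Y_{t_0}=0$, since $Y$ is $\mathcal C^1$ with $\dot Y=Z$, if also $Z_{t_0}=0$ then $t_0$ is a point where $Y$ has a zero of order $\ge 2$; I would argue this is a.s.\ impossible by comparing with the occupation-time / law-of-iterated-logarithm behavior of the non-degenerate diffusion $Z$, or more simply: on the event that $(Y,Z)$ approaches $(0,0)$, consider the first time $Y$ returns to $0$ after a visit; at that time $Z$ has had a genuine Brownian increment of non-degenerate quadratic variation over a positive time interval, so $Z\neq 0$ a.s.\ there (the event $\{Z_{t_0}=0\}$ for a fixed $\mathcal F_{t_0}$-measurable $t_0$ that is a hitting time of $Y$ has probability zero because $Z$ has a non-degenerate Gaussian-type distribution conditionally on the past). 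A convenient rigorous packaging is a Lyapunov/comparison argument: show $W_t := Y_t^2 + \varepsilon Y_t Z_t + Z_t^2$ (for small $\varepsilon>0$, a standard hypocoercivity-type quadratic form) satisfies, before the stopping times, $\mathrm d(\log W_t) \ge -C\,\mathrm d t + (\text{martingale})$ with the martingale part having bounded-below quadratic variation proportional to $W_t^{-2}Z_t^2\,\mathrm dt$, whence $\log W_t$ cannot reach $-\infty$ in finite time; alternatively invoke directly the non-attainability criterion of~\cite[Chapter~11.1]{FF} for the $(Y,Z)$-system, to which the paper already alludes. Finally, undo the localization: let $R\to\infty$ and sum over a countable cover of $\Gamma^0$ by such tubular charts, using the strong Markov property (valid by the discussion after Definition~\ref{defi:absorbed}) to see that the global probability of ever hitting $\Gamma^0$ is $0$.

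\textbf{Main obstacle.} The delicate point is the degeneracy structure at $\Gamma^0$: the process is hypoelliptic, so $(Y,Z)$ is itself degenerate (noise enters only through $Z$), and one cannot simply say ``a $2$-dimensional diffusion avoids a point.'' The real content is that hitting $\Gamma^0$ requires the \emph{simultaneous} vanishing of the normal position and the normal velocity, i.e.\ a second-order zero of the $\mathcal C^1$ path $t\mapsto Y_t$, and this is prevented precisely because $Z$ — which is $\dot Y$ — is a non-degenerate scalar Itô process that a.s.\ does not vanish at any previsible hitting time of $Y$. Making this last assertion fully rigorous (controlling the interaction between the random time at which $Y$ hits $0$ and the value of $Z$ there, uniformly, and handling the possibly infinitely many such times near a would-be contact with $\Gamma^0$) is where the careful work lies; the Lyapunov-function route, or a direct appeal to the one-dimensional non-attainability results, is the way I would make it airtight.
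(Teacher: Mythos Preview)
Your reduction to the pair $(Y_t,Z_t)=(\delta(q^x_t),\nabla\delta(q^x_t)\cdot p^x_t)$ is natural and correct, and your identification of the main obstacle is exactly right. However, the Lyapunov route you propose has a genuine gap: the quadratic form $W=Y^2+\varepsilon YZ+Z^2$ does \emph{not} give a lower bound on the drift of $\log W$. Indeed, for the model operator $\mathcal L=Z\partial_Y+\tfrac{\sigma^2}{2}\partial_Z^2$ one computes
\[
\text{drift of }\log W \;=\; \frac{2YZ+\varepsilon Z^2+\sigma^2}{W}-\frac{\sigma^2(\varepsilon Y+2Z)^2}{2W^2},
\]
and along $(Y,Z)=(0,z)$ this equals $\varepsilon-\sigma^2/z^2\to-\infty$ as $z\to0$. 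The failure is structural: the kinetic scaling is $Y\mapsto\lambda^3Y$, $Z\mapsto\lambda Z$, $t\mapsto\lambda^2 t$, so an isotropic quadratic form cannot capture the correct local geometry near $(0,0)$. A working Lyapunov function would have to respect this anisotropy (e.g.\ be built from $Y^2$ and $Z^6$ with a suitable cross term), and verifying the required inequality in all directions is nontrivial --- it is certainly not the ``convenient packaging'' you suggest. Likewise, Friedman's non-attainability criteria in~\cite[Chapter~11]{FF} concern diffusions that degenerate only at the boundary of a domain; they do not apply off-the-shelf to this everywhere-degenerate hypoelliptic system, so invoking them is circular without producing the barrier function.

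The paper takes a completely different route that is in fact a rigorous implementation of your \emph{other} heuristic --- that $Z$ should have a non-degenerate law at times when $Y$ is small. First, a Girsanov argument (Lemma~\ref{Girsanov}) reduces to the Gaussian case $F=0$, $\gamma=0$, where $(q_t,p_t)$ is explicit. Then, instead of working with the random hitting time of $\{Y=0\}$ (which is what makes your argument hard to close), the paper discretizes: on a grid $t_k=kT/N$, one controls $\mathbb P(|\overline{\mathrm d}_\partial(q_{t_k})|\le M\eta_N,\ |p_{t_k}\cdot\nabla\overline{\mathrm d}_\partial(q_{t_k})|\le M_1\eta_N^\alpha)$ using the explicit Gaussian conditional law of $p_{t_k}$ given $q_{t_k}$ (variance $\sim t_k$) together with Weyl's tube formula for the volume of $\{|\overline{\mathrm d}_\partial|\le M\eta_N\}$. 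Each summand is $O(\eta_N^{1+\alpha}/\sqrt{t_k})$, and summing over $k$ gives $O(\eta_N^\alpha)\to0$. This sidesteps entirely the difficulty you flag about controlling $Z$ at \emph{random} (previsible) hitting times, at the cost of a Hölder-continuity localization of the velocity to pass from grid times to the actual hitting time.
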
 

Using this non-attainability result we are able to characterize more precisely the exit event from~$D$ in the following proposition. 

\begin{proposition}[Attainability of the boundary]\label{prop:tau}
  Let Assumptions~\ref{hyp O} and~\ref{hyp F1} hold.
  \begin{enumerate}[label=(\roman*),ref=\roman*]
    \item\label{it:tau:reg} If $x\in\Gamma^+\cup\Gamma^0$, then for all $\epsilon>0$, $(q^x_t,p^x_t)_{t\geq0}$ visits $\overline{D}^c$ almost surely on $[0,\epsilon]$, i.e.
    \begin{equation}\label{sortie immédiate}
     \mathbb{P}\left(\exists t \in [0,\epsilon] : q^x_t \in \overline{\mathcal{O}}^c\right)=1 .
      \end{equation}
    In particular, $\tau^x_\partial=0$ almost surely.
    \item\label{it:tau:sing} If $x\in D\cup\Gamma^-$, then
      $\tau^x_\partial>0$ almost surely and one has
  \begin{equation}\label{singular set}
    \mathbb{P}\left(p^x_{\tau^x_\partial}\cdot n(q^x_{\tau^x_\partial}) \leq0 , \tau^x_\partial<\infty \right)=0.
    \end{equation}
  \end{enumerate}
\end{proposition}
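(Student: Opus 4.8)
The plan is to read both assertions off the Itô--Taylor expansion of the signed distance to $\partial\mathcal{O}$ along the trajectory, and to use Proposition~\ref{non attainability} to exclude a tangential exit. By Remark~\ref{solution dans D} I may assume $F$ satisfies the strengthened Assumption~\ref{hyp F}, so that $(X^x_t)_{t\ge0}$ is globally defined and Proposition~\ref{non attainability} is available. Using Assumption~\ref{hyp O}, I fix a tubular neighbourhood $\mathcal{N}$ of $\partial\mathcal{O}$ on which the signed distance $\rho$ to $\partial\mathcal{O}$ (negative inside $\mathcal{O}$) is $\mathcal{C}^2$, with $\nabla\rho=n$ on $\partial\mathcal{O}$; when convenient I replace $\rho$ by a $\mathcal{C}^2$ extension to $\mathbb{R}^d$ with bounded derivatives that agrees with the signed distance near the relevant boundary point. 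Integrating~\eqref{Langevin} twice gives $q^x_t=q+tp+\sigma\int_0^tB_s\,\mathrm{d}s+\mathrm{O}(t^2)$ for $x=(q,p)$, hence, when $q\in\partial\mathcal{O}$, a second-order expansion of $\rho$ at $q$ yields
\begin{equation*}
\rho(q^x_t)=t\,\bigl(p\cdot n(q)\bigr)+\sigma\int_0^t n(q)\cdot B_s\,\mathrm{d}s+\widetilde E_t ,
\end{equation*}
where, by Assumption~\ref{hyp F} and the standard moment bounds for~\eqref{Langevin}, $\widetilde E_t=\mathrm{o}(t)$ almost surely and $\widetilde E_t/t^{3/2}\to0$ in probability.

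For Part~\ref{it:tau:reg}: if $x\in\Gamma^+$ then $p\cdot n(q)>0$ and, since $\int_0^t n(q)\cdot B_s\,\mathrm{d}s=\mathrm{o}(t)$ almost surely, the expansion forces $\rho(q^x_t)>0$, i.e.\ $q^x_t\in\overline{\mathcal{O}}^c$, for all small $t>0$; this is~\eqref{sortie immédiate}. If $x\in\Gamma^0$ then $p\cdot n(q)=0$, so $\rho(q^x_t)=\sigma\int_0^t n(q)\cdot B_s\,\mathrm{d}s+\widetilde E_t$; since $Y_s:=n(q)\cdot B_s$ is a one-dimensional Brownian motion, Brownian scaling shows $\int_0^t Y_s\,\mathrm{d}s$ is distributed as $t^{3/2}$ times a fixed non-degenerate centred Gaussian, while $\widetilde E_t/t^{3/2}\to0$ in probability, so $\mathbb{P}(\rho(q^x_t)>0)\to\tfrac12$ as $t\to0$. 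Together with $\mathbb{P}(q^x_t\in\mathcal{N})\to1$ and the inclusion $\{\rho(q^x_t)>0\}\cap\{q^x_t\in\mathcal{N}\}\subseteq\{q^x_t\in\overline{\mathcal{O}}^c\}$, this gives $\mathbb{P}(A)\ge\tfrac12$ for the event $A:=\bigcap_{\epsilon>0}\{\exists\,t\in(0,\epsilon]:q^x_t\in\overline{\mathcal{O}}^c\}$; since $A$ belongs to the germ $\sigma$-field of the driving Brownian motion, Blumenthal's $0$--$1$ law yields $\mathbb{P}(A)=1$, which is~\eqref{sortie immédiate}. In both cases $\tau^x_\partial=0$ almost surely.

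For Part~\ref{it:tau:sing}: the bound $\tau^x_\partial>0$ almost surely follows from continuity of $t\mapsto q^x_t$ and openness of $\mathcal{O}$ when $q\in\mathcal{O}$, and from the expansion above (which gives $\rho(q^x_t)<0$ for small $t>0$) when $q\in\partial\mathcal{O}$ with $p\cdot n(q)<0$. To locate the exit point, set $\tau:=\tau^x_\partial$ and argue on $\{\tau<\infty\}$. By continuity and the definition of $\tau$ one has $q^x_\tau\in\partial\mathcal{O}$, and for $t<\tau$ close enough to $\tau$ that $q^x_t\in\mathcal{N}$ one has $\rho(q^x_t)<0=\rho(q^x_\tau)$; since $s\mapsto\rho(q^x_s)$ is $\mathcal{C}^1$ near $\tau$ with derivative $\nabla\rho(q^x_s)\cdot p^x_s$, the mean value theorem gives
\begin{equation*}
n(q^x_\tau)\cdot p^x_\tau=\lim_{t\uparrow\tau}\frac{-\rho(q^x_t)}{\tau-t}\ge0 ,
\end{equation*}
so $X^x_\tau\in\Gamma^+\cup\Gamma^0$ almost surely. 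On the other hand $x\notin\Gamma^0$ and $\tau>0$ almost surely, so Proposition~\ref{non attainability} gives $X^x_\tau\notin\Gamma^0$ almost surely. Combining these, $X^x_\tau\in\Gamma^+$ almost surely on $\{\tau<\infty\}$, which is exactly~\eqref{singular set}.

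I expect the genuinely delicate point to be the case $x\in\Gamma^0$ in Part~\ref{it:tau:reg}: there the first-order term of $\rho(q^x_t)$ vanishes and immediate exit must be extracted from the integrated Brownian term $\sigma\int_0^t n(q)\cdot B_s\,\mathrm{d}s$, which requires a quantitative control of the second-order remainder $\widetilde E_t$ (at rate $\mathrm{o}(t^{3/2})$) and the passage from the ``probability $\tfrac12$'' estimate to an almost sure statement through the $0$--$1$ law. The remaining steps are comparatively routine once Proposition~\ref{non attainability} is granted.
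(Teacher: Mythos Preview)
Your proof is correct, and the overall architecture matches the paper's: Part~\eqref{it:tau:sing} is routine once Proposition~\ref{non attainability} is granted, and the hard case is $x\in\Gamma^0$ in Part~\eqref{it:tau:reg}. The individual arguments, however, differ in interesting ways.

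For $x\in\Gamma^+$ and for the positivity of $\tau^x_\partial$ on $\Gamma^-$, the paper invokes the uniform interior/exterior sphere property (Proposition~\ref{prop:sphere}) rather than the signed distance expansion; these are essentially two phrasings of the same first-order argument, so there is no substantive difference.

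The real divergence is the $\Gamma^0$ case. The paper first uses Girsanov (Lemma~\ref{Girsanov}) to reduce to the driftless process $(\check q^x_t,\check p^x_t)$, then straightens the boundary through a local $\mathcal{C}^2$ chart $\phi$, and finally appeals to the law of the iterated logarithm for the integrated Brownian motion~\cite{Lach} to obtain directly that $\limsup_{t\to0}\phi^{-1}(\check q^x_t)\cdot e_d/\Psi(t)>0$ almost surely, whence instantaneous exit. Your route is shorter: you keep the full dynamics, Taylor-expand $\rho(q^x_t)$, observe that the remainder is $O(t^2)$ almost surely so that $\rho(q^x_t)/t^{3/2}$ converges in law to a nondegenerate centred Gaussian, deduce $\liminf_{t\to0}\mathbb{P}(\rho(q^x_t)>0)\ge\tfrac12$, and then upgrade to an almost sure statement via Blumenthal's $0$--$1$ law applied to the Brownian germ $\sigma$-field. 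This avoids both the Girsanov step and the external input from~\cite{Lach}; the price is that you obtain only the $0$--$1$ dichotomy rather than the quantitative $\limsup$ the paper gets, but that is all the proposition requires. One minor point worth making explicit in a final write-up is that the event $A$ is indeed $\mathcal{F}^B_{0+}$-measurable because the strong solution $X^x$ is adapted to the Brownian filtration and the trajectories are continuous, so the existential quantifier over $t\in(0,\epsilon]$ can be replaced by a countable supremum.
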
 
\begin{remark}  One can actually prove that for all $x\in D\cup \Gamma^-$, $\tau^x_\partial<\infty$ almost surely (since it admits exponential moments see~\cite[Remark~2.20]{LelRamRey2}). Therefore, the equality \eqref{singular set} can be written equivalently: for all $x\in D\cup \Gamma^-$,
  $\mathbb{P}(X^x_{\tau^x_\partial} \in
    \Gamma^-)=1$. 
\end{remark} 

Proposition~\ref{prop:tau} implies that for all $t \geq 0$, almost
surely, if $\tau^x_{\partial}>t$ then $X^x_t \in D\cup\Gamma^-$, and
if $\tau^x_{\partial}\leq t$ then $X^x_{\tau^x_{\partial}} \in
\Gamma^+\cup\Gamma^0$. This ensures that the definition of the
function $u$ in Equation~\eqref{u} below is legitimate.

We are now in position to state the main result of this section, namely the existence
of a unique classical solution to the kinetic Fokker-Planck equation~\eqref{kFP pb}, and its probabilistic representation.

\begin{theorem}[Classical solution and probabilistic representation for the kinetic Fokker-Planck equation~\eqref{kFP pb}]\label{Solution PDE} Under Assumptions~\ref{hyp O} and~\ref{hyp F1}, let $f\in\mathcal{C}^b(D\cup\Gamma^-)$ and $g\in\mathcal{C}^b(\Gamma^+\cup\Gamma^0)$, and define the function $u$ on $\mathbb{R}_+\times \overline{D}$ by
\begin{equation}\label{u}
    u:(t,x)\mapsto\mathbb{E}\left[ \mathbb{1}_{\tau^x_{\partial}>t} f(X^x_t) +\mathbb{1}_{\tau^x_{\partial}\leq t} g(X^x_{\tau^x_{\partial}}) \right].
    \end{equation}
  Then we have the following results:
\begin{enumerate}[label={\rm(\roman*)},ref=\roman*]
  \item\label{it:ibvp:val} Initial and boundary values: the function $u$ satisfies
 \begin{equation*}
u(0,x)=\left\{
\begin{aligned}
    f(x) &\quad \text{if $x\in D\cup\Gamma^-$,}\\
    g(x) &\quad \text{if $x\in \Gamma^+\cup\Gamma^0$},
\end{aligned}
\right. 
\end{equation*}
and 
\begin{equation*}
  \forall t>0, \quad \forall x\in\Gamma^+\cup\Gamma^0, \qquad u(t,x)=g(x).
\end{equation*}
  \item\label{it:ibvp:cont} Continuity: $u \in \mathcal{C}^b((\mathbb{R}_+\times\overline{D}) \setminus (\{0\}\times(\Gamma^+\cup\Gamma^0)) )$, and if $f$ and $g$ satisfy the compatibility condition
\begin{equation}\label{compatibility cond}
    x\in\overline{D}\mapsto\mathbb{1}_{x\in
      D\cup\Gamma^-}f(x)+\mathbb{1}_{x \in \Gamma^+\cup\Gamma^0}g(x)\in\mathcal{C}^b(\overline{D}),
      \end{equation}
then $u\in\mathcal{C}^b(\mathbb{R}_+\times\overline{D})$.
  \item\label{it:ibvp:reg} Interior regularity: $u \in \mathcal{C}^\infty(\mathbb{R}_+^*\times D)$ and, for all $t>0$, $x \in D$,
  \begin{equation}\label{Langevin PDE}
    \partial_tu(t,x)=\mathcal{L}u(t,x).
    \end{equation}
\end{enumerate}  
The three items~\eqref{it:ibvp:val}, \eqref{it:ibvp:cont}
and~\eqref{it:ibvp:reg} show that $u$ defined by~\eqref{u} is a classical solution to~\eqref{kFP pb} in the sense of
    Definition~\ref{def:class}. We also have the following uniqueness
    result for classical solutions to~\eqref{kFP pb}:
\begin{enumerate}[label={\rm (\roman*)},ref=\roman*]
  \setcounter{enumi}{3} 
  \item\label{it:ibvp:uniq} Let $v$ be a
    classical solution to~\eqref{kFP pb} in the sense of
    Definition~\ref{def:class}. If, for all $T>0$, $v$ is bounded on the set  $[0,T] \times D$, then $v(t,x)=u(t,x)$ for all $(t,x)\in (\mathbb{R}_+\times(D\cup\Gamma^+))\setminus(\{0\}\times \Gamma^+)$.
\end{enumerate}  
\end{theorem}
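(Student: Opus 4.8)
By Remark~\ref{solution dans D}, since $u$ only depends on the absorbed Langevin process, I may and will work under the stronger Assumption~\ref{hyp F}, so that the flow $x\mapsto(X^x_t)_{t\ge0}$ is globally defined and, by Gronwall's lemma, a.s.\ continuous, uniformly on compact time intervals. I would establish the four items in the order first, third, second, fourth. The initial and boundary values follow at once: setting $t=0$ in~\eqref{u} and using $X^x_0=x$, the dichotomy of Proposition~\ref{prop:tau} ($\tau^x_\partial>0$ a.s.\ on $D\cup\Gamma^-$, $\tau^x_\partial=0$ a.s.\ on $\Gamma^+\cup\Gamma^0$) gives $u(0,x)=f(x)$ resp.\ $g(x)$, and for $t>0$, $x\in\Gamma^+\cup\Gamma^0$ the same fact forces $u(t,x)=g(x)$; the bound $\|u\|_\infty\le\|f\|_\infty\vee\|g\|_\infty$ is immediate.

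\textbf{Interior regularity and the PDE.} Fix $(t_0,x_0)\in\mathbb{R}_+^*\times D$ and $r>0$ with $\overline{\mathrm{B}(x_0,r)}\subset D$; let $\tau^x_r$ be the first exit time of $X^x$ from $\mathrm{B}(x_0,r)$, which is a.s.\ finite (the Langevin process leaves any bounded set in finite time) and satisfies $\tau^x_r<\tau^x_\partial$. The strong Markov property applied at $\tau^x_r\wedge s$ gives, for $x\in\mathrm{B}(x_0,r)$ and $0<s<t$,
\[
  u(t,x)=\mathbb{E}\bigl[u\bigl(t-\tau^x_r\wedge s,\,X^x_{\tau^x_r\wedge s}\bigr)\bigr],
\]
i.e.\ $u$ satisfies the parabolic mean value property for $\mathcal{L}$ locally in $\mathbb{R}_+^*\times D$. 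Together with the (obvious) local boundedness of $u$, this identity lets one conclude, by a standard regularization argument (test against $\Phi\in\mathcal{C}_c^\infty$, use Itô's formula for the smooth $\Phi$ and the density of the free Langevin process, and let $s\downarrow0$), that $u$ is a distributional solution of $\partial_tu=\mathcal{L}u$ on $\mathbb{R}_+^*\times D$ in the sense of Definition~\ref{def:dist}. The hypoellipticity of $\partial_t-\mathcal{L}$ on $\mathbb{R}_+^*\times D$ then upgrades $u$ to $\mathcal{C}^\infty(\mathbb{R}_+^*\times D)$ and turns the distributional identity into the pointwise equation~\eqref{Langevin PDE}; in particular $u$ is continuous on the open set $\mathbb{R}_+^*\times D$.

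\textbf{Continuity up to the boundary.} The remaining continuity is obtained from the a.s.\ continuity of the flow together with stability of the pair $(\tau^x_\partial,X^x_{\tau^x_\partial})$ and dominated convergence in~\eqref{u}, using that $f,g$ are bounded continuous and that $\mathbb{P}(\tau^x_\partial=t)=0$ for $t>0$ — the latter because $\{\tau^x_\partial=t\}\subset\{X^x_t\in\partial\mathcal{O}\times\mathbb{R}^d\}$ and $X^x_t$ has a density for $t>0$. At $(0,x_0)$ with $x_0\in D\cup\Gamma^-$ one uses $\tau^{x_0}_\partial>0$ a.s.\ and that the process travels little in small time; at $(t_0,x_0)$ with $t_0>0$ and $x_0\in\Gamma^+$ one uses that the strictly outward normal velocity forces $\tau^x_\partial\to0$ and $X^x_{\tau^x_\partial}\to x_0$ as $x\to x_0$ in $\overline D$; and at $(t_0,x_0)$ with $t_0>0$ and $x_0\in D\cup\Gamma^-$ one uses that, by Proposition~\ref{non attainability} and Proposition~\ref{prop:tau}, the trajectory issued from $x_0$ a.s.\ never touches $\partial\mathcal{O}$ tangentially before exiting and exits transversally through $\Gamma^+$, so that the exit time and exit position depend continuously on the starting point (in probability). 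Finally, under the compatibility condition~\eqref{compatibility cond}, formula~\eqref{u} rewrites as $u(t,x)=\mathbb{E}\bigl[\tilde h(X^x_{t\wedge\tau^x_\partial})\bigr]$ where $\tilde h$ is the common continuous extension of $f$ and $g$ to $\overline D$, and the same transversality/non-tangency facts give a.s.\ continuity of $(t,x)\mapsto X^x_{t\wedge\tau^x_\partial}$, hence continuity of $u$ on all of $\mathbb{R}_+\times\overline D$.

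\textbf{Uniqueness and the main obstacle.} For uniqueness, let $v$ be a bounded classical solution and fix $t>0$, $x\in D$. For $\epsilon\in(0,t)$ and an exhaustion $D_k\uparrow D$ by compactly contained open sets, with exit times $\tau^x_{D_k}\uparrow\tau^x_\partial$, apply Itô's formula to $s\mapsto v(t-s,X^x_s)$ on $[0,(t-\epsilon)\wedge\tau^x_{D_k}]$: since $v\in\mathcal{C}^{1,2}$ with $\partial_tv=\mathcal{L}v$ in the interior the finite-variation part vanishes, and on the compact set $[\epsilon,t]\times\overline{D_k}$ the stochastic term is a genuine martingale, so $v(t,x)=\mathbb{E}[v(t-(t-\epsilon)\wedge\tau^x_{D_k},X^x_{(t-\epsilon)\wedge\tau^x_{D_k}})]$; letting $k\to\infty$ (using boundedness of $v$ on $[0,t]\times D$, continuity of $v$ up to $\Gamma^+$, the boundary condition, and $X^x_{\tau^x_\partial}\in\Gamma^+$ a.s.\ from Proposition~\ref{prop:tau}), and then $\epsilon\to0$ (using the initial condition $v(0,\cdot)=f$ on $D$ and $\mathbb{P}(\tau^x_\partial=t)=0$), identifies $v(t,x)$ with $u(t,x)$; the cases $t=0$, $x\in D$ and $t>0$, $x\in\Gamma^+$ are immediate from the first item and Definition~\ref{def:class}. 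I expect the genuine difficulty of the whole theorem to be the continuity of $u$ up to the grazing set $\Gamma^0$ at positive times (equivalently, a.s.\ continuity of $(t,x)\mapsto X^x_{t\wedge\tau^x_\partial}$ there): at a point of $\Gamma^0$ the exit is driven not by the drift but by the diffusive fluctuations of the normal velocity, so one needs a quantitative strengthening of Proposition~\ref{prop:tau} — for every $\eta,\epsilon>0$, a neighborhood of $x_0\in\Gamma^0$ in $\overline D$ on which $\mathbb{P}(\tau^x_\partial>\epsilon)+\mathbb{P}(|X^x_{\tau^x_\partial}-x_0|>\eta)<\eta$ — which requires flattening $\partial\mathcal{O}$ in local coordinates (Assumption~\ref{hyp O}), comparing it with a paraboloid, and controlling the first crossing time of the normal position component, which near $t=0$ behaves like an integrated Brownian motion and hence changes sign on arbitrarily small time scales; the uniformity of these estimates over a neighborhood of $x_0$ is what makes the $\mathcal{C}^2$ bound on the curvature essential.
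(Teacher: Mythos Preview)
Your overall architecture is correct and items~(i) and~(iv) match the paper's proofs exactly (It\^o's formula plus localization on an exhaustion $V_k\uparrow D$, then $s\to t$ using $\mathbb{P}(\tau^x_\partial=t)=0$). The two substantive differences are in items~(iii) and~(ii).

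\medskip
\textbf{Interior regularity.} The paper does \emph{not} go directly from the Markov/mean-value identity to a distributional solution. Instead it uses a \emph{parabolic approximation}: add $\epsilon\Delta_q$ to $\mathcal{L}$, so that the perturbed operator is uniformly elliptic and Friedman's classical theory gives a solution $u_\epsilon$ with the probabilistic representation~\eqref{v_epsilon}; then prove $u_\epsilon\to u$ pointwise (via a Gronwall estimate comparing the perturbed and unperturbed trajectories), pass to the limit in the weak formulation, and invoke hypoellipticity. Your route---deduce $\partial_t u=\mathcal{L}u$ in $\mathcal{D}'$ from the identity $u(t,x)=\mathbb{E}[u(t-s,X^x_s)\mathbb{1}_{\tau>s}]+o(1)$---also works, but the ``standard regularization argument'' is not as standard as you suggest: one has to pair with $\Phi\in\mathcal C_c^\infty$, use the free transition density $\mathrm{p}_h$ and the backward equation of Proposition~\ref{prop:kolmo-langevin} to control $\int \mathrm{p}_h(x,y)\Phi(t,x)\,\mathrm{d}x-\Phi(t,y)$, and then handle the unbounded $p$-direction (e.g.\ via the Gaussian bounds of Theorem~\ref{borne densite thm} or a tightness argument). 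The paper's elliptic-regularization route trades this analysis for an appeal to existing parabolic theory; your route is more self-contained but needs those details spelled out.

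\medskip
\textbf{Continuity at $\Gamma^0$.} You correctly locate the delicate point, but the resolution is simpler than the quantitative probability estimate you propose. The paper's key device is the \emph{pathwise} Gronwall coupling (Lemma~\ref{couplage Lemma}): all processes $X^{x_n}$ are driven by the same Brownian motion, and $\sup_{s\le t}|X^{x_n}_s-X^{x}_s|\le|x_n-x|\,\mathrm{e}^{C_{\mathrm{Lip}}t}$ almost surely. Combined with Proposition~\ref{prop:tau}\eqref{it:tau:reg}---which already asserts that from $x_0\in\Gamma^0$ the trajectory almost surely visits $\overline{\mathcal{O}}^c$ on every interval $(0,\epsilon]$---this gives, for every $\omega$, an explicit time $s\le\epsilon$ with $\mathrm{d}_{\overline{\mathcal{O}}}(q^{x_0}_s)>0$, hence $q^{x_n}_s\notin\overline{\mathcal{O}}$ for $n$ large. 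This yields $\tau^{x_n}_\partial\to 0$ and $X^{x_n}_{\tau^{x_n}_\partial}\to x_0$ \emph{almost surely}, with no new uniform-in-$x$ estimate required. All the hard analysis (local coordinates, comparison with integrated Brownian motion, law of the iterated logarithm) is already packaged inside the proof of Proposition~\ref{prop:tau}\eqref{it:tau:reg}; the continuity proof for $u$ then reduces to Lemma~\ref{cv indicatrices lemma} and dominated convergence, exactly as in your sketch for $\Gamma^+$.
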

Proposition~\ref{prop:tau} and Theorem~\ref{Solution PDE} are proven
in Section~\ref{section 2}. The proof essentially follows the same three-step structure as for the probabilistic representation formula~\eqref{eq:baru} of solutions to the Initial-Boundary Value Problem~\eqref{FIBVP}: we construct weak (actually, distributional) solutions to~\eqref{Langevin PDE} by parabolic approximation, use the hypoellipticity of $\partial_t - \mathcal{L}$ to obtain the smoothness of such solutions and apply the It\^o formula to identify the solution with $u$ defined by~\eqref{u}. We mention here that, regarding the first step, a variational approach to~\eqref{kFP pb}, closer to the spirit of the proof outlined in Section~\ref{subsection 1} than our parabolic approximation argument, was recently developed by Armstrong and Mourrat~\cite{Mourrat}.

\begin{remark}[Extension to bounded and measurable functions]\label{rk:extLinf}
Let $f : D \cup \Gamma^- \to \mathbb{R}$ be measurable and bounded, and take $g \equiv 0$. Using an elementary regularization argument, which can be rigorously justified with the help of Theorem~\ref{thm density intro} and Corollary~\ref{Rq densite estimation} stated below, it is easy to check that the function $u$ defined by~\eqref{u} remains a distributional solution of~\eqref{Langevin PDE} on $\mathbb{R}_+^* \times D$ and therefore, by hypoellipticity, still satisfies Assertion~\eqref{it:ibvp:reg}.
\end{remark}

\begin{remark}
  It is easy to check that,  using the same proofs, these results also hold for a time-dependent boundary
condition $g(t,x)\in\mathcal{C}^b(\mathbb{R}_+ \times
(\Gamma^+\cup\Gamma^0))$, replacing~\eqref{u} by \linebreak  $
u(t,x)=\mathbb{E}[ \mathbb{1}_{\tau^x_{\partial}>t} f(X^x_t)
  +\mathbb{1}_{\tau^x_{\partial}\leq t} g(t-\tau^x_\partial, X^x_{\tau^x_{\partial}}) ]$. We stick to a
time-homogeneous boundary conditions for the ease of notation.
\end{remark}

\begin{remark}
Note that the compatibility condition~\eqref{compatibility cond} is necessary to ensure the continuity of the solution at $\{0\}\times \overline{D}$. Furthermore, it is known that even for smooth and compatible boundary and initial conditions, one cannot expect the solution to be smooth at the boundary $\partial D$: it has indeed been shown in the one-dimensional case ($d=1$) that the solution is only expected to be Hölder-continuous on the singular set $\Gamma^0=\{ (q,p)\in\partial\mathcal{O}\times\mathbb{R}^d : p\cdot n(q)=0 \} ,$ and not differentiable, see~\cite{Vel}.
\end{remark}

\subsubsection{Maximum principle and Harnack inequality}

As an immediate consequence of Theorem~\ref{Solution PDE}, under Assumptions~\ref{hyp O} and~\ref{hyp F1}, if $f\geq0$ on $D$ and $g\geq0$ on $\Gamma^+$ then it follows that any solution $v$
of~\eqref{kFP pb} which satisfies the conditions
of item~\eqref{it:ibvp:uniq} in Theorem~\ref{Solution PDE} is such that $v\geq0$ on
$\mathbb{R}_+\times\overline{D}$. We now state stronger forms of this
maximum principle, as well as a Harnack inequality, under the following supplementary assumption on the domain $\mathcal{O}$. 
\begin{assumptionO}\label{hyp:conn}
  The set $\mathcal{O}$ is connected.
\end{assumptionO}

\begin{theorem}[Maximum principle]\label{maximum principle}
Let Assumptions~\ref{hyp F1}, \ref{hyp O} and~\ref{hyp:conn} hold. Let $u\in\mathcal{C}^{1,2}(\mathbb{R}_+^*\times D)$ such that $\partial_tu-\mathcal{L}u\leq0$ on $\mathbb{R}_+^*\times D$.
\begin{enumerate}[label=(\roman*),ref=\roman*]
    \item\label{weak principle} Assume that $u\in \mathcal{C}^b((\mathbb{R}_+\times( D\cup\Gamma^+))\setminus(\{0\}\times\Gamma^+))$, then 
\begin{equation}\label{sup de u}
    \sup_{\mathbb{R}_+^*\times D}u(t,x)=\sup_{(\{t=0\}\times D)\cup(\mathbb{R}_+^*\times \Gamma^+)}u(t,x).
    \end{equation} 
    \item\label{strong principle} Assume that $u$ reaches a maximum at a point $(t_0,x_0)\in\mathbb{R}_+^*\times D$, then
      $$\forall t\leq t_0,\quad \forall x\in D,\qquad u(t,x)=u(t_0,x_0).$$
\end{enumerate}
\end{theorem}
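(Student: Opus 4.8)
\textbf{Part \eqref{weak principle} (weak maximum principle).}
The plan is to run a probabilistic argument based on the representation formula of Theorem~\ref{Solution PDE}, combined with a subsolution comparison. First I would reduce to the case of a bounded time interval: fix $T>0$, and it suffices to bound $\sup_{(0,T]\times D} u$ by the right-hand side of~\eqref{sup de u}, then let $T\to\infty$. On $[0,T]\times \overline D$, set $M$ to be the (finite, by the assumed boundedness and continuity of $u$) value $\sup_{(\{0\}\times D)\cup((0,T]\times\Gamma^+)} u$. Fix $x\in D$ and $t\in(0,T]$, and let $(X^x_s)_{s\ge 0}$ be the Langevin process of~\eqref{Langevin} with exit time $\tau^x_\partial$; by Proposition~\ref{prop:tau}\eqref{it:tau:sing}, $\tau^x_\partial>0$ a.s.\ and on $\{\tau^x_\partial<\infty\}$ one has $X^x_{\tau^x_\partial}\in\Gamma^+$ a.s. Applying the Itô formula to $s\mapsto u(t-s, X^x_s)$ on the stochastic interval $[0,t\wedge\tau^x_\partial]$ — after the usual localization by a sequence of stopping times exiting compact subsets of $(0,t]\times D$, using $u\in\mathcal C^{1,2}(\mathbb R_+^*\times D)$ and $\partial_s u + (-\partial_t + \mathcal L)u\le 0$ along the reversed time — yields that $s\mapsto u(t-s,X^x_s)$ is a supermartingale up to $t\wedge\tau^x_\partial$, so
\begin{equation*}
  u(t,x)\le \mathbb E\!\left[\mathbb 1_{\tau^x_\partial>t}\,u(0,X^x_t)+\mathbb 1_{\tau^x_\partial\le t}\,u(t-\tau^x_\partial,X^x_{\tau^x_\partial})\right]\le M.
\end{equation*}
Taking the supremum over $(t,x)$ gives $\le$; the reverse inequality in~\eqref{sup de u} is trivial since the right-hand side is a supremum over a subset of the closure. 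The delicate point in this part is the localization in the Itô formula near $\{t=0\}$ and near $\Gamma^+\cup\Gamma^0$, where $u$ need not be $\mathcal C^{1,2}$; this is handled by first stopping strictly inside $(0,t]\times D$ and then passing to the limit using the continuity of $u$ on $(\mathbb R_+\times(D\cup\Gamma^+))\setminus(\{0\}\times\Gamma^+)$ together with the non-attainability of $\Gamma^0$ (Proposition~\ref{non attainability}), which guarantees that the limiting boundary point lies in $\Gamma^+$ where $u$ is controlled by $M$, and not on the bad set $\Gamma^0$.

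\textbf{Part \eqref{strong principle} (strong maximum principle).}
Here the natural route is the classical Nirenberg–Bony strong maximum principle for degenerate-elliptic/parabolic operators satisfying Hörmander's condition. Write $Z:=\{(t,x)\in\mathbb R_+^*\times D: u(t,x)=u(t_0,x_0)=:m\}$, which is relatively closed in $\mathbb R_+^*\times D$ and nonempty; the goal is to show $Z\supseteq (0,t_0]\times D$. The operator $\partial_t-\mathcal L$ on $\mathbb R_+^*\times D$ is of the form $\partial_t - \sum_{i}Y_i^2 - Y_0$ with the fields $Y_0 = p\cdot\nabla_q + F(q)\cdot\nabla_p - \gamma p\cdot\nabla_p$ (drift, including $\partial_t$) and $Y_i=\frac{\sigma}{\sqrt 2}\,\partial_{p_i}$; the iterated Lie brackets $[\partial_{p_i},Y_0]$ recover $\partial_{q_i}$ (up to smooth terms), so Hörmander's bracket condition holds at every point, and moreover the drift direction $Y_0$ points forward in time. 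By Bony's propagation theorem (see Bony, \emph{Ann. Inst. Fourier} 1969, or the parabolic version), the maximum set $Z$ is invariant under the flow of the diffusion vector fields $\pm Y_i$ and, because $\partial_t-\mathcal L$ is ``$\ge$''-type (we have $\partial_t u - \mathcal L u\le 0$, i.e.\ $u$ is a subsolution, and it attains an interior maximum), also forward-invariant along the drift flow of $Y_0$ — which here propagates \emph{backward} in the time variable, matching the conclusion $t\le t_0$. Combining: starting from $(t_0,x_0)$, one first propagates in the velocity directions $\partial_{p_i}$ (this moves $p$ freely), then along $Y_0$ to move $q$ and decrease $t$, and iterating these two families of moves one reaches, by the Hörmander-type controllability, every point of $(0,t_0]\times D$ (here Assumption~\ref{hyp:conn}, the connectedness of $\mathcal O$, is exactly what is needed to connect arbitrary positions $q,q'\in\mathcal O$ by an admissible control path staying in $\mathcal O$). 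Hence $u\equiv m$ on $(0,t_0]\times D$.

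\textbf{Main obstacle.}
I expect the principal difficulty to be in Part~\eqref{weak principle}, namely making the Itô / supermartingale argument fully rigorous despite the facts that (a) $u$ is only $\mathcal C^{1,2}$ in the open set $\mathbb R_+^*\times D$ and merely continuous (not differentiable) up to $\Gamma^+$ and down to $\{t=0\}$, and (b) the exit position could \emph{a priori} land on the singular set $\Gamma^0$ where even continuity of $u$ is not assumed. Both issues are resolved using results already available in the paper: Proposition~\ref{non attainability} (the process never hits $\Gamma^0$, so the limiting exit point is in $\Gamma^+$ a.s.) and Proposition~\ref{prop:tau}\eqref{it:tau:sing} ($X^x_{\tau^x_\partial}\in\Gamma^+$ on $\{\tau^x_\partial<\infty\}$), together with a careful exhaustion of $(0,t]\times D$ by compacts and dominated convergence using the boundedness of $u$ on $[0,T]\times D$. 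For Part~\eqref{strong principle}, the only subtlety is checking the precise geometric hypotheses of Bony's theorem at the boundary-in-time and verifying the controllability inside the cylinder $D$, both of which reduce to the explicit structure of $\mathcal L$ and Assumption~\ref{hyp:conn}.
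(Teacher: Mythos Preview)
Your Part~\eqref{weak principle} matches the paper's argument essentially verbatim: Itô's formula on $s\mapsto u(t-s,X^x_s)$, localization by the exit times $\tau^x_{V_k^c}$ from compact subsets of $D$, passage to the limit using the continuity of $u$ up to $\Gamma^+$ and the non-attainability of $\Gamma^0$, yielding the supermartingale inequality $u(t,x)\le\mathbb E[\mathbb 1_{\tau^x_\partial>t}u(0,X^x_t)+\mathbb 1_{\tau^x_\partial\le t}u(t-\tau^x_\partial,X^x_{\tau^x_\partial})]$.

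Your Part~\eqref{strong principle} is a genuinely different route. The paper does \emph{not} invoke Bony's propagation theorem; instead it continues the probabilistic argument: applying the Itô identity at $(t_0,x_0)$ and subtracting $u(t_0,x_0)$, all three resulting terms are nonpositive and sum to zero, which forces $\mathbb E[\mathbb 1_{\tau^{x_0}_\partial>t_0}(u(t_0-s,X^{x_0}_s)-u(t_0,x_0))]=0$ for all $s\in[0,t_0)$. The conclusion then follows by contradiction from an \emph{irreducibility} lemma (Lemma~\ref{lemma irreducibilty}): for any open $A\subset D$ and $s\in(0,t_0)$, $\mathbb P(X^{x_0}_s\in A,\tau^{x_0}_\partial>t_0)>0$. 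This lemma in turn rests on the positivity of the absorbed transition density $\mathrm p^D_t$ (Theorem~\ref{thm density intro}), so the paper's strong maximum principle is logically downstream of the full density analysis in Sections~\ref{Section 4}--\ref{Section Adjoint process and compactness}.

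Your Bony-based approach, if carried out carefully, is more self-contained and avoids that dependency; the price is that the ``controllability'' step---reaching every $(t,x)\in(0,t_0]\times D$ from $(t_0,x_0)$ by concatenating flows of $\pm\partial_{p_i}$ and of the drift while staying inside $D$---must be made precise. That amounts to constructing admissible $\mathcal C^1$ paths in $\mathcal O$ joining arbitrary $(q,p),(q',p')$ in a prescribed time, which is exactly what the paper does in Lemma~\ref{cste_Harnack} (for the Harnack chain). So your PDE route and the paper's probabilistic route ultimately lean on the same geometric ingredient (connectedness of $\mathcal O$ plus explicit path construction), but package it differently: you feed it into Bony's propagation, the paper feeds it---via the Harnack inequality---into positivity of $\mathrm p^D_t$ and then into irreducibility.
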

Theorem~\ref{maximum principle} is proven
in Section~\ref{ss:maximum}.
Let us conclude this section by stating a Harnack inequality. In the literature, a variant of the Harnack inequality was obtained in the stationary case for hypoelliptic operators, see~\cite{Bon69}. In~\cite{Har}, the authors prove a Harnack inequality
satisfied by distributional solutions to $\partial_tu=\mathcal{L}u$ in
sufficiently small domains. Here, we extend their result on a general compact set of $D$ in the
following theorem.  The proof uses in particular the concept of Harnack chains from~\cite{Polidoro}.  

\begin{theorem}[Harnack inequality]\label{Harnack} Let Assumptions~\ref{hyp F1}, \ref{hyp O} and~\ref{hyp:conn} hold. For any compact set $K\subset
  D$, $\epsilon>0$ and $T>0$, there exists a constant
  $C_{K,\epsilon,T}>0$ such that for any non-negative distributional
  solution $u$ of $\partial_tu=\mathcal{L}u$  on $\mathbb{R}_+^*\times D$ (in the sense of Definition~\ref{def:dist}), for all $t\geq \epsilon$,
\begin{equation}\label{Harnack inequality}
    \sup_{x\in K}u(t,x)\leq C_{K,\epsilon,T}\inf_{x\in K}u(t+T,x).
\end{equation}
\end{theorem}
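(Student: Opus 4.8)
The plan is to deduce the global Harnack inequality on an arbitrary compact set $K\subset D$ from the local Harnack inequality of~\cite{Har}, which holds on sufficiently small cylinders, by a chaining argument. The first step is to record the local statement in a usable form: there exist a small radius $r_0>0$ and geometric data such that for every point $z_0=(t_0,x_0)\in\mathbb{R}_+^*\times D$ and every nonnegative distributional solution $u$ of $\partial_t u=\mathcal L u$ on a cylinder $Q(z_0)$ of ``radius'' $r_0$ around $z_0$ (with respect to the natural parabolic/Kolmogorov scaling adapted to $\mathcal L$), one has $u(z_0)\le C_0\, u(z_1)$ for all $z_1$ in a smaller cylinder $Q^-(z_0)$ lying strictly in the past of $z_0$; here one must be careful that the time-shift in this local inequality is strictly positive, reflecting the finite speed of propagation of positivity for the degenerate operator. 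Since distributional solutions are $\mathcal C^\infty(\mathbb{R}_+^*\times D)$ by hypoellipticity, there is no regularity issue in evaluating $u$ pointwise.

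The second step is the construction of Harnack chains. Using Assumption~\ref{hyp:conn} ($\mathcal O$ connected) together with the fact that the Langevin operator satisfies the Hörmander condition with the drift term $p\cdot\nabla_q$ playing the role of a ``first-order commutator direction'', one shows that any two points of $D$ can be joined by an admissible path (a concatenation of integral curves of the diffusion vector fields $\partial_{p_i}$ and of the drift field), along which positivity propagates; this is precisely the mechanism formalized via the notion of Harnack chains in~\cite{Polidoro}. Concretely, given the compact set $K$, $\epsilon>0$ and $T>0$, I would fix $x\in K$ and $y\in K$ and build a finite sequence of points $x_0=y,x_1,\dots,x_N=x$ in $D$ and times $\epsilon=s_0<s_1<\dots<s_N=\epsilon+T$ (or rather $t+T$ down to $t$, run backwards in time so that the local inequality applies at each link) such that each consecutive pair $(s_{k-1},x_{k-1}),(s_k,x_k)$ lies in a local Harnack cylinder; iterating the local inequality $N$ times yields $u(t,x)\le C_0^N\, u(t+T,y)$. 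A compactness/uniform-geometry argument — using that $K$ is compact and stays at positive distance from $\partial\mathcal O$, that $\mathcal O$ is $\mathcal C^2$ and bounded (Assumption~\ref{hyp O}), and that the scaling data are translation-invariant in $(t,q,p)$ up to the drift — shows that the number of links $N$ and the constants can be chosen uniformly in $x,y\in K$ and uniformly for all $t\ge\epsilon$, producing the single constant $C_{K,\epsilon,T}=C_0^N$. Taking the supremum over $x\in K$ and the infimum over $y\in K$ gives~\eqref{Harnack inequality}.

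The third, more delicate point, is ensuring that the chaining can be carried out with a uniformly bounded number of steps while respecting two competing constraints: the time variable must decrease (from $t+T$ to $t$) at each link by a controlled positive amount dictated by the local inequality, and the total time budget is exactly $T$. This forces a quantitative version of the connectivity of $D$ under the control system associated with $\{\partial_{p_1},\dots,\partial_{p_d},\,p\cdot\nabla_q\}$: one needs that $K$ is reachable-from-$K$ in time at most $T$ with a number of elementary maneuvers bounded in terms of $K$ and $T$ only. This is where most of the work lies; I expect to handle it by covering $K$ by finitely many coordinate boxes, using the explicit controllability of the linearized (Kolmogorov) dynamics $\dot q=p$, $\dot p=u_{\mathrm{ctrl}}$ to connect nearby points in arbitrarily short time, and invoking the local Harnack inequality of~\cite{Har} on each such short-time box. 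The main obstacle, therefore, is not the analysis of the PDE — which is imported wholesale from~\cite{Har} — but the geometric bookkeeping needed to obtain a chain whose length is uniform over $K$ and over $t\ge\epsilon$, so that the resulting constant genuinely depends only on $K$, $\epsilon$ and $T$; the strict positivity $\epsilon$ of the starting time is needed precisely because near $t=0$ a solution need not be bounded away from $0$ on $K$, and the time $T$ must be strictly positive because each application of the local inequality costs a positive amount of time.
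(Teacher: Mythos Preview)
Your overall strategy is exactly the paper's: import the local Harnack inequality from~\cite{Har} and propagate it along a chain whose length is bounded uniformly over $K$. There is, however, a slip in your first paragraph: you write $u(z_0)\le C_0\,u(z_1)$ with $z_1$ in the \emph{past} of $z_0$, but the parabolic Harnack inequality goes the other way --- it is the supremum over the past cylinder that is controlled by the infimum over the later one, i.e.\ $u(z_1)\le C_0\,u(z_0)$. You clearly have the right picture in mind, since your final chain produces $u(t,x)\le C_0^N u(t+T,y)$, but the local statement as written is reversed and would derail the iteration if taken literally.

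On the chaining itself, the paper does essentially what you describe but makes the geometry explicit in a way your plan leaves implicit. The local cylinders from~\cite{Har} come via the rescaling $h_{r,z_0}(t,q,p)=(r^2t+t_0,\,q_0-r^2tp_0+r^3q,\,p_0-rp)$, so they are \emph{tilted along the free transport} $\dot q=p$; the relevant position displacement between consecutive chain points is therefore the second-order quantity $\phi(s_{j+1})-\phi(s_j)-\alpha\dot\phi(s_j)=O(\alpha^2)$, which is what fits inside the $r^3$-window. This is why the paper's chain is built by sampling a single $\mathcal C^1$/piecewise $\mathcal C^2$ path $\phi:[0,T]\to\mathcal O$ with uniformly bounded velocity \emph{and acceleration} (Lemma~\ref{cste_Harnack}), rather than by ``connecting nearby points in arbitrarily short time'' as you suggest --- the latter phrasing risks sending the control/acceleration to infinity and breaking the fit with the fixed Harnack scaling. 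Once you replace your covering-by-boxes picture with a uniformly-bounded-acceleration path and use the tilted cylinders, the verification that $\Phi(s_{j+1})\in Q^-_{K,T,r}(\Phi(s_j))$ is a direct Taylor estimate, and choosing the step $\alpha=r^2(\Delta_{K,T}+R_{K,T}^2/2)$ so that $T/\alpha\in\mathbb N$ settles the time-budget issue you flag in your third paragraph.
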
 
Theorem~\ref{Harnack} is proven in Section~\ref{ss:harnack}.

\begin{remark}\label{rmk Harnack global}
For a given compact set $K\subset
  \mathbb{R}^{2d}$, one can find an open set $\mathcal{O}$ satisfying Assumptions~\ref{hyp O} and~\ref{hyp:conn} such that $K\subset\mathcal{O}\times\mathbb{R}^d$. Therefore, Theorem \ref{Harnack} implies the following statement: under Assumption~\ref{hyp F1}, for any compact set $K\subset
  \mathbb{R}^{2d}$, $\epsilon>0$ and $T>0$, there exists a constant
  $C_{K,\epsilon,T}>0$ such that, for all $t\geq \epsilon$, the Harnack inequality~\eqref{Harnack inequality} holds for any non-negative distributional
  solution $u$ of $\partial_tu=\mathcal{L}u$ on the whole space $\mathbb{R}_+^*\times \mathbb{R}^{2d}$. 
\end{remark}

\subsection{Kolmogorov equations and Gaussian bounds for the Langevin process}\label{sec:gauss_lang}

In this section, we consider the Langevin process~\eqref{Langevin}
without absorbing boundary condition. We recall that under Assumption \ref{hyp F}, for all
$x\in\mathbb{R}^{2d}$ the equation \eqref{Langevin} admits a unique
strong global solution $(X^x_t)_{t\geq0}$ on $\mathbb{R}^{2d}$. Let us
introduce the associated transition kernel $\mathrm{P}_t$:
$$\forall t>0, \quad \forall x\in \mathbb{R}^{2d}, \quad \forall A\in\mathcal{B}(\mathbb{R}^{2d}), \qquad \mathrm{P}_t(x,A):=\mathbb{P}(X^x_t\in A).$$

The following standard properties of $\mathrm{P}_t(x,\cdot)$ are for
example proven in~\cite[Corollary 7.2]{RB} (see also Equations~(153)
and~(155) there).
\begin{proposition}[Kolmogorov equations for the Langevin process]\label{prop:kolmo-langevin}
  Under Assumption \ref{hyp F}, there exists a function 
\begin{equation}\label{global density}
    (t,x,y)\mapsto \mathrm{p}_t(x,y) \in\mathcal{C}^\infty(\mathbb{R}_+^{*}\times \mathbb{R}^{2d}\times \mathbb{R}^{2d})
    \end{equation}
such that for all $t>0$, $x\in\mathbb{R}^{2d}$
and $A\in\mathcal{B}(\mathbb{R}^{2d})$, $$\mathrm{P}_t(x,A)=\int_A
\mathrm{p}_t(x,y) \mathrm{d}y .$$
Moreover, this transition density satisfies the backward and forward Kolmogorov equations: 
\begin{enumerate}[label=(\roman*),ref=\roman*]
    \item $(t,x)\mapsto \mathrm{p}_t(x,y)$ satisfies $\partial_t \mathrm{p}=\mathcal{L}_x\mathrm{p}$ on $\mathbb{R}_+^{*}\times\mathbb{R}^{2d}$,
    \item $(t,y)\mapsto \mathrm{p}_t(x,y)$ satisfies $\partial_t \mathrm{p}=\mathcal{L}_y^*\mathrm{p}$ on $\mathbb{R}_+^{*}\times\mathbb{R}^{2d}$.
\end{enumerate}
  \end{proposition}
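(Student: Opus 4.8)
The plan is to construct the transition density via the parametrix method, following Menozzi~\cite{Menozzi} (and the classical reference~\cite{RB}), and then deduce the Kolmogorov equations from the smoothness of the density together with standard probabilistic identities. The starting point is that the coefficients of the SDE~\eqref{Langevin} are smooth and, under Assumption~\ref{hyp F}, globally Lipschitz with bounded first-order derivatives; moreover the diffusion matrix is degenerate (no noise in the $q$-direction) but the drift couples $q$ and $p$, so that the full Hörmander bracket condition holds and $\partial_t-\mathcal{L}$ is hypoelliptic on $\mathbb{R}_+^*\times\mathbb{R}^{2d}$. Concretely, I would first recall from~\cite{RB} (or re-derive via the parametrix expansion with Gaussian proxy being the linearized, i.e.\ Gaussian, Langevin dynamics) that for every $t>0$ and $x\in\mathbb{R}^{2d}$ the law $\mathrm{P}_t(x,\cdot)$ is absolutely continuous with a density $\mathrm{p}_t(x,y)$, and that this density — obtained as a uniformly convergent series — is jointly smooth in $(t,x,y)\in\mathbb{R}_+^*\times\mathbb{R}^{2d}\times\mathbb{R}^{2d}$. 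This is exactly the content of~\cite[Corollary~7.2]{RB}, so the bulk of this step is a citation; if one wants to be self-contained, the parametrix series has to be shown to converge in an appropriate weighted space and to be differentiable term by term, which is where the degeneracy forces the use of the anisotropic scaling $t\mapsto t^{3/2}$ in the $q$-variable.

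Next, for the backward equation: fix $y\in\mathbb{R}^{2d}$ and a test function, or argue directly. The semigroup property $\mathrm{p}_{t+s}(x,y)=\int \mathrm{p}_t(x,z)\mathrm{p}_s(z,y)\,\mathrm{d}z$ (Chapman–Kolmogorov, immediate from the Markov property) together with the smoothness just established lets me write, for the function $v(t,x):=\mathrm{p}_t(x,y)$, the identity $v(t+s,x)=\mathbb{E}[v(t,X^x_s)]$ for small $s>0$. Differentiating in $s$ at $s=0$ and using Itô's formula (legitimate because $v(t,\cdot)\in\mathcal{C}^2$ with controlled growth, thanks to the Gaussian bound of Theorem~\ref{borne densite thm} and its derivatives) gives $\partial_t v=\mathcal{L}_x v$ on $\mathbb{R}_+^*\times\mathbb{R}^{2d}$, i.e.\ $\partial_t\mathrm{p}=\mathcal{L}_x\mathrm{p}$. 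The forward equation is obtained dually: for $\Phi\in\mathcal{C}_c^\infty(\mathbb{R}^{2d})$, $\mathbb{E}[\Phi(X^x_t)]=\int\mathrm{p}_t(x,y)\Phi(y)\,\mathrm{d}y$, and Itô's formula applied to $\Phi(X^x_t)$ gives $\frac{\mathrm{d}}{\mathrm{d}t}\mathbb{E}[\Phi(X^x_t)]=\mathbb{E}[\mathcal{L}\Phi(X^x_t)]=\int\mathrm{p}_t(x,y)\mathcal{L}_y\Phi(y)\,\mathrm{d}y$; integrating by parts (the boundary terms vanish by the Gaussian decay of $\mathrm{p}_t$ and its $y$-derivatives) moves $\mathcal{L}_y$ onto $\mathrm{p}_t$ as $\mathcal{L}^*_y$, and since this holds for all $\Phi$ and $t\mapsto\mathrm{p}_t(x,\cdot)$ is smooth, one concludes $\partial_t\mathrm{p}=\mathcal{L}^*_y\mathrm{p}$ pointwise on $\mathbb{R}_+^*\times\mathbb{R}^{2d}$.

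The main obstacle is the degeneracy of the diffusion: there is no $p$-type noise acting on the $q$-component, so one cannot invoke the standard uniformly-parabolic parametrix theory, and one must instead control the parametrix series with the anisotropic ("Kolmogorov") scaling and verify that term-by-term differentiation in $t$, $x$ and $y$ is justified under that scaling — this is precisely the delicate part carried out in~\cite{Menozzi} and~\cite{RB}. Everything downstream (Chapman–Kolmogorov, Itô's formula, the integration by parts) is then routine, provided one has in hand the Gaussian-type upper bounds on $\mathrm{p}_t$ and its derivatives that control the growth at infinity; these bounds are also a byproduct of the parametrix construction and are recorded separately as Theorem~\ref{borne densite thm}. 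I would therefore present this proposition essentially as a consequence of~\cite[Corollary~7.2]{RB}, adding only the short Itô-formula arguments above to pin down the two Kolmogorov equations in the stated pointwise sense.
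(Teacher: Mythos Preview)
Your approach is essentially the same as the paper's: the paper does not prove this proposition at all but simply cites \cite[Corollary~7.2]{RB} (and Equations~(153) and~(155) there) for both the existence of the smooth density and the Kolmogorov equations, which is exactly what you propose to do. One caution: you invoke Theorem~\ref{borne densite thm} to justify the growth control in your It\^o-formula arguments, but in the paper's logical order that theorem is proved \emph{after} Proposition~\ref{prop:kolmo-langevin} and its proof explicitly uses the proposition, so citing it here would be circular; since you correctly observe that the Gaussian bounds are already a byproduct of the parametrix construction in \cite{RB,Menozzi}, simply drop the reference to Theorem~\ref{borne densite thm} and appeal directly to those sources.
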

The subscripts in $\mathcal{L}_x$ and $\mathcal{L}^*_y$ indicate on
  which variables the differential operators apply. We will also need the following immediate corollary of Proposition~\ref{prop:kolmo-langevin}.
\begin{corollary}[Atoms of $\tau^x_{\partial}$]\label{rq densité}
Under the assumptions of  Proposition~\ref{prop:kolmo-langevin}, for
all $x\in \overline{D}$, for all~$t>0$,
$$\mathbb{P}(\tau^x_{\partial}= t)\leq
\mathbb{P}(q^x_t\in\partial\mathcal{O})=0.$$
\end{corollary}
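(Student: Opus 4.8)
The plan is to prove Corollary~\ref{rq densité} as a direct consequence of the existence of the smooth transition density $\mathrm{p}_t(x,y)$ from Proposition~\ref{prop:kolmo-langevin}. The first inequality, $\mathbb{P}(\tau^x_\partial = t) \le \mathbb{P}(q^x_t \in \partial\mathcal{O})$, is purely a set inclusion: on the event $\{\tau^x_\partial = t\}$, the process is well-defined up to and including time $t$ (since $t \le \tau^x_\partial$ and, under Assumption~\ref{hyp F}, the solution is in fact global), the sample paths are almost surely continuous, and $q^x_s \in \mathcal{O}$ for all $s < t = \tau^x_\partial$; continuity then forces $q^x_t \in \overline{\mathcal{O}}$, while the definition of $\tau^x_\partial$ as the first exit time precludes $q^x_t \in \mathcal{O}$ (otherwise the process would still be inside $D$ at time $t$, contradicting the minimality of $\tau^x_\partial = t$ — more carefully, one uses that $q^x$ stays in the open set $\mathcal{O}$ on a right-neighborhood of any interior time). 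Hence $q^x_t \in \partial\mathcal{O}$, which gives the inclusion $\{\tau^x_\partial = t\} \subset \{q^x_t \in \partial\mathcal{O}\}$ and thus the inequality.

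For the second equality, $\mathbb{P}(q^x_t \in \partial\mathcal{O}) = 0$, the point is that $X^x_t$ has a density with respect to Lebesgue measure on $\mathbb{R}^{2d}$, namely $y \mapsto \mathrm{p}_t(x,y)$, so that
\begin{equation*}
  \mathbb{P}(q^x_t \in \partial\mathcal{O}) = \mathbb{P}\left(X^x_t \in \partial\mathcal{O} \times \mathbb{R}^d\right) = \int_{\partial\mathcal{O}\times\mathbb{R}^d} \mathrm{p}_t(x,y)\,\mathrm{d}y.
\end{equation*}
It then suffices to observe that $\partial\mathcal{O} \times \mathbb{R}^d$ has zero Lebesgue measure in $\mathbb{R}^{2d}$: indeed, since $\mathcal{O}$ is a bounded $\mathcal{C}^2$ domain by Assumption~\ref{hyp O}, its boundary $\partial\mathcal{O}$ is a $\mathcal{C}^2$ hypersurface of $\mathbb{R}^d$, hence $|\partial\mathcal{O}| = 0$ in $\mathbb{R}^d$, and therefore $|\partial\mathcal{O}\times\mathbb{R}^d| = |\partial\mathcal{O}| \cdot |\mathbb{R}^d| = 0$ by Fubini (or simply because it is contained in a countable union of graphs of $\mathcal{C}^2$ functions, each of measure zero). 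The integral above thus vanishes, completing the proof. Note that, as pointed out in the excerpt, this uses that $x \in \overline{D}$ ensures the process is well-defined at least up to $\tau^x_\partial \ge t$ on the relevant event, but in fact under Assumption~\ref{hyp F} there is no issue of explosion.

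There is essentially no main obstacle here: the statement is a soft corollary, and the only mild care needed is in the first inequality, where one must argue cleanly from path-continuity and the definition of the first exit time that $\{\tau^x_\partial = t\} \subset \{q^x_t \in \partial\mathcal{O}\}$ — this is routine for a continuous process exiting an open set. The substantive input, the existence and Lebesgue-absolute-continuity of the law of $X^x_t$, has already been established in Proposition~\ref{prop:kolmo-langevin}, so the proof is just the combination of that fact with the null-measure of the cylinder over $\partial\mathcal{O}$.
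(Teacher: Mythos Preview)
Your proof is correct and is exactly the argument the paper has in mind: the corollary is stated without proof in the paper, as an ``immediate'' consequence of Proposition~\ref{prop:kolmo-langevin}, and your two steps (the set inclusion $\{\tau^x_\partial=t\}\subset\{q^x_t\in\partial\mathcal{O}\}$ from path continuity and openness of $\mathcal{O}$, then the vanishing of $\mathbb{P}(q^x_t\in\partial\mathcal{O})$ from absolute continuity of the law of $X^x_t$ and $|\partial\mathcal{O}|=0$) are precisely the intended ones.
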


Theorem \ref{borne densite thm} below states that the
transition density $\mathrm{p}_t(x,y)$ admits an explicit Gaussian
upper bound. This has already been proven in \cite{Menozzi} in the case $\gamma=0$, using the
parametrix method, if
Assumption \ref{hyp F} holds. In this case, the drift
coefficient $(q,p)\mapsto F(q)$ for the velocity-related SDE in~\eqref{Langevin} is indeed globally Lipschitz continuous and bounded,
and thus satisfies the hypothesis required in \cite{Menozzi}. If
$\gamma\neq0$ the drift coefficient $(q,p)\mapsto F(q)-\gamma p$ is
not bounded in $\mathbb{R}^{2d}$, but we adapt the idea of the
parametrix method  to obtain a Gaussian upper bound in this case, see Section~\ref{subsection Gaussian}.
 
\begin{theorem}[Gaussian upper bound on $\mathrm{p}_t$]\label{borne densite thm}
Under Assumption \ref{hyp F}, the transition density $\mathrm{p}_t(x,y)$ of the Langevin process $(X^x_t)_{t\geq0}$ satisfying \eqref{Langevin} is such that for all $\alpha\in (0,1)$, there exists $c_\alpha>0$, depending only on $\alpha$, such that for all $T>0$ and $t\in(0,T]$, for all $x,y\in\mathbb{R}^{2d}$,
\begin{equation} \label{borne densité}
    \mathrm{p}_t(x,y)\leq\frac{1}{\alpha^d} \sum_{j=0}^\infty \frac{\left( \Vert F\Vert_\infty c_\alpha(1+\sqrt{\gamma_- T}) \sqrt{\pi t}  \right)^j}{\sigma^j \Gamma\left(\frac{j+1}{2}\right)} \widehat{\mathrm{p}}^{(\alpha)}_{t}(x,y),
\end{equation}
where $\gamma_-=\max(-\gamma,0)$ is the negative part of $\gamma\in\mathbb{R}$,
$\Gamma$ is the Gamma function and
$\widehat{\mathrm{p}}^{(\alpha)}_{t}(x,y)$ is the transition density
of the Gaussian process with infinitesimal generator $\mathcal{L}_{0,\gamma,\sigma/\sqrt{\alpha}}$ defined in~\eqref{generateur Langevin}, see also Equations~\eqref{expr densite}-\eqref{densite p^alpha} below for explicit formulas.
\end{theorem}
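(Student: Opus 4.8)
The plan is to establish the Gaussian upper bound via the parametrix method, following the strategy of~\cite{Menozzi} but with a modification to handle the unbounded drift $-\gamma p$ when $\gamma \neq 0$. First I would perform a change of variables (a time-dependent linear transformation diagonalizing the deterministic part of the flow) to reduce the problem to the case of the pure Gaussian generator $\mathcal{L}_{0,\gamma,\sigma}$ perturbed by the bounded force term $F(q)\cdot\nabla_p$. Concretely, the operator $\mathcal{L}_{0,\gamma,\sigma}$ generates an explicit Gaussian (Ornstein-Uhlenbeck-type degenerate) process whose transition density $\widehat{\mathrm{p}}_t$ can be written down via the Kalman-type covariance matrix; this will serve as the \emph{frozen} or \emph{proxy} density around which the parametrix expansion is built. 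The scaling parameter $\alpha \in (0,1)$ enters by comparing $\widehat{\mathrm{p}}^{(\alpha)}_t$ (the density with diffusion $\sigma/\sqrt{\alpha}$) to convolutions of $\widehat{\mathrm{p}}_t$ with itself, using that a ``fattened'' Gaussian dominates a convolution of ``thin'' Gaussians up to the factor $1/\alpha^d$ — this is exactly the mechanism by which Baldi's~\cite{Baldi} and Menozzi's bounds produce the $\alpha^{-d}$ prefactor and the series.

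The core of the argument is the parametrix iteration. Writing $\mathrm{p}_t(x,y) = \widehat{\mathrm{p}}_t(x,y) + \int_0^t \int_{\mathbb{R}^{2d}} \mathrm{p}_s(x,z)\,(\mathcal{L} - \widehat{\mathcal{L}})_z \widehat{\mathrm{p}}_{t-s}(z,y)\,\mathrm{d}z\,\mathrm{d}s$ (the Duhamel/Volterra representation), where $\widehat{\mathcal{L}}$ is the frozen Gaussian generator and the difference $(\mathcal{L}-\widehat{\mathcal{L}})$ consists of the force term $F\cdot\nabla_p$ (and, if one freezes $\gamma$ as well, a linear term to be absorbed), I would iterate to get $\mathrm{p}_t = \sum_{j\geq 0} \widehat{\mathrm{p}} \otimes H^{(j)}$ where $H$ is the parametrix kernel and $\otimes$ denotes space-time convolution. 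Each application of $\mathcal{L} - \widehat{\mathcal{L}}$ costs one velocity-gradient of a Gaussian, which in the degenerate scaling contributes a factor of order $\|F\|_\infty / (\sigma \sqrt{t-s})$ — the square-root singularity being integrable — and this is what, after iterating $j$ times and computing the resulting Beta-function integrals $\int_{0<s_1<\cdots<s_j<t} \prod (s_{k+1}-s_k)^{-1/2}\,\mathrm{d}s$, yields the combinatorial factor $(\sqrt{\pi t})^j / \Gamma(\tfrac{j+1}{2})$ together with the spatial Gaussian $\widehat{\mathrm{p}}^{(\alpha)}_t$. The constant $c_\alpha$ and the term $1 + \sqrt{\gamma_- T}$ will emerge from bounding the ratio of the frozen Gaussian's velocity-derivative to the comparison density over the time horizon $[0,T]$.

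The main obstacle, and the novel point relative to~\cite{Menozzi}, is precisely the unboundedness of the drift $-\gamma p$ in $\mathbb{R}^{2d}$, which prevents a direct citation. I would handle this by keeping the $-\gamma p$ term \emph{inside} the frozen operator rather than treating it as a perturbation: that is, take $\widehat{\mathcal{L}} = \mathcal{L}_{0,\gamma,\sigma}$ so that $\mathcal{L} - \widehat{\mathcal{L}} = F\cdot\nabla_p$ is genuinely bounded. The price is that the explicit frozen density $\widehat{\mathrm{p}}_t$ and its covariance matrix now depend on $\gamma$; when $\gamma < 0$ the Ornstein-Uhlenbeck flow is expanding, and controlling the Gaussian and its spatial derivatives uniformly on $(0,T]$ forces the appearance of the factor $1 + \sqrt{\gamma_- T}$ — one must track how $\mathrm{e}^{-\gamma t}$-type terms in the covariance affect both the exponent and the normalization. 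Verifying that the covariance matrix of $\mathcal{L}_{0,\gamma,\sigma}$ satisfies the standard non-degeneracy (scaling) estimates uniformly in $t\in(0,T]$, with explicit constants, and that the resulting Gaussian derivative bounds reproduce exactly the claimed series, is the technical heart; the remaining convergence of the series (which holds for every $t$ since $\Gamma(\tfrac{j+1}{2})$ grows super-exponentially) and the comparison with $\widehat{\mathrm{p}}^{(\alpha)}_t$ are then routine. I would defer the explicit computation of $\widehat{\mathrm{p}}_t$ and $\widehat{\mathrm{p}}^{(\alpha)}_t$ (Equations~\eqref{expr densite}--\eqref{densite p^alpha}) and the Gaussian derivative estimates to the body of Section~\ref{subsection Gaussian}.
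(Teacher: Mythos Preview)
Your proposal is correct and matches the paper's approach almost exactly: the paper freezes at $\widehat{\mathcal{L}}=\mathcal{L}_{0,\gamma,\sigma}$ so that $\mathcal{L}-\widehat{\mathcal{L}}=F\cdot\nabla_p$ is bounded, derives the Duhamel/mild formulation $\mathrm{p}_t-\widehat{\mathrm{p}}_t=\mathrm{p}\otimes H$ via It\^o's formula, iterates, and controls each iterate using the gradient bound $|\nabla_p\widehat{\mathrm{p}}_t|\le \frac{c_\alpha(1+\sqrt{\gamma_- t})}{\sigma\sqrt{t}}\widehat{\mathrm{p}}^{(\alpha)}_t$ (Lemma~\ref{prop densite}) together with Chapman--Kolmogorov for $\widehat{\mathrm{p}}^{(\alpha)}$ and the Beta-integral computation you describe. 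The only superfluous element in your plan is the preliminary ``time-dependent linear change of variables'': no such reduction is needed, since choosing the frozen operator to already contain the $-\gamma p$ drift is precisely what makes the perturbation bounded.
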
   

\subsection{Kolmogorov equations for the absorbed Langevin process}\label{sec:gauss_lang_abs}

Let us define the transition kernel $\mathrm{P}_t^D$ for the absorbed Langevin process $(X^x_t)_{0 \leq t \leq \tau^x_\partial}$: 
\begin{equation}    \label{kernel D}
    \forall t \geq 0, \quad \forall x \in \overline{D}, \quad \forall A \in \mathcal{B}(D), \qquad \mathrm{P}_t^D(x,A):=\mathbb{P}(X^x_t\in A,\tau^x_\partial>t) .
\end{equation}
It is easy to see that for any $t \geq 0$, $x \in \overline{D}$ and $A \in \mathcal{B}(D)$,
\begin{equation}\label{eq:PtDPt}
  \mathrm{P}^D_t(x,A) \leq \mathrm{P}_t(x,A).
\end{equation}
The next theorem is the equivalent of Proposition~\ref{prop:kolmo-langevin} for the transition kernel $\mathrm{P}^D_t(x,\cdot)$.
\begin{theorem}[Kolmogorov equations for the absorbed Langevin process]\label{thm density intro}
    Under Assumptions \ref{hyp O} and \ref{hyp F1}, 
there exists a function
    $$(t,x,y)\mapsto \mathrm{p}_t^D(x,y)
    \in\mathcal{C}^\infty(\mathbb{R}_+^{*}\times D\times
    D)\cap\mathcal{C}(\mathbb{R}_+^*\times\overline{D}\times\overline{D})$$
    such that for all $t>0$, $x\in \overline{D}$ and
    $A\in\mathcal{B}(D)$, 
    $$\mathrm{P}_t^D(x,A)=\int_A \mathrm{p}_t^D(x,y) \mathrm{d}y.$$
Moreover, this transition density $\mathrm{p}^D_t$ satisfies the backward and forward Kolmogorov  equations:
    \begin{enumerate}[label=(\roman*),ref=\roman*]
    \item $(t,x)\mapsto \mathrm{p}_t^D(x,y)$ satisfies $\partial_t \mathrm{p}^D=\mathcal{L}_x\mathrm{p}^D$ on $\mathbb{R}_+^{*}\times D$,
    \item $(t,y)\mapsto \mathrm{p}_t^D(x,y)$ satisfies $\partial_t \mathrm{p}^D=\mathcal{L}_y^*\mathrm{p}^D$ on $\mathbb{R}_+^{*}\times D$.
\end{enumerate} 
Finally, for all $t>0$,  $(x,y) \in \overline{D} \times \overline{D}$,
\begin{enumerate}[label=(\roman*),ref=\roman*]
    \item $\mathrm{p}^D_t(x,y)=0$ if $x\in \Gamma^+\cup\Gamma^0$ or $y\in\Gamma^-\cup\Gamma^0$,
    \item under Assumption~\ref{hyp:conn}, $\mathrm{p}^D_t(x,y)>0$ if $x\notin \Gamma^+\cup\Gamma^0$ and $y\notin\Gamma^-\cup\Gamma^0$.
\end{enumerate}
\end{theorem}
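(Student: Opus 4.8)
The plan is to build $\mathrm{p}^D_t$ from the transition kernel $\mathrm{P}^D_t$, to obtain the two Kolmogorov equations and interior smoothness from hypoellipticity, and to extract the boundary behaviour from Theorem~\ref{Solution PDE}, the Harnack inequality of Theorem~\ref{Harnack}, and a time-reversal argument. By Remark~\ref{solution dans D} we may and do assume the strengthened Assumption~\ref{hyp F}, so that Proposition~\ref{prop:kolmo-langevin} and Theorem~\ref{borne densite thm} apply. For fixed $t>0$ and $x\in\overline{D}$, the domination~\eqref{eq:PtDPt} together with Proposition~\ref{prop:kolmo-langevin} shows that $\mathrm{P}^D_t(x,\cdot)$ is absolutely continuous on $D$, with a density $\mathrm{p}^D_t(x,y)$ satisfying $0\le\mathrm{p}^D_t(x,y)\le\mathrm{p}_t(x,y)$ for a.e.\ $y\in D$; this domination, sharpened through the explicit Gaussian bound of Theorem~\ref{borne densite thm}, is used repeatedly below. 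Moreover, for any bounded measurable function $f$ on $D$ (extended by $0$ to $D\cup\Gamma^-$), Theorem~\ref{Solution PDE} together with Remark~\ref{rk:extLinf} shows that $(t,x)\mapsto\mathrm{P}^D_tf(x):=\mathbb{E}[\mathbb{1}_{\tau^x_\partial>t}f(X^x_t)]=\int_D\mathrm{p}^D_t(x,y)f(y)\,\mathrm{d}y$ is a distributional solution of $\partial_tu=\mathcal{L}u$ on $\mathbb{R}_+^*\times D$, hence belongs to $\mathcal{C}^\infty(\mathbb{R}_+^*\times D)$ and solves this equation classically.

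For the backward equation, fix $y\in D$: from the Markov property of the absorbed process at time $t-r$ one has, for $0<r<t$ and a.e.\ $y$, $\mathrm{p}^D_t(x,y)=\mathrm{P}^D_{t-r}\big[\mathrm{p}^D_r(\cdot,y)\big](x)$, and since $\mathrm{p}^D_r(\cdot,y)$ is bounded and measurable by the domination, the previous step shows that, for a.e.\ $y$, the map $(t,x)\mapsto\mathrm{p}^D_t(x,y)$ is $\mathcal{C}^\infty$ on $\mathbb{R}_+^*\times D$ and solves $\partial_t\mathrm{p}^D=\mathcal{L}_x\mathrm{p}^D$ there (let $r\to0$). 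For the forward equation, fix $x\in D$ and apply the It\^o formula to $s\mapsto\Phi(s,X^x_s)$ for $\Phi\in\mathcal{C}_c^\infty(\mathbb{R}_+^*\times D)$, stopped at $\tau^x_\partial$: by Proposition~\ref{prop:tau} the process exits $D$ through $\Gamma^+\subset\partial D$, away from $\mathrm{supp}\,\Phi$, so the boundary contribution vanishes, and one obtains $\int_0^\infty\int_D(\partial_s+\mathcal{L}_y)\Phi(s,y)\,\mathrm{p}^D_s(x,y)\,\mathrm{d}y\,\mathrm{d}s=0$, i.e.\ $(t,y)\mapsto\mathrm{p}^D_t(x,y)$ is a distributional solution of $\partial_tw=\mathcal{L}^*_yw$ in the sense of Definition~\ref{def:dist} (with $\mathcal{L}$ and $\mathcal{L}^*$ interchanged); hypoellipticity of $\partial_t-\mathcal{L}^*$ then makes it $\mathcal{C}^\infty$ in $(t,y)$, for every $x$, and gives the forward equation classically. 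Joint smoothness on $\mathbb{R}_+^*\times D\times D$ — which in particular removes the exceptional null set above — follows by differentiating under the integral sign in the Chapman--Kolmogorov identity $\mathrm{p}^D_t(x,y)=\int_D\mathrm{p}^D_{t/2}(x,z)\,\mathrm{p}^D_{t/2}(z,y)\,\mathrm{d}z$: interior hypoelliptic estimates bound the $(t,x)$-derivatives of $z\mapsto\mathrm{p}^D_{t/2}(x,z)$ and the $(t,y)$-derivatives of $z\mapsto\mathrm{p}^D_{t/2}(z,y)$ by multiples of suprema of the non-absorbed transition density $\mathrm{p}$, which are integrable in $z$ thanks to the Gaussian bound of Theorem~\ref{borne densite thm}.

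For the boundary values: if $x\in\Gamma^+\cup\Gamma^0$ then $\tau^x_\partial=0$ almost surely by Proposition~\ref{prop:tau}, so $\mathrm{P}^D_t(x,\cdot)\equiv0$ and $\mathrm{p}^D_t(x,y)=0$; the vanishing for $y\in\Gamma^-$ is the dual statement, obtained by applying the same argument to the adjoint (time-reversed) process, which by the change of variables $(p,\gamma)\mapsto(-p,-\gamma)$ of Remark~\ref{rk:kFPname} has transition density $\mathrm{e}^{d\gamma t}\mathrm{p}^D_t(x,(q,-p))$ in its first variable and for which the roles of $\Gamma^+$ and $\Gamma^-$ are interchanged; the vanishing on $\Gamma^0$ then follows from the continuity statement discussed below, since $\Gamma^0\subset\overline{\Gamma^+}\cap\overline{\Gamma^-}$. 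For the strict positivity under Assumption~\ref{hyp:conn}: for $x\in D$ one has $\int_D\mathrm{p}^D_t(x,y)\,\mathrm{d}y=\mathbb{P}(\tau^x_\partial>t)>0$ — positivity here being a controllability (support-theorem) statement for~\eqref{Langevin} — so $\mathrm{p}^D_t(x,\cdot)$ is a nontrivial nonnegative solution of the forward equation, and the Harnack inequality of Theorem~\ref{Harnack}, in its $\mathcal{L}^*$ form obtained via the above duality and applied between times $t/2$ and $t$, forces $\mathrm{p}^D_t(x,\cdot)>0$ on all of $D$ for every $t>0$; the extension to $x\in\Gamma^-$ (resp.\ to $y\in\Gamma^+$) follows from $\mathrm{p}^D_t(x,y)=\int_D\mathrm{p}^D_{t/2}(x,z)\,\mathrm{p}^D_{t/2}(z,y)\,\mathrm{d}z$ using that the process issued from $\Gamma^-$ (resp.\ the adjoint process issued from $\Gamma^+$) enters $D$ immediately, so that the corresponding factor is not identically zero.

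The main difficulty is the joint continuity of $\mathrm{p}^D_t$ up to $\overline{D}\times\overline{D}$, which is also what turns the vanishing on $\Gamma^+\cup\Gamma^0$ in $x$ and on $\Gamma^-\cup\Gamma^0$ in $y$, and the positivity elsewhere, into genuine boundary limits: Theorem~\ref{borne densite thm} only provides a uniform upper bound, and Theorem~\ref{Solution PDE} only yields continuity in $x$ up to $\overline{D}$ after integration against a test function. Overcoming this requires introducing the adjoint Langevin process, establishing continuity of its absorbed semigroup up to the boundary, and combining the resulting one-sided continuity statements in $x$ and in $y$ through time-reversibility identities relating $\mathrm{p}^D_t(x,y)$ to the adjoint transition density; this is the technically heaviest ingredient, carried out in Section~\ref{Section Adjoint process and compactness}.
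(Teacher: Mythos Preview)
Your overall architecture matches the paper's: density via domination by $\mathrm{p}_t$, forward Kolmogorov by It\^o's formula and hypoellipticity, backward Kolmogorov paired with hypoellipticity for interior smoothness, and the boundary continuity via the adjoint process and a reversibility identity (the paper's Theorem~\ref{duality thm}), which you correctly flag as the technically heavy part deferred to Section~\ref{Section Adjoint process and compactness}. Two route differences are worth noting. First, for the backward equation you use the Markov identity $\mathrm{p}^D_t(x,y)=\mathrm{P}^D_{t-r}[\mathrm{p}^D_r(\cdot,y)](x)$ together with Remark~\ref{rk:extLinf}; the paper instead proves the forward equation first, gains continuity in $y$ from hypoellipticity of $\partial_t-\mathcal{L}^*$, and then obtains the backward equation for \emph{every} $y$ by a Fubini argument with tensor test functions $\Phi_1(t,x)\Phi_2(y)$ --- this avoids your ``a.e.\ $y$'' detour and the need to justify differentiation under the Chapman--Kolmogorov integral. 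Second, for positivity you invoke a support/controllability theorem to get $\mathbb{P}(\tau^x_\partial>t)>0$; the paper does not use any such external input, arguing instead by contradiction: if $\mathrm{p}^D_{t_0}(x_0,y_0)=0$, the Harnack inequality (applied to $(t,y)\mapsto\mathrm{e}^{-d\gamma t}\mathrm{p}^D_t(x_0,\diamond y)$, which solves $\partial_t\Psi=\widetilde{\mathcal{L}}^\diamond\Psi$) forces $\mathrm{p}^D_t(x_0,\cdot)\equiv0$ for all $t<t_0$, hence $\mathbb{P}(\tau^{x_0}_\partial>t)=0$ for small $t$, contradicting the elementary fact $\mathbb{P}(\tau^{x_0}_\partial>t)\to1$ as $t\to0$. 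Finally, be aware that the reversibility relation $\widetilde{\mathrm{p}}^D_t(x,y)=\mathrm{e}^{-d\gamma t}\mathrm{p}^D_t(y,x)$ is not a direct corollary of Remark~\ref{rk:kFPname}: the paper establishes it by first proving, via delicate probabilistic estimates (Proposition~\ref{comportement densite}), that $\widetilde{v}(t,x)=\mathrm{e}^{-d\gamma t}\int_D\mathrm{p}^D_t(y,x)\varphi(y)\,\mathrm{d}y$ satisfies the correct boundary condition on $\Gamma^-$, and only then invoking uniqueness for the adjoint Initial--Boundary Value Problem.
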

\begin{remark}
   The positivity of the transition density $\mathrm{p}_t^D$ is
   obtained using the Harnack inequality from
   Theorem~\ref{Harnack}. In addition, as explained above, if the unabsorbed Langevin
   process is defined globally in time, the existence of a smooth
   transition density $\mathrm{p}_t$ satisfying the Kolmogorov
   equations follows from~\cite[Corollary 7.2]{RB}. Using
   Remark~\ref{rmk Harnack global}, one can then apply the Harnack
   inequality to ensure the positivity of $\mathrm{p}_t$ on
   $\mathbb{R}^{2d}\times \mathbb{R}^{2d}$. This extends the
   positivity result from~\cite[Corollary 3.3]{PositiveDensity} to non-polynomial coefficients.
\end{remark}
The proof of the existence of a transition density
$\mathrm{p}_t^D(x,y)$ which is smooth on $\mathbb{R}_+^{*}\times D
\times D$ and satisfies the backward and forward
Kolmogorov equations will be done in Propositions \ref{existence
  densite} and \ref{regularité densité}, in Section~\ref{smooth
  density subsection}. 
The results on the continuity at the boundary as well as on the positivity of
$\mathrm{p}^D_t$ are stated in Theorem~\ref{boundary property
  density}. They crucially rely on the introduction and study of a so-called adjoint process to~\eqref{Langevin}, which is carried out in Section~\ref{Section Adjoint process and compactness}.
  
Under Assumptions \ref{hyp O} and \ref{hyp F1}, the transition density $\mathrm{p}^D_t$ only depends on the values of $F$ inside $\mathcal{O}$, see Remark~\ref{solution dans D}. Therefore, up to a modification of $F$ outside of $\mathcal{O}$ so that $F$ satisfies~\ref{hyp F}, the following useful corollary of  Theorem~\ref{thm density intro}, the inequality~\eqref{eq:PtDPt} and Theorem~\ref{borne densite thm} can be deduced.
\begin{corollary}[Gaussian upper bound on $\mathrm{p}^D_t$]\label{Rq densite estimation}
  Under Assumptions \ref{hyp O} and \ref{hyp F1}, $\mathrm{p}^D_t(x,y)$ satisfies the Gaussian upper bound of Theorem~\ref{borne densite thm}, in which the quantity $\|F\|_\infty$ is replaced by~$\|F\|_{\mathrm{L}^\infty(D)}$.  
\end{corollary}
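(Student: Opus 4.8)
The plan is to deduce the bound by comparing $\mathrm{p}^D_t$ with the transition density of the \emph{non-absorbed} Langevin process, for which Theorem~\ref{borne densite thm} is available, after a preliminary modification of the force field. The subtlety is that Theorem~\ref{borne densite thm} requires Assumption~\ref{hyp F} (in particular $F$ bounded and globally Lipschitz on $\mathbb{R}^d$), whereas here only Assumption~\ref{hyp F1} is assumed, and moreover we want the sup-norm appearing in the bound to be exactly $\|F\|_{\mathrm{L}^\infty(D)}$ rather than the (possibly larger) sup-norm of the modified field.

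First I would fix $\epsilon>0$ and construct a field $\tilde F_\epsilon$ satisfying Assumption~\ref{hyp F}, agreeing with $F$ on $\overline{\mathcal{O}}$, and with $\|\tilde F_\epsilon\|_\infty\le\|F\|_{\mathrm{L}^\infty(D)}+\epsilon$. Writing $R:=\|F\|_{\mathrm{L}^\infty(D)}=\max_{\overline{\mathcal{O}}}|F|<\infty$ (finite since $\mathcal{O}$ is bounded and $F$ continuous), one takes $\tilde F_\epsilon:=\rho_\epsilon\circ(\psi F)$, where $\psi\in\mathcal{C}_c^\infty(\mathbb{R}^d)$ equals $1$ on a neighborhood of $\overline{\mathcal{O}}$ — so that $\psi F$ is compactly supported, hence bounded and globally Lipschitz, and coincides with $F$ on $\overline{\mathcal{O}}$ — and $\rho_\epsilon\in\mathcal{C}^\infty(\mathbb{R}^d,\mathbb{R}^d)$ is a smooth, globally Lipschitz radial retraction of $\mathbb{R}^d$ onto $\overline{\mathrm{B}(0,R+\epsilon)}$ which is the identity on $\overline{\mathrm{B}(0,R)}$. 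Applying $\rho_\epsilon$ \emph{after} the cutoff is what keeps $\tilde F_\epsilon$ globally Lipschitz (the inner map $\psi F$ is globally Lipschitz, whereas $F$ itself need not be), and since $|\psi F|=|F|\le R$ on $\overline{\mathcal{O}}$ one has $\tilde F_\epsilon=F$ there. By Remark~\ref{solution dans D}, this modification changes neither the absorbed Langevin process nor, by Theorem~\ref{thm density intro}, its transition density $\mathrm{p}^D_t$.

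Next I would apply the elementary domination~\eqref{eq:PtDPt} to the process with force $\tilde F_\epsilon$: for every $t>0$, $x\in\overline{D}$ and $A\in\mathcal{B}(D)$,
\[
\int_A\mathrm{p}^D_t(x,y)\,\mathrm{d}y=\mathrm{P}^D_t(x,A)\le\mathrm{P}^{\tilde F_\epsilon}_t(x,A)=\int_A\mathrm{p}^{\tilde F_\epsilon}_t(x,y)\,\mathrm{d}y,
\]
where $\mathrm{p}^{\tilde F_\epsilon}$ is the smooth density provided by Proposition~\ref{prop:kolmo-langevin} (applicable since $\tilde F_\epsilon$ satisfies Assumption~\ref{hyp F}). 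Arbitrariness of $A$ yields $\mathrm{p}^D_t(x,y)\le\mathrm{p}^{\tilde F_\epsilon}_t(x,y)$ for Lebesgue-a.e.\ $y\in D$, and continuity of both sides on $\overline{D}\times\overline{D}$ (Theorem~\ref{thm density intro} and Proposition~\ref{prop:kolmo-langevin}) upgrades this to all $(x,y)\in\overline{D}\times\overline{D}$. Then Theorem~\ref{borne densite thm}, applied to $\mathrm{p}^{\tilde F_\epsilon}$, bounds the right-hand side; since $c_\alpha$, $\gamma_-$, $\sigma$ and $\widehat{\mathrm{p}}^{(\alpha)}_t$ do not depend on the force and all the series coefficients are nonnegative, the bound is nondecreasing in $\|\tilde F_\epsilon\|_\infty$, so we may replace $\|\tilde F_\epsilon\|_\infty$ by $R+\epsilon$. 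As the left-hand side does not depend on $\epsilon$, letting $\epsilon\downarrow0$ and using that $z\mapsto\sum_{j\ge0}z^j/\Gamma\big(\frac{j+1}{2}\big)$ defines an entire (hence continuous) function gives the claimed bound with $\|F\|_{\mathrm{L}^\infty(D)}$ in place of $\|F\|_\infty$.

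I expect the construction of $\tilde F_\epsilon$ to be the only genuinely delicate point. The naive truncation of $F$ by the nearest-point projection onto $\overline{\mathrm{B}(0,R)}$ is only Lipschitz, not $\mathcal{C}^1$, and there is in fact a topological obstruction to a $\mathcal{C}^1$ retraction of $\mathbb{R}^d$ onto a closed ball that restricts to the identity on that ball; this is precisely why the ball is enlarged by $\epsilon$ and why the limit $\epsilon\downarrow0$ is taken at the end. Everything else is a routine assembly of Theorems~\ref{thm density intro} and~\ref{borne densite thm} together with the domination~\eqref{eq:PtDPt}.
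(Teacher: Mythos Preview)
Your proof is correct and follows the same approach as the paper: modify $F$ outside $\mathcal{O}$ so as to satisfy Assumption~\ref{hyp F} (leaving $\mathrm{p}^D_t$ unchanged by Remark~\ref{solution dans D}), use the domination~\eqref{eq:PtDPt}, and apply Theorem~\ref{borne densite thm}. The paper only gives this as a one-line sketch and does not address how to obtain exactly $\|F\|_{\mathrm{L}^\infty(D)}$ rather than the sup-norm of the extension; your $\epsilon$-room construction and limiting argument fill that gap carefully, and your observation about the obstruction to a $\mathcal{C}^1$ retraction onto the closed ball explains why the extra room is genuinely needed for a smooth extension.
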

 
\section{The Initial-Boundary Value problem}
\label{section 2}
This section is devoted to the proof of Theorem \ref{Solution PDE}. In
this theorem, Assertion~\eqref{it:ibvp:val} is an immediate
consequence of Proposition~\ref{prop:tau} which is proven in Section
\ref{section proof lemmata}. Assertions~\eqref{it:ibvp:uniq},
\eqref{it:ibvp:cont} and~\eqref{it:ibvp:reg} are respectively proven
in Sections~\ref{ss:ibvp:uniq}, \ref{ss:ibvp:cont}
and~\ref{ss:ibvp:reg}. Finally, Section~\ref{section proof lemmata} is
devoted to the proofs of Propositions~\ref{non attainability} and~\ref{prop:tau}.

All the results in this section are proven under Assumptions~\ref{hyp O} and~\ref{hyp
  F}. Since the final statement of Theorem~\ref{Solution PDE} only
depends on the values of $F$ in $\mathcal{O}$ (see
Remark~\ref{solution dans D}), this statement remains valid if Assumption~\ref{hyp F} is replaced by Assumption~\ref{hyp F1}. 

Before proceeding, we introduce the following notation: we let $\mathrm{d}_\partial$ be the Euclidean distance function to the boundary $\partial\mathcal{O}$ from a point in $\mathcal{O}$, i.e.
\begin{equation}\label{distance boundary}
\mathrm{d}_\partial:q\in\mathbb{R}^d\mapsto
\begin{cases}
  \mathrm{d}(q,\partial\mathcal{O})
&\text{if $q\in\mathcal{O}$,}\\
0 &\text{if $q \not\in\mathcal{O}$,} 
\end{cases}
\end{equation}
and $\mathrm{d}_{\overline{\mathcal{O}}}$ be the Euclidean distance to
the compact set $\overline{\mathcal{O}}$, 
\begin{equation}\label{distance compact}
\mathrm{d}_{\overline{\mathcal{O}}}:q\in\mathbb{R}^d\mapsto
\mathrm{d}(q,\overline{\mathcal{O}}).
\end{equation} These distance functions are $1$-Lipschitz continuous.

\subsection{\texorpdfstring{Uniqueness: proof of Assertion~\eqref{it:ibvp:uniq} in Theorem~\ref{Solution PDE}}{}}\label{ss:ibvp:uniq}

Assertion~\eqref{it:ibvp:uniq} in Theorem~\ref{Solution PDE} follows from the application of the Itô formula. Although the general argument is well-known, we detail its application here in order to emphasize two specificities of our framework: the fact that the domain $D$ is unbounded, and the fact that the kinetic nature of the Langevin process $(X^x_t)_{t \geq 0}$ makes boundary conditions on $v$ only necessary on the subset $\Gamma^+$ of $\partial D$.
 
\begin{proof}[Proof of Assertion~\eqref{it:ibvp:uniq} in Theorem~\ref{Solution PDE}]
Let $g\in\mathcal{C}^b(\Gamma^+\cup\Gamma^0)$ and $f\in\mathcal{C}^b(D\cup\Gamma^-)$. Let $v$ be a solution of \eqref{kFP pb}, satisfying the conditions of Assertion~\eqref{it:ibvp:uniq} in Theorem~\ref{Solution PDE}.

Let $x\in D$. Let $(X^x_t=(q^x_t,p^x_t))_{t\geq0}$ be the strong solution of \eqref{Langevin} on $\mathbb{R}^{2d}$. For $k>0$, let $V_k$ be the following open and bounded subset of $D$  
\begin{equation}\label{Vk}
    V_k:=\left\{(q,p)\in D : \vert p\vert< k, \mathrm{d}_\partial(q)>\frac{1}{k} \right\}.
\end{equation}
Let us choose $k$ large enough so that $x\in V_k$. Let $\tau^{x}_{V_k^c}$ be the following stopping time:
\[\begin{aligned}
\tau^x_{V_k^c}&=\inf \{t>0: X^x_t\notin V_k\} .
\end{aligned}\]
Let $t>0$ and $s\in[0,t)$. Since
$v\in\mathcal{C}^{1,2}(\mathbb{R}_+^{*}\times D)$, Itô's formula
applied to the process $(v(t-r,X^x_{r}))_{0\leq r\leq s}$ between $0$
and $s\land \tau^x_{V_k^c}$ yields: almost surely, for $s\in[0,t)$
\[\begin{aligned}
v(t-s\land \tau^x_{V_k^c},X^x_{s\land \tau^x_{V_k^c}})&=v(t,x)+\sigma \int_0^{s\land \tau^x_{V_k^c}}\nabla_p v(t-r,X^x_{r})\cdot \mathrm{d}B_r,
\end{aligned}\]
since $\partial_t v-\mathcal{L}v=0$ on $\mathbb{R}_+^{*}\times D$.
Besides, since $\nabla_p v$ is continuous on the compact set $[t-s,t]\times \overline{V_k}$, hence bounded on $(t-s,t)\times V_k$, the stochastic integral in the right-hand side is a martingale and its expectation vanishes. Therefore
\begin{equation}\label{ito v}
\begin{aligned}
v(t,x)&=\mathbb{E}\left[ v(t-s\land \tau^x_{V_k^c},X^x_{s\land \tau^x_{V_k^c}}) \right]\\
&=\mathbb{E}\left[ \mathbb{1}_{\tau^x_{V_k^c}>s} v(t-s,X^x_s) + \mathbb{1}_{\tau^x_{V_k^c}\leq s} v(t-\tau^x_{V_k^c},X^x_{\tau^x_{V_k^c}})  \right] .
\end{aligned}
\end{equation}
Now we would like to let $k\rightarrow\infty$ and then $s\rightarrow
t$ in \eqref{ito v}.

First let us prove the following
limit $$\lim_{k\rightarrow\infty}\tau^x_{V_k^c}=\tau^x_{\partial}\quad\text{almost
  surely.}$$
The sequence $(\tau^x_{V_k^c})_{k\geq1}$ is an increasing sequence of random variables, therefore it converges almost surely to $\sup_{k\geq1}\tau^x_{V_k^c}$. Besides, using the continuity of the trajectories of $(X^x_t)_{t\geq0}$, one gets, for all $r>0$,
\[\begin{aligned}
\left\{\sup_{k\geq1}\tau^x_{V_k^c}>r\right\}&=\left\{\exists  k\geq1: \tau^x_{V_k^c}>r\right\}\\
&=\left\{\exists  k\geq1: \sup_{u\in[0,r]}\vert p^x_u\vert< k, \inf_{u\in[0,r]} \mathrm{d}_\partial(q^x_u)>\frac{1}{k}\right\}\\
&=\left\{\sup_{u\in[0,r]}\vert p^x_u\vert<\infty, \inf_{u\in[0,r]} \mathrm{d}_\partial(q^x_u)>0\right\}\\
&=\left\{\sup_{u\in[0,r]}\vert p^x_u\vert<\infty, \tau^x_\partial>r\right\} .
\end{aligned}\]
 For all $r>0$, we have that, almost surely, $\sup_{u\in[0,r]}\vert p^x_u\vert<\infty$. Therefore, $\sup_{k\geq1}\tau^x_{V_k^c}>r$ if and only if $\tau^x_\partial>r$, that is to say $\sup_{k\geq1}\tau^x_{V_k^c}=\tau^x_\partial$ almost surely.
As a result, one gets $\lim_{k\rightarrow\infty}\tau^x_{V_k^c}=\tau^x_{\partial}$ almost surely. Consequently, since $k\mapsto \tau^x_{V_k^c}$ is increasing and $t\mapsto\mathbb{1}_{t>s}$ is left-continuous, one has  almost surely that for all $s>0$,
\begin{equation}\label{cv indicatrice k}
    \mathbb{1}_{\tau^x_{V_k^c}>s}\underset{k\rightarrow\infty}{\longrightarrow}\mathbb{1}_{\tau^x_{\partial}>s}.
\end{equation}

Second, notice that $X^x_{\tau^x_{\partial}}\in \Gamma^+$ almost surely on the event $\{\tau^x_{\partial}\leq s\}$ by Proposition~\ref{prop:tau} since $x\in D$. Consequently, since $v\in\mathcal{C}(\mathbb{R}^{*}_+\times( D\cup\Gamma^+))$ and $v=g$ on $\mathbb{R}^{*}_+\times\Gamma^+$
\begin{equation}\label{cv tau k}
    \mathbb{1}_{\tau^x_{V_k^c}\leq s} v(t-\tau^x_{V_k^c},X^x_{\tau^x_{V_k^c}})\underset{k\rightarrow\infty}{\longrightarrow} \mathbb{1}_{\tau^x_{\partial}\leq s} g(X^x_{\tau^x_{\partial}})\quad\text{ almost surely.}
    \end{equation}
We now use \eqref{cv indicatrice k} and~\eqref{cv tau k} to apply the
dominated convergence theorem to \eqref{ito v} when $k$ goes to
infinity, using the fact that $v$  is assumed to be bounded on $[0,t]\times D$. Therefore, one gets for $x\in D$ and  $s\in[0,t)$,
\begin{equation}\label{ito v 2}
\begin{aligned}
v(t,x)&=\mathbb{E}\bigg[ \mathbb{1}_{\tau^x_{\partial}>s} v(t-s,X^x_s) \bigg]+\mathbb{E}\bigg[ \mathbb{1}_{\tau^x_{\partial}\leq s} g(X^x_{\tau^x_{\partial}}) \bigg].
\end{aligned}
\end{equation}

Finally, let us consider the limit $s\rightarrow t$ in \eqref{ito v
  2}. Notice
that $$\mathbb{1}_{\tau^x_{\partial}>s}\underset{s\rightarrow
  t}{\longrightarrow}\mathbb{1}_{\tau^x_{\partial}>t}
\quad\text{almost surely,} $$ using Corollary~\ref{rq densité} (which
follows from Proposition~\ref{prop:kolmo-langevin}, which holds independently from
the results proven in this section, as will become clear in Section~\ref{section proof lemmata}). Therefore, the second term in the right-hand side of the equality~\eqref{ito v 2} satisfies by dominated convergence,
$$\mathbb{E}\bigg[ \mathbb{1}_{\tau^x_{\partial}\leq s} g(X^x_{\tau^x_{\partial}}) \bigg]\underset{s\rightarrow t}{\longrightarrow} \mathbb{E}\bigg[ \mathbb{1}_{\tau^x_{\partial}\leq t} g(X^x_{\tau^x_{\partial}}) \bigg] .$$
Moreover the continuity of the trajectories of $(X^x_s)_{s\geq0}$ and the continuity of $v$ on $\mathbb{R}_+\times D$ ensure that $$\mathbb{1}_{\tau^x_{\partial}>s} v(t-s,X^x_s)\underset{s\rightarrow t}{\longrightarrow}\mathbb{1}_{\tau^x_{\partial}>t} v(0,X^x_t)=\mathbb{1}_{\tau^x_{\partial}>t} f(X^x_t)\quad\text{almost surely.}$$
Finally, taking the limit $s\rightarrow t$ in \eqref{ito v 2}, the dominated convergence theorem ensures that 
$$v(t,x)=\mathbb{E}\big[ \mathbb{1}_{\tau^x_{\partial}>t} f(X^x_t)+ \mathbb{1}_{\tau^x_{\partial}\leq t} g(X^x_{\tau^x_{\partial}}) \big] $$
for all $t>0$, $x\in D$. This concludes the proof of Assertion~\eqref{it:ibvp:uniq} in Theorem~\ref{Solution PDE} using the continuity of $v$ in $(\mathbb{R}_+\times(D\cup\Gamma^+))\setminus(\{0\}\times \Gamma^+)$.
\end{proof}
 
\subsection{\texorpdfstring{Continuity: proof of Assertion~\eqref{it:ibvp:cont} in Theorem~\ref{Solution PDE}\label{ss:ibvp:cont}}{}}

The proof of Assertion~\eqref{it:ibvp:cont} in Theorem~\ref{Solution PDE} relies on the the following lemmas. We recall that under Assumption~\ref{hyp F}, we denote by $C_\mathrm{Lip}$ the Lipschitz constant of the drift of~\eqref{Langevin}.

\begin{lemma}[Gronwall Lemma]\label{couplage Lemma} Under Assumption~\ref{hyp F}, for all $t\geq0$, for all $x,y \in \mathbb{R}^{2d}$, one has
\begin{equation}\label{couplage}
    \sup_{s\in[0,t]}\vert X^x_s- X^y_s\vert \leq \vert x- y \vert \mathrm{e}^{C_\mathrm{Lip}t}\quad\text{ almost surely}.\\
\end{equation}
Besides, for $(t_0,x_0)\in\mathbb{R}_+\times\mathbb{R}^{2d}$ we have
\begin{equation}\label{cv jointe t,x}
    X^{x}_{t}\underset{(t,x)\rightarrow (t_0,x_0)}{{\longrightarrow}}X^{x_0}_{t_0}\quad\text{almost surely.}
\end{equation} 
\end{lemma}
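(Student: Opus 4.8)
The plan is to prove Lemma~\ref{couplage Lemma} directly from the SDE~\eqref{Langevin} using a pathwise Gronwall argument, then deduce the joint convergence~\eqref{cv jointe t,x} from the estimate~\eqref{couplage} together with the sample-path continuity of the Langevin process. Throughout, I work under Assumption~\ref{hyp F}, so that the drift $b(q,p):=(p,F(q)-\gamma p)$ is globally Lipschitz on $\mathbb{R}^{2d}$ with constant $C_\mathrm{Lip}$, and the diffusion matrix is the constant $\sigma\,\mathrm{Id}$ on the velocity variables; in particular the strong solutions $(X^x_t)_{t\ge0}$ and $(X^y_t)_{t\ge0}$ can be realized on the same probability space driven by the same Brownian motion $(B_t)_{t\ge0}$.

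For the first estimate~\eqref{couplage}, I would subtract the integral forms of the two SDEs: since the noise terms are identical and cancel, one has for all $s\ge0$, almost surely,
\begin{equation*}
  X^x_s-X^y_s = (x-y) + \int_0^s \bigl( b(X^x_r)-b(X^y_r) \bigr)\,\mathrm{d}r.
\end{equation*}
Taking Euclidean norms and using the Lipschitz bound $|b(X^x_r)-b(X^y_r)|\le C_\mathrm{Lip}|X^x_r-X^y_r|$ gives, for $s\in[0,t]$,
\begin{equation*}
  |X^x_s-X^y_s| \le |x-y| + C_\mathrm{Lip}\int_0^s |X^x_r-X^y_r|\,\mathrm{d}r \le |x-y| + C_\mathrm{Lip}\int_0^t \sup_{u\in[0,r]}|X^x_u-X^y_u|\,\mathrm{d}r.
\end{equation*}
Since the right-hand side is nondecreasing in $s$, taking the supremum over $s\in[0,t]$ on the left and then applying the (integral form of the) Gronwall lemma to the finite, measurable function $r\mapsto \sup_{u\in[0,r]}|X^x_u-X^y_u|$ yields $\sup_{s\in[0,t]}|X^x_s-X^y_s|\le |x-y|\mathrm{e}^{C_\mathrm{Lip}t}$ almost surely. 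One small point to handle cleanly is that the inequality should first be established for each fixed $t$ on a full-measure event, and then, since the bound $|x-y|\mathrm{e}^{C_\mathrm{Lip}t}$ is continuous and nondecreasing in $t$, it extends to all $t\ge0$ simultaneously on a single full-measure event by taking a countable dense set of times and using path continuity.

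For the joint convergence~\eqref{cv jointe t,x}, I would fix $(t_0,x_0)$ and write, for $(t,x)$ near $(t_0,x_0)$,
\begin{equation*}
  |X^x_t-X^{x_0}_{t_0}| \le |X^x_t-X^{x_0}_t| + |X^{x_0}_t-X^{x_0}_{t_0}|.
\end{equation*}
The first term is bounded by $|x-x_0|\mathrm{e}^{C_\mathrm{Lip}(t_0+1)}$ for $t\le t_0+1$ by~\eqref{couplage}, hence tends to $0$ as $x\to x_0$, uniformly in such $t$; the second term tends to $0$ almost surely as $t\to t_0$ by the almost sure continuity of the sample path $s\mapsto X^{x_0}_s$. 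Combining these gives $X^x_t\to X^{x_0}_{t_0}$ almost surely. I do not expect a genuine obstacle here: the only mild subtlety is the logical order of quantifiers (choosing a single full-measure event on which both~\eqref{couplage} holds for all $t$ and $s\mapsto X^{x_0}_s$ is continuous), which is routine. The argument is entirely standard; the reason it is stated as a lemma is simply that it will be invoked repeatedly in the continuity proof of Assertion~\eqref{it:ibvp:cont}.
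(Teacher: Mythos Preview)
Your proposal is correct and follows precisely the approach the paper indicates: the paper merely states that~\eqref{couplage} is ``a standard application of the Gronwall Lemma which we do not detail here'' and that~\eqref{cv jointe t,x} is ``a straightforward consequence of the continuity of the trajectories,'' which is exactly what you have spelled out. Your treatment of the minor quantifier issues (single full-measure event) is more careful than what the paper sketches, but there is no substantive difference in method.
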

The estimate~\eqref{couplage} follows from a standard application of the Gronwall Lemma which we do not detail here. The joint continuity statement~\eqref{cv jointe t,x} is then a straightforward consequence of the continuity of the trajectories of $(X^{x}_{t})_{t\geq0}$.

\begin{lemma}[Continuity of the exit event indicator]\label{cv indicatrices lemma}
Under Assumptions~\ref{hyp O} and~\ref{hyp F}, let $(t,x)\in(\mathbb{R}_+\times\overline{D})\setminus (\{0\}\times(\Gamma^+\cup\Gamma^0))$. Let $(t_n,x_n)_{n\geq1}$ be a sequence of $\mathbb{R}_+\times\overline{D}$ converging towards $(t,x)$. Then one has
\begin{equation}\label{cv indicatrices 1}
    \mathbb{1}_{\tau^{x_n}_{\partial}> t_n}\underset{n\rightarrow \infty}{{\longrightarrow}}\mathbb{1}_{\tau^{x}_{\partial}> t}\quad\text{almost surely.}
    \end{equation}
    \end{lemma}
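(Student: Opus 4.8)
The goal is to prove the almost-sure convergence $\mathbb{1}_{\tau^{x_n}_\partial > t_n} \to \mathbb{1}_{\tau^x_\partial > t}$ for $(t,x) \notin \{0\}\times(\Gamma^+\cup\Gamma^0)$. The natural strategy is to fix a realization of the Brownian motion $B$ lying in an almost-sure event on which several good properties hold simultaneously, and then show pointwise convergence on that event. The first ingredient is Lemma~\ref{couplage Lemma}: on the full-measure event where \eqref{couplage} holds for all rational pairs (hence, by continuity, all pairs), we have the uniform-on-compacts convergence $X^{x_n}_\cdot \to X^x_\cdot$, and in fact $X^{x_n}_{t_n} \to X^x_t$ by \eqref{cv jointe t,x}. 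The second ingredient should be a dichotomy according to whether the limiting trajectory exits $D$ strictly before time $t$, strictly after, or exactly at $t$ --- and ruling out the borderline case almost surely.

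\medskip

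First I would treat the case $\tau^x_\partial > t$. On this event, since $t>0$ (note $x$ could be in $\Gamma^-$, but then $\tau^x_\partial > 0$ a.s. by Proposition~\ref{prop:tau}\eqref{it:tau:sing}, and we are considering the sub-event $\{\tau^x_\partial > t\}$ with $t \ge 0$; if $t = 0$ the indicator is trivially $1$ eventually once $t_n$ is small unless $x_n$ leaves immediately, which needs a separate look) the trajectory $(q^x_s)_{s \in [0,t]}$ stays in the compact set $\overline{\mathcal{O}}$ but actually, more is true: by continuity $\inf_{s\in[0,t]} \mathrm{d}_\partial(q^x_s) =: \delta > 0$ whenever $\tau^x_\partial > t$ and $q^x_0 = q \in \mathcal{O}$. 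Then uniform convergence of trajectories on $[0, t+1]$ forces $q^{x_n}_s$ to stay within $\delta/2$ of $\mathcal{O}$ for all $s \le t_n$ once $n$ is large (using $t_n \to t$), hence $\tau^{x_n}_\partial > t_n$ for large $n$, giving the indicator convergence. The subtle point here is the initial position when $x \in \Gamma^-$: then $q \in \partial\mathcal{O}$ and $\mathrm{d}_\partial(q^x_0) = 0$, so one must use that the process immediately enters $D$ (which follows from the strong Markov / diffusion behaviour, or can be extracted from Proposition~\ref{prop:tau}\eqref{it:tau:sing} combined with continuity) to get an interior bound on $[\epsilon', t]$ and handle $[0,\epsilon']$ by a separate argument or by noting $x_n$ near $\Gamma^-$ also exits only after a positive time. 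This case --- uniform control near $\Gamma^-$ --- is where I expect the main technical friction.

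\medskip

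Next, the case $\tau^x_\partial < t$. Here the trajectory has genuinely left $\overline{D}$: by Proposition~\ref{prop:tau}\eqref{it:tau:sing}, a.s. $X^x_{\tau^x_\partial} \in \Gamma^+$, so $p^x_{\tau^x_\partial} \cdot n(q^x_{\tau^x_\partial}) > 0$, which means the trajectory crosses $\partial\mathcal{O}$ transversally and hence $q^x_s \in \overline{\mathcal{O}}^c$ for some $s \in (\tau^x_\partial, t)$; more quantitatively, $\inf\{\mathrm{d}_{\overline{\mathcal{O}}}(q^x_s)^{-1}\}$ type estimate --- i.e., there is $s_0 < t$ with $\mathrm{d}_{\overline{\mathcal{O}}}(q^x_{s_0}) > 0$. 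Uniform trajectory convergence then gives $\mathrm{d}_{\overline{\mathcal{O}}}(q^{x_n}_{s_0}) > 0$ for large $n$ with $s_0 < t_n$, hence $\tau^{x_n}_\partial \le s_0 < t_n$, and the indicator is $0$ eventually. The only remaining case is $\tau^x_\partial = t$, which we must show has probability zero: this is exactly Corollary~\ref{rq densité} (atoms of $\tau^x_\partial$), valid for $x \in \overline{D}$, which gives $\mathbb{P}(\tau^x_\partial = t) \le \mathbb{P}(q^x_t \in \partial\mathcal{O}) = 0$. Assembling: on the almost-sure event where the coupling estimate holds, $\tau^x_\partial \neq t$, and $X^x_{\tau^x_\partial} \in \Gamma^+$ whenever $\tau^x_\partial < \infty$, the three cases are exhaustive and in each the desired convergence holds, which completes the proof. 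I would write this out carefully with the $\delta$'s and $s_0$'s made explicit, and flag the $\Gamma^-$ initial-condition subtlety as the one place requiring real care (possibly invoking a small extra lemma that the hitting time of $\partial\mathcal{O}$ is continuous in the starting point near $\Gamma^- \cup D$).
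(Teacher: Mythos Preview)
Your proposal is correct and follows essentially the same approach as the paper: the same three-case split on $\{\tau^x_\partial < t\}$, $\{\tau^x_\partial = t\}$, $\{\tau^x_\partial > t\}$, invoking Lemma~\ref{couplage Lemma}, Proposition~\ref{prop:tau}, and Corollary~\ref{rq densité} at exactly the same places. The only spot worth noting is the $\Gamma^-$ subcase you flag: the paper does not introduce an auxiliary continuity lemma for the hitting time but instead argues by contradiction---if along some subsequence $\tau^{x_{n_k}}_\partial \le 1/k$, then $X^{x_{n_k}}_{\tau^{x_{n_k}}_\partial} \in \Gamma^+$ (Proposition~\ref{prop:tau}) converges to $x$ by the coupling estimate, forcing $x \in \Gamma^+ \cup \Gamma^0$, which contradicts $x \in \Gamma^-$.
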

 
\begin{proof} We prove the convergence \eqref{cv indicatrices 1} on the events $\{\tau^{x}_{\partial}<t\}$, $\{\tau^{x}_{\partial}=t\}$ and $\{\tau^{x}_{\partial}>t\}$, separately.

\medskip \noindent\textbf{Step 1}. Let us start by proving \eqref{cv
  indicatrices 1} on the event $\{\tau^{x}_{\partial}< t\}$
(necessarily $t>0$). Let $\epsilon\in(0,t-\tau^x_\partial)$. If $x\in\Gamma^+\cup\Gamma^0$, then by Proposition~\ref{prop:tau}, $\tau^x_\partial=0$ and there exists $s\in(0,\epsilon]$ such that $\mathrm{d}_{\overline{\mathcal{O}}}(q_s^x)>0$. If $x\in D\cup\Gamma^-$, then $X^x_{\tau^{x}_{\partial}}\in\Gamma^+$ almost surely  by Proposition~\ref{prop:tau}. By the strong Markov property and Proposition~\ref{prop:tau}, there exists again $s\in (\tau^{x}_{\partial},\tau_\partial^x+\epsilon]$ such that $\mathrm{d}_{\overline{\mathcal{O}}}(q_s^x)>0$.

Since $x_n\underset{n\longrightarrow \infty}{{\longrightarrow}}x$, there exists $N_1\geq1$ such that for all $n\geq N_1$, $\vert x_n- x \vert \leq \frac{\mathrm{d}_{\overline{\mathcal{O}}}(q_s^x)}{2} \mathrm{e}^{-C_{\mathrm{Lip}}(\tau_\partial^x+\epsilon)}$. As a result using Lemma \ref{couplage Lemma} along with the fact that the distance function $\mathrm{d}_{\overline{\mathcal{O}}}$ is $1$-Lipschitz continuous, it follows that for all $n\geq N_1$ $$\mathrm{d}_{\overline{\mathcal{O}}}(q_s^{x_n})\geq \frac{\mathrm{d}_{\overline{\mathcal{O}}}(q_s^x)}{2}>0 . $$
Therefore, we have $\tau_\partial^{x_n} < s \leq \tau^x_\partial + \epsilon$. In addition, the convergence $t_n\underset{n\longrightarrow \infty}{{\longrightarrow}}t$ implies that there exists $N_2\geq1$ such that for all $n\geq N_2$, one has $t_n\geq \tau_\partial^x+\epsilon$, since $\tau_\partial^x+\epsilon<t$. As a result for $n\geq \max(N_1,N_2)$,
$$\tau^{x_n}_{\partial}<\tau^{x}_{\partial}+\epsilon\leq t_n . $$
Hence the convergence \eqref{cv indicatrices 1} on the event $\{\tau^{x}_{\partial}< t\}$.

\medskip \noindent \textbf{Step 2}. Let us consider the event
$\{\tau^{x}_{\partial}= t\}$. For $t=0$ and $x\in D\cup\Gamma^-$,
$\mathbb{P}(\tau^{x}_{\partial}= 0)=0$ by
Proposition~\ref{prop:tau}. Moreover for $t>0$,
$\mathbb{P}(\tau^{x}_{\partial}= t)=0$ by Corollary~\ref{rq densité}. As a result, it is not necessary to prove the convergence \eqref{cv indicatrices 1} on the event $\{\tau^{x}_{\partial}= t\}$ as the latter is negligible.

\medskip \noindent \textbf{Step 3}. Finally, it only remains to prove
the convergence \eqref{cv indicatrices 1} on the event
$\{\tau^{x}_{\partial}> t\}$. Let $t^{'}:=\frac{1}{2}(\tau^{x}_{\partial}+t)$. Since $t_n\underset{n\longrightarrow \infty}{{\longrightarrow}}t$, there exists $N_1\geq1$ such that for $n\geq N_1$, $t_n\leq t^{'}$.

On the one hand, if $x\in D$, then by the continuity of the trajectories of $(q^x_s)_{s
  \in [0,t']}$, one has $\inf_{s\in[0,t^{'}]}\mathrm{d}_\partial(q_s^x)>0$. By Lemma \ref{couplage Lemma} and the fact that the distance function $\mathrm{d}_\partial$ is $1$-Lipschitz continuous, there exists $N_2\geq1$ such that for $n\geq N_2$, $$\inf_{s\in[0,t^{'}]} \mathrm{d}_\partial(q_s^{x_n})>0,$$ which yields $\tau^{x_n}_{\partial}> t^{'}$. As a result, for $n\geq\max(N_1,N_2)$, $$\tau^{x_n}_{\partial}> t^{'}\geq t_n . $$

On the other hand, if $x\in\partial D$, then necessarily $x\in\Gamma^-$, otherwise $\tau^{x}_{\partial}=0$ by Proposition~\ref{prop:tau}. Then for all $k\geq1$, $$\inf_{s\in\left[\frac{1}{k},t'\right]}\mathrm{d}_\partial(q_s^x)>0 . $$
Using Lemma~\ref{couplage Lemma} again, we get that there exists $M_k\geq1$ such that for $n\geq M_k$, $\tau^{x_n}_{\partial}> t^{'}$ or $\tau^{x_n}_{\partial}\leq\frac{1}{k}$. Assume that there exists an unbounded sequence $(n_k)_{k\geq1}$ such that $\tau^{x_{n_k}}_{\partial}\leq\frac{1}{k}$. Then $x\in \Gamma^+\cup\Gamma^0$ since $X^{x_{n_k}}_{\tau^{x_{n_k}}_{\partial}}\in\Gamma^+$ and $X^{x_{n_k}}_{\tau^{x_{n_k}}_{\partial}}\underset{n\longrightarrow \infty}{{\longrightarrow}}x$ by Lemma \ref{couplage Lemma}, which is in contradiction with the fact that $x\in \Gamma^-$. As a result there exists $N_2\geq1$ such that for all $n\geq N_2$, $\tau^{x_n}_{\partial}> t^{'}$. Hence, for $n\geq\max(N_1,N_2)$, $\tau^{x_n}_{\partial}> t_n$. This concludes the proof of the convergence \eqref{cv indicatrices 1} on the event $\{\tau^{x}_{\partial}> t\}$.
\end{proof}

\begin{remark}
Notice that the convergence \eqref{cv indicatrices 1} cannot be satisfied for $(t,x)\in\{0\}\times(\Gamma^+\cup\Gamma^0)$. Indeed, if $x \in \Gamma^+ \cup \Gamma^0$ and $(x_n)_{n \geq 1}$ is a sequence of elements of $D$ which converges towards $x$, then by Proposition~\ref{prop:tau} we have $\mathbb{1}_{\tau^{x_n}_{\partial}> 0}=1$ almost surely while $\mathbb{1}_{\tau^x_{\partial}> 0}=0$ almost surely. 
\end{remark}

\begin{remark}\label{rk:cvtauxn}
  Let us take $t_n=t>0$ for all $n \geq 1$ in Lemma~\ref{cv
    indicatrices lemma}. Then we get that for any sequence $(x_n)_{n
    \geq 1}$ of elements of $\overline{D}$ converging to some $x \in
  \overline{D}$, $\mathbb{1}_{\tau^{x_n}_{\partial}> t}$ converges
  almost surely to $\mathbb{1}_{\tau^x_{\partial}> t}$. Using the monotonicity of the functions $t \mapsto
  \mathbb{1}_{\tau^{x_n}_{\partial}> t}$ and $t \mapsto
  \mathbb{1}_{\tau^x_{\partial}> t}$, we deduce that almost surely, for any $t>0$ such that $t \not=\tau^x_\partial$, $\mathbb{1}_{\tau^{x_n}_{\partial}> t}$ converges
  almost surely to $\mathbb{1}_{\tau^x_{\partial}> t}$. Integrating in time, we conclude that $\tau^{x_n}_{\partial}$ converges almost surely to
  $\tau^x_{\partial}$.
\end{remark}

We are now in position to prove Assertion~\eqref{it:ibvp:cont} in Theorem~\ref{Solution PDE}.

\begin{proof}[Proof of Assertion~\eqref{it:ibvp:cont} in Theorem~\ref{Solution PDE}]The proof is divided into two steps. In the first step we show that $u$ is continuous on $(\mathbb{R}_+\times\overline{D})\setminus(\{0\}\times(\Gamma^+\cup\Gamma^0))$, and in the second step we show that if $f$ and $g$ satisfy the compatibility condition~\eqref{compatibility cond} then $u$ is continuous on $\mathbb{R}_+ \times \overline{D}$.

\medskip \noindent\textbf{Step 1}.
 Let $(t,x)\in (\mathbb{R}_+\times\overline{D})\setminus(\{0\}\times(\Gamma^+\cup\Gamma^0))$. Let $(t_n,x_n)_{n\geq1}$ be a sequence in $\mathbb{R}_+\times\overline{D}$ converging to $(t,x)$. Let us prove that
 \begin{equation}\label{convergence u_n to u}
     u(t_n,x_n)\underset{n\longrightarrow \infty}{{\longrightarrow}}u(t,x).
 \end{equation}
To this aim, let us study the difference $\vert  u(t_n,x_n)-u(t,x)\vert$. It follows from the expression \eqref{u} of $u$ and the triangle inequality that
\begin{equation}\label{majoration u_n-u}
    \vert  u(t_n,x_n)-u(t,x)\vert\leq\mathbb{E}\left[\left\vert f(X^{x_n}_{t_n})\mathbb{1}_{\tau^{x_n}_{\partial}>t_n}-f(X^{x}_{t})\mathbb{1}_{\tau^{x}_{\partial}>t}\right\vert \right]+\mathbb{E}\left[\left\vert g(X^{x_n}_{\tau^{x_n}_{\partial}})\mathbb{1}_{\tau^{x_n}_{\partial}\leq t_n}-g(X^{x}_{\tau^{x}_{\partial}})\mathbb{1}_{\tau^{x}_{\partial}\leq t}\right\vert \right].    
  \end{equation}
  
Let us start with the first term in the right-hand side of the inequality above. We have that
\begin{align*}
    &\left\vert f(X^{x_n}_{t_n})\mathbb{1}_{\tau^{x_n}_{\partial}>t_n}-f(X^{x}_{t})\mathbb{1}_{\tau^{x}_{\partial}>t}\right\vert\\
    &=\left\vert\mathbb{1}_{\tau^{x_n}_{\partial}>t_n,\tau^{x}_{\partial}>t}\left[f(X^{x_n}_{t_n})-f(X^{x}_{t})\right]+f(X^{x_n}_{t_n})\mathbb{1}_{\tau^{x_n}_{\partial}>t_n,\tau^{x}_{\partial}\leq t}-f(X^{x}_{t})\mathbb{1}_{\tau^{x}_{\partial}>t,\tau^{x_n}_{\partial}\leq t_n}\right\vert\\
    &\leq\mathbb{1}_{\tau^{x_n}_{\partial}>t_n,\tau^{x}_{\partial}>t}\left\vert f(X^{x_n}_{t_n})-f(X^{x}_{t})\right\vert+\Vert f\Vert_\infty \left|\mathbb{1}_{\tau^{x_n}_{\partial}>t_n}-\mathbb{1}_{\tau^{x}_{\partial}>t}\right|,
\end{align*}
since $\mathbb{1}_{\tau^{x_n}_{\partial}>t_n,\tau^{x}_{\partial}\leq t}+\mathbb{1}_{\tau^{x}_{\partial}>t,\tau^{x_n}_{\partial}\leq t_n}=|\mathbb{1}_{\tau^{x_n}_{\partial}>t_n}-\mathbb{1}_{\tau^{x}_{\partial}>t}|$. By Lemmas~\ref{couplage Lemma} and \ref{cv indicatrices lemma}, since $f\in\mathcal{C}^b(D\cup\Gamma^-)$, it follows by the dominated convergence theorem that $\mathbb{E}[\vert f(X^{x_n}_{t_n})\mathbb{1}_{\tau^{x_n}_{\partial}>t_n}-f(X^{x}_{t})\mathbb{1}_{\tau^{x}_{\partial}>t}\vert]\underset{n\rightarrow \infty}{{\longrightarrow}}0$.

Let us now consider the second term in the right-hand side of the inequality \eqref{majoration u_n-u}. We have that
\begin{align*}
    &\left\vert g(X^{x_n}_{\tau^{x_n}_{\partial}})\mathbb{1}_{\tau^{x_n}_{\partial}\leq t_n}-g(X^{x}_{\tau^{x}_{\partial}})\mathbb{1}_{\tau^{x}_{\partial}\leq t}\right\vert\\
    &=\left\vert \mathbb{1}_{\tau^{x_n}_{\partial}\leq t_n, \tau^x_{\partial}\leq t} \left[g(X^{x_n}_{\tau^{x_n}_{\partial}})-g(X^{x}_{\tau^{x}_{\partial}})\right] + g(X^{x_n}_{\tau^{x_n}_{\partial}})\mathbb{1}_{\tau^{x_n}_{\partial}\leq t_n, \tau^x_{\partial} > t} - g(X^{x}_{\tau^{x}_{\partial}})\mathbb{1}_{\tau^{x_n}_{\partial}> t_n, \tau^x_{\partial}\leq t}\right\vert\\
    &\leq \mathbb{1}_{\tau^{x_n}_{\partial}\leq t_n, \tau^x_{\partial}\leq t}\left\vert g(X^{x_n}_{\tau^{x_n}_{\partial}})-g(X^{x}_{\tau^{x}_{\partial}})\right\vert + \Vert g\Vert_\infty \left\vert \mathbb{1}_{\tau^{x_n}_{\partial}> t_n}-\mathbb{1}_{\tau^{x}_{\partial}>t} \right\vert,
\end{align*}
so that we deduce again from Lemmas~\ref{couplage Lemma} and \ref{cv indicatrices lemma} along with Remark~\ref{rk:cvtauxn} and the dominated convergence theorem, since $g \in\mathcal{C}^b(\Gamma^+\cup\Gamma^0)$,  that
\begin{equation*}
  \mathbb{E}\left[\left\vert g(X^{x_n}_{\tau^{x_n}_{\partial}})\mathbb{1}_{\tau^{x_n}_{\partial}\leq t_n}-g(X^{x}_{\tau^{x}_{\partial}})\mathbb{1}_{\tau^{x}_{\partial}\leq t}\right\vert \right] \underset{n\rightarrow \infty}{{\longrightarrow}}0,
\end{equation*}
which completes the proof of~\eqref{convergence u_n to u}.

\medskip \noindent\textbf{Step 2}. Assume now that $f$ and $g$ satisfy the compatibility condition \eqref{compatibility cond}.  Let $x\in\Gamma^+\cup\Gamma^0$ and $(t_n,x_n)_{n\geq1}$ be a sequence in $\mathbb{R}_+\times\overline{D}$ converging to $(0,x)$. Let us prove that $$u(t_n,x_n)\underset{n\longrightarrow \infty}{{\longrightarrow}}u(0,x)=g(x).$$
We have that 
\[\begin{aligned}
\vert   u(t_n,x_n)-g(x) \vert&= \left\vert  \mathbb{E}\left[ \left(f(X^{x_n}_{t_n})-g(x)\right) \mathbb{1}_{\tau^{x_n}_{\partial}>t_n} \right]+\mathbb{E}\left[ \left(g(X^{x_n}_{\tau^{x_n}_{\partial}})-g(x)\right) \mathbb{1}_{\tau^{x_n}_{\partial}\leq t_n} \right]  \right\vert\\
&\leq  \mathbb{E}\big[\big\vert f(X^{x_n}_{t_n})-g(x)\big\vert\mathbb{1}_{\tau^{x_n}_{\partial}>t_n}\big]+\mathbb{E}\big[ \big\vert g(X^{x_n}_{\tau^{x_n}_{\partial}})-g(x)\big\vert \mathbb{1}_{\tau^{x_n}_{\partial}\leq t_n} \big] .\\
\end{aligned}\]
It follows from the compatibility condition \eqref{compatibility cond}
and Lemma \ref{couplage Lemma} that
$\mathbb{1}_{\tau^{x_n}_{\partial}>t_n}\big\vert
f(X^{x_n}_{t_n})-g(x)\big\vert \underset{n\rightarrow
  \infty}{{\longrightarrow}}0$
almost surely. Therefore, using the dominated convergence theorem, the first term in the right-hand side of the  inequality above converges to $0$.
Furthermore, on the event $\{\tau^{x_n}_{\partial}\leq t_n\}$, it follows  by Lemma \ref{couplage Lemma} that
\begin{align*}
    \left\vert X^{x_n}_{\tau^{x_n}_{\partial}}-x\right\vert&\leq\left \vert X^{x_n}_{\tau^{x_n}_{\partial}}-X^{x}_{\tau^{x_n}_{\partial}}\right\vert+\left\vert X^{x}_{\tau^{x_n}_{\partial}}-x\right\vert\\
    &\leq \underbrace{\vert x_n-x\vert \mathrm{e}^{C_\mathrm{Lip} t_n}}_{\underset{n\rightarrow \infty}{{\longrightarrow}}0}+\underbrace{\left\vert X^{x}_{\tau^{x_n}_{\partial}}-x\right\vert}_{\underset{n\rightarrow \infty}{{\longrightarrow}}0\text{ a.s.}},
\end{align*}
since $\tau^{x_n}_{\partial}\leq t_n\underset{n\rightarrow
  \infty}{{\longrightarrow}}0$. As a result, $\mathbb{E}\big[
\big\vert g(X^{x_n}_{\tau^{x_n}_{\partial}})-g(x)\big\vert
\mathbb{1}_{\tau^{x_n}_{\partial}\leq t_n} \big]\underset{n\rightarrow
  \infty}{{\longrightarrow}}0$ since
$g\in\mathcal{C}^b(\Gamma^+\cup\Gamma^0)$. Hence
$u(t_n,x_n)\underset{n\longrightarrow
  \infty}{{\longrightarrow}}g(x)$. This concludes the proof of Assertion~\eqref{it:ibvp:cont} in Theorem~\ref{Solution PDE}.
\end{proof}
 
\subsection{\texorpdfstring{Interior regularity: proof of Assertion~\eqref{it:ibvp:reg} in Theorem~\ref{Solution PDE}}{}}\label{ss:ibvp:reg}
The link between functions of the form of $u$ defined by~\eqref{u} and parabolic problems of the form~\eqref{Langevin PDE} is standard for uniformly elliptic operators in bounded domains with compatible initial and boundary conditions, see for instance~\cite[Chapter~6]{F}. In order to extend this link to the degenerate operator $\mathcal{L}$, we proceed by approximating~\eqref{Langevin PDE} by the uniformly elliptic problem
\begin{equation}\label{eq:edpueps}
  \partial_t u_\epsilon = \mathcal{L} u_\epsilon + \epsilon \Delta_q u_\epsilon.
\end{equation} 

Let $\epsilon>0$ and $(\widetilde{B}_t)_{t\geq0}$ be a $d-$dimensional Brownian motion independent of $(B_t)_{t\geq0}$. Under Assumption \ref{hyp F}, for all $x \in D$ we denote by $(X_t^{x,\epsilon}=(q^{x,\epsilon}_t,p^{x,\epsilon}_t))_{t\geq0}$ the strong solution of 
\begin{equation}\label{perturbed Langevin}
  \left\{
    \begin{aligned}
&        \mathrm{d}q^{x,\epsilon}_t=p^{x,\epsilon}_t \mathrm{d}t+\sqrt{2\epsilon} \mathrm{d}\widetilde{B}_t, \\
 &       \mathrm{d}p^{x,\epsilon}_t=F(q^{x,\epsilon}_t) \mathrm{d}t-\gamma p^{x,\epsilon}_t \mathrm{d}t+\sigma \mathrm{d}B_t,\\
  &      (q^{x,\epsilon}_0,p^{x,\epsilon}_0)= x .
    \end{aligned}
\right.
\end{equation}
Let $\tau^{x,\epsilon}_{\partial}=\inf \{t>0: X^{x,\epsilon}_t\notin D\}$ be the first exit time from $D$ of the process $(X_t^{x,\epsilon})_{t\geq0}$. 

We first assume that the functions $f$ and $g$ satisfy the compatibility condition~\eqref{compatibility cond}, define the function $h \in\mathcal{C}^b(\overline{D})$ by
\begin{equation}\label{eq:h-fg}
  h(x) = \mathbb{1}_{x\in D\cup\Gamma^-}f(x)+\mathbb{1}_{\Gamma^+\cup\Gamma^0}g(x),
\end{equation}
and state the following two lemmas.

\begin{lemma}[Perturbed problem]\label{v_eps equation}
Under Assumptions~\ref{hyp O} and~\ref{hyp F}, let $\epsilon>0$ and let $f\in\mathcal{C}^b(D\cup~\Gamma^-)$, $g\in\mathcal{C}^b(\Gamma^+\cup\Gamma^0)$ satisfy \eqref{compatibility cond}. Let $h \in \mathcal{C}^b(\overline{D})$ be defined by~\eqref{eq:h-fg}. The function $u_\epsilon$ on $\mathbb{R}_+^*\times D$ defined by
\begin{equation}\label{v_epsilon}
    u_\epsilon:(t,x)\mapsto\mathbb{E}\left[ \mathbb{1}_{\tau^{x,\epsilon}_{\partial}>t} h_{|D}(X^{x,\epsilon}_t) +\mathbb{1}_{\tau^{x,\epsilon}_{\partial}\leq t} h_{|\partial D}\Big(X^{x,\epsilon}_{\tau^{x,\epsilon}_{\partial}}\Big) \right]
\end{equation}
satisfies~\eqref{eq:edpueps} in the sense of distributions on $\mathbb{R}_+^* \times D$.
\end{lemma}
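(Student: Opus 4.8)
The plan is to reduce the statement to the classical theory of uniformly parabolic Initial-Boundary Value Problems on bounded domains, the point being that the extra term $\sqrt{2\epsilon}\,\mathrm{d}\widetilde{B}_t$ in~\eqref{perturbed Langevin} turns it into a \emph{non-degenerate} diffusion: the generator $\mathcal{L}+\epsilon\Delta_q$ has second-order part $\epsilon\Delta_q+\tfrac{\sigma^2}{2}\Delta_p$, so on every bounded set the operator $\partial_t-\mathcal{L}-\epsilon\Delta_q$ is uniformly parabolic with $\mathcal{C}^\infty$, bounded coefficients. Localization is needed both because $D=\mathcal{O}\times\mathbb{R}^d$ is unbounded in the velocity variable and because the drift $(q,p)\mapsto(p,F(q)-\gamma p)$ is unbounded. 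As a preliminary observation, since $h\in\mathcal{C}^b(\overline D)$ the integrand in~\eqref{v_epsilon} is bounded by $\|h\|_\infty$, so $\|u_\epsilon\|_\infty\le\|h\|_\infty$ and $u_\epsilon\in\mathrm{L}^1_{\mathrm{loc}}(\mathbb{R}_+^*\times D)$, hence $u_\epsilon$ does define a distribution on $\mathbb{R}_+^*\times D$.

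The second ingredient is the joint continuity of $u_\epsilon$ on $\mathbb{R}_+^*\times D$. I would obtain this by the same arguments as in the proof of Assertion~\eqref{it:ibvp:cont} of Theorem~\ref{Solution PDE} (joint continuity of $(t,x)\mapsto X^{x,\epsilon}_t$, almost sure continuity of the exit-event indicator, dominated convergence), which here are in fact considerably simpler: since $(X^{x,\epsilon}_t)_{t\ge0}$ is non-degenerate and $\mathcal{O}$ is $\mathcal{C}^2$, every point of $\partial D$ is regular for $D^c$ and there is no singular set, so that $x\mapsto\tau^{x,\epsilon}_\partial$ is almost surely continuous and the analogue of Lemma~\ref{cv indicatrices lemma} holds without restriction near the boundary; alternatively one may invoke directly the classical continuity results for uniformly elliptic diffusions, e.g.~\cite[Chapter~6]{F}.

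The core of the argument is a local mean-value identity coming from the strong Markov property. Fix a bounded open set $U$ with $\overline U\subset D$, a point $x\in U$, and $0<s<t$; write $\tau^{x,\epsilon}_U=\inf\{r>0:X^{x,\epsilon}_r\notin U\}$ and $\theta=s\wedge\tau^{x,\epsilon}_U\le s<t$. Because $X^{x,\epsilon}_r\in\overline U\subset D$ for all $r\le\theta$ and $D$ is open, one has $\theta<\tau^{x,\epsilon}_\partial$ and $\tau^{x,\epsilon}_\partial=\theta+\tau^{X^{x,\epsilon}_\theta,\epsilon}_\partial$; conditioning the integrand in~\eqref{v_epsilon} on $\mathcal{F}_\theta$ (note $t-\theta>0$) and using the strong Markov property and the definition of $u_\epsilon$ therefore yields
\begin{equation*}
  u_\epsilon(t,x)=\mathbb{E}\left[u_\epsilon\left(t-\theta,\,X^{x,\epsilon}_{\theta}\right)\right],\qquad\theta=s\wedge\tau^{x,\epsilon}_U .
\end{equation*}
Now, given $(t,x)\in\mathbb{R}_+^*\times D$, fix a bounded smooth cylinder $Q=(a,b)\times U$ with $(t,x)\in Q$ and $\overline Q\subset\mathbb{R}_+^*\times D$. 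On $\overline U$ the operator $\partial_t-\mathcal{L}-\epsilon\Delta_q$ is uniformly parabolic with smooth bounded coefficients, so by the classical theory (see e.g.~\cite[Chapter~6]{F}; the compatibility of the data is automatic since $u_\epsilon$ is continuous on $\overline Q$) there is a unique $w\in\mathcal{C}^{1,2}(Q)\cap\mathcal{C}(\overline Q)$ solving $\partial_t w=\mathcal{L}w+\epsilon\Delta_q w$ in $Q$ with $w=u_\epsilon$ on the parabolic boundary of $Q$, and $w$ admits the probabilistic representation $w(t,x)=\mathbb{E}\big[u_\epsilon(t-\sigma_Q,X^{x,\epsilon}_{\sigma_Q})\big]$ with $\sigma_Q=(t-a)\wedge\tau^{x,\epsilon}_U$. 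Applying the mean-value identity with $s=t-a\in(0,t)$ gives $w=u_\epsilon$ on $Q$, hence $u_\epsilon\in\mathcal{C}^{1,2}(Q)$ and $\partial_tu_\epsilon=\mathcal{L}u_\epsilon+\epsilon\Delta_qu_\epsilon$ pointwise on $Q$. Since $Q$ is an arbitrary bounded smooth cylinder with closure in $\mathbb{R}_+^*\times D$, the identity~\eqref{eq:edpueps} holds on all of $\mathbb{R}_+^*\times D$, a fortiori in the sense of distributions.

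The main points requiring care are the bookkeeping in the strong Markov step — one must check that the exit from $D$ has not yet occurred at time $\theta$, which is ensured by $X^{x,\epsilon}_\theta\in\overline U\subset D$ — and the verification that $u_\epsilon$ matches the boundary data of the local classical solution $w$, which is precisely where the joint continuity of $u_\epsilon$ enters. Neither is a genuine obstacle: the degeneracy and the singular-set phenomena that make Theorem~\ref{Solution PDE} delicate are absent here thanks to the elliptic regularization $\epsilon\Delta_q$, which is the very reason for introducing~\eqref{perturbed Langevin}.
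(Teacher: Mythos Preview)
Your argument is correct and takes a genuinely different route from the paper. The paper exhausts $D$ by an increasing sequence of bounded $\mathcal{C}^2$ domains $\widetilde V_k\subset D$, solves on each $\widetilde V_k$ the uniformly parabolic IBVP with initial and boundary data $h$ (via \cite[Chapter~6, Theorem~5.2]{F}), obtains the probabilistic representation $v_{k,\epsilon}(t,x)=\mathbb{E}[\mathbb{1}_{\tau^{x,\epsilon}_{\widetilde V_k^c}>t}h(X^{x,\epsilon}_t)+\mathbb{1}_{\tau^{x,\epsilon}_{\widetilde V_k^c}\le t}h(X^{x,\epsilon}_{\tau^{x,\epsilon}_{\widetilde V_k^c}})]$, shows $v_{k,\epsilon}\to u_\epsilon$ pointwise as $k\to\infty$, and passes to the limit in the weak formulation by dominated convergence.

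Your approach instead establishes continuity of $u_\epsilon$ first, then uses the strong Markov mean-value identity to identify $u_\epsilon$ locally with the classical solution of a bounded IBVP whose boundary data is $u_\epsilon$ itself. This buys you more --- you actually prove $u_\epsilon\in\mathcal{C}^{1,2}(\mathbb{R}_+^*\times D)$ and that~\eqref{eq:edpueps} holds classically, not merely distributionally --- at the cost of needing the preliminary continuity of $u_\epsilon$, which you correctly observe is easier than in the degenerate setting since every boundary point of $D$ is regular for the uniformly elliptic perturbation. The paper's route avoids this preliminary step entirely: it never needs to know anything about $u_\epsilon$ beyond its definition, since the approximants $v_{k,\epsilon}$ are built from the original data $h$ and the limit is only taken in the distributional sense. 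Both approaches invoke the same classical input (Friedman's theory on bounded domains); yours is a localization via the strong Markov property, the paper's is a global approximation by domain truncation.
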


\begin{lemma}[Convergence]\label{v_eps convergence}
Under the assumptions of Lemma~\ref{v_eps equation}, for all $t>0$ and $x\in D$, $$u_\epsilon(t,x)\underset{\epsilon\rightarrow 0}{{\longrightarrow}} u(t,x).$$
\end{lemma}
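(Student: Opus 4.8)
The plan is to recast both $u_\epsilon(t,x)$ and $u(t,x)$ as the expectation of the \emph{same} bounded continuous function $h$ evaluated at a trajectory stopped at its first exit time from $D$, and then to pass to the limit $\epsilon\to0$ by dominated convergence; the only substantial ingredient will be the convergence of the perturbed trajectories together with their exit times. On the event $\{\tau^{x,\epsilon}_\partial>t\}$ one has $q^{x,\epsilon}_t\in\mathcal{O}$, hence $X^{x,\epsilon}_t\in D$ and $h_{|D}(X^{x,\epsilon}_t)=h(X^{x,\epsilon}_t)$; on $\{\tau^{x,\epsilon}_\partial\leq t\}$, the continuity of $(X^{x,\epsilon}_s)_{s\geq0}$ and the openness of $\mathcal{O}$ give $X^{x,\epsilon}_{\tau^{x,\epsilon}_\partial}\in\partial D$, hence $h_{|\partial D}(X^{x,\epsilon}_{\tau^{x,\epsilon}_\partial})=h(X^{x,\epsilon}_{\tau^{x,\epsilon}_\partial})$; therefore $u_\epsilon(t,x)=\mathbb{E}[h(X^{x,\epsilon}_{t\wedge\tau^{x,\epsilon}_\partial})]$. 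Similarly, since $x\in D$, Proposition~\ref{prop:tau} gives $\tau^x_\partial>0$ and $X^x_{\tau^x_\partial}\in\Gamma^+$ almost surely on $\{\tau^x_\partial<\infty\}$, so that $g(X^x_{\tau^x_\partial})=h(X^x_{\tau^x_\partial})$ there, while $f(X^x_t)=h(X^x_t)$ on $\{\tau^x_\partial>t\}$; hence $u(t,x)=\mathbb{E}[h(X^x_{t\wedge\tau^x_\partial})]$. Since $h\in\mathcal{C}^b(\overline{D})$ (this is where the compatibility condition~\eqref{compatibility cond} is used), it remains to prove that $X^{x,\epsilon}_{t\wedge\tau^{x,\epsilon}_\partial}\to X^x_{t\wedge\tau^x_\partial}$ almost surely as $\epsilon\to0$, and to invoke dominated convergence with the bound $\|h\|_\infty$.

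For the trajectorial stability I would subtract the SDEs~\eqref{perturbed Langevin} and~\eqref{Langevin}, which are driven by the same Brownian motion $(B_t)_{t\geq0}$, and run a Gronwall estimate of the type of Lemma~\ref{couplage Lemma}, using the global Lipschitz continuity of the drift under Assumption~\ref{hyp F}. This yields, for every $T>0$,
\begin{equation*}
  \sup_{s\in[0,T]}|X^{x,\epsilon}_s-X^x_s|\leq C_T\,\sqrt{\epsilon}\,\sup_{s\in[0,T]}|\widetilde{B}_s|,
\end{equation*}
for a constant $C_T$ depending only on $T$ and $C_\mathrm{Lip}$, so that $\sup_{s\in[0,T]}|X^{x,\epsilon}_s-X^x_s|\to0$ almost surely as $\epsilon\to0$.

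Next I would prove $\tau^{x,\epsilon}_\partial\to\tau^x_\partial$ almost surely, following the pattern of Lemma~\ref{cv indicatrices lemma}. For the lower bound: if $t'<\tau^x_\partial$, then $q^x_s\in\mathcal{O}$ for all $s\in[0,t']$ and compactness yields $\inf_{s\in[0,t']}\mathrm{d}_\partial(q^x_s)>0$; by the uniform convergence above and the $1$-Lipschitz continuity of $\mathrm{d}_\partial$, one gets $\inf_{s\in[0,t']}\mathrm{d}_\partial(q^{x,\epsilon}_s)>0$ for $\epsilon$ small, hence $\tau^{x,\epsilon}_\partial>t'$. For the upper bound: given $\delta>0$, the strong Markov property applied at time $\tau^x_\partial$, together with Proposition~\ref{prop:tau} applied at the starting point $X^x_{\tau^x_\partial}\in\Gamma^+$, produces almost surely a time $s\in(\tau^x_\partial,\tau^x_\partial+\delta)$ with $\mathrm{d}_{\overline{\mathcal{O}}}(q^x_s)>0$; by the uniform convergence again $\mathrm{d}_{\overline{\mathcal{O}}}(q^{x,\epsilon}_s)>0$ for $\epsilon$ small, so $\tau^{x,\epsilon}_\partial<\tau^x_\partial+\delta$. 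Letting $\delta\to0$ gives $\limsup_{\epsilon\to0}\tau^{x,\epsilon}_\partial\leq\tau^x_\partial$, which together with the lower bound yields $\tau^{x,\epsilon}_\partial\to\tau^x_\partial$ almost surely. Hence $t\wedge\tau^{x,\epsilon}_\partial\to t\wedge\tau^x_\partial$ almost surely, and from
\begin{equation*}
  |X^{x,\epsilon}_{t\wedge\tau^{x,\epsilon}_\partial}-X^x_{t\wedge\tau^x_\partial}|\leq\sup_{s\in[0,t]}|X^{x,\epsilon}_s-X^x_s|+|X^x_{t\wedge\tau^{x,\epsilon}_\partial}-X^x_{t\wedge\tau^x_\partial}|
\end{equation*}
together with the continuity of $s\mapsto X^x_s$, we obtain $X^{x,\epsilon}_{t\wedge\tau^{x,\epsilon}_\partial}\to X^x_{t\wedge\tau^x_\partial}$ almost surely, so that dominated convergence gives $u_\epsilon(t,x)\to u(t,x)$.

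The main obstacle is the convergence of the exit times: uniform convergence of the trajectories does not suffice in general to force $\tau^{x,\epsilon}_\partial\to\tau^x_\partial$, and the upper bound above relies crucially on the non-tangential character of the exit of the limiting Langevin trajectory, namely the fact provided by Proposition~\ref{prop:tau} that $X^x_{\tau^x_\partial}\in\Gamma^+$ almost surely. This rules out the degenerate scenario of the trajectory grazing $\partial\mathcal{O}$ tangentially at its exit time, and guarantees that the perturbed trajectories also leave $\overline{D}$ at a nearby time.
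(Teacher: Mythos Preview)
Your proof is correct and follows essentially the same approach as the paper's: both rely on the Gronwall-type estimate for $\sup_{s\in[0,t]}|X^{x,\epsilon}_s-X^x_s|$ and on the convergence of exit times, the upper bound for which hinges (exactly as you identify) on the overshoot into $\overline{\mathcal{O}}^c$ guaranteed by Proposition~\ref{prop:tau} at the exit point in $\Gamma^+$. Your packaging via the stopped-process representation $u_\epsilon(t,x)=\mathbb{E}[h(X^{x,\epsilon}_{t\wedge\tau^{x,\epsilon}_\partial})]$ is a mild streamlining compared to the paper, which instead splits $|u_\epsilon-u|$ by a triangle inequality over the events $\{\tau^x_\partial>t\}$, $\{\tau^x_\partial<t\}$ and invokes Corollary~\ref{rq densité} to dismiss $\{\tau^x_\partial=t\}$; your route avoids that case distinction at the cost of proving the slightly stronger statement $\tau^{x,\epsilon}_\partial\to\tau^x_\partial$ a.s.
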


Before proving Lemmas~\ref{v_eps equation} and~\ref{v_eps
  convergence}, let us conclude the proof of
Assertion~\eqref{it:ibvp:reg} in Theorem~\ref{Solution PDE} using
these results. Under the assumption that $f$ and $g$ satisfy the
compatibility condition~\eqref{compatibility cond}, it is immediate,
using the result of Lemma \ref{v_eps convergence}, to obtain that $u$
solves~\eqref{Langevin PDE} in the sense of distributions, by passing
to the limit $\epsilon \to 0$ in the weak formulation of the partial
differential equation and using the fact that
  $\|u_\epsilon\|_{L^\infty(\overline D)} \le
  \|h\|_{L^\infty(\overline D)}$.

If $f$ and $g$ do not satisfy the compatibility
condition~\eqref{compatibility cond}, one can use the following
approximation argument to conclude. First, we note that since $g$ is
continuous on the closed set $\Gamma^+ \cup \Gamma^0$, there exists a
function $\widetilde{g} \in \mathcal{C}^b(\overline{D})$ which coincides
with $g$ on $\Gamma^+ \cup \Gamma^0$ by Tietze-Urysohn's extension
  theorem~\cite[Theorem 4.5.1]{Dieudonne}. For any $k \geq 1$ and $x \in D \cup \Gamma^-$, let us now set
\begin{equation*}
  \widetilde{f}_k(x) = (1-\psi_k(x)) f(x) + \psi_k(x)\widetilde{g}(x),
\end{equation*}
where $\psi_k : \overline{D} \to [0,1]$ is a continuous function such that
\begin{equation*}
  \psi_k(x) = \begin{cases}
    1 & \text{if $x \in \Gamma^+ \cup \Gamma^0$,}\\
    0 & \text{if $\mathrm{d}(x, \Gamma^+ \cup \Gamma^0) \geq 1/k$.}
  \end{cases}
\end{equation*}
Then $\widetilde{f}_k$ and $g$ satisfy the compatibility condition~\eqref{compatibility cond}, so that the argument above shows that the function $\widetilde{u}_k$ defined by
\begin{equation*}
  \widetilde{u}_k(t,x) := \mathbb{E}\left[ \mathbb{1}_{\tau^x_{\partial}>t} \widetilde{f}_k(X^x_t) +\mathbb{1}_{\tau^x_{\partial}\leq t} g(X^x_{\tau^x_{\partial}}) \right]
\end{equation*}
solves~\eqref{Langevin PDE} in the distributional sense. On the other
hand, $\widetilde{f}_k(x)$ converges to $f(x)$ for all $x \in D \cup
\Gamma^-$ when $k \to +\infty$, which by the dominated convergence
theorem implies that $\widetilde{u}_k(t,x)$ converges to $u(t,x)$ and
therefore shows that $u$ is a distributional solution
to~\eqref{Langevin PDE}, also in the case when $f$ and $g$ do not satisfy the compatibility
condition~\eqref{compatibility cond}.

It finally follows from the hypoellipticity of the operator $\partial_t - \mathcal{L}$ that $u$ is actually in $\mathcal{C}^\infty(\mathbb{R}_+^* \times D)$, which completes the proof of Assertion~\eqref{it:ibvp:reg} in Theorem~\ref{Solution PDE}.

Let us now conclude this section by proving the two Lemmas~\ref{v_eps equation} and~\ref{v_eps convergence}.
\begin{proof}[Proof of Lemma~\ref{v_eps equation}] 
The result is standard for bounded domains, but $D$ is not bounded. We
thus use an approximation argument. Let $(\widetilde{V}_k)_{k\geq1}$ be a sequence of $\mathcal{C}^2$ bounded open subsets of $D$ such that:
\begin{enumerate}[label=(\roman*),ref=\roman*]
    \item for all $k\geq1$, $\widetilde{V}_k\subset D\cap\{(q,p)\in\mathbb{R}^{2d}: \vert p\vert\leq k\}$,
    \item for all $k\geq1$, $\widetilde{V}_k\subset\widetilde{V}_{k+1}$,
    \item $\bigcup_{k\geq1}\widetilde{V}_k=D$.
\end{enumerate}
For $\epsilon>0$, let $\tau^{x,\epsilon}_{\widetilde{V}_k^c}$ be the following stopping time:
\[\begin{aligned}
\tau^{x,\epsilon}_{\widetilde{V}_k^c}&=\inf \{t>0: X^{x,\epsilon}_t\notin \widetilde{V}_k\} .
\end{aligned}\]
Let $T>0$. Consider the following Initial-Boundary Value Problem,
\begin{equation}\label{perturbed Langevin PDE}
  \left\{\begin{aligned}
    \partial_t v_{k,\epsilon}(t,x) &=\mathcal{L}v_{k,\epsilon}(t,x) + \epsilon \Delta_q v_{k,\epsilon}(t,x), && t\in(0,T], \quad x \in \widetilde{V}_k,\\
    v_{k,\epsilon}(0,x) &= h_{|\widetilde{V}_k}(x), && x\in \widetilde{V}_k,\\
    v_{k,\epsilon}(t,x) &= h_{|\partial \widetilde{V}_k}(x), && t\in(0,T], \quad x \in \partial\widetilde{V}_k.
  \end{aligned}\right.
\end{equation} 

\noindent By~\cite[Chapter 6, Theorem 5.2]{F} there exists a unique
classical solution $v_{k,\epsilon}$ in
$\mathcal{C}^2((0,T]\times\widetilde{V}_k)\cap\mathcal{C}^b([0,T]\times\overline{\widetilde{V}_k})$
of \eqref{perturbed Langevin PDE}. Furthermore, the solution can be written as
follows: for all $t>0$ and $x\in
D$  $$v_{k,\epsilon}(t,x)=\mathbb{E}\left[
  \mathbb{1}_{\tau^{x,\epsilon}_{\widetilde{V}_k^c}>t} h_{|
    \widetilde{V}_k}(X^{x,\epsilon}_t)
  +\mathbb{1}_{\tau^{x,\epsilon}_{\widetilde{V}_k^c}\leq t} h_{|\partial
    \widetilde{V}_k}\Big(X^{x,\epsilon}_{\tau^{x,\epsilon}_{\widetilde{V}_k^c}}\Big)
\right].$$ Moreover when $k$ goes to infinity one has (following the
proof of Assertion~\eqref{it:ibvp:uniq} in Theorem~\ref{Solution PDE},
see Section~\ref{ss:ibvp:uniq}):
\begin{equation}\label{cv en k}
    v_{k,\epsilon}(t,x)\underset{k\longrightarrow \infty}{{\longrightarrow}}u_\epsilon(t,x).
\end{equation}
Therefore, since $v_{k,\epsilon}$ is a classical solution of
\eqref{perturbed Langevin PDE} it is also a solution in the sense of
distributions of
$\partial_tv_{k,\epsilon}=\mathcal{L}v_{k,\epsilon}+\epsilon
\Delta_qv_{k,\epsilon}$ on $(0,T)\times \widetilde{V_k}$. But then, since
$T$ is arbitrary, $u_\epsilon$ is also a solution in the sense of distributions of $\partial_t u_{\epsilon}=\mathcal{L}u_{\epsilon}$ on $\mathbb{R}_+
^*\times D$. Indeed, for $\Phi\in\mathcal{C}^\infty_c(\mathbb{R}_+^*
\times D)$,  there exists $k_0>0$ and $T_0>0$ such that $\mathrm{supp}(\Phi)\subset(0,T_0]\times\widetilde{V}_{k_0}$. As a result, for all $k>k_0$ and $T>T_0$,
$$\iint_{\mathbb{R}^{*}_+\times D}v_{k,\epsilon}(t,x)\left(\partial_t
  \Phi(t,x)+\mathcal{L}^*\Phi(t,x)+\epsilon \Delta_q \Phi(t,x)
\right)\mathrm{d}t \mathrm{d}x=0 .$$ The proof is then easily
completed, using~\eqref{cv en k} and the dominated convergence theorem.
\end{proof}
\begin{proof}[Proof of Lemma~\ref{v_eps convergence}] 
 An application of Gronwall's Lemma, as in the proof of Lemma~\ref{couplage Lemma}, shows that, almost surely, 
 \begin{equation}\label{difference processes}
\sup_{s\in[0,t]}\vert X_s^{x,\epsilon}- X^x_s\vert\leq \sqrt{2\epsilon} \sup_{s\in[0,t]}\vert \widetilde{B}_{s} \vert   \mathrm{e}^{C_\mathrm{Lip} t} 
\end{equation}
where $C_\mathrm{Lip}$ is the Lipschitz constant of the drift of~\eqref{Langevin}. In particular, for all $t\geq0$,   $X_t^{x,\epsilon}\underset{\epsilon\longrightarrow 0}{{\longrightarrow}}X^x_t$ almost surely.

Let us now consider the difference between $u_\epsilon(t,x)$ and
$u(t,x)$ for $t>0$, $x\in D$. Using the same triangle inequality as in
the proof of Assertion~\eqref{it:ibvp:cont} of Theorem~\ref{Solution
  PDE} (see Section~\ref{ss:ibvp:cont}), one has 
\[\begin{aligned}
\left\vert  u_\epsilon(t,x)-u(t,x)\right\vert&\leq \mathbb{E}\left[\mathbb{1}_{\tau^{x,\epsilon}_{\partial}>t,\tau^{x}_{\partial}>t}\left\vert h_{| D}(X^{x,\epsilon}_{t})-h_{| D}(X^{x}_{t}) \right\vert \right]+2\Vert h\Vert_{\infty}\mathbb{E}\left[ \left\vert \mathbb{1}_{\tau^{x,\epsilon}_{\partial}>t}-\mathbb{1}_{\tau^{x}_{\partial}>t}\right\vert \right]\\
&\quad +\mathbb{E}\left[\mathbb{1}_{\tau^{x,\epsilon}_{\partial}\leq t, \tau^x_{\partial}\leq t}\left\vert h_{| \partial D}(X^{x,\epsilon}_{\tau^{x,\epsilon}_{\partial}})-h_{| \partial D}(X^{x}_{\tau^{x}_{\partial}})\right\vert \right] .
\end{aligned}\] 
Using \eqref{difference processes} and the fact that $h_{|
  D}\in\mathcal{C}^b(D)$, it follows from the dominated convergence
theorem that the first term in the right-hand side of the inequality
converges to $0$ as $\epsilon$ goes to $0$. Besides, remember that
$\mathbb{P}(\tau^x_{\partial}=t)=0$ for $x\in D$ and
$t>0$ by Corollary~\ref{rq densité}. As a result if one can prove that for all $x\in D$, $t>0$,
\begin{equation}\label{cv indicatrices 3}
\mathbb{1}_{\tau^{x,\epsilon}_{\partial}>t}\underset{\epsilon\rightarrow 0}{{\longrightarrow}}   \mathbb{1}_{\tau^x_{\partial}>t}\quad\text{almost surely on the events $\{\tau^x_\partial < t\text\}$ and $\{\tau^x_\partial > t\}$,}
\end{equation}
and
\begin{equation}\label{cv indicatrices 3 bis}
\tau^{x,\epsilon}_{\partial}\underset{\epsilon\longrightarrow
  0}{{\longrightarrow}}\tau^{x}_{\partial}\quad\text{almost surely on
  the event }\{\tau^{x}_{\partial} < t\},
\end{equation}
then using \eqref{difference processes}, the fact that $h_{|\partial
  D}\in\mathcal{C}^b(\Gamma^+\cup\Gamma^0)$ and the continuity of the
trajectories of $(X_t^x)_{t\geq0}$, the convergence of $
u_\epsilon(t,x)$ towards $u(t,x)$ follows from
the dominated convergence theorem, and the proof is complete.

 Let us now prove the two convergences \eqref{cv indicatrices 3} and \eqref{cv indicatrices 3 bis}.

\medskip \noindent \textbf{Step 1}. Consider first the convergence
\eqref{cv indicatrices 3} on the event $\{\tau^x_{\partial}>t\}$.  By the continuity of the trajectories of $(q^x_s)_{s\geq0}$,
$$ \epsilon_0:=\inf_{0\leq s\leq t}\mathrm{d}_\partial(q^x_s)>0 .$$

\noindent Let $S_t:=\sup_{0\leq s\leq t}\vert \widetilde{B}_{s}
\vert$. For $\epsilon\leq \frac{\epsilon^2_0}{8S^2_t}
\mathrm{e}^{-2C_\mathrm{Lip}t}$ $(\text{which is positive since
}S_t<\infty\text{ almost surely})$, one has by \eqref{difference processes}:
$$\sup_{0\leq s\leq t}\vert q^{x,\epsilon}_s- q^x_s\vert \leq \sup_{0\leq s\leq t}\vert X_s^{x,\epsilon}- X^x_s\vert
\leq \frac{\epsilon_0}{2} .$$

\noindent Hence, since $\mathrm{d}_\partial$ is $1$-Lipschitz continuous, for $\epsilon\leq \frac{\epsilon^2_0}{8S^2_t} \mathrm{e}^{-2C_\mathrm{Lip}t}$ $$\inf_{0\leq s\leq t}\mathrm{d}_\partial(q^{x,\epsilon}_s)\geq \frac{\epsilon_0}{2}>0,$$
which implies
$ \mathbb{1}_{\tau^{x,\epsilon}_{\partial}>t}=1$ and~\eqref{cv
  indicatrices 3} thus holds on the event $\{\tau^x_{\partial}>t\}$.

\medskip \noindent \textbf{Step 2}. Let us now prove the convergences
\eqref{cv indicatrices 3} and \eqref{cv indicatrices 3 bis} on the
event $\{\tau^x_{\partial}< t\}$. Since $x\in D$, by Proposition \ref{prop:tau} one has $(q^x_{\tau^x_\partial},p^x_{\tau^x_\partial})\in\Gamma^+$ almost surely. Let $0<\eta<(t-\tau^x_\partial)\land\tau^x_\partial$. The strong Markov property along with Proposition \ref{prop:tau} ensure that there exists almost surely $t_0\in(\tau^x_\partial,\tau^x_\partial+\eta)$ such that 

$$\epsilon_1:=\mathrm{d}_{\overline{\mathcal{O}}}(q^x_{t_0}) >0 .$$
Besides, the continuity of the trajectories of $(q_s^x)_{s\geq0}$ ensures that 
$$\epsilon_2:=\inf_{0\leq s\leq \tau^x_\partial-\eta}\mathrm{d}_\partial(q^{x}_s)>0 . $$

\noindent As a result, for $\epsilon\leq \frac{\epsilon^2_1\land\epsilon^2_2}{8S^2_t} \mathrm{e}^{-2C_\mathrm{Lip}t}$,
$$\sup_{0\leq s\leq t}\vert q^{x,\epsilon}_s- q^x_s\vert \leq \sup_{0\leq s\leq t}\vert X_s^{x,\epsilon}- X^x_s\vert
\leq \frac{\epsilon_1\land\epsilon_2}{2}.$$
Hence, since $\mathrm{d}_{\overline{\mathcal{O}}}$ is 1-Lipschitz continuous, $$\mathrm{d}_{\overline{\mathcal{O}}}(q^{x,\epsilon}_{t_0})\geq\frac{\epsilon_1}{2}>0 ,$$
and since $\mathrm{d}_\partial$ is 1-Lipschitz continuous as well, one has
$$\inf_{0\leq s\leq \tau^x_\partial-\eta}\mathrm{d}_\partial(q^{x,\epsilon}_s)\geq \frac{\epsilon_2}{2}>0 . $$
Therefore, for $\epsilon$ small enough,
$$ \vert \tau^{x,\epsilon}_\partial-\tau^{x}_\partial \vert\leq
\eta\quad\text{and in particular}\quad\tau^{x,\epsilon}_\partial\leq\tau^{x}_\partial+\eta<t .$$ Consequently, the convergences \eqref{cv indicatrices 3} and \eqref{cv indicatrices 3 bis} hold on the event $\{\tau^x_{\partial}< t\}$.
\end{proof}
 
\subsection{Proof of Propositions \ref{non attainability} and \ref{prop:tau}}\label{section proof lemmata}
We conclude Section~\ref{section 2} with the proofs of
Propositions~\ref{non attainability} and~\ref{prop:tau}, which are the cornerstones of all the
previous results. In Section~\ref{sss:sphere}, we deduce from a simple geometric argument that Assertion~\eqref{it:tau:reg} in Proposition~\ref{prop:tau} holds for $x \in \Gamma^+$, and that, taking Proposition~\ref{non attainability} for granted, Assertion~\eqref{it:tau:sing} in Proposition~\ref{prop:tau} holds. The proof of the remaining statements, namely Proposition~\ref{non attainability} and Assertion~\eqref{it:tau:reg} in Proposition~\ref{prop:tau} for $x \in \Gamma^0$, both rely on a preliminary reduction to a Gaussian process, thanks to the Girsanov theorem, which is detailed in Section~\ref{sss:girsanov}. Last, we complete the proof of Assertion~\eqref{it:tau:reg} in Proposition~\ref{prop:tau} in Section~\ref{subsec:proof assertion i}
and we provide the proof of Proposition~\ref{non attainability} in Section~\ref{subsec:proof proposition}. 

\subsubsection{The interior and exterior sphere conditions}\label{sss:sphere}

The first part of the proof of Proposition~\ref{prop:tau} relies on the following geometric property of the set $\mathcal{O}$ which is standard.

\begin{proposition}[Uniform interior and exterior sphere conditions]\label{prop:sphere}
  Under Assumption~\ref{hyp O}, there exists $\rho > 0$ such that for any $q \in \partial \mathcal{O}$, there exist two points $q_\mathrm{int} \in \mathcal{O}$ and $q_\mathrm{ext} \in \overline{\mathcal{O}}^c$ such that the open Euclidean balls $\mathrm{B}(q_\mathrm{int},\rho)$ and $\mathrm{B}(q_\mathrm{ext},\rho)$ satisfy
\begin{equation*}
  \mathrm{B}(q_\mathrm{int},\rho) \subset \mathcal{O}, \qquad \mathrm{B}(q_\mathrm{ext},\rho) \subset \overline{\mathcal{O}}^c, \qquad  \overline{\mathrm{B}(q_\mathrm{int},\rho)} \cap \mathcal{O}^c = \overline{\mathrm{B}(q_\mathrm{ext},\rho)} \cap \overline{\mathcal{O}} = \{q\}.
\end{equation*}
\end{proposition}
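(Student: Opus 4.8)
The plan is to build the points $q_{\mathrm{int}}$ and $q_{\mathrm{ext}}$ explicitly as $q-\rho\,n(q)$ and $q+\rho\,n(q)$ for a single radius $\rho>0$ extracted from a \emph{uniform} tubular neighborhood of $\partial\mathcal{O}$. Since $\mathcal{O}$ is $\mathcal{C}^2$ and bounded, $\partial\mathcal{O}$ is a compact hypersurface and the outward unit normal $n$ is $\mathcal{C}^1$ on it, so the endpoint map $\Psi(q,t):=q+t\,n(q)$ is $\mathcal{C}^1$ on $\partial\mathcal{O}\times\mathbb{R}$, and its differential at any $(q,0)$ is the linear isomorphism $(v,s)\mapsto v+s\,n(q)$ of $T_q\partial\mathcal{O}\times\mathbb{R}$ onto $\mathbb{R}^d$ (the $t$-dependent term vanishes at $t=0$). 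Hence $\Psi$ is a local $\mathcal{C}^1$-diffeomorphism around each $(q,0)$; a compactness argument then upgrades this to injectivity of $\Psi$ on $\partial\mathcal{O}\times(-\mu,\mu)$ for some $\mu>0$: if this failed for every $\mu=1/k$, one would extract two distinct preimages of a common point converging to some $(q_\infty,0)$, contradicting local injectivity there.

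I would then isolate the two consequences of this injectivity that do all the work. Fix $0<\rho<\mu$ and $q\in\partial\mathcal{O}$. First, the point $x:=q-\rho\,n(q)$ satisfies $\mathrm{d}(x,\partial\mathcal{O})=\rho$ with $q$ its unique nearest point in $\partial\mathcal{O}$: indeed $\mathrm{d}(x,\partial\mathcal{O})\le|x-q|=\rho<\mu$, any minimiser $q'$ obeys $x-q'\perp T_{q'}\partial\mathcal{O}$, i.e.\ $x=q'+s\,n(q')$ with $|s|=\mathrm{d}(x,\partial\mathcal{O})<\mu$, and injectivity of $\Psi$ forces $(q',s)=(q,-\rho)$; the same holds for $q+\rho\,n(q)$. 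Second, the open segment $\{q+t\,n(q):0<t<\mu\}$ misses $\partial\mathcal{O}$ (again by injectivity of $\Psi$), so, being connected and disjoint from $\partial\mathcal{O}$ while $\mathbb{R}^d\setminus\partial\mathcal{O}=\mathcal{O}\sqcup\overline{\mathcal{O}}^c$, it lies entirely in one of the two open sets; since $n$ points outward it lies in $\overline{\mathcal{O}}^c$, and symmetrically $q-t\,n(q)\in\mathcal{O}$ for $0<t<\mu$.

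It then remains to assemble. Set $\rho:=\mu/2$, $q_\mathrm{int}:=q-\rho\,n(q)\in\mathcal{O}$ and $q_\mathrm{ext}:=q+\rho\,n(q)\in\overline{\mathcal{O}}^c$ (by the second point). By the first point, $\mathrm{B}(q_\mathrm{int},\rho)$ is disjoint from $\partial\mathcal{O}$; being connected and containing $q_\mathrm{int}\in\mathcal{O}$, it is contained in $\mathcal{O}$, and likewise $\mathrm{B}(q_\mathrm{ext},\rho)\subset\overline{\mathcal{O}}^c$. For the tangency identities: a point of $\overline{\mathrm{B}(q_\mathrm{int},\rho)}\cap\mathcal{O}^c$ lies at distance exactly $\rho$ from $q_\mathrm{int}$ (the open ball being inside $\mathcal{O}$) and on $\partial\mathcal{O}$ (an interior point of $\mathcal{O}^c$ would force nearby $\mathcal{O}^c$-points into the open ball), hence is the unique nearest point $q$; a point $y\in\overline{\mathrm{B}(q_\mathrm{ext},\rho)}\cap\overline{\mathcal{O}}$ is at distance $\rho$ from $q_\mathrm{ext}$, and if $y\in\mathcal{O}$ the segment $[y,q_\mathrm{ext}]$ would cross $\partial\mathcal{O}$ at a point strictly closer than $\rho$ to $q_\mathrm{ext}$, contradicting the first point, so $y\in\partial\mathcal{O}$ and $y=q$ by uniqueness of the nearest point. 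This yields all three displayed equalities.

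The main obstacle is the uniformity over $\partial\mathcal{O}$ of the tubular radius $\mu$: the pointwise local diffeomorphism property of $\Psi$ is immediate from the inverse function theorem, but passing to a single $\mu>0$ valid at every boundary point uses in an essential way the compactness granted by Assumption~\ref{hyp O}, via the extraction argument above; everything that follows is elementary point-set topology. Alternatively, one may quote the classical fact that for a bounded $\mathcal{C}^2$ domain the signed distance to $\partial\mathcal{O}$ is $\mathcal{C}^2$ in a fixed neighborhood of the boundary, from which the two consequences used above follow directly.
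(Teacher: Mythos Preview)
The paper does not actually prove this proposition: it is stated as a ``standard'' geometric property of $\mathcal{C}^2$ bounded domains and used without proof. Your argument is correct and supplies a complete self-contained proof via the tubular neighborhood map $\Psi(q,t)=q+t\,n(q)$, with the uniform radius $\mu$ extracted by compactness of $\partial\mathcal{O}$; the two consequences you isolate (unique nearest boundary point for $q\pm\rho\,n(q)$, and the normal segments avoiding $\partial\mathcal{O}$) are exactly what is needed, and the assembly is clean. Your closing remark about the signed distance being $\mathcal{C}^2$ near $\partial\mathcal{O}$ is precisely the route the paper implicitly relies on elsewhere (it later invokes \cite[Lemma~14.16]{GT} for this fact), so your alternative is well aligned with the paper's toolkit.
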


Let us now detail the application of Proposition~\ref{prop:sphere} to the proof of Proposition~\ref{prop:tau}. 

For $x=(q,p)\in\Gamma^+$, let $q_\mathrm{ext} \in
\overline{\mathcal{O}}^c$ be given by the exterior sphere
condition. Necessarily, the vectors $q_\mathrm{ext}-q$ and
$n(q)$ are colinear. On the other hand, for $t \to 0$, $(q^x_t - q) \cdot n(q) \sim t p \cdot n(q) > 0$, which then implies that $|q^x_t-q_\mathrm{ext}|^2 = \rho^2 - 2 \rho t p \cdot n(q) + o(t)$ so that $q^x_t \in \mathrm{B}(q_\mathrm{ext},\rho) \subset \overline{\mathcal{O}}^c$ for $t$ small enough, and therefore~\eqref{sortie
  immédiate} holds.

With similar arguments, the interior sphere condition shows that if $x
\in \Gamma^-$, then $\tau_\partial^x > 0$ almost surely. Moreover, it
is obvious that if $x
\in D$, then $\tau_\partial^x > 0$ almost surely. Finally, if $x \in
D \cup \Gamma^-$, then on the event $\tau^x_\partial \leq T$ one necessarily has
$X^x_{\tau^x_\partial} \in \Gamma^+ \cup \Gamma^0$ almost surely,
which rewrites:
\begin{equation}\label{eq:p.n<0}
 \forall T >0,\quad \forall x \in D \cup \Gamma^-,\qquad  \mathbb{P}\left( p^x_{\tau^x_\partial}\cdot n(q^x_{\tau^x_\partial}) < 0 ,
  \tau^x_\partial \le T \right)=0 .
\end{equation}
Therefore, taking Proposition~\ref{non attainability} for granted, we obtain Assertion \eqref{it:tau:sing} in Proposition~\ref{prop:tau}.

\subsubsection{Reduction to a Gaussian process}\label{sss:girsanov}

Proposition~\ref{non attainability} and Assertion \eqref{it:tau:reg} in Proposition~\ref{prop:tau} rely on the following preliminary result.

\begin{lemma}[Girsanov Theorem]\label{Girsanov}
Let Assumption~\ref{hyp F} hold. Let $x\in \mathbb{R}^{2d}$ and let $(\Check{q}^x_t,\Check{p}^x_t)_{t\geq0}$ be the strong solution on $\mathbb{R}^{2d}$ of
\begin{equation}\label{Langevin2}
  \left\{
    \begin{aligned}
&        \mathrm{d}\Check{q}^x_t=\Check{p}^x_t\mathrm{d}t , \\
 &       \mathrm{d}\Check{p}^x_t= \sigma \mathrm{d}B_t ,\\
  &      (\Check{q}_0^x,\Check{p}_0^x)=x.
    \end{aligned}
\right.
\end{equation} 
For $T\geq0$, the laws of $(\Check{q}^x_t,\Check{p}^x_t)_{t\in[0,T]}$
and $(q^x_t,p^x_t)_{t\in[0,T]}$ are equivalent in the space of sample
paths $\mathcal{C}([0,T],\mathbb{R}^{2d})$, i.e. for all Borel sets
$A$ of $\mathcal{C}([0,T],\mathbb{R}^{2d})$,
$$\mathbb{P}((\Check{q}^x_t,\Check{p}^x_t)_{t\in[0,T]}\in A)=0\quad\text{if and only if}\quad\mathbb{P}((q^x_t,p^x_t)_{t\in[0,T]}\in A)=0 . $$
\end{lemma}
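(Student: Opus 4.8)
The plan is to deduce the equivalence of laws from Girsanov's theorem. Equations~\eqref{Langevin} and~\eqref{Langevin2} differ only through the drift of the velocity component — $F(q)-\gamma p$ against $0$ — and share the same constant diffusion coefficient $\sigma$, which is nondegenerate on the velocity variables; hence the law of one process should be obtained from that of the other by an equivalent change of probability measure. I would work under $\mathbb{P}$ with the solution $(\check{q}^x_t,\check{p}^x_t)_{t\geq0}$ of~\eqref{Langevin2}, for which $\check{p}^x_t=\check{p}^x_0+\sigma B_t$ is explicitly a Brownian motion, and introduce
\begin{equation*}
  \theta_t:=\frac{1}{\sigma}\bigl(F(\check{q}^x_t)-\gamma\,\check{p}^x_t\bigr),\qquad \mathcal{E}_t:=\exp\left(\int_0^t\theta_s\cdot\mathrm{d}B_s-\frac{1}{2}\int_0^t|\theta_s|^2\,\mathrm{d}s\right).
\end{equation*}
Granting that $(\mathcal{E}_t)_{0\le t\le T}$ is a true $\mathbb{P}$-martingale, the measure $\mathrm{d}\mathbb{Q}:=\mathcal{E}_T\,\mathrm{d}\mathbb{P}$ is then a probability measure equivalent to $\mathbb{P}$, since $\mathcal{E}_T>0$ almost surely, and by Girsanov's theorem $\widehat{B}_t:=B_t-\int_0^t\theta_s\,\mathrm{d}s$ is a $\mathbb{Q}$-Brownian motion.

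Rewriting~\eqref{Langevin2} in terms of $\widehat{B}$, one sees that, under $\mathbb{Q}$, the process $(\check{q}^x_t,\check{p}^x_t)_{0\le t\le T}$ solves~\eqref{Langevin} with initial condition $x$; by uniqueness in law for~\eqref{Langevin} (which follows from the well-posedness of~\eqref{Langevin} recalled above), its law under $\mathbb{Q}$ therefore coincides with the law of $(q^x_t,p^x_t)_{t\in[0,T]}$ under $\mathbb{P}$. Hence, for any Borel set $A\subset\mathcal{C}([0,T],\mathbb{R}^{2d})$,
\begin{equation*}
  \mathbb{P}\bigl((q^x_t,p^x_t)_{t\in[0,T]}\in A\bigr)=\mathbb{E}_{\mathbb{P}}\bigl[\mathcal{E}_T\,\mathbb{1}_{(\check{q}^x_t,\check{p}^x_t)_{t\in[0,T]}\in A}\bigr],
\end{equation*}
and since $\mathcal{E}_T>0$ almost surely the right-hand side vanishes if and only if $\mathbb{P}\bigl((\check{q}^x_t,\check{p}^x_t)_{t\in[0,T]}\in A\bigr)=0$, which is precisely the claimed equivalence. (The case $T=0$ is trivial, both laws being $\delta_x$.)

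The only real obstacle is to verify that $(\mathcal{E}_t)_{0\le t\le T}$ is a genuine martingale and not merely a nonnegative supermartingale; this is where Assumption~\ref{hyp F} is used, but the boundedness of $F$ alone does not suffice because the term $-\gamma\,\check{p}^x_t$ has linear growth in $|\check{p}^x_t|$. I would first treat short time horizons through Novikov's criterion: from $|\theta_s|^2\le\tfrac{2}{\sigma^2}\bigl(\|F\|_\infty^2+\gamma^2|\check{p}^x_s|^2\bigr)$ together with $\check{p}^x_s=\check{p}^x_0+\sigma B_s$ one obtains $\tfrac12\int_0^T|\theta_s|^2\,\mathrm{d}s\le C(x,T)+2\gamma^2\int_0^T|B_s|^2\,\mathrm{d}s$ for a finite constant $C(x,T)$, and since $\mathbb{E}\bigl[\exp\bigl(\lambda\int_0^T|B_s|^2\,\mathrm{d}s\bigr)\bigr]<\infty$ whenever $\lambda$ lies below an explicit threshold of order $T^{-2}$, Novikov's condition holds and $\mathcal{E}$ is a martingale on $[0,T]$ as soon as $T$ is small enough, the smallness depending only on $\gamma$. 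For an arbitrary $T>0$, I would split $[0,T]$ into finitely many subintervals $0=t_0<t_1<\dots<t_n=T$ each short enough for the previous step, and conclude by induction using the Markov property of both processes: the equivalence of the laws over $[0,t_{k-1}]$ started from an arbitrary point, combined with the equivalence over $[t_{k-1},t_k]$ conditionally on the position reached at time $t_{k-1}$, glues into the equivalence over $[0,t_k]$. An alternative route would be to invoke directly a Beneš-type sufficient condition for the martingale property of $\mathcal{E}$, valid for drifts of at most linear growth, which applies here since $(\check{q}^x_s,\check{p}^x_s)\mapsto F(\check{q}^x_s)-\gamma\,\check{p}^x_s$ has linear growth. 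Carrying out this localisation-and-gluing argument cleanly is the main technical point I expect in the proof.
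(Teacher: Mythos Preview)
Your proposal is correct and follows the same overall Girsanov strategy as the paper. The only difference is in how you verify that the exponential local martingale $(\mathcal{E}_t)_{0\le t\le T}$ is a true martingale. You propose Novikov on short subintervals followed by a Markov-gluing argument, with a Bene\v{s}-type criterion mentioned as an alternative. The paper instead invokes directly a sufficient condition from Friedman (\cite[Theorem~1.1, p.~152]{F}) requiring only that
\[
\sup_{s\in[0,T]}\mathbb{E}\bigl[\exp(\mu|\mathcal{Z}^x_s|^2)\bigr]<\infty\quad\text{for some }\mu>0,
\]
where $\mathcal{Z}^x_s=F(\check{q}^x_s)-\gamma\check{p}^x_s$. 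Since $\check{p}^x_s=p+\sigma B_s$ is Gaussian with variance $\sigma^2 s\le\sigma^2 T$, one has $\mathbb{E}[\exp(2\mu\gamma^2|\check{p}^x_s|^2)]<\infty$ uniformly in $s\in[0,T]$ as soon as $\mu$ is chosen small enough (depending on $T$), and this yields $\mathbb{E}[\mathcal{E}_T]=1$ for every $T$ in one stroke, with no localisation or gluing needed. Your route is perfectly valid but heavier; the Bene\v{s}-type alternative you mention at the end is in spirit exactly what the paper does.
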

\begin{proof}
Let $x=(q,p)\in\mathbb{R}^{2d}$. Equation~\eqref{Langevin2} admits
a unique global in time strong solution
$(\Check{q}^x_t,\Check{p}^x_t)_{t\geq0}$ on~$\mathbb{R}^{2d}$ since
its coefficients are globally Lipschitz continuous. For $T\geq0$, let us define, 
$$ \mathcal{Z}^x_T=F(\Check{q}^x_T)-\gamma \Check{p}^x_T ,$$ and 
$$ \mathcal{E}^x_T=\exp\left( \int_0^T \mathcal{Z}^x_s\cdot\mathrm{d}B_s -\frac{1}{2}\int_0^T\vert\mathcal{Z}^x_s\vert^2 \mathrm{d}s   \right) .$$
It is clear that $\mathcal{E}^x_T$ is $\mathcal{F}_T$-measurable. Let us show that for all $T\geq0$, $\mathbb{E}[\mathcal{E}^x_T] =1 $.
According to \cite[Theorem 1.1 p. 152]{F}, this equality  is satisfied
if there exists $\mu>0$ such
that  $$\sup_{s\in[0,T]}\mathbb{E}[\mathrm{exp}(\mu
\vert\mathcal{Z}^x_s\vert^2)]<\infty,$$ which we now prove. Since $F$ satisfies Assumption \ref{hyp F}, it follows that for $s\in[0,T]$,
$$\vert\mathcal{Z}^x_s\vert^2=\vert F(\Check{q}^x_s) -\gamma \Check{p}^x_s\vert^2\leq 2\Vert F\Vert_\infty^2 +2\gamma^2\vert\Check{p}^x_s\vert^2. $$
In addition, $\Check{p}^x_s\sim \mathcal{N}_{d} (p,\sigma^2 s I_d) $. Let $G\sim\mathcal{N}_{d} (0, I_d)$, we get for $s\in[0,T]$,
\begin{align*}
    \mathbb{E}[\mathrm{exp}(\mu \vert\mathcal{Z}^x_s\vert^2)]&\leq\mathrm{exp}(2\mu\Vert F\Vert_\infty^2) \mathbb{E}(\mathrm{exp}(2\mu\gamma^2\vert\Check{p}^x_s\vert^2)) \\
    &=\mathrm{exp}(2\mu\Vert F\Vert_\infty^2) \mathbb{E}(\mathrm{exp}(2\mu\gamma^2\vert p+\sigma\sqrt{s} G\vert^2))\\
    &\leq \mathrm{exp}(2\mu\Vert F\Vert_\infty^2+4\mu\gamma^2\vert p\vert^2) \mathbb{E}(\mathrm{exp}(4\mu\gamma^2\sigma^2T\vert G\vert^2)) .
\end{align*}
 Moreover, $\mathbb{E}(\mathrm{exp}(4\mu\gamma^2\sigma^2T\vert
   G\vert^2)< \infty$ for sufficiently small $\mu$.  

This result allows us to define the probability measure $\mathbb{Q}_T$
on $\mathcal{F}_T$ by $\mathrm{d}\mathbb{Q}_T=\mathcal{E}^x_T
\mathrm{d}\mathbb{P}\vert_{\mathcal{F}_T}$. Since $\mathcal{E}^x_T>0$, $\mathbb{P}\vert_{\mathcal{F}_T}$-a.s., the measures $\mathbb{P}\vert_{\mathcal{F}_T}$ and $\mathbb{Q}_T$ are equivalent. Besides, by the Girsanov Theorem \cite[Theorem 1.1 p. 152]{F}
the process $$ \left(\Check{B}_s:=B_s-\int_0^s\mathcal{Z}^x_r
  \mathrm{d}r\right)_{0\leq s\leq T}$$ is a
$(\mathcal{F}_s)_{s\in[0,T]}$-Brownian motion under the probability
$\mathbb{Q}_T$. As a result, the process
$(\Check{X}^x_s,\Check{B}_s)_{s\in[0,T]}$ satisfies \eqref{Langevin}
on the probability space $(\Omega,\mathcal{F}
,(\mathcal{F}_s)_{s\in[0,T]},\mathbb{Q}_T)$. On the other hand, the
pathwise uniqueness for~\eqref{Langevin} implies the uniqueness in
distribution by Yamada Watanabe's theorem, so that the law of
$(\Check{q}^x_s,\Check{p}^x_s)_{s\in[0,T]}$ under $\mathbb{Q}_T$ is
the law of $(q^x_s,p^x_s)_{s\in[0,T]}$ under $\mathbb{P}$, whence the final result.
\end{proof}

We are now in position to complete the proof of Proposition~\ref{prop:tau} and to detail the proof of Proposition~\ref{non attainability}. By Lemma \ref{Girsanov} it is
sufficient to prove both statements for the process $(\Check
{X}^x_t)_{t\geq0}$ defined in~\eqref{Langevin2}, and for which we
introduce the notation $\Check{\tau}^x_\partial := \inf\{t>0:
\Check{X}^x_t \not\in D\}$.  

\subsubsection{Proof of Assertion~\eqref{it:tau:reg} in Proposition~\ref{prop:tau}}\label{subsec:proof assertion i}

\begin{proof}[Proof of Assertion~\eqref{it:tau:reg} in Proposition~\ref{prop:tau}]
The case $x \in \Gamma^+$ has been addressed in Section~\ref{sss:sphere}. It remains
now to prove \eqref{sortie immédiate} for $x=(q,p)\in\Gamma^0$. One has from \eqref{Langevin2} that for all $t\geq0$,
$$\Check{p}^x_t=p+\sigma B_t \text{ and thus }
\Check{q}^x_t=q+pt + \sigma \int_0^t B_s \,  \mathrm{d}s.$$
The idea of the proof is to reduce the problem to the case of a flat boundary by
a change of variable, and then to use known results for $1$-d
integrated Brownian motion, see~\cite{Lach}.

Let
$(e_1,\ldots, e_d)$ be the canonical basis of $\mathbb{R}^d$. Since
$\mathcal{O}$ is a bounded $\mathcal{C}^2$ set of $\mathbb{R}^d$,
by~\cite[Theorem~2.1.2]{BG} there exists an open neighborhood $U$ of $q$ and a $\mathcal{C}^2$-diffeomorphism $\phi:(-1,1)^d\rightarrow U$ satisfying $\phi(0)=q$ and
$$ \mathcal{O}\cap U=\phi(\{y\in(-1,1)^d: y\cdot e_d<0\})\quad\text{and}\quad\partial\mathcal{O}\cap U=\phi(\{y\in(-1,1)^d: y\cdot e_d=0\}) .$$
Moreover, $n(q)\in\mathbb{R}^d$ is the unique vector such that
\begin{equation}\label{proprietes vecteur normal}
    n(q)\in\mathrm{Span}(d_0\phi(e_1),\ldots,d_0\phi(e_{d-1}))^\perp,  \vert n(q)\vert=1,  \text{ and } d_0\phi(e_d)\cdot n(q)>0,
\end{equation}
where $d_0\phi$ is the differential at $0\in\mathbb{R}^d$ of $\phi$.

Now let $K$ be a compact set included in $U$ such that $q\in\mathring{K}$. Let $\Check{\tau}^x_{K^c}:=\inf\{t>0:\Check{q}^x_t\notin K\}$ be the first exit time of $K$ for $(\Check{q}^x_t)_{t\geq0}$, then $\Check{\tau}^x_{K^c}>0$ almost surely by continuity of the trajectories of~$(\Check{q}^x_t)_{t\geq0}$.

\noindent For $t\leq\Check{\tau}^x_{K^c}$ we have 
\begin{equation}\label{eq:phi1}
\phi^{-1}(\Check{q}^x_t)= \underbrace{\phi^{-1}(q)}_{=0}+\int_0^t d_{\Check {q}^x_s}(\phi^{-1})\left(p +\sigma B_s\right) \mathrm{d}s.
\end{equation}
Since $\phi^{-1}$ is a $\mathcal{C}^2$-diffeomorphism from $U$ to $(-1,1)^d$, then $y\in K\subset U\mapsto d_y(\phi^{-1})$ is $\mathcal{C}^1$ on the compact set $K$. In particular it is Lipschitz continuous with some Lipschitz constant $k$. As a result, since for $t\geq0$, $\Check{q}^x_t=q+\int_0^t\Check{p}^x_s \mathrm{d}s$, then for all $t\in[0,\Check{\tau}^x_{K^c}]$ and $z\in\mathbb{R}^d$,
\begin{align}
\left\vert d_{\Check {q}^x_t}(\phi^{-1})(z)-d_q(\phi^{-1})(z)
  \right\vert
  &\leq k \vert  \Check{q}^x_t-q \vert \vert z\vert =k \left\vert  t
    p+\sigma \int_0^t B_s  \mathrm{d}s \right\vert \vert z\vert  \nonumber\\
&\leq k t\left(  \vert p\vert+\sigma \sup_{s\in[0,t]}\vert B_s\vert  \right) \vert z\vert.\label{eq:phi2}
\end{align}
Hence we have  from~\eqref{eq:phi1} and~\eqref{eq:phi2}
$$\left\vert\phi^{-1}(\Check{q}^x_t)-\int_0^t d_{q}(\phi^{-1})(p+\sigma B_s) \mathrm{d}s\right\vert\leq kt^2\left(\vert  p\vert+ \sigma \sup_{s\in[0,t]}\vert B_s\vert  \right)^2.$$
Therefore 
\begin{equation}\label{prod scalaire 3} 
    \left\vert\phi^{-1}(\Check {q}^x_t)\cdot e_d-  td_q(\phi^{-1})(p)\cdot e_d- \sigma d_q(\phi^{-1})\left(\int_0^t B_s \mathrm{d}s\right)\cdot e_d\right\vert\leq k t^2 \left(\vert  p\vert+ \sigma \sup_{s\in[0,t]}\vert B_s\vert  \right)^2.
\end{equation}

Let us now prove that, since $x \in \Gamma^0$,
\begin{equation}\label{prod scalaire}
    d_q(\phi^{-1})(p)\cdot e_d=0.
\end{equation}
Since $\phi$ is a $\mathcal{C}^1$-diffeomorphism from $(-1,1)^d$ to
$U$ with $U$ a neighborhood of $q$ and $\phi(0)=q$, then $d_0(\phi)$
is invertible with inverse
satisfying $$(d_0(\phi))^{-1}=d_q(\phi^{-1})$$ In particular, the
family $(d_0(\phi)(e_1),\ldots,d_0(\phi)(e_d))$ is a basis of
$\mathbb{R}^d$. Let us now decompose the vector~$p$ in this basis: $$p=\sum_{j=1}^d p_j  d_0(\phi)(e_j) .$$
Using \eqref{proprietes vecteur normal} and the fact that
$p\cdot n(q)=0$ since $x\in\Gamma^0$, we get
$p_d=0$. As a result,
\[\begin{aligned}
d_q(\phi^{-1})(p)\cdot e_d&=d_q(\phi^{-1})\left(\sum_{j=1}^d p_j  d_0(\phi)(e_j)\right)\cdot e_d\\
&=\sum_{j=1}^d p_j(d_0(\phi))^{-1}d_0(\phi)(e_j)\cdot e_d=p_d=0 .
\end{aligned}\]
This concludes the proof of \eqref{prod scalaire}.

Now notice that
\begin{equation}\label{prod scalaire 2}
    d_q(\phi^{-1})\left(\int_0^t B_s \mathrm{d}s\right)\cdot e_d=\int_0^tB_s\cdot d_0(\phi)^{-T}(e_d) \mathrm{d}s,
\end{equation} where $d_0(\phi)^{-T}$ is the transpose matrix of $d_0(\phi)^{-1}$.  Moreover, $\vert d_0(\phi)^{-T}(e_d)\vert >0$, since $d_0(\phi)^{-T}$ is also invertible. Using \eqref{prod scalaire} and \eqref{prod scalaire 2} in \eqref{prod scalaire 3}, one gets
\begin{equation}\label{eq11}
    \left\vert\phi^{-1}(\Check {q}^x_t)\cdot e_d- \sigma \int_0^tB_s\cdot d_0(\phi)^{-T}(e_d) \mathrm{d}s\right\vert\leq k t^2 \left(\vert  p\vert+ \sigma \sup_{s\in[0,t]}\vert B_s\vert  \right)^2.
\end{equation}
Let us define the process $(\widehat{B}_s)_{s \in [0,t]}$ by $$\forall s\in[0,t],\qquad\widehat {B}_s:=B_s\cdot \frac{d_0(\phi)^{-T}(e_d)}{\vert d_0(\phi)^{-T}(e_d)\vert}.$$It is clearly a one-dimensional Brownian motion on $[0,t]$. Then \eqref{eq11} rewrites 
$$\left\vert\phi^{-1}(\Check {q}^x_t)\cdot e_d- \sigma \vert d_0(\phi)^{-T}(e_d)\vert  \int_0^t\widehat {B}_s \mathrm{d}s\right\vert\leq k t^2 \left(\vert  p\vert+ \sigma \sup_{s\in[0,t]}\vert B_s\vert  \right)^2.$$
 
The law of the iterated logarithm for the integrated Brownian motion
(see~\cite[Theorem 1]{Lach}) provides us with the following asymptotic
result: $$\limsup_{t\rightarrow0}\frac{\int_0^t\widehat {B}_s \mathrm{d}s}{\sqrt{\frac{2}{3}} t^{\frac{3}{2}}\sqrt{\log \log(1/t)}}=1\quad\text{almost surely.}$$
For $t>0$, let $\Psi(t)=\sqrt{\frac{2}{3}} t^{\frac{3}{2}}\sqrt{\log \log(1/t)}$, then 
\[\begin{aligned}
\left\vert\frac{\phi^{-1}(\Check {q}^x_t)\cdot e_d}{\Psi(t)}
-\sigma \vert d_0(\phi)^{-T}(e_d)\vert  \frac{\int_0^t\widehat {B}_s \mathrm{d}s}{\Psi(t)} \right\vert\leq\underbrace{\frac{k t^2}{\Psi(t)}}_{\underset{t\rightarrow0}{\longrightarrow}0}\left(\vert  p\vert+ \sigma \sup_{s\in[0,t]}\vert B_s\vert  \right)^2  . \\
\end{aligned}\]
Therefore, almost surely, $$\limsup_{t\rightarrow0}\frac{\phi^{-1}(\Check {q}^x_t)\cdot e_d}{\Psi(t)}= \sigma \vert d_0(\phi)^{-T}(e_d)\vert>0  .$$
As a result, the process $(\Check {q}^x_t)_{t\geq0}$ visits $U\cap\overline{\mathcal{O}}^c$ infinitely often for times close to $0$. This implies in particular that $\Check{\tau}^x_\partial=0$ almost surely.  
\end{proof}

\subsubsection{Proof of Proposition~\ref{non attainability}}\label{subsec:proof proposition}
 
We now address the proof of Proposition~\ref{non attainability}. For $x\in\mathbb{R}^{2d}$, let 
$\Check{\tau}^x_{0}:=\inf\{t>0:(\Check{q}^x_t,\Check{p}^x_t)\in\Gamma^0\}$ and let us show here that for all $x\in\mathbb{R}^{2d}\setminus \Gamma^0$,
$$\mathbb{P}(\Check{\tau}^x_{0}<\infty)=0,$$
which is equivalent to
\begin{equation}\label{eqn2}
    \forall T>0,\qquad\mathbb{P}(\Check{\tau}^x_{0}\leq T)=0.
\end{equation}  
The idea of the proof is the following. If one replaces the random time
$\Check{\tau}^x_0$ by a deterministic time $t \leq T$, and
denote by $\Check{n}$ some continuous extension of the normal vector
$n$ in a neighborhood of $\partial\mathcal{O}$, then using the fact
that $\Check{p}^x_t$ has a nondegenerate Gaussian conditional
distribution given $\Check{q}^x_t$ allows us to write $$\mathbb{P}\left(\Check{p}^x_t\cdot \Check{n}(\Check{q}^x_t)= 0\right) = \mathbb{E}\left[\mathbb{P}\left(\Check{p}^x_t\cdot\Check{n}(\Check{q}^x_t)= 0|\Check{q}^x_t\right)\right]=0.$$ Our proof therefore relies on the approximation of $\Check{\tau}^x_0$ by a grid of deterministic times and exploits the fact that while assuming that such a time $t$ is close to $\Check{\tau}^x_0$ makes the distribution of $\Check{q}^x_t$ quite singular, it leaves `enough randomness' in the distribution of $\Check{p}^x_t$ for quantities of the form $\mathbb{P}(\Check{p}^x_{\Check{\tau}^x_0}\cdot n(\Check{q}^x_{\Check{\tau}^x_0})= 0, \Check{\tau}^x_0 \simeq t)$ to be sufficiently small.
\begin{proof}[Proof of Proposition~\ref{non attainability}] 
Let $x=(q,p) \in \mathbb{R}^{2d}\setminus \Gamma^0$. As explained above, the objective is to prove~\eqref{eqn2}.

Let $\alpha\in(0,1/2)$. Since $(\Check{p}^x_t)_{0\leq t\leq T}$ is a Brownian motion, one has that
$$\sup_{0\leq t\leq T}\vert \Check {p}^x_t\vert<\infty,\quad
\sup_{0\leq s<t\leq T}\frac{\vert \Check {p}^x_t-\Check
  {p}^x_s\vert}{\vert t-s\vert^{\alpha}}<\infty\quad\text{almost
  surely.}$$ Let $\epsilon>0$ and let us choose $M$ large enough so that 
\begin{equation}\label{ineq2}   
\mathbb{P}\left(\sup_{0\leq t\leq T}\vert \Check {p}^x_t\vert>M \right)\leq\epsilon,\quad \mathbb{P}\left(\sup_{0\leq s<t\leq T}\frac{\vert \Check {p}^x_t-\Check {p}^x_s\vert}{\vert t-s\vert^{\alpha}}>M \right)\leq\epsilon.
\end{equation} 
Therefore,
\[\begin{aligned}
&\mathbb{P}\left(\Check{\tau}^x_0\leq T \right)\\
&=\mathbb{P}\left(\Check {p}^x_{\Check{\tau}^x_0}\cdot n(\Check{q}^x_{\Check{\tau}^x_0})=0 , \Check{\tau}^x_0\leq T \right)\\&\leq\mathbb{P}\left(\Check {p}^x_{\Check{\tau}^x_0}\cdot n(\Check{q}^x_{\Check{\tau}^x_0})=0 , \Check{\tau}^x_0\leq T ,\sup_{0\leq t\leq T}\vert \Check {p}^x_t\vert\leq M ,\sup_{0\leq s<t\leq T}\frac{\vert \Check{p}^x_t-\Check{p}^x_s\vert}{\vert t-s\vert^{\alpha}}\leq M \right)+2 \epsilon.
\end{aligned}\]
Let us now consider the first term in the right-hand side of the inequality above .

\medskip \noindent \textbf{Step 1}.  Let $N\in\mathbb{N}^*$. We divide the interval $(0,T]$ into $N$ intervals $(t_k,t_{k+1}]$ with $t_k:=k \eta_N$ and $\eta_N:=\frac{T}{N}$. As a result, since $\Check{\tau}^x_0>0$ almost surely, because $x$ belongs to the open set $\mathbb{R}^{2d}\setminus \Gamma^0$, 
\[\begin{aligned}
&\mathbb{P}\left(\Check{p}^x_{\Check{\tau}^x_0}\cdot n(\Check{q}^x_{\Check{\tau}^x_0})=0 , \Check{\tau}^x_0\leq T ,\sup_{0\leq t\leq T}\vert \Check{p}^x_t\vert\leq M ,\sup_{0\leq s<t\leq T}\frac{\vert \Check{p}^x_t-\Check{p}^x_s\vert}{\vert t-s\vert^{\alpha}}\leq M \right)\\&=\sum_{k=0}^{N-1}\mathbb{P}\left(\Check{p}^x_{\Check{\tau}^x_0}\cdot n(\Check {q}^x_{\Check{\tau}^x_0})=0 , \Check{\tau}^x_0\in(t_k,t_{k+1}] ,\sup_{0\leq t\leq T}\vert \Check{p}^x_t\vert\leq M ,\sup_{0\leq s<t\leq T}\frac{\vert \Check{p}^x_t-\Check {p}^x_s\vert}{\vert t-s\vert^{\alpha}}\leq M \right) .
\end{aligned}\]
    Let us denote by $\overline{\mathrm{d}}_\partial$ the signed Euclidean distance to the boundary $\partial\mathcal{O}$, i.e.
\begin{equation*}
  \overline{\mathrm{d}}_\partial : q \in \mathbb{R}^d \mapsto \begin{cases}
    \mathrm{d}(q,\partial\mathcal{O}) & \text{if $q \in \mathcal{O}$,}\\
    -\mathrm{d}(q,\partial\mathcal{O}) & \text{if $q \not\in \mathcal{O}$,}
  \end{cases}
\end{equation*}
so that $\mathrm{d}_\partial$ is the positive part of $\overline{\mathrm{d}}_\partial$, and the function $\overline{\mathrm{d}}_\partial$ is $1$-Lipschitz continuous.

On the event $$\mathcal{A}_{k,M}:=\left\{\Check{p}^x_{\Check{\tau}^x_0}\cdot n(\Check {q}^x_{\Check{\tau}^x_0})=0 , \Check{\tau}^x_0\in(t_k,t_{k+1}] , \sup_{0\leq t\leq T}\vert \Check {p}^x_t\vert\leq M , \sup_{0\leq s<t\leq T}\frac{\vert \Check {p}^x_t-\Check {p}^x_s\vert}{\vert t-s\vert^{\alpha}}\leq M\right\} ,$$ we have 
$$\left\vert  \Check {q}^x_{\Check{\tau}^x_0}-\Check {q}^x_{t_k} \right\vert = \left\vert  \int_{t_k}^{\Check{\tau}^x_0} \Check {p}^x_u \mathrm{d}u \right\vert\leq M(\Check{\tau}^x_0-t_k)\leq M\eta_N .$$
Thus,
$$ \vert\overline{\mathrm{d}}_\partial(\Check {q}^x_{t_k})\vert\leq \left\vert  \Check {q}^x_{t_k}-\Check {q}^x_{\Check \tau^x_0} \right\vert\leq  M\eta_N .$$
For $\mu>0$, let 
\begin{equation}\label{C1 distance}
\overline{\mathcal{O}}_\mu:=\{q\in\mathbb{R}^d:\vert\overline{\mathrm{d}}_\partial(q)\vert\leq \mu\}.
\end{equation}
Since the bounded open set $\mathcal{O}$ is $\mathcal{C}^2$ there exists a constant $\mu>0$ such that the signed distance $\overline{\mathrm{d}}_\partial(q)$ to $\partial\mathcal{O}$ is $\mathcal{C}^2$ on the set $\overline{\mathcal{O}}_\mu$ according to~\cite[Lemma 14.16]{GT}. Moreover $\overline{\mathrm{d}}_\partial(q)$ satisfies the following eikonal equation
\begin{equation}
\left\{
\begin{aligned}\label{eikonal equation}
       \vert\nabla \overline{\mathrm{d}}_\partial(q)\vert &= 1\quad&&\text{for }q\in\overline{\mathcal{O}}_\mu,\\
        \nabla \overline{\mathrm{d}}_\partial(q)&=- n(q)\quad&&\text{for }q\in\partial\mathcal{O}.
\end{aligned}
\right. 
\end{equation}
Let us now choose $N$ large enough so that $M\eta_N=M\frac{T}{N}\leq \mu$. As a result, since $\Check {q}^x_{t_k}\in\overline{\mathcal{O}}_\mu$ on $\mathcal{A}_{k,M}$, 
\[\begin{aligned}
\left\vert\Check {p}^x_{t_k}\cdot\nabla \overline{\mathrm{d}}_\partial(\Check {q}^x_{t_k})\right\vert &\leq \left\vert\left(\Check {p}^x_{t_k}-\Check {p}^x_{\Check{\tau}^x_0}\right)\cdot\nabla \overline{\mathrm{d}}_\partial(\Check {q}^x_{t_k})\right\vert+\left\vert\Check {p}^x_{\Check{\tau}^x_0}\cdot(\nabla \overline{\mathrm{d}}_\partial(\Check {q}^x_{t_k})-\nabla \overline{\mathrm{d}}_\partial(q^x_{\Check{\tau}^x_0}))\right\vert + \underbrace{\left\vert\Check {p}^x_{\Check{\tau}^x_0}\cdot\nabla \overline{\mathrm{d}}_\partial(\Check {q}^x_{\Check{\tau}^x_0})\right \vert}_{=0\text{ on }\mathcal{A}_{k,M}\text{ by } \eqref{eikonal equation}}\\
&\leq \left\vert  \Check {p}^x_{t_k}-\Check {p}^x_{\Check{\tau}^x_0} \right\vert + M\left\vert  \nabla \overline{\mathrm{d}}_\partial(\Check {q}^x_{t_k})-\nabla \overline{\mathrm{d}}_\partial(\Check {q}^x_{\Check{\tau}^x_0}) \right\vert \\
&\leq M\eta_N^\alpha + M^2K\eta_N
\end{aligned}\]
with $K$ the Lipschitz constant of $\nabla \overline{\mathrm{d}}_\partial$ on the compact set $\overline{\mathcal{O}}_\mu$ since $\overline{\mathrm{d}}_\partial$ is $\mathcal{C}^2$ on $\overline{\mathcal{O}}_\mu$.
Defining $M_1:=M+M^2K$, one gets for $N$ large enough so that $\eta_N\leq1$
\[\begin{aligned}
\left\vert\Check {p}^x_{t_k}\cdot\nabla \overline{\mathrm{d}}_\partial(\Check {q}^x_{t_k})\right\vert&\leq M_1 \eta_N^\alpha .
\end{aligned}\]

\noindent This yields that
\begin{align*}
&\mathbb{P}\left(\Check {p}^x_{\Check{\tau}^x_0}\cdot n(\Check {q}^x_{\Check{\tau}^x_0})=0 , \Check{\tau}^x_0\leq T ,\sup_{0\leq t\leq T}\vert \Check {p}^x_t\vert\leq M ,\sup_{0\leq s<t\leq T}\frac{\vert \Check {p}^x_t-\Check {p}^x_s\vert}{\vert t-s\vert^{\alpha}}\leq M \right)\\&\leq\sum_{k=0}^{N-1}\mathbb{P}\left( \left\vert\Check {p}^x_{t_k}\cdot\nabla \overline{\mathrm{d}}_\partial(\Check {q}^x_{t_k})\right\vert\leq M_1\eta_N^\alpha ,  \vert\overline{\mathrm{d}}_\partial(\Check {q}^x_{t_k})\vert\leq M\eta_N \right) .\numberthis \label{eqn}
\end{align*} 
 
Let $k_0:=\left\lceil\frac{4M}{\vert p\cdot n(q)\vert}\right\rceil$. For $k\in\llbracket 0,k_0-1\rrbracket$, the summand in \eqref{eqn}   vanishes when $N$ goes to infinity since $t_k=k\frac{T}{N}\leq (k_0-1)\frac{T}{N}\underset{N\rightarrow\infty}{\longrightarrow}0$ and either $\vert\overline{\mathrm{d}}_\partial(q)\vert>0$ (if $q \not\in \partial\mathcal{O})$ or $\vert p\cdot\nabla \overline{\mathrm{d}}_\partial(q)\vert>0$ (if $q \in \partial\mathcal{O}$, because $(q,p)\notin\Gamma^0$). 


\medskip \noindent \textbf{Step 2}. Let us now prove that for $k\in\llbracket k_0+1,N-1\rrbracket$, the summand in \eqref{eqn} is of order $\eta_N^{1+\alpha}$. 

It is easy to check that $(\Check {q}^x_t,\Check {p}^x_t)$ is a Gaussian vector in $\mathbb{R}^{2d}$ with law
\begin{equation}
\begin{pmatrix}
\Check{q}^x_t  \\
\Check{p}^x_t
\end{pmatrix}
  \sim  \mathcal{N}_{2d}\left(
\begin{matrix} 
\begin{pmatrix}
q+t p \\
p 
\end{pmatrix},
\begin{pmatrix}
\frac{\sigma^2 t^3}{3} I_d & \frac{\sigma^2 t^2}{2} I_d \\
\frac{\sigma^2 t^2}{2} I_d & \sigma^2 t I_d 
\end{pmatrix}
\end{matrix}
\right),
\end{equation}
where $I_d$ denotes the identity matrix of $\mathbb{R}^d$. In particular, 
$$ \Check {q}^x_t   \sim  \mathcal{N}_{d}\left(q+tp,\frac{\sigma^2 t^3}{3} I_d \right)$$ with a density denoted by $f_t$ for $t>0$. We can also compute the conditional law of $\Check {p}^x_t$ knowing $\Check {q}^x_t$ using~\cite[Prop 3.13]{Conditioning}. It is given by
$$ \mathcal{N}_{d}\left(p+\frac{3}{2t} (\Check {q}^x_t-q-tp
  ),\frac{\sigma^2 t}{4}  I_d \right),$$ with a density denoted by
$h_t(\cdot\vert \Check {q}^x_t)$. As a result, using the fact that $\vert \nabla \overline{\mathrm{d}}_\partial\vert=1$ on $\overline{\mathcal{O}}_\mu$, (see \eqref{eikonal equation}), the conditional law of $\Check {p}^x_{t}\cdot\nabla \overline{\mathrm{d}}_\partial(\Check {q}^x_{t})$ knowing $\Check {q}^x_t$, when $\Check {q}^x_{t} \in\overline{\mathcal{O}}_\mu$,  is given by
$$ \mathcal{N}\left(\left(p+\frac{3}{2t} (\Check {q}^x_t-q-tp
    )\right)\cdot\nabla \overline{\mathrm{d}}_\partial(\Check
    {q}^x_{t}),\frac{\sigma^2 t}{4}\right)$$ with a
density denoted by $g_t(\cdot\vert\Check {q}^x_t)$. 

As a consequence, for $k\in\llbracket k_0,N-1\rrbracket$, 
\[\begin{aligned}
&\mathbb{P}\big(\left\vert\Check {p}^x_{t_k}\cdot\nabla \overline{\mathrm{d}}_\partial(\Check {q}^x_{t_k})\right\vert\leq M_1\eta_N^\alpha , |\overline{\mathrm{d}}_\partial(\Check {q}^x_{t_k})| \leq M\eta_N\big)\\ 
&=\int_{q' \in \mathbb{R}^d} \mathbb{1}_{\vert\overline{\mathrm{d}}_\partial(q')\vert\leq M\eta_N} f_{t_k}(q')\left(\int_{y\in\mathbb{R}}\mathbb{1}_{\vert y\vert\leq M_1\eta_N^\alpha } g_{t_k}(y\vert q') \mathrm{d}y\right)\mathrm{d}q' .
\end{aligned}\]
Let $m_k(q'):=(p+\frac{3}{2t_k} (q'-q-t_kp ))\cdot\nabla \overline{\mathrm{d}}_\partial(q')$ then for $q'\in\overline{\mathcal{O}}_\mu$,
\[\begin{aligned}
\int_{y\in\mathbb{R}}\mathbb{1}_{\vert y\vert\leq M_1\eta_N^\alpha }  g_{t_k}(y\vert q') \mathrm{d}y&=\int_{y\in\mathbb{R}}\mathbb{1}_{\vert y\vert\leq M_1\eta_N^\alpha } \underbrace{\frac{\mathrm{e}^{-2\frac{(y-m_k(q'))^2}{ \sigma^2  t_k}}}{\sqrt{\frac{\pi \sigma^2  t_k}{2} }}}_{\leq \frac{1}{\sqrt{\frac{\pi \sigma^2  t_k}{2} }}} \mathrm{d}y\leq \frac{2\sqrt{2}M_1\eta_N^\alpha}{\sqrt{\pi \sigma^2  t_k}} .
\end{aligned}\]
Hence
\begin{align}
&\mathbb{P}\big(\left\vert\Check {p}^x_{t_k}\cdot\nabla \overline{\mathrm{d}}_\partial(\Check {q}^x_{t_k})\right\vert\leq M_1\eta_N^\alpha ,|\overline{\mathrm{d}}_\partial(\Check {q}^x_{t_k})| \leq M\eta_N\big)\nonumber\\
&\leq  \int_{q' \in \mathbb{R}^d} \mathbb{1}_{|\overline{\mathrm{d}}_\partial(q')|\leq M\eta_N} \frac{2\sqrt{2}M_1\eta_N^\alpha}{\sqrt{\pi \sigma^2  t_k}} \left(\frac{3}{2\pi  \sigma^2  t_k^3}\right)^{\frac{d}{2}} \mathrm{e}^{-\frac{3\vert z-q-t_kp \vert^2}{2 \sigma^2 t_k^3}} \mathrm{d}q'\nonumber\\
&\leq \left(\frac{3}{2\pi \sigma^2 }\right)^{\frac{d}{2}} \frac{2\sqrt{2}M_1\eta_N^\alpha}{\sqrt{\pi \sigma^2 }}\int_{\mathbb{R}^d}\frac{\mathrm{e}^{-\frac{3\vert q'-q-t_kp \vert^2}{2\sigma^2 t_k^3 }}}{t_k^{\frac{3d+1}{2}}} \mathrm{d}q'. \label{integrand non attaign}
\end{align} 
Let us now prove that the integrand is bounded by a constant independent of~$k$.

\noindent \textbf{Case a).} Assume that $q\notin\partial\mathcal{O}$, then $\vert\overline{\mathrm{d}}_\partial(q)\vert>0$ and
there exists $\mu_1>0$ such that for any $q'\in\mathcal{O}_{\mu_1}$,
$$\vert q'-q\vert\geq \sqrt{\frac{2}{3}} \vert\overline{\mathrm{d}}_\partial(q)\vert .$$
Let us pick $N$ large enough so that
\begin{equation}\label{eta}
   M\eta_N\leq \min(\mu,\mu_1).
\end{equation}
Let $C_2:=\sup_{q'\in\mathcal{O}}\vert q-q'\vert$. For $C_2 \vert p\vert t_k\leq \frac{\overline{\mathrm{d}}_\partial(q)^2}{6}$,  
\[\begin{aligned}
-\vert q'-q-t_kp \vert^2 &=-\vert q'-q\vert^2-t_k^2  \vert p\vert^2+2t_k
(q'-q)\cdot p \leq -\vert q'-q\vert^2+2t_k C_2 \vert p\vert\leq -\frac{\overline{\mathrm{d}}_\partial(q)^2}{3}
\end{aligned}\]
and $$\frac{\mathrm{e}^{-\frac{3\vert q'-q-t_kp \vert^2}{2\sigma^2 t_k^3 }}}{t_k^{\frac{3d+1}{2}}}\leq\frac{\mathrm{e}^{-\frac{\overline{\mathrm{d}}_\partial(q)^2}{2\sigma^2 t_k^3 }}}{t_k^{\frac{3d+1}{2}}} . $$
Moreover, if $C_2 \vert p\vert t_k>\frac{\overline{\mathrm{d}}_\partial(q)^2}{6 }$ (necessarily $\vert p\vert\neq0$),
\begin{equation}\label{eq:maj cas 1}
    \frac{\mathrm{e}^{-\frac{3\vert z-q-t_kp  \vert^2}{2\sigma^2 t_k^3 }}}{t_k^{\frac{3d+1}{2}}}\leq\frac{1}{\left(\frac{\overline{\mathrm{d}}_\partial(q)^2}{6 C_2 \vert p\vert}\right)^{\frac{3d+1}{2}}} .
\end{equation}
Besides, the function $t>0\mapsto
\frac{\mathrm{e}^{-\frac{\overline{\mathrm{d}}_\partial(q)^2}{2\sigma^2 t^3
    }}}{t^{\frac{3d+1}{2}}}+\frac{1}{(\frac{\overline{\mathrm{d}}_\partial(q)^2}{6
    C_2 \vert p\vert})^{\frac{3d+1}{2}}} $ is bounded by a constant
$C_3$ which depends only on $q,p$ and $d$. 

\noindent \textbf{Case b).} Assume that $q\in\partial\mathcal{O}$, then necessarily $\vert p\cdot n(q)\vert>0$ since $(q,p)\notin\Gamma^0$. By the right continuity in $0$ of $s\mapsto p\cdot\nabla\overline{\mathrm{d}}_\partial(q+sp)$, there exists $\beta>0$ such that for all $s\in[0,\beta]$, $\vert p\cdot\nabla\overline{\mathrm{d}}_\partial(q+sp)\vert\geq\frac{\vert p\cdot n(q)\vert}{2}$ (and $p\cdot\nabla\overline{\mathrm{d}}_\partial(q+sp)$ has constant sign on $[0,\beta]$). 

Assume that $t_k\leq\beta$. One has that 
$$\overline{\mathrm{d}}_\partial(q+t_kp)=\int_0^{t_k}p\cdot\nabla\overline{\mathrm{d}}_\partial(q+sp)\mathrm{d}s.$$
Therefore, since the integrand $p\cdot\nabla\overline{\mathrm{d}}_\partial(q+sp)$ has constant sign on $[0,t_k]$,
$$\vert\overline{\mathrm{d}}_\partial(q+t_kp)\vert\geq t_k\frac{\vert p\cdot n(q)\vert}{2}.$$
As a result, for $q'\in\mathcal{O}_{M\eta_N}$ since $\overline{\mathrm{d}}_\partial$ is $1-$Lipschitz continuous,
\begin{align}
    \vert q'-q-t_kp\vert&\geq\vert\overline{\mathrm{d}}_\partial(q+t_kp)-\overline{\mathrm{d}}_\partial(q')\vert\nonumber\\
    &\geq t_k\frac{\vert p\cdot\nabla\overline{\mathrm{d}}_\partial(q)\vert}{2} -M\eta_N\nonumber\\ &= t_k\frac{\vert p\cdot\nabla\overline{\mathrm{d}}_\partial(q)\vert}{2}\left(1-\frac{\eta_N}{t_k}\frac{2M}{\vert p\cdot\nabla\overline{\mathrm{d}}_\partial(q)\vert}\right).\nonumber 
\end{align}
Besides, since $k\geq k_0$,  
\begin{align}
    \frac{\eta_N}{t_k}\frac{2M}{\vert p\cdot\nabla\overline{\mathrm{d}}_\partial(q)\vert}&=\frac{1}{k}\frac{2M}{\vert p\cdot\nabla\overline{\mathrm{d}}_\partial(q)\vert}\\
    &\leq\frac{1}{k_0}\frac{2M}{\vert p\cdot\nabla\overline{\mathrm{d}}_\partial(q)\vert}\leq\frac{1}{2},
\end{align}
by definition of $k_0$. Therefore, for $t_k\leq\beta$,
$$\vert q'-q-t_kp\vert\geq t_k\frac{\vert p\cdot\nabla\overline{\mathrm{d}}_\partial(q)\vert}{4},$$
which ensures that the integrand in \eqref{integrand non attaign} is smaller than $\frac{\mathrm{e}^{-\frac{3\vert p\cdot\nabla\overline{\mathrm{d}}_\partial(q)\vert^2}{32\sigma^2 t_k} }}{t_k^{\frac{3d+1}{2}}}$ which is smaller than a constant $C_4>0$ only depending on $q$, $p$ and $d$. On the other hand, if $t_k\geq \beta$, the integrand \eqref{integrand non attaign} also admits a constant upper-bound independent of $k$. 
 
As a result, one gets that $q' \in
\mathcal{O}_{M\eta_N} \mapsto \frac{\mathrm{e}^{-\frac{3\vert q'-q-t_kp \vert^2}{2\sigma^2
      t_k^3 }}}{t_k^{\frac{3d+1}{2}}}$ is bounded by $C_5:=C_3\land C_4$, which is
independent of $k$. 

Last, using Weyl's tube formula~\cite{WeylTube}, one gets that there exists $C_6>0$ only depending on $\mathcal{O}$ such that 
$$\int_{\mathcal{O}_{M\eta_N}} \mathrm{d}q'\leq C_6 M\eta_N . $$ As a consequence,
\begin{equation}
\mathbb{P}\big(\left\vert\Check {p}^x_{t_k}\cdot\nabla
                \overline{\mathrm{d}}_\partial(\Check {q}^x_{t_k})\right\vert\leq M_1\eta_N^\alpha,  \vert\overline{\mathrm{d}}_\partial(\Check
                {q}^x_{t_k})\vert\leq M\eta_N \big)
    \leq 
\left(\frac{3}{2\pi \sigma^2 }\right)^{\frac{d}{2}} \frac{2\sqrt{2}M_1\eta_N^\alpha}{\sqrt{ \pi \sigma^2 }} C_5 C_6  M\eta_N \numberthis\label{eq12}
\end{equation} 
which is independent of $k$.

\medskip \noindent \textbf{Step 3}. Finally summing over all $k$ one gets from \eqref{eqn} and \eqref{eq12}, for $N$ large enough: 
\[\begin{aligned}
&\mathbb{P}\left(\Check {p}^x_{\Check{\tau}^x_0}\cdot n(\Check {q}^x_{\Check{\tau}^x_0})=0 , \Check{\tau}^x_0\leq T ,\sup_{0\leq t\leq T}\vert \Check {p}^x_t\vert\leq M ,\sup_{0\leq s<t\leq T}\frac{\vert \Check {p}^x_t-\Check {p}^x_s\vert}{\vert t-s\vert^{\alpha}}\leq M \right)\\ 
&\leq\underbrace{\sum_{k=0}^{k_0-1}\mathbb{P}\left( \left\vert\Check {p}^x_{t_k}\cdot\nabla \overline{\mathrm{d}}_\partial(\Check {q}^x_{t_k})\right\vert\leq M_1\eta_N^\alpha, \vert\overline{\mathrm{d}}_\partial(\Check {q}^x_{t_k})\vert\leq M\eta_N \right)}_{\underset{N\rightarrow \infty}{\longrightarrow}0}+\frac{N}{N-k_0}\underbrace{\left(\frac{3}{2\pi \sigma^2 }\right)^{\frac{d}{2}} \frac{2\sqrt{2}M_1}{\sqrt{ \pi \sigma^2 }} C_5 C_6  MT}_{\text{not depending on }N} \eta_N^{\alpha}.
\end{aligned}\]

\noindent Letting $\eta_N\underset{N\rightarrow \infty}{\longrightarrow}0$, we get $$\mathbb{P}\left(\Check {p}^x_{\Check{\tau}^x_0}\cdot n(\Check {q}^x_{\Check{\tau}^x_0})=0 , \Check{\tau}^x_0\leq T ,\sup_{0\leq t\leq T}\vert \Check {p}^x_t\vert\leq M ,\sup_{0\leq s<t\leq T}\frac{\vert \Check {p}^x_t-\Check {p}^x_s\vert}{\vert t-s\vert^{\alpha}}\leq M \right)=0 .$$
Thus, for all $\epsilon>0$,

$$ \mathbb{P}\left(\Check {p}^x_{\Check{\tau}^x_0}\cdot n(\Check {q}^x_{\Check{\tau}^x_0})=0 , \Check{\tau}^x_0\leq T \right)\leq 2 \epsilon$$
which concludes the proof of~\eqref{eqn2}. 
\end{proof} 
\section{Harnack inequality and Maximum principle}
\label{Section 3}
This section is devoted to the proof of the Harnack inequality stated in Theorem \ref{Harnack} and of the maximum principle of Theorem~\ref{maximum principle}. The proofs are respectively detailed in Sections~\ref{ss:harnack} and~\ref{ss:maximum}.

\subsection{Proof of Theorem~\ref{Harnack}}\label{ss:harnack}

A Harnack inequality for weak solutions of~\eqref{Langevin PDE} was already proven in~\cite{Har}. It says that for every point $(t_0,x_0)\in \mathbb{R}_+^*\times D$ there exist $T>0$, two small disjoint cylinders $Q^+,Q^-\subset D$ close to $x_0$ and a constant $C>0$ such that for any non-negative distributional solution $u$ of $\partial_t u=\mathcal{L}u$, we have for all $t_0\geq0$,
$$\sup_{x\in Q^-}u(t_0,x)\leq C\inf_{x\in Q^+}u(t_0+T,x). $$
 Adapting a chaining argument from~\cite{Polidoro} with a suitable chaining function, we extend this inequality to any compact set $K$ of $D$ to obtain the result \eqref{Harnack inequality}. In order to prepare the chaining argument, we first introduce some notation. For all $x,y\in D$ and $T,M,\delta>0$, we denote by $\mathcal{H}_{T,x,y,M,\delta}$ the set of $\mathcal{C}^1$ and piecewise $\mathcal{C}^2$ paths $\phi : [0,T] \to \mathcal{O}$ such that
\begin{equation*}
  \left(\phi(0),\dot{\phi}(0)\right)=x, \quad \left(\phi(T),\dot{\phi}(T)\right)=y, \quad \sup_{s\in[0,T]}\left(\left\vert\dot{\phi}\right\vert+\left\vert\ddot{\phi}\right\vert\right)(s)\leq M, \quad \inf_{s\in[0,T]}\mathrm{d}_\partial(\phi(s))>\delta.
\end{equation*}

\begin{lemma}\label{chemin}
There exists a universal constant $C>1$ such that for all $\Delta>0$, $x=(q,p),y=(q',p')\in\mathbb{R}^{2d}$, there exists $\phi\in\mathcal{C}^{2}([0,\Delta],\mathbb{R}^d)$ such that
\begin{enumerate}[label=(\roman*),ref=\roman*]
    \item $\left(\phi(0),\dot{\phi}(0)\right)=x$ and $\left(\phi(\Delta),\dot{\phi}(\Delta)\right)=y$,
    \item $\sup_{t\in[0,\Delta]}\vert\phi(t)-q\vert\leq C(\vert q'-q\vert+\Delta\vert p'-p\vert+\Delta \vert p\vert)$, 
    \item $\sup_{t\in[0,\Delta]} \vert\dot{\phi}(t)\vert \leq \frac{C}{\Delta}(\vert q'-q\vert+\Delta\vert p'-p\vert+\Delta \vert p\vert)$,
    \item $\sup_{t\in[0,\Delta]} \vert\ddot{\phi}(t)\vert \leq \frac{C}{\Delta^2}(\vert q'-q\vert+\Delta\vert p'-p\vert+\Delta \vert p\vert)$.
\end{enumerate} 
\end{lemma}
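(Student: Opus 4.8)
The plan is to exhibit $\phi$ explicitly as a rescaled cubic Hermite interpolant. Setting $s:=t/\Delta\in[0,1]$, I would define
$$\phi(t):=q\,h_0(s)+\Delta p\,h_1(s)+q'\,h_2(s)+\Delta p'\,h_3(s),$$
where $h_0(s)=1-3s^2+2s^3$, $h_1(s)=s-2s^2+s^3$, $h_2(s)=3s^2-2s^3$ and $h_3(s)=-s^2+s^3$ are the usual cubic Hermite basis polynomials on $[0,1]$. Since $\phi$ is a polynomial in $t$ it is $\mathcal{C}^2$ (in fact $\mathcal{C}^\infty$), and evaluating $h_0,\dots,h_3$ and their first derivatives at $s=0$ and $s=1$ gives $\phi(0)=q$, $\dot\phi(0)=\tfrac{1}{\Delta}(\Delta p)=p$, $\phi(\Delta)=q'$ and $\dot\phi(\Delta)=\tfrac{1}{\Delta}(\Delta p')=p'$, which is item (i).

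For the quantitative bounds (ii)--(iv), the one identity that does the work is the partition-of-unity relation $h_0+h_2\equiv 1$, which entails $h_0'+h_2'\equiv 0$ and $h_0''+h_2''\equiv 0$. Using $h_0-1=-h_2$ together with the substitution $\Delta p'=\Delta p+\Delta(p'-p)$, I would rewrite
$$\phi(t)-q=(q'-q)\,h_2(s)+\Delta p\,\bigl(h_1(s)+h_3(s)\bigr)+\Delta(p'-p)\,h_3(s),$$
and, differentiating once and twice in $t$ (each derivative producing a factor $1/\Delta$),
$$\Delta\,\dot\phi(t)=(q'-q)\,h_2'(s)+\Delta p\,\bigl(h_1'(s)+h_3'(s)\bigr)+\Delta(p'-p)\,h_3'(s),$$
$$\Delta^2\,\ddot\phi(t)=(q'-q)\,h_2''(s)+\Delta p\,\bigl(h_1''(s)+h_3''(s)\bigr)+\Delta(p'-p)\,h_3''(s).$$
Since $h_0,\dots,h_3$ are fixed polynomials, there is a universal constant $C>1$ bounding $|h_2|$, $|h_1+h_3|$, $|h_3|$ and all their first and second derivatives uniformly on $[0,1]$; taking $\sup_{s\in[0,1]}$ (equivalently $\sup_{t\in[0,\Delta]}$) in the three displays yields exactly (ii), (iii) and (iv).

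I do not anticipate any genuine obstacle: the construction is completely explicit. The only point requiring care is to make the right-hand sides come out in the prescribed form $|q'-q|+\Delta|p'-p|+\Delta|p|$, rather than the cruder $|q|+|q'|+\Delta|p|+\Delta|p'|$ one gets by bounding term by term, and this is precisely what the identity $h_0+h_2\equiv1$ and the replacement $p'=p+(p'-p)$ accomplish. I would also remark that $\phi$ is a path in $\mathbb{R}^d$ which need not stay in $\mathcal{O}$: the containment requirement $\inf_s\mathrm{d}_\partial(\phi(s))>\delta$ entering the definition of $\mathcal{H}_{T,x,y,M,\delta}$ is enforced separately in the chaining argument, by invoking this lemma only for endpoints $x,y$ sufficiently close to one another and for $\Delta$ sufficiently small.
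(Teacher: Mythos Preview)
Your proof is correct and is essentially the same as the paper's. The paper defines
\[
\phi(t)=q+(q'-q)\Bigl(3\tfrac{t^2}{\Delta^2}-2\tfrac{t^3}{\Delta^3}\Bigr)+\Delta(p'-p)\Bigl(\tfrac{t^3}{\Delta^3}-\tfrac{t^2}{\Delta^2}\Bigr)+\Delta p\Bigl(\tfrac{t}{\Delta}+2\tfrac{t^3}{\Delta^3}-3\tfrac{t^2}{\Delta^2}\Bigr),
\]
which is exactly your cubic Hermite interpolant already written in the form $\phi(t)-q=(q'-q)h_2(s)+\Delta p\,(h_1+h_3)(s)+\Delta(p'-p)h_3(s)$, and then simply states that the bounds are easy to check; your write-up makes explicit the partition-of-unity step and the substitution $p'=p+(p'-p)$ that the paper leaves implicit.
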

\begin{proof}
Let $\Delta>0$. For all $t\in[0,\Delta]$, we define
$$\phi(t):=q+(q'-q)\left(3 \frac{t^2}{\Delta^2}-2 \frac{t^3}{\Delta^3}\right)+\Delta (p'-p)\left(\frac{t^3}{\Delta^3}-\frac{t^2}{\Delta^2}\right)+ \Delta p\left(\frac{t}{\Delta}+2\frac{t^3}{\Delta^3}-3\frac{t^2}{\Delta^2}\right) .$$ It is easy to see that $\phi$ satisfies the conditions above.
\end{proof} 

We recall that under Assumption~\ref{hyp O}, the set $\mathcal{O}$ satisfies the uniform interior sphere condition and denote by $\rho>0$ the associated radius, given by Proposition~\ref{prop:sphere}. The following lemma is proven in Appendix~\ref{pf:cste_Harnack}. 

\begin{lemma}[Admissible paths] \label{cste_Harnack} Under Assumptions~\ref{hyp O} and~\ref{hyp:conn}, let $K\subset D$ be a compact set and $\delta_K:=\mathrm{d}(K,\partial D)\land\rho$. For all $T>0$, there exists $M_{K,T}>0$ such that for all $x,y\in K$ the set $\mathcal{H}_{T,x,y,M_{K,T},\delta_K/2}$ is nonempty.
\end{lemma}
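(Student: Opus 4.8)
The plan is to construct $\phi$ in three successive stages: a short initial stage on $[0,\epsilon]$ that brings the velocity from $p$ down to $0$ while keeping $\phi$ within distance $\delta_K/2$ of $q$; a long middle stage on $[\epsilon,T-\epsilon]$ joining $q$ to $q'$ at rest at both ends while staying in $\{\mathrm{d}_\partial>\delta_K/2\}$; and a final stage on $[T-\epsilon,T]$, the time-reversal of the first, accelerating from rest at $q'$ up to velocity $p'$. Denote by $\pi(K)\subset\mathcal{O}$ the image of $K$ under $(q,p)\mapsto q$; it is compact, so $R_K:=\sup_{(q,p)\in K}|p|<\infty$, and, since $\mathrm{d}(K,\partial D)=\inf_{q\in\pi(K)}\mathrm{d}_\partial(q)$ (because $\partial D=\partial\mathcal{O}\times\mathbb{R}^d$) is positive, every $q\in\pi(K)$ satisfies $\mathrm{d}_\partial(q)\ge\delta_K$. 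Fix $\epsilon\in(0,T/3]$ with $\epsilon R_K\le\delta_K$.

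The geometric core of the proof is the claim that the compact set $\mathcal{K}_K:=\{q\in\overline{\mathcal{O}}:\mathrm{d}_\partial(q)\ge\delta_K\}$ is path-connected, and that there is an integer $m_K\ge1$, depending only on $K$, such that any two points of $\pi(K)$ can be joined by a polygonal path with at most $m_K$ segments contained in the open set $\{\mathrm{d}_\partial>\delta_K/2\}$. This is where Assumptions~\ref{hyp O} and~\ref{hyp:conn}, and the inequality $\delta_K\le\rho$, are used. By Proposition~\ref{prop:sphere}, the nearest-point projection $\pi_\partial$ onto $\partial\mathcal{O}$ is single-valued and continuous on $\{0<\mathrm{d}_\partial<\rho\}$, with $q=\pi_\partial(q)+\mathrm{d}_\partial(q)\,\nu_q$ where $\nu_q$ is the inward unit normal at $\pi_\partial(q)$, and moreover $\mathrm{d}_\partial(q+t\,\nu_q)=\mathrm{d}_\partial(q)+t$ and $q+t\,\nu_q\in\mathcal{O}$ whenever $0\le\mathrm{d}_\partial(q)+t\le\rho$. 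Given $z,z'\in\mathcal{K}_K$, I would pick a continuous path $\gamma$ from $z$ to $z'$ inside the connected open set $\mathcal{O}$ and push it inward by setting $\widetilde\gamma(s):=\gamma(s)$ when $\mathrm{d}_\partial(\gamma(s))\ge\delta_K$ and $\widetilde\gamma(s):=\gamma(s)+(\delta_K-\mathrm{d}_\partial(\gamma(s)))\,\nu_{\gamma(s)}$ otherwise; the second branch is legitimate since $\delta_K\le\rho$, the two branches match continuously where $\mathrm{d}_\partial(\gamma(s))=\delta_K$, and $\mathrm{d}_\partial(\widetilde\gamma(s))\ge\delta_K$ throughout, so $\widetilde\gamma$ is a path from $z$ to $z'$ in $\mathcal{K}_K$. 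Since $\mathcal{K}_K$ is a connected compact subset of the open set $\{\mathrm{d}_\partial>\delta_K/2\}$, it can be covered by finitely many open balls, say $m_K$ of them, whose closures lie in $\{\mathrm{d}_\partial>\delta_K/2\}$; connectedness of $\mathcal{K}_K$ forces the nerve of this cover (two balls adjacent when they intersect) to be connected, so any two points of $\pi(K)$ are linked by a chain of overlapping balls, hence by a polygonal path of the asserted form. Rounding their corners inside $\{\mathrm{d}_\partial>\delta_K/2\}$ and reparametrising on $[0,1]$ with a smooth time change vanishing to first order at the endpoints, I obtain, uniformly over $q,q'\in\pi(K)$, a path $\Gamma\in\mathcal{C}^1([0,1],\mathbb{R}^d)$ that is piecewise $\mathcal{C}^2$, takes values in $\{\mathrm{d}_\partial>\delta_K/2\}$, and satisfies $\Gamma(0)=q$, $\Gamma(1)=q'$, $\dot\Gamma(0)=\dot\Gamma(1)=0$ and $\sup_{[0,1]}(|\dot\Gamma|+|\ddot\Gamma|)\le L_K$ for some $L_K$ depending only on $K$.

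It then remains to assemble and estimate. On $[0,\epsilon]$ take $\phi(t)=q+p\,t(1-t/\epsilon)^2$, so that $\phi(0)=q$, $\dot\phi(0)=p$, $\phi(\epsilon)=q$, $\dot\phi(\epsilon)=0$; on $[\epsilon,T-\epsilon]$ set $\phi(t)=\Gamma\big((t-\epsilon)/(T-2\epsilon)\big)$; on $[T-\epsilon,T]$ take $\phi(t)=q'-p'(T-t)(1-(T-t)/\epsilon)^2$, so that $\phi(T-\epsilon)=q'$, $\dot\phi(T-\epsilon)=0$, $\phi(T)=q'$, $\dot\phi(T)=p'$. The three pieces agree in value and first derivative at $t=\epsilon$ and $t=T-\epsilon$, hence $\phi\in\mathcal{C}^1([0,T],\mathbb{R}^d)$ is piecewise $\mathcal{C}^2$ with $(\phi(0),\dot\phi(0))=x$ and $(\phi(T),\dot\phi(T))=y$. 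On $[0,\epsilon]$ one has $|\phi(t)-q|\le\frac{4}{27}|p|\epsilon\le\frac{4}{27}\delta_K<\delta_K/2$, so $\mathrm{d}_\partial(\phi(t))\ge\mathrm{d}_\partial(q)-|\phi(t)-q|>\delta_K/2$ by the $1$-Lipschitz continuity of $\mathrm{d}_\partial$; the same holds near $q'$ on $[T-\epsilon,T]$, and on $[\epsilon,T-\epsilon]$ the path lies in $\{\mathrm{d}_\partial>\delta_K/2\}$ by construction of $\Gamma$; hence $\inf_{s\in[0,T]}\mathrm{d}_\partial(\phi(s))>\delta_K/2$. Finally, differentiating the explicit cubics gives $|\dot\phi|\le C\,R_K$ and $|\ddot\phi|\le C\,R_K/\epsilon$ on the first and third stages for a universal constant $C$, while the chain rule gives $|\dot\phi|\le L_K/(T-2\epsilon)\le 3L_K/T$ and $|\ddot\phi|\le L_K/(T-2\epsilon)^2\le 9L_K/T^2$ on the middle stage; taking $M_{K,T}$ to be the sum of these bounds — which depends only on $K$ and $T$ — yields $\phi\in\mathcal{H}_{T,x,y,M_{K,T},\delta_K/2}$, proving the set is nonempty.

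The only genuinely delicate step is the geometric claim of the second paragraph: the path-connectedness of $\mathcal{K}_K$ (which crucially exploits $\delta_K\le\rho$ together with the uniform interior and exterior sphere conditions of Proposition~\ref{prop:sphere}) and the uniform bound $m_K$ on the complexity of the connecting paths. Everything else is elementary bookkeeping with polynomials and the chain rule.
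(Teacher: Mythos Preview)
Your three-stage approach---decelerate to rest, translate positions at rest, accelerate to the target velocity---is correct in spirit and genuinely different from the paper's. The paper works entirely in phase space: it covers the compact set $K'=\{(q,p):|p|\le k,\ \mathrm{d}_\partial(q)\ge\delta_K\}$ by small balls, builds a connectedness graph on the centres, and uses the cubic interpolants of Lemma~\ref{chemin} to join \emph{adjacent phase-space points} in equal time steps $\Delta=T/(N+1)$, chaining $N{+}1$ such pieces. Your construction decouples position from velocity, which is conceptually clean, and you provide an explicit argument (the inward-pushing retraction along normals, exploiting $\delta_K\le\rho$) for the connectedness of $\{\mathrm{d}_\partial\ge\delta_K\}$ that the paper merely asserts.

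There is, however, one step that does not quite close as written: the ``rounding corners'' is not uniform in $q,q'$. The angle at the first interior vertex $c_{i_0}$ of your polygonal chain depends on where $q$ sits inside its ball, and can be arbitrarily sharp (take $q$ on the short segment $[c_{i_0},c_{i_1}]$, just past $c_{i_0}$); then a circular-arc fillet of any fixed radius has tangent points at distance $r_0\cot(\theta/2)$ from the corner, which overshoots the adjacent segments. The cleanest repair is to abandon rounding and instead reparametrise \emph{each} linear segment by a fixed profile $h:[0,1]\to[0,1]$ with $h(0)=0$, $h(1)=1$, $\dot h(0)=\dot h(1)=0$. The resulting path is $\mathcal{C}^1$ (velocity zero at every vertex), piecewise $\mathcal{C}^\infty$, lies exactly on the polygonal path (so stays in $\{\mathrm{d}_\partial>\delta_K/2\}$ automatically), and has $|\dot\Gamma|+|\ddot\Gamma|$ bounded by $\|\dot h\|_\infty+\|\ddot h\|_\infty$ times a constant depending only on $m_K$ and $\mathrm{diam}(\mathcal{K}_K)$---hence uniformly in $q,q'$. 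This is essentially the role that Lemma~\ref{chemin} plays in the paper's argument.
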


Let us now prove the Harnack inequality in Theorem \ref{Harnack}.
\begin{proof}[Proof of Theorem~\ref{Harnack}]
Let $K\subset D$ be a compact set. Let $T>0$ and let $u$ be a
non-negative distributional solution of $\partial_t u-\mathcal{L}u=0$
on $\mathbb{R}_+^*\times D$. The proof is divided into two steps. In
the first step, we introduce the necessary background in order to
apply the Harnack inequality from~\cite{Har}. In the second step, we
detail the chaining argument, based on Lemma~\ref{cste_Harnack}, which
allows us to obtain the Harnack inequality of Theorem \ref{Harnack}.

\medskip \noindent \textbf{Step 1}. Let $M_{K,T}>0$ and $\delta_{K}>0$
be the constants given in Lemma \ref{cste_Harnack} and let us define
the constant
$r_{K,T}:=\sqrt{\frac{\delta_{K}}{1+M_{K,T}}}\land\frac{1}{2}$. Let $r\in(0,r_{K,T}]$. Let us define $$D_{K,T,r}=\{(t,q,p)\in \mathbb{R}_+^* \times D : t>r^2, \mathrm{d}_\partial(q)>\delta_{K}/2 , \vert p\vert\leq M_{K,T} \}. $$ Notice that $(r^2,\infty)\times K\subset D_{K,T,r}$. Let $Q$ be the following unit box $$Q:=\{(t,q,p)\in\mathbb{R}\times\mathbb{R}^{2d} : t\in(-1,0],\vert q\vert<1 , \vert p\vert<1 \} .$$
For all $z_0=(t_0,q_0,p_0)\in D_{K,T,r}$, let us define the following function on $Q$ 
$$h_{r,z_0}:(t,q,p) \mapsto (r^2t+t_0,q_0-r^2tp_0+r^3q,p_0-r p).$$
 Notice that for all $z_0\in D_{K,T,r}$ and $(t,q,p)\in Q$,
$$\vert-r^2tp_0+r^3q\vert\leq M_{K,T}r^2_{K,T}+r^3_{K,T}<
r^2_{K,T}(1+M_{K,T})\leq\delta_{K} ,$$ since $r_{K,T}\in(0,1)$. As a
result, $h_{r,z_0}$ is a function on $Q$ with values in
$\mathbb{R}_+^*\times D$.

Since $\partial_t-\mathcal{L}$ is a hypoelliptic operator on
$\mathbb{R}_+^*\times D$ it follows that $u$ is in
$\mathcal{C}^\infty(\mathbb{R}_+^*\times D)$. Let us now define the following smooth function
\begin{equation*}
  u_{r,z_0} :=u\circ h_{r,z_0}
\end{equation*}
on $Q$. It satisfies  
\begin{equation*}
  \partial_tu_{r,z_0} =-p\cdot\nabla_qu_{r,z_0}+\gamma (r p_0-r^2 p)\cdot\nabla_pu_{r,z_0}-r F(q_0-r^2tp_0+r^3q)\cdot\nabla_pu_{r,z_0}+\frac{\sigma^2}{2}\Delta_p u_{r,z_0}.
\end{equation*}
Besides for $(t,q,p)\in Q$, 
\[\begin{aligned}
\left\vert\gamma (r p_0-r^2 p)-r F(q_0-r^2tp_0+r^3q)\right\vert&\leq \vert\gamma\vert \left(M_{K,T}+\delta_K\right)+\|F\|_{\mathrm{L}^\infty(\mathcal{O})}
\end{aligned}\] 
which is a constant depending only on the compact $K$, $T$ and the coefficients of $\mathcal{L}$. As a result, Theorem 4 in~\cite{Har} ensures the existence of constants $C_{K,T}>1$ and $R_{K,T},\Delta_{K,T}\in(0,1)$ (which do not depend on $r \in (0,r_{K,T}]$ or $z_0$) such that $\Delta_{K,T} + R^2_{K,T} < 1$ and
\begin{equation}\label{Harnack1}
    \sup_{(t,q,p)\in Q_{K,T}^-}u_{r,z_0}(t,q,p)\leq C_{K,T}\inf_{(t,q,p)\in Q_{K,T}^+}u_{r,z_0}(t,q,p) 
\end{equation} 
where $$ Q_{K,T}^+:=\{(t,q,p) : t\in(-R_{K,T}^2,0],\vert q\vert<R_{K,T}^3 , \vert p\vert<R_{K,T} \}\subset Q,$$ $$ Q_{K,T}^-:=\{(t,q,p) : t\in(-R_{K,T}^2-\Delta_{K,T},-\Delta_{K,T}],\vert q\vert<R_{K,T}^3 , \vert p\vert<R_{K,T} \}\subset Q .$$
Introducing the notation $Q_{K,T,r}^\pm(z_0) = h_{r,z_0}(Q^\pm_{K,T})$, \eqref{Harnack1} rewrites, for all $r \in (0,r_{K,T}]$ and $z_0 \in D_{K,T,r}$,
\begin{equation}\label{Harnack2}
    \sup_{(t,q,p)\in Q_{K,T,r}^-(z_0)}u(t,q,p)\leq C_{K,T}\inf_{(t,q,p)\in Q_{K,T,r}^+(z_0)}u(t,q,p) . 
\end{equation} 

\medskip \noindent\textbf{Step 2}. Let $\epsilon>0$. Let us choose $r^\epsilon_{K,T}$ satisfying
\begin{enumerate}[label=(\roman*),ref=\roman*]
    \item $0<r^\epsilon_{K,T}\leq r_{K,T}$,
    \item $r^\epsilon_{K,T}<\frac{2  R_{K,T}^3}{M_{K,T}\left(\Delta_{K,T}+\frac{R_{K,T}^2}{2}\right)^2}\land\frac{R_{K,T}}{M_{K,T}\left(\Delta_{K,T}+\frac{R_{K,T}^2}{2}\right)}\land\sqrt{\frac{\epsilon}{1-\Delta_{K,T}-\frac{R_{K,T}^2}{2}}}$,
    \item the quantity $\alpha^\epsilon_{K,T}:=(r_{K,T}^\epsilon)^2\left(\Delta_{K,T}+\frac{R_{K,T}^2}{2}\right)$ is such that $n^\epsilon_{K,T}:=\frac{T}{\alpha_{K,T}^\epsilon}\in\mathbb{N}$.
\end{enumerate}
\noindent Let $t\geq T+\epsilon$. Let $(x,y)$ be two arbitrary points in the compact set $K$. Let $\phi\in\mathcal{H}_{T,x,y,M_{K,T},\delta_{K}/2}$ (which exists by Lemma \ref{cste_Harnack}) and let $$\Phi:s\in[0,T]\mapsto 
\begin{pmatrix}
\phi(s)  \\
\dot{\phi}(s)\\
t-s
\end{pmatrix}\in\mathbb{R}^{2d+1} .$$

\noindent Now let $(s^\epsilon_j)_{0\leq j\leq n^\epsilon_{K,T}}$ be the sequence defined by  $s^\epsilon_j:=j \alpha^\epsilon_{K,T}$ for $0\leq j\leq n^\epsilon_{K,T}$.

Let us show that $\Phi(s^\epsilon_j)\in D_{K,T,r^\epsilon_{K,T}}$ for all $0\leq j\leq n^\epsilon_{K,T}-1$. Indeed, one has  
$$t-s_{n^\epsilon_{K,T}-1}=t-(n^\epsilon_{K,T}-1) \alpha^\epsilon_{K,T}=t-T+(r_{K,T}^\epsilon)^2\left(\Delta_{K,T}+\frac{R_{K,T}^2}{2}\right)>(r^\epsilon_{K,T})^2,$$
since $(r^\epsilon_{K,T})^2<\frac{\epsilon}{1-\Delta_{K,T}-\frac{R_{K,T}^2}{2}}.$ The rest follows from the definition of $\mathcal{H}_{T,x,y,M_{K,T},\delta_{K}/2}$.
Hence, \eqref{Harnack2} is satisfied for $r=r^\epsilon_{K,T}$ and $z_0=\Phi(s^\epsilon_j)$ for all $0\leq j\leq n^\epsilon_{K,T}-1$, i.e. $$\sup_{(t,q,p)\in Q_{K,T,r^\epsilon_{K,T}}^-(\Phi(s^\epsilon_j))}u(t,q,p)\leq C_{K,T}\inf_{(t,q,p)\in Q_{K,T,r^\epsilon_{K,T}}^+(\Phi(s^\epsilon_j))}u(t,q,p).$$

Let us now prove that for every $0\leq j\leq n^\epsilon_{K,T}-1$, $\Phi(s^\epsilon_{j+1})\in Q_{K,T,r^\epsilon_{K,T}}^-(\Phi(s^\epsilon_j))$. Let
\begin{enumerate}[label=(\roman*),ref=\roman*]
    \item $\widehat{t}_j:=-\frac{\alpha^\epsilon_{K,T}}{(r^\epsilon_{K,T})^2}=-\Delta_{K,T}-\frac{R_{K,T}^2}{2}$,
    \item $\widehat{q}_j:=\frac{1}{(r^\epsilon_{K,T})^3}\left(\phi(s^\epsilon_{j+1})-\phi(s^\epsilon_{j})-\alpha^\epsilon_{K,T} \dot{\phi}(s^\epsilon_{j})\right)$,
    \item $\widehat{p}_j:=\frac{1}{r^\epsilon_{K,T}}\left(\dot{\phi}(s^\epsilon_{j})-\dot{\phi}(s^\epsilon_{j+1})\right)$.
\end{enumerate}
  Then it only remains to prove that
  $(\widehat{t}_j,\widehat{q}_j,\widehat{p}_j)\in Q_{K,T}^-$ for every
  $0\leq j\leq n^\epsilon_{K,T}-1$, i.e. that
  $$h_{r^\epsilon_{K,T},\Phi(s^\epsilon_{j})}(\widehat{t}_j,\widehat{q}_j,\widehat{p}_j)=\Phi(s^\epsilon_{j+1}).$$

First, concerning $\widehat{t}_j$, it is clear by definition of $\alpha^\epsilon_{K,T}$ that $$-\Delta_{K,T}-R_{K,T}^2< -\frac{\alpha^\epsilon_{K,T}}{(r^\epsilon_{K,T})^2}\leq -\Delta_{K,T} .$$

Second, for $\widehat{q}_j$,
\[\begin{aligned}
\left\vert\phi(s^\epsilon_{j+1})-\phi(s^\epsilon_{j})-\alpha^\epsilon_{K,T} \dot{\phi}(s^\epsilon_j)\right\vert&=\left\vert\int_{s^\epsilon_j}^{s^\epsilon_{j+1}}\left(\dot{\phi}(\eta)-\dot{\phi}(s^\epsilon_j)\right)\mathrm{d}\eta\right\vert\\
&\leq \int_{s^\epsilon_{j}}^{s^\epsilon_{j+1}}\int_{s^\epsilon_j}^{\eta}\left\vert\ddot{\phi}(\mu)\right\vert \mathrm{d}\mu \mathrm{d}\eta\\
&\leq
M_{K,T}\int_{s^\epsilon_{j}}^{s^\epsilon_{j+1}}\left(\eta-s^\epsilon_j\right)
\mathrm{d}\eta \leq M_{K,T} \frac{(\alpha^\epsilon_{K,T})^2}{2}
\end{aligned}\]
and since 
\begin{equation*}
  r_{K,T}^\epsilon<\frac{2  R_{K,T}^3}{M_{K,T}\left(\Delta_{K,T}+\frac{R_{K,T}^2}{2}\right)^2},
\end{equation*}
we have
\begin{equation*}
  (r_{K,T}^\epsilon)^4\left(\Delta_{K,T}+\frac{R_{K,T}^2}{2}\right)^2<\frac{2 (r_{K,T}^\epsilon)^3 R_{K,T}^3}{M_{K,T}},
\end{equation*}
and therefore
\begin{equation*}
  M_{K,T} \frac{(\alpha^\epsilon_{K,T})^2}{2}<(r_{K,T}^\epsilon)^3 R_{K,T}^3.
\end{equation*}

Third, for $\widehat{p}_j$,
\[\begin{aligned}
\left\vert\dot{\phi}(s^\epsilon_{j+1})-\dot{\phi}(s^\epsilon_{j})\right\vert&\leq \int_{s^\epsilon_{j}}^{s^\epsilon_{j+1}}\left\vert\ddot{\phi}(\eta)\right\vert \mathrm{d}\eta\leq M_{K,T} \alpha^\epsilon_{K,T}\\
\end{aligned}\]
and the assumption that
\begin{equation*}
  r^\epsilon_{K,T}<\frac{R_{K,T}}{M_{K,T}\left(\Delta_{K,T}+\frac{R_{K,T}^2}{2}\right)}
\end{equation*}
ensures that
\begin{equation*}
  M_{K,T} \alpha^\epsilon_{K,T}<r^\epsilon_{K,T} R_{K,T}.
\end{equation*}
Hence $\Phi(s^\epsilon_{j+1})\in Q_{K,T,r^\epsilon_{K,T}}^-(\Phi(s^\epsilon_j))$.

Finally, one gets for $0\leq j\leq n^\epsilon_{K,T}-1$
$$u(\Phi(s^\epsilon_{j+1}))\leq\sup_{Q_{K,T,r^\epsilon_{K,T}}^-(\Phi(s^\epsilon_j))}u\leq C_{K,T}\inf_{Q_{K,T,r^\epsilon_{K,T}}^+(\Phi(s^\epsilon_j))}u\leq C_{K,T} u(\Phi(s^\epsilon_{j}))$$
which yields by iterating,
$$u(\Phi(T))=u(t-T,y)\leq C_{K,T}^{n^\epsilon_{K,T}} u(\Phi(0))=C_{K,T}^{n^\epsilon_{K,T}} u(t,x) $$
where $C_{K,T}^{n^\epsilon_{K,T}}$ does not depend on $(t,x,y)$ but only on the compact set $K$ and $T,\epsilon$. As a result, we have for all $t\geq T+\epsilon$,
$$\sup_{x\in K} u(t-T,x)\leq C_{K,T}^{n^\epsilon_{K,T}} \inf_{x\in K}u(t,x) $$ which concludes the proof.
\end{proof}  

\subsection{Proof of Theorem~\ref{maximum principle}}\label{ss:maximum}

In order to prove the maximum principle stated in Theorem \ref{maximum principle}, we need the following lemma.

\begin{lemma}[Irreducibility] \label{lemma irreducibilty} Let Assumptions \ref{hyp F1}, \ref{hyp O} and \ref{hyp:conn} hold. Let $A$ be an open subset of~$D$, then 
$$\forall x\in D,\quad\forall   t>0,\quad\forall s\in(0,t),\qquad\mathbb{P}(X^x_s\in A,\tau^x_{\partial}>t)>0.$$ 
\end{lemma}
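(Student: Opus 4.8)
The plan is to reduce the statement, via the Girsanov-type Lemma~\ref{Girsanov}, to the Gaussian process $(\check q^x_u,\check p^x_u)_{u\ge0}$ solving~\eqref{Langevin2}, and then to combine a deterministic controllability argument with the full topological support of Brownian motion. Since the event $\{X^x_s\in A,\ \tau^x_\partial>t\}$ depends only on the absorbed Langevin process (with $A\subset D$ and $s<t$), by Remark~\ref{solution dans D} we may assume without loss of generality that $F$ also satisfies Assumption~\ref{hyp F}, so that Lemma~\ref{Girsanov} applies. Write $x=(q,p)$. Applied on $[0,t]$, Lemma~\ref{Girsanov} makes the laws of $(q^x_u,p^x_u)_{u\in[0,t]}$ and $(\check q^x_u,\check p^x_u)_{u\in[0,t]}$ mutually absolutely continuous on $\mathcal{C}([0,t],\mathbb{R}^{2d})$; as $\{X^x_s\in A,\ \tau^x_\partial>t\}$ is a Borel functional of the path restricted to $[0,t]$, it suffices to show that $\mathbb{P}\bigl((\check q^x_s,\check p^x_s)\in A,\ \check\tau^x_\partial>t\bigr)>0$, where $\check\tau^x_\partial:=\inf\{u>0:(\check q^x_u,\check p^x_u)\notin D\}$. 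Recall from~\eqref{Langevin2} that $\check p^x_u=p+\sigma B_u$ and $\check q^x_u=q+pu+\sigma\int_0^u B_r\,\mathrm{d}r$.

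Next I would build a deterministic control curve. Fix $y=(q',p')\in A$. Since $D=\mathcal{O}\times\mathbb{R}^d$ is open and connected, hence path-connected, there is a compact set $K\subset D$ containing both $x$ and $y$; set $\delta_K:=\mathrm{d}(K,\partial D)\wedge\rho>0$. Lemma~\ref{cste_Harnack} with time horizon $s$ then yields a constant $M>0$ and a path $\phi\in\mathcal{H}_{s,x,y,M,\delta_K/2}$, i.e. $\phi:[0,s]\to\mathcal{O}$ is $\mathcal{C}^1$, piecewise $\mathcal{C}^2$, with $(\phi(0),\dot\phi(0))=x$, $(\phi(s),\dot\phi(s))=y$ and $\inf_{[0,s]}\mathrm{d}_\partial(\phi)>\delta_K/2$. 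Since $y\in K\subset D$ one has $\mathrm{d}_\partial(q')=\mathrm{d}(y,\partial D)\ge\delta_K$, so $\phi$ extends to a curve $\psi:[0,t]\to\mathcal{O}$, still $\mathcal{C}^1$ and piecewise $\mathcal{C}^2$, equal to $\phi$ on $[0,s]$ and staying within distance $\delta_K/2$ of $q'$ on $[s,t]$; in particular $\inf_{[0,t]}\mathrm{d}_\partial(\psi)\ge\delta_K/2$. Finally set $w:=(\dot\psi-p)/\sigma\in\mathcal{C}([0,t],\mathbb{R}^d)$, which satisfies $w(0)=0$, and note that with this choice $q+pu+\sigma\int_0^u w_r\,\mathrm{d}r=\psi(u)$ and $p+\sigma w_u=\dot\psi(u)$ for all $u\in[0,t]$.

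The final step uses the full support of Brownian motion: for every $\varepsilon>0$, $\mathbb{P}\bigl(\sup_{u\in[0,t]}|B_u-w_u|<\varepsilon\bigr)>0$. On this event the two identities above give $\sup_{[0,t]}|\check q^x_u-\psi(u)|\le\sigma t\varepsilon$ and $|\check p^x_s-\dot\psi(s)|\le\sigma\varepsilon$. Choosing $\varepsilon$ so small that $\sigma t\varepsilon<\delta_K/2$ forces $\check q^x_u\in\mathcal{O}$ for all $u\in[0,t]$, hence $\check\tau^x_\partial>t$; and since $(\psi(s),\dot\psi(s))=y$ belongs to the open set $A$, shrinking $\varepsilon$ further gives $(\check q^x_s,\check p^x_s)\in A$. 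Thus an event of positive probability is contained in $\{(\check q^x_s,\check p^x_s)\in A,\ \check\tau^x_\partial>t\}$, which proves the claim for the Gaussian process and, by Lemma~\ref{Girsanov}, for the Langevin process.

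The main obstacle is the construction of the control curve $\psi$: one must join $x$ to a point of $A$ by a curve respecting the non-holonomic relation $\dot q=p$, so that both position \emph{and} velocity are prescribed at the endpoints, while keeping it at positive distance from $\partial\mathcal{O}$ throughout $[0,t]$. This is precisely what Lemma~\ref{cste_Harnack} provides, and it is here that the connectedness Assumption~\ref{hyp:conn} is used. Everything else is routine: the absolute continuity of Lemma~\ref{Girsanov}, the explicit Gaussian form of $(\check q^x,\check p^x)$, and the elementary full-support property of Brownian motion.
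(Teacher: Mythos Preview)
Your proof is correct and takes a genuinely different route from the paper's argument.

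The paper proves the lemma in two lines by invoking the positivity of the transition density $\mathrm{p}^D_t$ from Theorem~\ref{thm density intro}: writing
\[
\mathbb{P}(X^x_s\in A,\tau^x_\partial>t)=\mathbb{E}\bigl[\mathbb{1}_{X^x_s\in A,\tau^x_\partial>s}\,\mathbb{P}(\tau^z_\partial>t-s)\big|_{z=X^x_s}\bigr]
\]
via the Markov property, both factors are positive by the positivity of $\mathrm{p}^D_s(x,\cdot)$ and of $\mathrm{p}^D_{t-s}(\cdot,\cdot)$ on $D$. This is short, but it forward-references a result whose proof (Sections~\ref{Section 4}--\ref{Section Adjoint process and compactness}) requires the Harnack inequality and the adjoint-process machinery.

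Your argument avoids that forward reference entirely: Girsanov reduces to the driftless process $(\check q^x,\check p^x)$, Lemma~\ref{cste_Harnack} supplies an admissible control curve respecting the non-holonomic constraint $\dot q=p$ (this is precisely where Assumption~\ref{hyp:conn} enters), and the full support of Brownian motion in $\mathcal{C}_0([0,t],\mathbb{R}^d)$ ensures the process tracks the curve with positive probability. This is a more elementary and self-contained proof, essentially the Stroock--Varadhan support theorem specialized to the kinetic setting. One small point: the extension of $\phi$ to $\psi$ on $[s,t]$, ``staying within distance $\delta_K/2$ of $q'$'', deserves a word of justification, since the initial speed $|p'|$ may be large. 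The cleanest fix is a second application of Lemma~\ref{cste_Harnack} to the compact set $\{y\}$ with time horizon $t-s$, yielding a closed loop at $y$ that stays at distance $>\delta_{\{y\}}/2\ge\delta_K/2$ from $\partial\mathcal{O}$; gluing this to $\phi$ at $u=s$ preserves the $\mathcal{C}^1$, piecewise $\mathcal{C}^2$ regularity since both pieces satisfy $(\psi(s),\dot\psi(s))=y$.
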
 
\begin{proof}
Let $x\in D,t>0,s\in(0,t)$. Let $A$ be an open subset of $D$, the Markov property at time $s$ ensures that
$$\mathbb{P}(X^x_s\in A,\tau^x_{\partial}>t)=\mathbb{E}\left[\mathbb{1}_{X^x_s\in A,\tau^x_{\partial}>s}\mathbb{P}(\tau^y_{\partial}>t-s)\vert_{y=X^x_s}\right].$$
By Theorem \ref{thm density intro}, which is proven in Sections \ref{Section 4} and \ref{Section Adjoint process and compactness}, the kernel $\mathrm{P}^D_s(x,\cdot)$ defined in \eqref{kernel D} admits a positive density function $\mathrm{p}^D_t(x,\cdot)$. Therefore, $\mathbb{P}(X^x_s\in A,\tau^x_{\partial}>s)>0$. Again, by the positivity of  $\mathrm{p}^D_{t-s}(\cdot,\cdot)$ in Theorem \ref{thm density intro}, on the event $\{X^x_s\in A,\tau^x_{\partial}>s\}$, one has that $\mathbb{P}(\tau^y_{\partial}>t-s)\vert_{y=X^x_s}>0$ almost surely. This concludes the proof using the Markov property stated above.
\end{proof}
Let us now prove Theorem~\ref{maximum principle}.
\begin{proof}[Proof of Theorem \ref{maximum principle}]

\noindent Let $x\in D$. Let $(X^x_t=(q^x_t,p^x_t))_{t\geq0}$ be the strong solution of \eqref{Langevin} on $\mathbb{R}^{2d}$. For $k>0$, let $V_k$ be the following open and bounded subset of $D$  
\begin{equation*}
    V_k:=\left\{(q,p)\in D : \vert p\vert< k, \mathrm{d}_\partial(q)>\frac{1}{k} \right\}.
\end{equation*}
Let $\tau^{x}_{V_k^c}$ be the following stopping time:
$$\tau^x_{V_k^c}=\inf \{t>0: X^x_t\notin V_k\}.$$
Let $t>0$ and $s\in[0,t)$. Since
$u\in\mathcal{C}^{1,2}(\mathbb{R}_+^{*}\times D)$, Itô's formula
applied to the process $(u(t-r,X^x_{r}))_{0\leq r\leq s}$ between $0$
and $s\land \tau^x_{V_k^c}$ yields: almost surely, for $s\in[0,t)$, 
\begin{align*}
u(t,x)&=\mathbb{E}\bigg[ \mathbb{1}_{\tau^x_{V_k^c}>s} u(t-s,X^x_s) \bigg]+\mathbb{E}\bigg[ \mathbb{1}_{\tau^x_{V_k^c}\leq s} u(t-\tau^x_{V_k^c},X^x_{\tau^x_{V_k^c}})\bigg]\\
&\quad +\mathbb{E}\bigg[ \int_0^{s\land\tau^x_{V_k^c}}\left(\partial_tu(t-r,X^x_{r})-\mathcal{L}u(t-r,X^x_{r})\right)\mathrm{d}r\bigg].\numberthis\label{formule ito max principle}
\end{align*}

\medskip \noindent\textbf{Step 1}. Let us prove Assertion \eqref{weak
  principle} in Theorem \ref{maximum principle} using \eqref{formule
  ito max principle}. It follows from \eqref{formule ito max principle} and the inequality $\partial_tu-\mathcal{L}u\leq0$ on $\mathbb{R}_+^*\times D$, that
$$u(t,x)\leq\mathbb{E}\bigg[ \mathbb{1}_{\tau^x_{V_k^c}>s} u(t-s,X^x_s) \bigg]+\mathbb{E}\bigg[ \mathbb{1}_{\tau^x_{V_k^c}\leq s} u(t-\tau^x_{V_k^c},X^x_{\tau^x_{V_k^c}})\bigg].$$
By assumption, $u\in \mathcal{C}^b((\mathbb{R}_+\times\overline{D})
\setminus (\{0\}\times(\Gamma^+\cup\Gamma^0)))$. Therefore, following
the same reasoning as in the proof of Assertion~\eqref{it:ibvp:uniq}
of Theorem~\ref{Solution PDE} in Section~\ref{ss:ibvp:uniq}, one
obtains by letting $s\rightarrow t$ and $k\rightarrow\infty$ that
$$u(t,x)\leq\mathbb{E}\bigg[ \mathbb{1}_{\tau^x_\partial>t} u(0,X^x_t) \bigg]+\mathbb{E}\bigg[ \mathbb{1}_{\tau^x_\partial< t} u(t-\tau^x_\partial,X^x_{\tau^x_\partial})\bigg].$$ 
Since $X^x_{\tau^x_\partial} \in\Gamma^+$ almost surely by Proposition \ref{prop:tau}, the inequality above immediately yields Assertion~\eqref{weak principle}.

\medskip \noindent\textbf{Step 2}. We now prove Assertion
\eqref{strong principle}. Applying the equality \eqref{formule ito max principle} for $(t,x)=(t_0,x_0)$ and subtracting $u(t_0,x_0)$, we obtain that for all $s\in[0,t_0)$,
\begin{align*}
    0&=\mathbb{E}\bigg[ \mathbb{1}_{\tau^{x_0}_{V_k^c}>s} \bigg(u(t_0-s,X^{x_0}_s)-u(t_0,x_0)\bigg)\bigg]+\mathbb{E}\bigg[ \mathbb{1}_{\tau^{x_0}_{V_k^c}\leq s} \bigg(u(t_0-\tau^{x_0}_{V_k^c},X^{x_0}_{\tau^{x_0}_{V_k^c}})-u(t_0,x_0)\bigg)\bigg]\\
    &\quad +\mathbb{E}\bigg[ \int_0^{s\land\tau^{x_0}_{V_k^c}}\bigg(\partial_tu(t_0-r,X^{x_0}_{r})-\mathcal{L}u(t_0-r,X^{x_0}_{r})\bigg)\mathrm{d}r\bigg].
\end{align*}
Using the fact that $u(t_0,x_0)=\Vert u\Vert_\infty$ and that
$\partial_tu-\mathcal{L}u\leq0$ on $\mathbb{R}_+^*\times D$, it
follows that, necessarily, for all $k>0$ and $s\in[0,t_0)$, (since $\mathbb{1}_{\tau^{x_0}_{V_k^c}>t_0}
\le \mathbb{1}_{\tau^{x_0}_{V_k^c}>s}$)
$$\mathbb{E}\bigg[ \mathbb{1}_{\tau^{x_0}_{V_k^c}>t_0} \bigg(u(t_0-s,X^{x_0}_s)-u(t_0,x_0)\bigg)\bigg]=0.$$
Taking $k\rightarrow\infty$ as in the proof of Assertion~\eqref{it:ibvp:uniq} of Theorem~\ref{Solution PDE}, one obtains that for all $s\in[0,t_0)$,
\begin{equation}\label{ineq step 2 max principle}
    \mathbb{E}\bigg[ \mathbb{1}_{\tau^{x_0}_\partial>t_0} \bigg(u(t_0-s,X^{x_0}_s)-u(t_0,x_0)\bigg)\bigg]=0. 
\end{equation}

Assume now that Assertion \eqref{strong principle} is not satisfied, then there exist $c>0$, $s_0\in(0,t_0)$ and an open subset $A$ of $D$ such that for all $y\in A$, $u(t_0-s_0,y)-u(t_0,x_0)\leq-c$. Therefore,
$$\mathbb{E}\bigg[\mathbb{1}_{\tau^{x_0}_\partial>t_0,X^{x_0}_{s_0}\in A} \bigg(u(t_0-s_0,X^{x_0}_{s_0})-u(t_0,x_0)\bigg)\bigg]\leq-c\mathbb{P}\bigg(\tau^{x_0}_\partial>t_0,X^{x_0}_{s_0}\in A\bigg)<0,$$ by Lemma \ref{lemma irreducibilty}. Moreover,
$$\mathbb{E}\bigg[ \mathbb{1}_{\tau^{x_0}_\partial>t_0} \bigg(u(t_0-s_0,X^{x_0}_{s_0})-u(t_0,x_0)\bigg)\bigg]\leq\mathbb{E}\bigg[\mathbb{1}_{\tau^{x_0}_\partial>t_0,X^{x_0}_{s_0}\in A} \bigg(u(t_0-s_0,X^{x_0}_{s_0})-u(t_0,x_0)\bigg)\bigg]<0,$$ which is in contradiction with \eqref{ineq step 2 max principle}, hence Assertion \eqref{strong principle}.
\end{proof}

\section{Gaussian upper bound and existence of a smooth transition density for the absorbed Langevin process}
\label{Section 4}\sectionmark{Gaussian upper-bound and absorbed transition density}
 
The proof of  the Gaussian upper bound stated in Theorem \ref{borne
  densite thm} is provided in Section~\ref{subsection
  Gaussian}. Section~\ref{smooth density subsection} is devoted to the
proof of the existence of a smooth transition density for the absorbed
Langevin process from Definition~\ref{defi:absorbed}, and the fact that this density
satisfies the backward and forward Kolmogorov equations. This yields
the first part of Theorem~\ref{thm density intro}, the boundary continuity
will be proven in Section~\ref{Section Adjoint process and
  compactness}. Section~\ref{sec:boundary} is devoted to the study of some preliminary boundary continuity properties of the transition density for the absorbed
Langevin process \eqref{Langevin} which will be useful in Section~\ref{Section Adjoint process and
  compactness}.

\subsection{\texorpdfstring{Gaussian upper bound for the Langevin process in $\mathbb{R}^d$}{}} \label{subsection Gaussian}

The purpose of this section is to provide a Gaussian upper bound
satisfied by the transition density $\mathrm{p}_t(x,y)$ of the process
$(X^x_t=(q^x_t,p^x_t))_{t\geq0}$ defined by \eqref{Langevin} under
Assumption \ref{hyp F}. We do not consider absorption in this section.

For $x=(q,p)\in\mathbb{R}^{2d}$, let  $(\widehat{X}^x_t=(\widehat{q}_t^{x},\widehat{p}_t^{x}))_{t\geq0}$ be the strong solution on $\mathbb{R}^{2d}$ of the following SDE
\begin{equation}\label{Langevin no potential}
  \left\{
    \begin{aligned}
&        \mathrm{d}\widehat{q}^{x}_t=\widehat{p}^{x}_t \mathrm{d}t ,\\
 &       \mathrm{d}\widehat{p}^{x}_t=-\gamma \widehat{p}^{x}_t \mathrm{d}t+\sigma \mathrm{d}B_t ,\\
  &      (\widehat{q}^{x}_0,\widehat{p}^{x}_0)= x,
    \end{aligned}
\right.   
\end{equation}
with infinitesimal generator $\widehat{\mathcal{L}} := \mathcal{L}_{0,\gamma,\sigma}$. Let $\Phi_1,\Phi_2$ be the following positive continuous functions on~$\mathbb{R}$:
\begin{equation}\label{Phi_1}
\Phi_1:\rho\in\mathbb{R}\mapsto\begin{cases}
  \frac{1-\mathrm{e}^{-\rho}}{\rho}&\text{if $\rho\neq0$,}\\ 
  1 & \text{if $\rho=0$,} 
\end{cases}
\end{equation}
\begin{equation}\label{Phi_2}
\Phi_2:\rho\in\mathbb{R}\mapsto\begin{cases}
  \frac{3}{2 \rho^3}\left[2\rho-3+4 \mathrm{e}^{-\rho}-\mathrm{e}^{-2\rho}\right] & \text{if $\rho\neq0$,}\\
  1 &\text{if $\rho=0$.}
\end{cases}
\end{equation}
The process $(\widehat{q}_t^{x},\widehat{p}_t^{x})_{t \geq 0}$ is Gaussian and for all $t \geq 0$, the vector $(\widehat{q}_t^{x},\widehat{p}_t^{x})$ admits the following law 
\begin{equation}\label{loi gaussienne}
\begin{pmatrix}
\widehat{q}^x_t  \\
\widehat{p}^x_t
\end{pmatrix}
  \sim  \mathcal{N}_{2d}\left(
\begin{matrix} 
\begin{pmatrix}
m_q^x(t)  \\
m_p^x(t) 
\end{pmatrix},
C(t)
\end{matrix} 
\right)  , 
\end{equation}
where the mean vector is $$m_q^x(t):=q+t p \Phi_1(\gamma t),\qquad m_p^x(t):=p \mathrm{e}^{-\gamma  t},$$
and the covariance matrix is
$$C(t):= \begin{pmatrix}
c_{qq}(t)  I_d & c_{qp}(t)  I_d \\
c_{qp}(t)  I_d & c_{pp}(t)  I_d 
\end{pmatrix}, 
$$
where $I_d$ is the identity matrix in $\mathbb{R}^{d\times d}$ and
\begin{equation}\label{coeff cov}
    c_{qq}(t):=\frac{\sigma^2 t^3}{3} \Phi_2(\gamma t),\qquad c_{qp}(t):=\frac{\sigma^2 t^2}{2} \Phi_1(\gamma t)^2,\qquad c_{pp}(t):=\sigma^2 t \Phi_1(2\gamma t)  . 
\end{equation} 
The determinant of the covariance matrix $C(t)$ is $\mathrm{det}(C(t))=(\frac{\sigma^4 t^4}{12} \phi(\gamma t))^d$ where $\phi$ is the positive continuous function defined by 
\begin{equation}\label{expr phi}
\phi:\rho\in\mathbb{R}\mapsto 4 \Phi_2(\rho) \Phi_1(2 \rho)-3 \Phi_1(\rho)^4=\begin{cases}
    \frac{6 (1-\mathrm{e}^{-\rho})}{\rho^4}\left[-2+\rho+(2+\rho)\mathrm{e}^{-\rho}\right]&\text{if $\rho\neq0$,}\\
   1 &\text{if $\rho=0$.}
\end{cases}
\end{equation} As a result, one can easily obtain an explicit expression of the transition density $\widehat{\mathrm{p}}_t((q,p),(q',p'))$ of the process $(\widehat{q}_t^{x},\widehat{p}_t^{x})_{t\geq0}$: for $t>0$, $(q,p),(q',p')\in\mathbb{R}^{2d}$,
\begin{equation}\label{expr densite}
    \widehat{\mathrm{p}}_t((q,p),(q',p')):=\frac{1}{ \sqrt{(2 \pi)^{2d} \left(\frac{\sigma^4 t^4}{12} \phi(\gamma t)\right)^d}} \mathrm{e}^{-\frac{\delta x(t)\cdot C^{-1}(t) \delta x(t)}{2}} 
\end{equation}
where 
\begin{align*}
\delta x(t):=
\begin{pmatrix}
\delta q(t) \\
\delta p(t) 
\end{pmatrix}:=
\begin{pmatrix}
q'-m_q^x(t)  \\
p'-m_p^x(t) 
\end{pmatrix},\qquad C^{-1}(t)=\frac{1}{\frac{\sigma^4 t^4}{12} \phi(\gamma t)} \begin{pmatrix}
c_{pp}(t)  I_d & -c_{qp}(t)  I_d \\
-c_{qp}(t)  I_d & c_{qq}(t)  I_d 
\end{pmatrix} . \numberthis\label{def moyenne cov}
\end{align*} 

We now give a useful rewriting of $\delta x(t)\cdot C^{-1}(t)\delta x(t)$ as
a sum of squares, inspired by~\cite[Equation~(2.5)]{FS}, using an
additional positive continuous function on $\mathbb{R}$:
\begin{equation}\label{Phi_3}
\Phi_3:\rho\in\mathbb{R}\mapsto
\begin{cases}
   \frac{2(1-\Phi_1(\rho))}{\rho}&\text{if $\rho\neq0$,}\\
   1 &\text{if $\rho=0$.} 
\end{cases}
\end{equation}
The proof of the following lemma is detailed in Appendix~\ref{pf:arg densite}.
\begin{lemma}[Covariance decomposition] \label{arg densite}
For all $t>0$, $\delta x=\begin{pmatrix}
\delta q\\
\delta p
\end{pmatrix}\in\mathbb{R}^{2d}$,
\begin{equation}\label{ecriture arg densite}
    \delta x\cdot C^{-1}(t)\delta x =\frac{1}{\sigma^2 t} \left\vert\Pi_1 \delta x\right\vert^2+\frac{12}{\sigma^2 t^3 \phi(\gamma t)} \left\vert\Pi_2(t) \delta x\right\vert^2 , 
\end{equation}
where $\Pi_1:=\begin{pmatrix}
\gamma I_d & I_d
\end{pmatrix}\in\mathbb{R}^{d\times 2d}$ and $\Pi_2(t):=\begin{pmatrix}
\Phi_1(\gamma t) I_d & -\frac{t}{2} \Phi_3(\gamma t) I_d
\end{pmatrix}\in\mathbb{R}^{d\times 2d}$. 
\end{lemma}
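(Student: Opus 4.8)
The plan is to prove~\eqref{ecriture arg densite} by a direct computation that exploits the block structure of $C(t)$ and then reduces to a few elementary identities between the scalar functions $\Phi_1,\Phi_2,\Phi_3,\phi$. By~\eqref{def moyenne cov}, $C^{-1}(t)$ is a $2\times2$ block matrix whose blocks are scalar multiples of $I_d$; writing $\delta x=(\delta q,\delta p)$ with $\delta q,\delta p\in\mathbb{R}^d$, the left-hand side of~\eqref{ecriture arg densite} therefore equals
\begin{equation*}
  \frac{12}{\sigma^4 t^4\phi(\gamma t)}\Bigl(c_{pp}(t)\,|\delta q|^2-2\,c_{qp}(t)\,\delta q\cdot\delta p+c_{qq}(t)\,|\delta p|^2\Bigr),
\end{equation*}
while, using $|\Pi_1\delta x|^2=\gamma^2|\delta q|^2+2\gamma\,\delta q\cdot\delta p+|\delta p|^2$ and $|\Pi_2(t)\delta x|^2=\Phi_1(\gamma t)^2|\delta q|^2-t\,\Phi_1(\gamma t)\Phi_3(\gamma t)\,\delta q\cdot\delta p+\frac{t^2}{4}\Phi_3(\gamma t)^2|\delta p|^2$, the right-hand side is likewise a linear combination of $|\delta q|^2$, $\delta q\cdot\delta p$ and $|\delta p|^2$. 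Hence it suffices to identify the three corresponding coefficients, i.e.\ to treat the case $d=1$.

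Inserting the formulas~\eqref{coeff cov} for $c_{qq},c_{qp},c_{pp}$ and setting $\rho:=\gamma t$ (so that $\gamma^2 t^2=\rho^2$), matching the coefficients of $\delta q^2$, $\delta q\,\delta p$ and $\delta p^2$ reduces, after clearing denominators, to the three identities
\begin{gather*}
  \rho^2\phi(\rho)=12\bigl(\Phi_1(2\rho)-\Phi_1(\rho)^2\bigr),\\
  \phi(\rho)=4\,\Phi_2(\rho)-3\,\Phi_3(\rho)^2,\\
  \rho\,\phi(\rho)=6\,\Phi_1(\rho)\bigl(\Phi_3(\rho)-\Phi_1(\rho)\bigr).
\end{gather*}
Each of these is checked by a routine computation from the explicit exponential expressions~\eqref{Phi_1}, \eqref{Phi_2}, \eqref{Phi_3} and~\eqref{expr phi}: for $\rho\neq0$ both sides of each identity reduce to the same rational function of $\rho$ and $\mathrm{e}^{-\rho}$ (proportional to $(1-\mathrm{e}^{-\rho})\bigl[-2+\rho+(2+\rho)\mathrm{e}^{-\rho}\bigr]$), and the degenerate case $\rho=0$ follows by continuity, since $\Phi_1,\Phi_2,\Phi_3$ and $\phi$ are continuous at $0$ with value $1$ there. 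This yields~\eqref{ecriture arg densite} for $d=1$, and hence in general.

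This computation presents no genuine difficulty. The points deserving some care are the bookkeeping involved in matching the three coefficients, the treatment of the value $\rho=0$, and the fact that the second of the three displayed identities is exactly the consistency between the defining formula $\phi=4\,\Phi_2(\cdot)\,\Phi_1(2\,\cdot)-3\,\Phi_1(\cdot)^4$ of~\eqref{expr phi} and the alternative expression $\phi=4\,\Phi_2-3\,\Phi_3^2$ that underlies the sum-of-squares decomposition.
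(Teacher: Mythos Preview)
Your proof is correct and follows essentially the same approach as the paper's: both expand each side of~\eqref{ecriture arg densite} as a quadratic form in $(\delta q,\delta p)$ and match the three coefficients, reducing the claim to exactly the same three scalar identities (your three displayed relations are algebraic rearrangements of the paper's $\tfrac{\rho^2}{12}\phi+\Phi_1^2=\Phi_1(2\rho)$, $\tfrac{\rho}{6}\phi-\Phi_1\Phi_3=-\Phi_1^2$, $\tfrac{1}{12}\phi+\tfrac14\Phi_3^2=\tfrac13\Phi_2$). Your additional remarks on the reduction to $d=1$ and on the case $\rho=0$ by continuity are fine and make the argument slightly more explicit than the paper's.
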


Now let $\alpha\in(0,1]$. For $x=(q,p),y=(q',p')\in\mathbb{R}^{2d}$ and $t>0$, let $\widehat{\mathrm{p}}^{(\alpha)}_t(x,y)$ be the transition density of the process $(\alpha^{-1/2}\widehat{X}^{\sqrt{\alpha}x}_t)_{t\geq0}$, with infinitesimal generator $\mathcal{L}_{0,\gamma,\sigma/\sqrt{\alpha}}$, i.e.
\begin{equation}\label{densite p^alpha}
    \widehat{\mathrm{p}}^{(\alpha)}_t((q,p),(q',p')):=\sqrt{\alpha^{2d}} \widehat{\mathrm{p}}_t(\sqrt{\alpha} (q,p),\sqrt{\alpha} (q',p')) . 
\end{equation}
Let us state the following useful properties which are also proven in Appendix~\ref{pf:arg densite}.
\begin{lemma}[Transition density properties]\label{prop densite}
 The transition densities $\widehat{\mathrm{p}}_t$ and
 $\widehat{\mathrm{p}}^{(\alpha)}_t$ satisfy:
 \begin{enumerate}[label=(\roman*),ref=\roman*]
 \item For all $t>0$,  and $x=(q,p),y=(q',p') \in\mathbb{R}^{2d}$,
   (using the notation~\eqref{def moyenne cov}) 
\begin{equation}\label{eq:ppalpha}
   \widehat{\mathrm{p}}_t(x,y)=\frac{1}{\sqrt{\alpha^{2d}}}
     \mathrm{e}^{-\frac{1-\alpha}{2}\delta x(t)\cdot C^{-1}(t)\delta x(t)} \widehat{\mathrm{p}}^{(\alpha)}_t(x,y).
\end{equation}
\item Chapman-Kolmogorov relation: For all $t>0$, for all $u\in(0,t)$ and $x,y\in\mathbb{R}^{2d}$,
\begin{equation}\label{chp kolmogorov}
    \int_{\mathbb{R}^{2d}}\widehat{\mathrm{p}}^{(\alpha)}_{u}(x,z) \widehat{\mathrm{p}}^{(\alpha)}_{t-u}(z,y) \mathrm{d}z=\widehat{\mathrm{p}}^{(\alpha)}_{t}(x,y) . 
\end{equation}
\item For all $t>0$,  $\varphi\in\mathcal{C}^b(\mathbb{R}_+\times\mathbb{R}^{2d})$ and $y,x_0, y_0\in\mathbb{R}^{2d}$, 
\begin{equation}\label{eq8}
    \int_{\mathbb{R}^{2d}} \widehat{\mathrm{p}}^{(\alpha)}_{t}(x,y) \mathrm{d}x=\mathrm{e}^{d \gamma t} 
\end{equation}
and  
\begin{align*}
     \int_{\mathbb{R}^{2d}}\widehat{\mathrm{p}}^{(\alpha)}_{t}(x,y) \varphi(t,x) \mathrm{d}x \underset{(t,y)\rightarrow(0,y_0)}{{\longrightarrow}} \varphi(0,y_0),\qquad\int_{\mathbb{R}^{2d}}\widehat{\mathrm{p}}^{(\alpha)}_{t}(x,y) \varphi(t,y) \mathrm{d}y \underset{(t,x)\rightarrow(0,x_0)}{{\longrightarrow}} \varphi(0,x_0) . \numberthis\label{chgt de var + cv}
\end{align*}
\item For all $\alpha \in (0,1)$, there exists $c_\alpha>0$ depending only on $\alpha$ such that for all $ t>0$, $x,y\in\mathbb{R}^{2d}$,
\begin{equation}\label{maj gradient p}
    \vert\nabla_{p }\widehat{\mathrm{p}}_{t }(x,y)\vert\leq \frac{c_\alpha(1+\sqrt{\gamma_- t}) }{\sqrt{\sigma^2 t }} \widehat{\mathrm{p}}^{(\alpha)}_{t }(x,y), 
\end{equation}
where $\gamma_-$ is the negative part of $\gamma\in\mathbb{R}$. 
\end{enumerate}  
\end{lemma}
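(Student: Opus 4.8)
The plan is to derive all five assertions from the explicit Gaussian formula~\eqref{expr densite}, exploiting two structural facts: for each $t>0$ the mean $m^x(t)=(m^x_q(t),m^x_p(t))$, with $m^x_q(t)=q+t\Phi_1(\gamma t)p$ and $m^x_p(t)=\mathrm{e}^{-\gamma t}p$, is a linear function $A(t)x$ of $x=(q,p)$ whose matrix satisfies $\det A(t)=\mathrm{e}^{-d\gamma t}$, while the covariance $C(t)$ depends neither on $x$ nor on $y$; writing $\mathrm{g}_{C}$ for the centred Gaussian density on $\mathbb{R}^{2d}$ with covariance $C$, this reads $\widehat{\mathrm{p}}_t(x,y)=\mathrm{g}_{C(t)}(y-A(t)x)$. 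First, Assertion~(i): plugging $(\sqrt{\alpha}x,\sqrt{\alpha}y)$ into~\eqref{expr densite} leaves the prefactor unchanged, while by linearity of $m$ the exponent $-\tfrac12\delta x(t)\cdot C^{-1}(t)\delta x(t)$ becomes $-\tfrac{\alpha}{2}\delta x(t)\cdot C^{-1}(t)\delta x(t)$; comparing with~\eqref{densite p^alpha} gives~\eqref{eq:ppalpha}. Next, Assertion~(ii): the change of variables $z=\sqrt{\alpha}y$ in $\mathbb{P}(\alpha^{-1/2}\widehat{X}^{\sqrt{\alpha}x}_t\in A)$ identifies $\widehat{\mathrm{p}}^{(\alpha)}_t$ as the transition density of the time-homogeneous Markov process $(\alpha^{-1/2}\widehat{X}^{\sqrt{\alpha}x}_t)_{t\ge0}$, so~\eqref{chp kolmogorov} is its Chapman--Kolmogorov relation; equivalently, substituting $c=\sqrt{\alpha}z$ reduces the left-hand side of~\eqref{chp kolmogorov} to the Chapman--Kolmogorov relation for $\widehat{\mathrm{p}}_t$ via~\eqref{densite p^alpha}.

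For Assertion~(iii), the substitution $z=\sqrt{\alpha}x$ gives $\int_{\mathbb{R}^{2d}}\widehat{\mathrm{p}}^{(\alpha)}_t(x,y)\,\mathrm{d}x=\int_{\mathbb{R}^{2d}}\mathrm{g}_{C(t)}(\sqrt{\alpha}y-A(t)z)\,\mathrm{d}z$, and the further substitution $u=A(t)z$ (Jacobian $\mathrm{e}^{d\gamma t}$) turns this into $\mathrm{e}^{d\gamma t}\int_{\mathbb{R}^{2d}}\mathrm{g}_{C(t)}(\sqrt{\alpha}y-u)\,\mathrm{d}u=\mathrm{e}^{d\gamma t}$, which is~\eqref{eq8}. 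The same two substitutions rewrite $\int\widehat{\mathrm{p}}^{(\alpha)}_t(x,y)\varphi(t,x)\,\mathrm{d}x$ as $\mathrm{e}^{d\gamma t}\int\mathrm{g}_{C(t)}(\sqrt{\alpha}y-u)\,\varphi\big(t,A(t)^{-1}u/\sqrt{\alpha}\big)\,\mathrm{d}u$; since $C(t)\to0$, $A(t)\to I_{2d}$ and $\mathrm{e}^{d\gamma t}\to1$ as $t\to0$, and $\varphi$ is bounded and continuous, a standard approximate-identity argument yields the first limit in~\eqref{chgt de var + cv}. The second limit is more direct, since $\widehat{\mathrm{p}}^{(\alpha)}_t(x,\cdot)$ is a probability density with mean $m^x(t)\to x$ and covariance $\alpha^{-1}C(t)\to0$ as $t\to0$.

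Assertion~(iv) is the substantive point. Fix $i\in\{1,\dots,d\}$ and set $v^{(i)}(t):=\big(t\Phi_1(\gamma t)e_i,\,\mathrm{e}^{-\gamma t}e_i\big)$, the $i$-th column of $\nabla_p m^x(t)$. From $\partial_{p_i}\delta x(t)=-v^{(i)}(t)$ and $\nabla\mathrm{g}_{C(t)}(z)=-C^{-1}(t)z\,\mathrm{g}_{C(t)}(z)$ one gets $\partial_{p_i}\widehat{\mathrm{p}}_t(x,y)=\big(v^{(i)}(t)\cdot C^{-1}(t)\delta x(t)\big)\widehat{\mathrm{p}}_t(x,y)$, hence, by Cauchy--Schwarz for the inner product induced by $C^{-1}(t)$,
\begin{equation*}
  \big|\partial_{p_i}\widehat{\mathrm{p}}_t(x,y)\big|\le\sqrt{v^{(i)}(t)\cdot C^{-1}(t)v^{(i)}(t)}\;\sqrt{Q(t)}\;\widehat{\mathrm{p}}_t(x,y),\qquad Q(t):=\delta x(t)\cdot C^{-1}(t)\delta x(t).
\end{equation*}
For the first factor I would use the quadratic-form identity behind Lemma~\ref{arg densite}, namely $C^{-1}(t)=\tfrac{1}{\sigma^2 t}\Pi_1^\top\Pi_1+\tfrac{12}{\sigma^2 t^3\phi(\gamma t)}\Pi_2(t)^\top\Pi_2(t)$: a short computation using $\gamma t\Phi_1(\gamma t)=1-\mathrm{e}^{-\gamma t}$ and $\Phi_1(\rho)^2-\tfrac12\Phi_3(\rho)\mathrm{e}^{-\rho}=\rho^{-2}\big(1-(1+\rho)\mathrm{e}^{-\rho}\big)$ gives $\Pi_1 v^{(i)}(t)=e_i$ and $\Pi_2(t)v^{(i)}(t)=t\,(\gamma t)^{-2}\big(1-(1+\gamma t)\mathrm{e}^{-\gamma t}\big)e_i$, so that
\begin{equation*}
  v^{(i)}(t)\cdot C^{-1}(t)v^{(i)}(t)=\frac{1+\Xi(\gamma t)}{\sigma^2 t},\qquad \Xi(\rho):=\frac{12}{\phi(\rho)}\left(\frac{1-(1+\rho)\mathrm{e}^{-\rho}}{\rho^2}\right)^2 .
\end{equation*}
For the second factor, Assertion~(i) and the elementary bound $\sqrt{s}\,\mathrm{e}^{-\frac{1-\alpha}{2}s}\le(\mathrm{e}(1-\alpha))^{-1/2}$ for $s\ge0$ give $\sqrt{Q(t)}\,\widehat{\mathrm{p}}_t(x,y)\le(\alpha^{2d}\mathrm{e}(1-\alpha))^{-1/2}\,\widehat{\mathrm{p}}^{(\alpha)}_t(x,y)$. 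Summing the squares over $i$ and absorbing the $(t,x,y)$-independent constants into $c_\alpha$, the bound~\eqref{maj gradient p} follows provided $1+\Xi(\rho)\le c'(1+\rho_-)$ for a universal $c'$, where $\rho_-:=\max(-\rho,0)$ (so that $\rho_-=\gamma_- t$ when $\rho=\gamma t$), using $\sqrt{1+\rho_-}\le1+\sqrt{\rho_-}$.

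The main obstacle is precisely this control of $\Xi$, and it is where I expect the real work: all three regimes must be handled. Near $\rho=0$, the expansion $1-(1+\rho)\mathrm{e}^{-\rho}=\tfrac{\rho^2}{2}-\tfrac{\rho^3}{3}+O(\rho^4)$ together with $\phi(0)=1$ shows $\Xi$ extends continuously (with $\Xi(0)=3$). As $\rho\to+\infty$, $\phi(\rho)\sim6\rho^{-3}$ by~\eqref{expr phi} gives $\Xi(\rho)\sim2\rho^{-1}\to0$, so $\Xi$ is bounded on $[0,\infty)$. As $\rho\to-\infty$, writing $s=-\rho$, the asymptotics $1-(1+\rho)\mathrm{e}^{-\rho}\sim s\,\mathrm{e}^{s}$ and $\phi(\rho)\sim6 s^{-3}\mathrm{e}^{2s}$ yield $\Xi(\rho)\sim2s$, so $\Xi(\rho)=O(|\rho|)$ on $(-\infty,0]$; combining the three regimes gives $1+\Xi(\rho)\le c'(1+\rho_-)$ and completes~\eqref{maj gradient p}. (Lemma~\ref{arg densite}, on which this argument rests, should itself be established first via the sum-of-squares computation alluded to in its statement.)
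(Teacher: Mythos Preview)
Your proof is correct and essentially matches the paper's: parts (i)--(iii) are handled identically (explicit Gaussian formulas, Markov property for the rescaled process, the affine change of variables $x\mapsto y-A(t)x$ with Jacobian $\mathrm{e}^{d\gamma t}$, and an approximate-identity argument), and for (iv) both arguments rest on the sum-of-squares decomposition of Lemma~\ref{arg densite} together with the same asymptotic analysis of the three regimes $\rho\to0,+\infty,-\infty$. The only cosmetic difference in (iv) is that the paper bounds the $\Pi_1$- and $\Pi_2$-contributions to $\nabla_p\widehat{\mathrm{p}}_t/\widehat{\mathrm{p}}_t$ separately by the triangle inequality and the elementary bound $\sup_{\theta\ge0}\theta\,\mathrm{e}^{-(1-\alpha)\theta^2/2}<\infty$, whereas you package them into a single Cauchy--Schwarz step in the $C^{-1}(t)$-inner product; your function $\Xi(\rho)$ is precisely $12$ times the square of the paper's ratio $\lvert-\Phi_1(\rho)^2+\tfrac12\Phi_3(\rho)\mathrm{e}^{-\rho}\rvert/\sqrt{\phi(\rho)}$, so the asymptotics you compute agree with the paper's.
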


We are now in position to prove Theorem \ref{borne densite thm}. 
\begin{proof}[Proof of Theorem \ref{borne densite thm}]
The idea is to first establish a mild formulation of the difference
between the two transition densities $\mathrm{p}_t(x,y)$ and
$\widehat{\mathrm{p}}_t(x,y)$, adapting the reasoning from
\cite{Menozzi}. Secondly, iterating the obtained equality, one
obtains, following the steps of \cite{Menozzi}, an expression of the difference between $\mathrm{p}_t(x,y)$ and $\widehat{\mathrm{p}}_t(x,y)$ in the form of a series, which then yields the Gaussian
upper bound stated in Theorem \ref{borne densite thm}.

\medskip \noindent\textbf{Step 1}. Let us first obtain the mild
formulation linking $\mathrm{p}_t(x,y)$ and
$\widehat{\mathrm{p}}_t(x,y)$. Let $T>0$ and
$\varphi\in\mathcal{C}_c^\infty(\mathbb{R}^d)$. Then the function
$$\Phi:(t,(q,p))\in[0,T)\times\mathbb{R}^{2d}\mapsto\int_{\mathbb{R}^{2d}} \widehat{\mathrm{p}}_{T-t}((q,p),y) \varphi(y) \mathrm{d}y ,$$
is in $\mathcal{C}^\infty([0,T)\times\mathbb{R}^{2d})$ by the Lebesgue
differentiation theorem. Besides, it satisfies
$\partial_t\Phi+\widehat{\mathcal{L}}\Phi=0$ using the backward
Kolmogorov equation satisfied by $\widehat{\mathrm{p}}_{t}(x,y)$ (see Proposition~\ref{prop:kolmo-langevin}).
As a result, the Itô formula ensures that for all $x\in\mathbb{R}^{2d}$, $t\in[0,T)$,
\begin{equation}\label{Ito formula}
    \Phi(t,X_t^x)=\underbrace{\Phi(0,x)}_{\int_{\mathbb{R}^{2d}} \widehat{\mathrm{p}}_{T}(x,y) \varphi(y) \mathrm{d}y}+\int_0^t \underbrace{(\mathcal{L}-\widehat{\mathcal{L}})\Phi(u,X_u^x)}_{F(q_{u}^x)\cdot\nabla_p \Phi(u,X^x_u)} \mathrm{d}u+\sigma\int_0^t  \nabla_p \Phi(u,X^x_u)\cdot\mathrm{d}B_u .  
\end{equation}
Besides, one has for $u\in[0,t]$, $(q,p)\in\mathbb{R}^{2d}$,
$$\nabla_p\Phi(u,(q,p))=\int_{\mathbb{R}^{2d}} \nabla_p \widehat{\mathrm{p}}_{T-u}((q,p),y) \varphi(y) \mathrm{d}y .$$
Let $\alpha\in(0,1)$. It follows from Lemma \ref{prop densite} that
there exist $C_1,C_2>0$ depending only on $\alpha,\sigma,\gamma, T$ such
that for all $t\in[0,T)$, $u\in[0,t]$, $(q,p)$ and $y$ in
$\mathbb{R}^{2d}$, 
\begin{equation}\label{majoration gradient densite}
    \vert\nabla_{p}\widehat{\mathrm{p}}_{T-u}((q,p),y)\vert\leq \frac{C_1}{\sqrt{T-u}} \widehat{\mathrm{p}}^{(\alpha)}_{T-u}((q,p),y)\leq \frac{C_2}{(T-t)^{2d+1/2} \phi(\gamma(T-u))^{d/2}} . 
\end{equation}
Therefore $\nabla_p \Phi$ is bounded on $[0,t]\times\mathbb{R}^{2d}$
and the integrand of the last term in the right-hand side of the
equality \eqref{Ito formula} is bounded, which implies that its expectation vanishes. 

 Furthermore, using the Fubini-Tonnelli theorem along with \eqref{majoration gradient densite}, one gets
\begin{equation}\label{integrabilite en T}
    \mathbb{E}\left(\int_{\mathbb{R}^{2d}} \left\vert F(q_u^x)\right\vert  \left\vert\nabla_p\widehat{\mathrm{p}}_{T-u}(X_u^x,y)\right\vert \vert\varphi(y)\vert \mathrm{d}y\right)\leq\frac{C_1 \Vert\varphi\Vert_\infty \Vert F\Vert_\infty}{\sqrt{T-u}}  \mathbb{E}\bigg(\underbrace{\int_{\mathbb{R}^{2d}}\widehat{\mathrm{p}}^{(\alpha)}_{T-u}(X_u^x,y) \mathrm{d}y}_{=1}\bigg) , 
\end{equation}
which is integrable on $[0,T)$.
Consequently,
\begin{equation}\label{egalite 1 mild}
    \mathbb{E}\left(\int_{\mathbb{R}^{2d}} \widehat{\mathrm{p}}_{T-t}(X_t^x,y) \varphi(y) \mathrm{d}y\right)=\int_{\mathbb{R}^{2d}} \widehat{\mathrm{p}}_{T}(x,y) \varphi(y) \mathrm{d}y+\int_0^t\int_{\mathbb{R}^{2d}} \mathbb{E}\left(F(q_u^x)\cdot \nabla_p\widehat{\mathrm{p}}_{T-u}(X_u^x,y)\right) \varphi(y) \mathrm{d}y \mathrm{d}u . 
\end{equation}
It follows from Lemma \ref{prop densite} that $\int_{\mathbb{R}^{2d}}
\widehat{\mathrm{p}}_{T-t}(X_t^x,y) \varphi(y) \mathrm{d}y$ converges
almost surely to $\varphi(X_T^x)$ when $t$ converges to $T$. By
considering the limit $t \to T$ (using
the dominated convergence theorem in the term in the left-hand side
of~\eqref{egalite 1 mild}), one obtains from \eqref{egalite 1 mild}
and \eqref{integrabilite en T} that for all $x \in \mathbb{R}^{2d}$,
\begin{align*}
    \int_{\mathbb{R}^{2d}} \mathrm{p}_{T}(x,y) \varphi(y) \mathrm{d}y&=\mathbb{E}\left(\varphi(X_T^x)\right)\\
    &=\int_{\mathbb{R}^{2d}} \widehat{\mathrm{p}}_{T}(x,y) \varphi(y) \mathrm{d}y+\int_0^T\int_{\mathbb{R}^{2d}} \mathbb{E}\left(F(q_u^x)\cdot \nabla_p\widehat{\mathrm{p}}_{T-u}(X_u^x,y) \varphi(y)\right) \mathrm{d}y \mathrm{d}u\\
    &=\int_{\mathbb{R}^{2d}} \widehat{\mathrm{p}}_{T}(x,y) \varphi(y) \mathrm{d}y+\int_0^T\int_{\mathbb{R}^{2d}}\int_{\mathbb{R}^{2d}} \mathrm{p}_{u}(x,(q,p)) F(q)\cdot \nabla_p\widehat{\mathrm{p}}_{T-u}((q,p),y) \varphi(y) \mathrm{d}q \mathrm{d}p \mathrm{d}y \mathrm{d}u .
\end{align*}
Since this is satisfied for all $T>0$ and
$\varphi\in\mathcal{C}_c^\infty(\mathbb{R}^d)$, then by continuity of
the transition density, one obtains that for all $t>0$ and $x,y\in\mathbb{R}^{2d}$,
$$\mathrm{p}_{t}(x,y)-\widehat{\mathrm{p}}_{t}(x,y)=\int_0^t\int_{\mathbb{R}^{2d}}\mathrm{p}_{u}(x,(q,p)) F(q)\cdot \nabla_p\widehat{\mathrm{p}}_{t-u}((q,p),y) \mathrm{d}q \mathrm{d}p \mathrm{d}u$$
which is a mild formulation of the Fokker-Planck equation associated with~\eqref{Langevin}.

In order to rewrite this mild formulation, let us define the following kernel $\mathrm{H}$ for $t>0$, $(q,p)$ and $y$ in $\mathbb{R}^{2d}$,
\begin{align*}
\mathrm{H}(t,(q,p),y)&:=F(q)\cdot\nabla_{p}\widehat{\mathrm{p}}_{t}((q,p),y) .
\end{align*}
For $t>0$, $x,y$ in $\mathbb{R}^{2d}$, let us define $\mathrm{p}\otimes \mathrm{H}(t,x,y)$  by
\begin{equation}\label{mild formulation 1}
    (\mathrm{p}\otimes \mathrm{H})(t,x,y)=\int_0^t\int_{\mathbb{R}^{2d}}\mathrm{p}_u(x,z) \mathrm{H}(t-u,z,y) \mathrm{d}z \mathrm{d}u. 
\end{equation} 
The mild formulation can thus be rewritten: for all $t>0$ and $x,y\in\mathbb{R}^{2d}$,
\begin{equation}\label{mild formulation 2}
    \mathrm{p}_t(x,y)-\widehat{\mathrm{p}}_t(x,y)=\mathrm{p}\otimes \mathrm{H}(t,x,y) . 
\end{equation}
We notice that $(\mathrm{p}\otimes \mathrm{H})\otimes
\mathrm{H}=\mathrm{p}\otimes(\mathrm{H}\otimes
  \mathrm{H})$, which allows us to define univocally $\mathrm{H}^{(k)}=\underbrace{\mathrm{H}\otimes\dots\otimes \mathrm{H}}_{k\text{ times}}$. Besides, iterating $r$ times the equality \eqref{mild formulation 2} we get
\begin{equation}\label{mild formulation 3}
    \mathrm{p}_t(x,y)=\widehat{\mathrm{p}}_t(x,y)+\sum_{j=1}^r\widehat{\mathrm{p}}\otimes \mathrm{H}^{(j)}(t,x,y)+\mathrm{p}\otimes \mathrm{H}^{(r+1)}(t,x,y) .  
  \end{equation}
  
\medskip \noindent\textbf{Step 2}. Let us prove that the series
$\sum_{j=1}^\infty\widehat{\mathrm{p}}\otimes \mathrm{H}^{(j)}(t,x,y)$
converges by getting upper bounds on $\widehat{\mathrm{p}}\otimes
\mathrm{H}^{(j)}$ for $j\geq1$. Let $\alpha\in(0,1)$. By Lemma
\ref{prop densite}, there exists $c_\alpha>0$ such that for all $t>0$,
$x,y \in\mathbb{R}^{2d}$, $\vert \mathrm{H}(t,x,y) \vert\leq\Vert F\Vert_\infty \frac{c_\alpha(1+\sqrt{\gamma_- t}) }{\sqrt{\sigma^2 t }} \widehat{\mathrm{p}}^{(\alpha)}_{t }(x,y)$. Therefore, for a fixed $T>0$, for all $t \in (0,T]$ and $x,y\in\mathbb{R}^{2d}$, 
\begin{equation}\label{majoration H}  
\left\vert \mathrm{H}(t,x,y) \right\vert\leq \frac{C_3}{\sqrt{t}}
\widehat{\mathrm{p}}^{(\alpha)}_{t }(x,y) \text{  where  }     C_3:=\Vert F\Vert_\infty \frac{c_\alpha(1+\sqrt{\gamma_- T}) }{\sigma} . 
\end{equation} 
Besides, for $u\in(0,t)$ and $t\in(0,T]$, $x,z,y\in\mathbb{R}^{2d}$,
one has from \eqref{majoration H}, since
$\widehat{\mathrm{p}}_{t}(x,y)\leq\alpha^{- d}
\widehat{\mathrm{p}}^{(\alpha)}_{t}(x,y)$ (from~\eqref{eq:ppalpha}), that
\begin{align*}
\vert \widehat{\mathrm{p}}_u(x,z) \mathrm{H}(t-u,z,y)\vert&\leq \frac{C_3}{\alpha^d}  \widehat{\mathrm{p}}^{(\alpha)}_{u}(x,z) \frac{\widehat{\mathrm{p}}^{(\alpha)}_{t-u}(z,y)}{\sqrt{t-u}} .
\end{align*} 
For $m,n>0$, let $B(m,n):=\int_0^1 u^{m-1} (1-u)^{n-1}
\mathrm{d}u=\frac{\Gamma(m)\Gamma(n)}{\Gamma(m+n)}$,  with $\Gamma$
the Gamma function. Therefore, for $t\in(0,T]$,
$x,y\in\mathbb{R}^{2d}$, by the Chapman-Kolmogorov relation~\eqref{chp kolmogorov},
\begin{align*}
\left\vert (\widehat{\mathrm{p}}\otimes\mathrm{H})(t,x,y)
  \right\vert&=\left\vert\int_0^t\int_{\mathbb{R}^{2d}}\widehat{\mathrm{p}}_u(x,z)
               \mathrm{H}(t-u,z,y) \mathrm{d}z \mathrm{d}u\right\vert \\
&\leq
                                                                       \frac{C_3}{\alpha^d} \widehat{\mathrm{p}}^{(\alpha)}_{t}(x,y) t^{\frac{1}{2}} B\left(1,\frac{1}{2}\right) .
                                                                       \end{align*}
By induction, for all $j\geq1$,
\begin{equation}\label{maj p H_j}
    \vert \widehat{\mathrm{p}}\otimes \mathrm{H}^{(j)}(t,x,y)\vert\leq \frac{C_3^j}{\alpha^d} \widehat{\mathrm{p}}^{(\alpha)}_{t}(x,y) t^{\frac{j}{2}} \prod_{l=1}^{j}B\left(\frac{l+1}{2},\frac{1}{2}\right) . 
\end{equation}
Consequently, since $\prod_{l=1}^{j}B\left(\frac{l+1}{2},\frac{1}{2}\right)=\frac{\sqrt{\pi}^j}{\Gamma(\frac{j+2}{2})}$ it is easy to see from the Stirling formula that for all $t>0$, $x,y\in\mathbb{R}^{2d}$, the series $\sum_{j=1}^\infty\widehat{\mathrm{p}}\otimes \mathrm{H}^{(j)}(t,x,y)$ converges absolutely.

\medskip \noindent \textbf{Step 3.} Let us now prove that
$\mathrm{p}\otimes
\mathrm{H}^{(r+1)}(t,x,y)\underset{r\rightarrow\infty}{\longrightarrow}0$
for all $t \in (0,T]$, $x,y\in\mathbb{R}^{2d}$. By \eqref{majoration H} and the Chapman-Kolmogorov relation~\eqref{chp kolmogorov}, we have for all $t \in (0,T]$, $x,y\in\mathbb{R}^{2d}$ $$\left\vert \mathrm{H}\otimes \mathrm{H}(t,x,y) \right\vert\leq C_3^2 B\left(\frac{1}{2},\frac{1}{2}\right)  \widehat{\mathrm{p}}^{(\alpha)}_{t}(x,y) .$$
By induction, for all $j\geq2$, for all $t \in (0,T]$,
$x,y\in\mathbb{R}^{2d}$, 
\begin{equation}\label{majoration H^j}
    \vert \mathrm{H}^{(j)}(t,x,y)\vert\leq C_{3}^j \widehat{\mathrm{p}}^{(\alpha)}_{t}(x,y) t^{\frac{j}{2}-1} \frac{\sqrt{\pi}^{j-1}}{\Gamma(\frac{j+1}{2})} . 
\end{equation}
As a consequence, 
\begin{align*}
\vert \mathrm{p}\otimes \mathrm{H}^{(r+1)}(t,x,y)\vert&\leq C_3^{r+1} \frac{\sqrt{\pi}^{r+1}}{\Gamma(\frac{r+3}{2})} \int_0^t\int_{\mathbb{R}^{2d}} \mathrm{p}_u(x,z)  \widehat{\mathrm{p}}^{(\alpha)}_{t-u}(z,y) (t-u)^{\frac{r-1}{2}} \mathrm{d}z \mathrm{d}u .
\end{align*}
From the expression of $\widehat{\mathrm{p}}^{(\alpha)}_{t-u}(z,y)$ it follows that there exists $C_4>0$ depending only on $\alpha,\gamma,\sigma$ such that
$$\widehat{\mathrm{p}}^{(\alpha)}_{t-u}(z,y) (t-u)^{\frac{r-1}{2}}\leq \frac{C_4 (t-u)^{\frac{r-1}{2}}}{(t-u)^{2d} \phi(\gamma (t-u))^{d/2}} .$$
Let us choose $r\geq4d+1$. Since $\phi$ is positive and continuous
then it is bounded from below for $s\in(-\vert\gamma\vert
T,\vert\gamma\vert T)$, therefore there exists $C_5>0$ depending on
$\alpha,\gamma,\sigma, d$ and $T$ such that for all $t\in(0,T)$,
$r\geq4d+1$, $u\in(0,t)$ and $z,y\in\mathbb{R}^{2d}$, 
$$\widehat{\mathrm{p}}^{(\alpha)}_{t-u}(z,y) (t-u)^{\frac{r-1}{2}}\leq C_5. $$
Therefore, for all $t \in (0,T]$, $x,y\in\mathbb{R}^{2d}$,
$$\vert  \mathrm{p}\otimes \mathrm{H}^{(r+1)}(t,x,y)\vert\leq C_3^{r+1} C_5 T B\left(\frac{1}{2},\frac{1}{2}\right) B\left(1,\frac{1}{2}\right)\cdots B\left(\frac{r}{2},\frac{1}{2}\right)\underset{r\longrightarrow \infty}{{\longrightarrow}}0 . $$ 

\medskip \noindent\textbf{Step 4}. As a result, using the results of Step 2 and 3 in the equality \eqref{mild formulation 3} we get for all $t \in (0,T]$, $x,y\in\mathbb{R}^{2d}$,
$$\mathrm{p}_t(x,y)-\widehat{\mathrm{p}}_t(x,y)=\sum_{j=1}^\infty \widehat{\mathrm{p}}\otimes \mathrm{H}^{(j)}(t,x,y) .$$
From the formula defining $C_3$ in~\eqref{majoration H} and \eqref{maj p H_j}, the inequality \eqref{borne densité} follows.
\end{proof}

\begin{remark}
In view of the proof of Theorem~\ref{borne densite thm}, it is clear
that~\eqref{borne densité} holds for $F$ only bounded (dropping the
assumptions that $F$ is $\mathcal{C}^\infty$ and globally Lipschitz continuous in
Assumption~\ref{hyp F}), as soon as there exists a weak solution to~\eqref{Langevin}. Gaussian upper bounds for the Langevin process thus hold under slightly more general assumptions than those originally stated in~\cite{Menozzi}.
\end{remark}

\subsection{Existence of a smooth transition density for the absorbed Langevin process}\label{smooth density subsection}

\begin{proposition}[Existence of a measurable transition density] \label{existence densite} Under Assumption \ref{hyp F1}, there exists a measurable function $$(t,x,y) \in \mathbb{R}_+^{*}\times D\times D\mapsto \mathrm{p}_t^D(x,y)$$ such that for all $t>0$ and $x \in D$, the kernel $\mathrm{P}^D_t(x,\cdot)$, defined in \eqref{kernel D}, has the density $\mathrm{p}^D_t(x,\cdot)$ with respect to the Lebesgue measure on $D$.  
\end{proposition}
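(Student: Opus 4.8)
The plan is to exploit the domination $\mathrm{P}_t^D(x,\cdot)\le\mathrm{P}_t(x,\cdot)$ from~\eqref{eq:PtDPt}, combined with the existence of the smooth transition density $\mathrm{p}_t$ of Proposition~\ref{prop:kolmo-langevin} for the non-absorbed process, to obtain absolute continuity of $\mathrm{P}_t^D(x,\cdot)$ for each fixed $(t,x)$, and then to build a \emph{jointly} measurable version of the associated Radon--Nikodym derivative by a Lebesgue differentiation argument.

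First, since the trajectories $(X_t^x)_{0\le t\le\tau_\partial^x}$, and hence the kernel $\mathrm{P}_t^D$, depend only on the values of $F$ inside $\mathcal{O}$ (Remark~\ref{solution dans D}), one may assume without loss of generality that $F$ satisfies Assumption~\ref{hyp F}. Then $(X_t^x)_{t\ge0}$ is globally defined and, by Proposition~\ref{prop:kolmo-langevin}, admits a smooth transition density $\mathrm{p}_t$. For every $t>0$ and $x\in D$, if $A\in\mathcal{B}(D)$ satisfies $|A|=0$ then $\mathrm{P}_t^D(x,A)\le\mathrm{P}_t(x,A)=\int_A\mathrm{p}_t(x,y)\,\mathrm{d}y=0$ by~\eqref{eq:PtDPt}; hence the finite measure $\mathrm{P}_t^D(x,\cdot)$ is absolutely continuous with respect to the Lebesgue measure on $D$, and by the Radon--Nikodym theorem it admits a density $f_{t,x}\in\mathrm{L}^1(D)$ with $0\le f_{t,x}\le\mathrm{p}_t(x,\cdot)$ almost everywhere. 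It remains to produce a version of $(t,x,y)\mapsto f_{t,x}(y)$ that is measurable on $\mathbb{R}_+^*\times D\times D$.

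To this end, I would first show that $(t,x,y)\mapsto\mathrm{P}_t^D\big(x,\mathrm{B}(y,r)\cap D\big)$ is Borel measurable for every fixed $r>0$. By Lemma~\ref{couplage Lemma} the flow $(t,x,\omega)\mapsto X_t^x(\omega)$ has a version that is continuous in $(t,x)$ for every $\omega$ and measurable in $\omega$ for every $(t,x)$, hence jointly measurable; moreover, by continuity of the paths and openness of $\mathcal{O}$, for $x\in D$ one has $\{\tau_\partial^x>t\}=\big\{\inf_{s\in([0,t]\cap\mathbb{Q})\cup\{0,t\}}\mathrm{d}_\partial(q_s^x)>0\big\}$, so $(t,x,\omega)\mapsto\mathbb{1}_{\tau_\partial^x(\omega)>t}$ is jointly measurable as well. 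Consequently the map
\[
  (t,x,y,\omega)\longmapsto\mathbb{1}_{\tau_\partial^x(\omega)>t}\,\mathbb{1}_{X_t^x(\omega)\in D}\,\mathbb{1}_{|X_t^x(\omega)-y|<r}
\]
is jointly measurable, and Fubini's theorem gives the measurability of its $\omega$-expectation, which is precisely $\mathrm{P}_t^D(x,\mathrm{B}(y,r)\cap D)$.

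Finally, I would set, for $t>0$ and $x,y\in D$,
\[
  \mathrm{p}_t^D(x,y):=\limsup_{n\to\infty}\frac{\mathrm{P}_t^D\big(x,\mathrm{B}(y,1/n)\cap D\big)}{\big|\mathrm{B}(y,1/n)\big|},
\]
which, as a countable $\limsup$ of jointly Borel functions, is jointly measurable on $\mathbb{R}_+^*\times D\times D$. For fixed $t>0$ and $x\in D$, extending $\mathrm{P}_t^D(x,\cdot)$ by zero to $\mathcal{B}(\mathbb{R}^{2d})$ yields a finite measure absolutely continuous with respect to the Lebesgue measure, with density $f_{t,x}\mathbb{1}_D\in\mathrm{L}^1(\mathbb{R}^{2d})$; since every $y\in D$ is an interior point, the balls $\mathrm{B}(y,1/n)$ are eventually contained in $D$, and the Lebesgue differentiation theorem gives $\mathrm{p}_t^D(x,y)=f_{t,x}(y)$ for Lebesgue-almost every $y\in D$. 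Hence $\mathrm{p}_t^D(x,\cdot)$ is a density of $\mathrm{P}_t^D(x,\cdot)$ with respect to the Lebesgue measure on $D$, which proves the proposition. The genuinely delicate point in this scheme is the joint measurability of the hitting-time indicator $(t,x,\omega)\mapsto\mathbb{1}_{\tau_\partial^x>t}$ (and, relatedly, of the flow $(t,x,\omega)\mapsto X_t^x(\omega)$); everything else is a routine assembly of~\eqref{eq:PtDPt}, the Radon--Nikodym theorem and the Lebesgue differentiation theorem.
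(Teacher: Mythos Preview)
Your proof is correct and follows essentially the same route as the paper: domination by $\mathrm{P}_t$ via~\eqref{eq:PtDPt} to get absolute continuity, then a jointly measurable version of the density defined as $\limsup$ of the averages $\varphi_r(t,x,y)=\mathrm{P}^D_t(x,\mathrm{B}(y,r)\cap D)/|\mathrm{B}(y,r)|$ along a countable sequence of radii, with the Lebesgue differentiation theorem identifying this limsup with the Radon--Nikodym density almost everywhere. The only difference is that the paper bypasses your ``delicate point'' (joint measurability of $(t,x,\omega)\mapsto\mathbb{1}_{\tau^x_\partial>t}$) by observing directly that each $\varphi_r$ is \emph{continuous} on $\mathbb{R}_+^*\times D\times D$, which follows from Lemmas~\ref{couplage Lemma} and~\ref{cv indicatrices lemma} together with the fact that $X^x_t$ has a density (so $\mathbb{P}(|X^x_t-y|=r)=0$); this makes the joint measurability immediate.
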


The proof of the proposition above is detailed in Appendix~\ref{pf:arg densite}. Let us now prove that this transition density $\mathrm{p}_t^D(x,y)$ is smooth on $\mathbb{R}_+^{*}\times D\times D$. This will be managed by first showing that it is a distributional solution of the backward and forward Kolmogorov equations. The smoothness of $\mathrm{p}^D_t$ will then follow from the hypoellipticity of the differential operators $\partial_t-\mathcal{L}$ and $\partial_t-\mathcal{L}^*$, see Definition~\ref{def:hypoell}. This scheme of proof is inspired from \cite[Section 3.5]{McK}. Notice that Proposition~\ref{existence densite} only defines the transition density on $\mathbb{R}_+^*\times D \times D$. The extension to a continuous function on $\mathbb{R}_+^*\times \overline{D} \times \overline{D}$ will be done in Section~\ref{Section Adjoint process and compactness} (see Theorem~\ref{boundary property density}).
\begin{proposition}[Kolmogorov equations] \label{regularité densité} Under Assumption \ref{hyp F1},
the transition density $(t,x,y)\mapsto \mathrm{p}_t^D(x,y)$ is a $\mathcal{C}^\infty(\mathbb{R}_+^{*}\times D\times D)$ function. Besides it satisfies the backward and forward Kolmogorov equations:
\begin{enumerate}[label=(\roman*),ref=\roman*]
    \item $(t,x)\mapsto \mathrm{p}_t^D(x,y)$ is a solution of $\partial_t \mathrm{p}^D=\mathcal{L}_x\mathrm{p}^D$ on $\mathbb{R}_+^{*}\times D$,
    \item $(t,y)\mapsto \mathrm{p}_t^D(x,y)$ is a solution of $\partial_t \mathrm{p}^D=\mathcal{L}_y^*\mathrm{p}^D$ on $\mathbb{R}_+^{*}\times D$.
\end{enumerate} 
\end{proposition}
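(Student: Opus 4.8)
The plan is to establish that $\mathrm{p}^D$, regarded as a distribution on $\mathbb{R}_+^*\times D\times D$, is simultaneously a solution of $(\partial_t-\mathcal{L}_x)\mathrm{p}^D=0$ (the backward equation, with $y$ a passive parameter) and of $(\partial_t-\mathcal{L}_y^*)\mathrm{p}^D=0$ (the forward equation, with $x$ passive), and then to deduce the smoothness of $\mathrm{p}^D$ from hypoellipticity. That $\mathrm{p}^D$ is indeed a distribution on $\mathbb{R}_+^*\times D\times D$ is immediate: it is jointly measurable by Proposition~\ref{existence densite}, and $\int_D\mathrm{p}^D_t(x,y)\,\mathrm{d}y=\mathbb{P}(\tau^x_\partial>t)\le1$ shows that it is locally integrable there.

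For the backward identity, fix $\varphi\in\mathcal{C}^\infty_c(D)$ and extend it by $0$ to an element of $\mathcal{C}^b(D\cup\Gamma^-)$. Theorem~\ref{Solution PDE}, applied with this initial datum and boundary datum $g\equiv0$, shows that $u_\varphi(t,x):=\mathbb{E}[\mathbb{1}_{\tau^x_\partial>t}\varphi(X^x_t)]=\int_D\mathrm{p}^D_t(x,y)\varphi(y)\,\mathrm{d}y$ belongs to $\mathcal{C}^\infty(\mathbb{R}_+^*\times D)$ and solves $\partial_t u_\varphi=\mathcal{L}u_\varphi$ there, hence is a distributional solution of the backward equation on $\mathbb{R}_+^*\times D$. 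Testing this against $\Phi\in\mathcal{C}^\infty_c(\mathbb{R}_+^*\times D)$, substituting the integral expression of $u_\varphi$ and applying Fubini (legitimate by the local integrability just noted), one obtains that $\mathrm{p}^D$ integrated against $\partial_t(\Phi\otimes\varphi)+\mathcal{L}^*_x(\Phi\otimes\varphi)$ vanishes for all $\Phi$ and $\varphi$; since finite sums $\sum_i\Phi_i\otimes\varphi_i$ are dense in $\mathcal{C}^\infty_c(\mathbb{R}_+^*\times D\times D)$, this is precisely $(\partial_t-\mathcal{L}_x)\mathrm{p}^D=0$ in the sense of distributions on $\mathbb{R}_+^*\times D\times D$.

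For the forward identity I would argue probabilistically. Fix $x\in D$ and $\Psi\in\mathcal{C}^\infty_c(\mathbb{R}_+^*\times D)$; extend $\Psi$ by $0$ to $\widetilde\Psi\in\mathcal{C}^\infty_c(\mathbb{R}_+\times\mathbb{R}^{2d})$, which vanishes near $\{0\}\times\mathbb{R}^{2d}$, near $\mathbb{R}_+\times\partial D$, and for $t$ large. Choosing $T$ past the time-support of $\Psi$ and applying Itô's formula to $\widetilde\Psi(t,X^x_t)$ on $[0,T\wedge\tau^x_\partial]$: the term at $t=0$ vanishes, the term at $T\wedge\tau^x_\partial$ vanishes because either $\widetilde\Psi(T,\cdot)=0$ or $X^x_{\tau^x_\partial}\in\partial D$ (Proposition~\ref{prop:tau}), where $\widetilde\Psi=0$, and the stochastic integral has zero expectation since $\nabla_p\widetilde\Psi$ is bounded; since $\widetilde\Psi=\Psi$ on $D$ and $(\partial_t+\mathcal{L})\Psi$ has compact time-support, what remains is $\mathbb{E}[\int_0^{\tau^x_\partial}(\partial_t\Psi+\mathcal{L}\Psi)(t,X^x_t)\,\mathrm{d}t]=0$, i.e.\ (by Fubini and the definition of $\mathrm{p}^D$) $\int_0^\infty\int_D\mathrm{p}^D_t(x,y)(\partial_t\Psi+\mathcal{L}_y\Psi)(t,y)\,\mathrm{d}y\,\mathrm{d}t=0$. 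Recalling Definition~\ref{def:dist} (with $\mathcal{L}$ replaced by $\mathcal{L}^*$, whose adjoint is $\mathcal{L}$), this says that $(t,y)\mapsto\mathrm{p}^D_t(x,y)$ solves $\partial_t v=\mathcal{L}^*v$ distributionally on $\mathbb{R}_+^*\times D$ for every $x\in D$; testing against $\varphi\in\mathcal{C}^\infty_c(D)$ in the $x$-variable and using density of tensor products again, $(\partial_t-\mathcal{L}_y^*)\mathrm{p}^D=0$ in the sense of distributions on $\mathbb{R}_+^*\times D\times D$.

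It remains to upgrade the two distributional identities to smoothness. Adding them, $\mathrm{p}^D$ is a distributional solution of $(2\partial_t-\mathcal{L}_x-\mathcal{L}_y^*)\mathrm{p}^D=0$ on $\mathbb{R}_+^*\times D\times D$; writing this operator in Hörmander form, the diffusion fields $\partial_{p_x^i},\partial_{p_y^i}$ together with their brackets against the first-order transport parts recover $\partial_{q_x^i},\partial_{q_y^i}$, so it satisfies the parabolic Hörmander condition and is hypoelliptic on $\mathbb{R}_+^*\times D\times D$; hence $\mathrm{p}^D\in\mathcal{C}^\infty(\mathbb{R}_+^*\times D\times D)$, and the two identities above, now holding pointwise, are exactly Assertions~(i) and~(ii). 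Alternatively one may keep the identities separate: $(t,x)\mapsto\mathrm{p}^D_t(x,y)$ is smooth for a.e.\ $y$ by hypoellipticity of $\partial_t-\mathcal{L}$, $(t,y)\mapsto\mathrm{p}^D_t(x,y)$ is smooth for every $x$ by hypoellipticity of $\partial_t-\mathcal{L}^*$, and joint smoothness then follows from the Chapman--Kolmogorov relation $\mathrm{p}^D_t(x,y)=\int_D\mathrm{p}^D_{t/2}(x,z)\,\mathrm{p}^D_{t/2}(z,y)\,\mathrm{d}z$ by differentiating under the integral, the domination of the derivatives being supplied by the Gaussian bound of Corollary~\ref{Rq densite estimation} combined with interior estimates for the hypoelliptic operators. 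The main obstacle is the careful bookkeeping in the Itô step — checking that every boundary contribution genuinely vanishes, which rests on $\Psi$ being supported away from $\partial D$ and on $X^x_{\tau^x_\partial}\in\Gamma^+$ from Proposition~\ref{prop:tau} — together with, should one follow the alternative route, the uniform-in-$z$ integrable domination needed to differentiate under the integral sign.
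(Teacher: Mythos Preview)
Your proof is correct and follows the paper's argument closely: the forward identity via It\^o's formula on $\Psi\in\mathcal{C}^\infty_c(\mathbb{R}_+^*\times D)$ and the backward identity via Theorem~\ref{Solution PDE} are exactly what the paper does (in the opposite order). The one substantive difference is the passage to \emph{joint} $\mathcal{C}^\infty$ regularity in $(t,x,y)$. The paper proceeds sequentially: hypoellipticity of $\partial_t-\mathcal{L}^*$ gives $(t,y)\mapsto\mathrm{p}^D_t(x,y)\in\mathcal{C}^\infty$ for each $x$; this continuity in $y$ is then used to upgrade the backward distributional identity from ``almost every $y$'' to ``every $y$''; finally hypoellipticity of $\partial_t-\mathcal{L}$ gives smoothness in $(t,x)$ for each $y$, and joint smoothness is asserted from these two facts. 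Your Option~A --- observing that $\mathrm{p}^D$ solves $(2\partial_t-\mathcal{L}_x-\mathcal{L}_y^*)\mathrm{p}^D=0$ distributionally on $\mathbb{R}_+^*\times D\times D$ and that this operator satisfies H\"ormander's parabolic condition there --- delivers joint regularity in a single stroke and makes that last step fully explicit; your Option~B is closer in spirit to the paper's sequencing but heavier.
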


\begin{proof}

Let $\Phi\in\mathcal{C}_c^\infty(\mathbb{R}_+^{*}\times D)$. Notice
that $\Phi$ can be extended by zero to a
$\mathcal{C}^\infty(\mathbb{R}_+\times \overline{D})$
function. 
 Let $(X^x_t=(q^x_t,p^x_t))_{t\geq0}$ be the process satisfying \eqref{Langevin}. Using Itô's formula, one gets for all $x\in D$ and $t>0$,
$$ \Phi(t,X^x_t)=\underbrace{\Phi(0,x)}_{=0}+\int_0^t\left[\partial_s\Phi(s,X^x_s)+\mathcal{L}\Phi(s,X^x_s)\right] \mathrm{d}s+\sigma  \int_0^t\nabla_p\Phi(s,X^x_s)\cdot \mathrm{d}B_s.$$
Thus, 
$$ \Phi(\tau^x_\partial \land t,X^x_{\tau^x_\partial \land t})=\int_0^t\mathbb{1}_{\tau^x_\partial>s} \left[\partial_s\Phi(s,X^x_s)+\mathcal{L}\Phi(s,X^x_s)\right]\mathrm{d}s+ \sigma \int_0^{\tau^x_\partial \land t}\underbrace{\nabla_p\Phi(s,X^x_s)}_{\text{bounded}}\cdot  \mathrm{d}B_s .$$

\noindent As a result, the stochastic integral in the right-hand side is a martingale. Taking the expectation, we get
$$ \mathbb{E}\left[ \Phi(\tau^x_\partial \land t,X^x_{\tau^x_\partial \land t}) \right]=\int_0^t \mathbb{E}\left[ \mathbb{1}_{\tau^x_\partial>s} \left(\partial_s\Phi(s,X^x_s)+\mathcal{L}\Phi(s,X^x_s)\right)  \right]\mathrm{d}s .$$

Since $X^x_{\tau^x_\partial}\in\partial D$ and $\Phi$ vanishes on $\mathbb{R}_+\times\partial D$,
\[\begin{aligned}
\mathbb{E}\left[ \Phi(\tau^x_\partial \land t,X^x_{\tau^x_\partial \land t}) \right]&=\mathbb{E}[ \mathbb{1}_{\tau^x_\partial>t} \Phi(t,X^x_{t})+ \mathbb{1}_{\tau^x_\partial\leq t} \underbrace{\Phi(\tau^x_\partial,X^x_{\tau^x_\partial})}_{=0} ]=\mathbb{E}\left[ \mathbb{1}_{\tau^x_\partial>t} \Phi(t,X^x_{t}) \right] .
\end{aligned}\]
Thus
$$ \mathbb{E}\left[ \mathbb{1}_{\tau^x_\partial>t} \Phi(t,X^x_{t}) \right]=\int_0^t \mathbb{E}\left[ \mathbb{1}_{\tau^x_\partial>s} \left(\partial_s\Phi(s,X^x_s)+\mathcal{L}\Phi(s,X^x_s)\right)  \right]\mathrm{d}s .$$
For $t$ large enough since $\Phi$ has a compact support on
$\mathbb{R}_+^*\times D$, the left-hand side in the equality above is zero. 
Therefore,
$$ \iint_{\mathbb{R}_+^{*}\times D} \left(\partial_s\Phi(s,y)+\mathcal{L}\Phi(s,y)\right) \mathrm{p}_s^D(x,y)  \mathrm{d}s \mathrm{d}y=0 .$$
As a result for all $x\in D$, $$(t,y)\in\mathbb{R}_+^{*}\times D \mapsto \mathrm{p}_t^D(x,y)$$ is a distributional solution of $$\partial_t\mathrm{p}_t^D=\mathcal{L}^*_y\mathrm{p}_t^D$$ on $\mathbb{R}_+^{*}\times D$. Since the operator $\partial_t-\mathcal{L}^*$ is hypoelliptic one has that 
\begin{equation}\label{reg t,y}
    (t,y)\in\mathbb{R}_+^{*}\times D \mapsto \mathrm{p}_t^D(x,y)  \in\mathcal{C}^\infty(\mathbb{R}_+^{*}\times D), 
\end{equation}
which proves the forward Kolmogorov equation.

We now address the backward Kolmogorov equation. Let $\Phi_1\in\mathcal{C}_c^\infty(\mathbb{R}_+^{*}\times D)$, $\Phi_2\in\mathcal{C}_c^\infty(D)$, and let us define the function $\Phi$ as follows: for all $(t,x,y)\in\mathbb{R}_+^{*}\times D\times D$,
$$\Phi(t,x,y)=\Phi_1(t,x) \Phi_2(y) .$$
Let us compute the following integral
\[\begin{aligned}
I&=\iiint_{\mathbb{R}_+^{*}\times D\times D} \mathrm{p}_t^D(x,y)\left(\partial_t\Phi(t,x,y)+\mathcal{L}_x^*\Phi(t,x,y)\right)\mathrm{d}t \mathrm{d}x \mathrm{d}y\\
&=\iint_{\mathbb{R}_+^{*}\times D}\left(\partial_t\Phi_1(t,x)+\mathcal{L}_x^*\Phi_1(t,x)\right)\left(\int_D \mathrm{p}_t^D(x,y) \Phi_2(y) \mathrm{d}y\right)\mathrm{d}t \mathrm{d}x.
\end{aligned}\]
On the one hand, since $(t,x)\mapsto \int_D \mathrm{p}_t^D(x,y)\Phi_2(y) \mathrm{d}y=\mathbb{E}[ \mathbb{1}_{\tau^x_\partial>t} \Phi_2(X^x_t) ]$ is a solution of $\partial_t u=\mathcal{L}_xu$ by Theorem~\ref{Solution PDE}, then $I=0$. On the other hand, it follows from Fubini's theorem that 
$$\int_D \Phi_2(y) \left(\iint_{\mathbb{R}_+^{*}\times D}\mathrm{p}_t^D(x,y)\left(\partial_t\Phi_1(t,x)+\mathcal{L}_x^*\Phi_1(t,x)\right) \mathrm{d}t \mathrm{d}x\right)\mathrm{d}y=0 . $$ Since $y\in D\mapsto\iint_{\mathbb{R}_+^{*}\times D}\mathrm{p}_t^D(x,y)\left(\partial_t\Phi_1(t,x)+\mathcal{L}_x^*\Phi_1(t,x)\right) \mathrm{d}t \mathrm{d}x\in \mathrm{L}_1^{\mathrm{loc}}(D)$ (since $\Phi_1\in\mathcal{C}_c^\infty(\mathbb{R}_+^{*}\times D)$), this ensures that for almost every $y\in D$,
$$\iint_{\mathbb{R}_+^{*}\times D}\mathrm{p}_t^D(x,y)\left(\partial_t\Phi_1(t,x)+\mathcal{L}_x^*\Phi_1(t,x)\right) \mathrm{d}t \mathrm{d}x=0 .$$ Using the continuity of $y\in D\mapsto\mathrm{p}_t^D(x,y)$ from \eqref{reg t,y}, the equality above remains true for all $y\in D$. Thus, for all $y \in D$, $(t,x) \mapsto \mathrm{p}_t^D(x,y)$ is a distributional solution of the backward Kolmogorov equation
$$\partial_t \mathrm{p}_t^D = \mathcal{L}_x \mathrm{p}_t^D$$ on $\mathbb{R}_+^* \times D$.

Consequently, the hypoellipticity of $\partial_t-\mathcal{L}$ on the open set $\mathbb{R}_+^{*}\times D$ ensures that for all $y\in D$
$$(t,x)\in \mathbb{R}_+^{*}\times D\mapsto \mathrm{p}_t^D(x,y)   \in \mathcal{C}^\infty(\mathbb{R}_+^{*}\times D) .$$
Therefore, using \eqref{reg t,y}, it follows that
$$ (t,x,y)\in \mathbb{R}_+^{*}\times D\times D\mapsto \mathrm{p}_t^D(x,y)   \in \mathcal{C}^\infty(\mathbb{R}_+^{*}\times D\times D),$$
which concludes the proof of Proposition \ref{regularité densité}.
\end{proof}

Corollary~\ref{Rq densite estimation} shows that the Gaussian upper bound on the transition density $\mathrm{p}_t$ immediately transfers to the transition density $\mathrm{p}^D_t$. In fact, in the next lemma, we show that the latter also satisfies a mild formulation of the form
\begin{equation}\label{eq:mildptD}
  \mathrm{p}^D_t - \widehat{\mathrm{p}}_t = \mathrm{p}^D \otimes \mathrm{H}^D,
\end{equation}
for some kernel $\mathrm{H}^D$, and compute estimates on this kernel to obtain an asymptotic expansion of $\mathrm{p}^D_t$ in compact sets of $D$. This lemma will be useful in Section~\ref{Section Adjoint process and compactness}.
\begin{lemma}[Local asymptotic expansion around $t=0$]\label{lemma mild inequality}
Under Assumption \ref{hyp F1}, the density $\mathrm{p}^D_t$ is such that for all compact sets $K\subset D$, $T>0$ and $\alpha\in(0,1)$, there exists $C>0$ such that for all $x,y\in K$ and $t\in (0,T)$,
\begin{equation}\label{mild inequality}
\left\vert\mathrm{p}^D_t(x,y)-\widehat{\mathrm{p}}_t(x,y)\right\vert\leq  C \sqrt{t}\,\widehat{\mathrm{p}}^{(\alpha)}_{t}(x,y) .  
\end{equation} 
\end{lemma}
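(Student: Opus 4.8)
The plan is to run the parametrix-type argument from the proof of Theorem~\ref{borne densite thm}, isolating the genuinely new term created by absorption and taming it with the compactness of $K$. By Remark~\ref{solution dans D} we may assume $F$ satisfies Assumption~\ref{hyp F}, so that $\mathrm p_t$ obeys the Gaussian bound of Theorem~\ref{borne densite thm} with $\|F\|_\infty$ read as $\|F\|_{\mathrm L^\infty(\mathcal O)}$ (Corollary~\ref{Rq densite estimation}). From the proof of Theorem~\ref{borne densite thm} one has, for any $\alpha'\in(0,1)$, $\mathrm p_t-\widehat{\mathrm p}_t=\sum_{j\ge1}\widehat{\mathrm p}\otimes\mathrm H^{(j)}$ with the bound~\eqref{maj p H_j}; summing the geometric-type series shows that for every $T>0$ there is $C$ with $|\mathrm p_t(x,y)-\widehat{\mathrm p}_t(x,y)|\le C\sqrt t\,\widehat{\mathrm p}^{(\alpha')}_t(x,y)$ and $\mathrm p_t(x,y)\le C\,\widehat{\mathrm p}^{(\alpha')}_t(x,y)$ for all $t\in(0,T)$, $x,y\in\mathbb R^{2d}$; moreover, from~\eqref{expr densite}--\eqref{densite p^alpha}, $\widehat{\mathrm p}^{(\alpha')}_t=(\alpha'/\alpha)^d\mathrm e^{-(\alpha'-\alpha)\,\delta x(t)\cdot C^{-1}(t)\,\delta x(t)/2}\,\widehat{\mathrm p}^{(\alpha)}_t$. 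Since $\mathrm p^D_t\le\mathrm p_t$ by~\eqref{eq:PtDPt}, the upper half of~\eqref{mild inequality} is immediate. For the lower half, writing $\widehat{\mathrm p}_t-\mathrm p^D_t=(\widehat{\mathrm p}_t-\mathrm p_t)+(\mathrm p_t-\mathrm p^D_t)$ and using the strong Markov property at $\tau^x_\partial$, everything reduces to bounding the \emph{absorption defect}
$$\mathrm r_t(x,y):=\mathrm p_t(x,y)-\mathrm p^D_t(x,y)=\mathbb E\Bigl[\mathbb 1_{\tau^x_\partial\le t}\,\mathrm p_{t-\tau^x_\partial}\bigl(X^x_{\tau^x_\partial},y\bigr)\Bigr]\ge0$$
by $C\sqrt t\,\widehat{\mathrm p}^{(\alpha)}_t(x,y)$ for $x,y\in K$. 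This is also the content of the mild formulation~\eqref{eq:mildptD}: applying It\^o's formula to $(t,z)\mapsto\int\widehat{\mathrm p}_{T-t}(z,y)\varphi(y)\mathrm dy$ along $(X^x_{\cdot\wedge\tau^x_\partial})$ as in the proof of Theorem~\ref{borne densite thm} gives $\mathrm p^D_t-\widehat{\mathrm p}_t=\mathrm p^D\otimes\mathrm H-\mathrm r_t$ with $\mathrm H(s,z,y)=F(q_z)\cdot\nabla_p\widehat{\mathrm p}_s(z,y)$, and the interior term is bounded exactly as there — via $\mathrm p^D_u\le C\widehat{\mathrm p}^{(\alpha)}_u$, the gradient estimate~\eqref{maj gradient p}, Chapman--Kolmogorov~\eqref{chp kolmogorov} and $\int_0^t(t-u)^{-1/2}\mathrm du=2\sqrt t$ — yielding $|\mathrm p^D\otimes\mathrm H(t,x,y)|\le C\sqrt t\,\widehat{\mathrm p}^{(\alpha)}_t(x,y)$.

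\textbf{The defect: the easy case.} For $t$ bounded away from $0$, $\mathrm r_t\le C\sqrt t\,\widehat{\mathrm p}^{(\alpha)}_t$ on $K\times K$ holds trivially by continuity and positivity of $\widehat{\mathrm p}^{(\alpha)}$ on compacts, so one may take $t$ small. Bounding $\mathrm p_{t-\tau^x_\partial}(X^x_{\tau^x_\partial},y)\le C\widehat{\mathrm p}^{(\alpha')}_{t-\tau^x_\partial}(X^x_{\tau^x_\partial},y)$ with a fixed $\alpha'\in(\alpha,1)$, split according to the segment $[q_x,q_y]$ between the positions of $x$ and $y$, with $D_0:=\mathrm d(K,\partial D)>0$. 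If $[q_x,q_y]$ comes within distance $D_0/2$ of $\partial\mathcal O$, then, $q_x$ and $q_y$ being at distance $\ge D_0$ from $\partial\mathcal O$ and $q\mapsto\mathrm d(q,\partial\mathcal O)$ being $1$-Lipschitz, one has $|q_x-q_y|\ge D_0$; Lemma~\ref{arg densite} together with the identity $\tfrac{\gamma t}2\Phi_3(\gamma t)+\Phi_1(\gamma t)=1$ gives $\delta x(t)\cdot C^{-1}(t)\,\delta x(t)\ge cD_0^2/t^3$ for $t$ small, whence by the displayed scaling $\mathrm r_t(x,y)\le\mathrm p_t(x,y)\le C\mathrm e^{-c'D_0^2/t^3}\widehat{\mathrm p}^{(\alpha)}_t(x,y)\le C\sqrt t\,\widehat{\mathrm p}^{(\alpha)}_t(x,y)$.

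\textbf{The defect: the detour (main obstacle).} The remaining case — $[q_x,q_y]$ at distance $\ge D_0/2$ from $\partial\mathcal O$ — is where the real work lies: a contributing trajectory must take a genuine detour to $\partial\mathcal O$, and this detour must be shown to cost an exponential factor beating $\sqrt t$. The plan is to bound the first-exit flux of the Langevin process through $\partial D=\partial\mathcal O\times\mathbb R^d$ by the unabsorbed density times the normal velocity (first exit $\le$ all crossings, recalling $X^x_{\tau^x_\partial}\in\Gamma^+$ a.s. by Proposition~\ref{prop:tau}):
$$\mathrm r_t(x,y)\le\int_0^t\!\!\int_{\partial\mathcal O}\!\!\int_{\mathbb R^d}\mathrm p_u\bigl(x,(\bar q,\bar p)\bigr)\,\bigl(\bar p\cdot n(\bar q)\bigr)_+\,\mathrm p_{t-u}\bigl((\bar q,\bar p),y\bigr)\,\mathrm d\bar p\,\mathcal H^{d-1}(\mathrm d\bar q)\,\mathrm du,$$
then insert the Gaussian bounds on $\mathrm p_u$ and $\mathrm p_{t-u}$, absorbing the polynomial weight $(\bar p\cdot n(\bar q))_+\le|\bar p|$ into a slight enlargement of the Gaussians, to reduce to a \emph{surface} Chapman--Kolmogorov integral of $\widehat{\mathrm p}^{(\alpha_1)}_u(x,(\bar q,\bar p))\,\widehat{\mathrm p}^{(\alpha_1)}_{t-u}((\bar q,\bar p),y)$ over $(u,\bar q,\bar p)\in(0,t)\times\partial\mathcal O\times\mathbb R^d$. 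For each $u$ the $\bar p$-integral is a Gaussian density in $\bar q$ whose mean — the bridge position at time $u$ joining $q_x$ and $q_y$ — stays at distance $\gtrsim D_0$ from $\partial\mathcal O$ in this regime, while the corresponding variance is $\lesssim t^3$; restricting the $\bar q$-integral from $\mathbb R^d$ to the codimension-one set $\partial\mathcal O$ therefore costs a factor $\le\mathrm e^{-cD_0^2/t^3}$, and the unrestricted integral equals $\widehat{\mathrm p}^{(\alpha_1)}_t(x,y)$. Integrating in $u$ and using $\widehat{\mathrm p}^{(\alpha_1)}_t\le(\alpha_1/\alpha)^d\widehat{\mathrm p}^{(\alpha)}_t$ yields $\mathrm r_t(x,y)\le Ct\,\mathrm e^{-cD_0^2/t^3}\,\widehat{\mathrm p}^{(\alpha)}_t(x,y)\le C\sqrt t\,\widehat{\mathrm p}^{(\alpha)}_t(x,y)$, which closes the proof. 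The two rigorous inputs just used are the delicate part, and the step I expect to be hardest: the bound on the first-exit distribution of the Langevin process by the density times the normal velocity, and the Gaussian concentration estimate for the codimension-one surface integral, both uniform in $u\in(0,t)$ and $x,y\in K$. I would establish them from the explicit Gaussian formulas~\eqref{expr densite}--\eqref{densite p^alpha} and Lemma~\ref{arg densite}, reducing to the Gaussian process $(\Check X^x_t)_{t\ge0}$ via the Girsanov theorem (Lemma~\ref{Girsanov}) and, for the exit estimate, a time-discretization argument controlling the velocity in the spirit of the proof of Proposition~\ref{non attainability}.
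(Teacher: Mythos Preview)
Your overall strategy is sound in principle but takes a substantially harder route than the paper, and the part you flag as delicate is indeed a real gap as written.

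The paper's proof avoids the absorption defect $\mathrm r_t$ entirely by a cutoff trick. Fix $\varphi\in\mathcal C^\infty_c(D)$ with $\varphi\equiv1$ on $K$, and set
\[
h_t(u)=\int_D \mathrm p^D_u(x,z)\,\widehat{\mathrm p}_{t-u}(z,y)\,\varphi(z)\,\mathrm dz.
\]
Since $x,y\in K$, one checks $h_t(0^+)=\widehat{\mathrm p}_t(x,y)$ and $h_t(t^-)=\mathrm p^D_t(x,y)$. Differentiating in $u$ using the forward Kolmogorov equation for $\mathrm p^D$ and the backward one for $\widehat{\mathrm p}$, and integrating by parts in $z$, produces \emph{no boundary term} because $\varphi$ has compact support in $D$. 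One is left with
\[
\mathrm p^D_t(x,y)-\widehat{\mathrm p}_t(x,y)=\int_0^t\!\!\int_D \mathrm p^D_u(x,z)\Bigl[F(q_z)\cdot\nabla_{p_z}\widehat{\mathrm p}_{t-u}(z,y)\,\varphi(z)+\sigma^2\nabla_{p_z}\widehat{\mathrm p}_{t-u}(z,y)\cdot\nabla_{p_z}\varphi(z)+\widehat{\mathrm p}_{t-u}(z,y)\,\mathcal L\varphi(z)\Bigr]\mathrm dz\,\mathrm du,
\]
and every term in brackets is bounded by $C(t-u)^{-1/2}\widehat{\mathrm p}^{(\alpha)}_{t-u}(z,y)$: the first by~\eqref{maj gradient p}, the other two because $\nabla\varphi$ and $\mathcal L\varphi$ are bounded and compactly supported. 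Then $\mathrm p^D_u\le C\widehat{\mathrm p}^{(\alpha)}_u$, Chapman--Kolmogorov, and $\int_0^t(t-u)^{-1/2}\,\mathrm du=2\sqrt t$ finish the job. The boundary contribution you are fighting is absorbed into the smooth, bounded error terms $\nabla\varphi$ and $\mathcal L\varphi$, and never needs a pathwise exit analysis.

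By contrast, your argument introduces the genuine boundary term $\mathrm r_t$ (or rather $\tilde{\mathrm r}_t=\mathbb E[\mathbb 1_{\tau^x_\partial\le t}\,\widehat{\mathrm p}_{t-\tau^x_\partial}(X^x_{\tau^x_\partial},y)]$, which is what the It\^o computation actually gives) and then attempts to control it. Your ``easy'' cases are fine, but the ``detour'' case rests on two inputs you only sketch: a flux-type bound on the joint law of $(\tau^x_\partial,X^x_{\tau^x_\partial})$ by the unabsorbed density times $(\bar p\cdot n(\bar q))_+$, and a uniform Gaussian concentration estimate for a codimension-one surface integral of $\widehat{\mathrm p}^{(\alpha_1)}_u\widehat{\mathrm p}^{(\alpha_1)}_{t-u}$. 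Neither is available off the shelf for the kinetic process; the first in particular requires a regularity statement about the exit distribution that is not proved in this paper (and the time-discretization route you suggest would need care near the singular set $\Gamma^0$). These could likely be made rigorous, but the cutoff trick renders them unnecessary.
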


\begin{proof}
 Since the density $\mathrm{p}^D_t$ only
depends on the values of $F$ in $\mathcal{O}$ (see
Remark~\ref{solution dans D}), we can assume that $F$ satisfies Assumption~\ref{hyp F} for the sake of simplicity. The first step of the proof consists in establishing the mild formulation~\eqref{eq:mildptD}. In contrast with the proof of Theorem \ref{borne densite thm}, where a mild formulation of the forward Kolmogorov equation satisfied by $\mathrm{p}_t$ is established, the absorbing boundary condition makes the use of the Itô formula inappropriate. We adopt a different approach, inspired from~\cite[Proposition~2.2]{Menozzi}.

Let $T>0$ and $K\subset D$ be a compact set. Let $x=(q,p)$, $y=(q',p')\in K$ and $t\in(0,T]$. Let us define $\varphi\in\mathcal{C}^\infty_c(D)$ such that 
\begin{equation}
    0\leq\varphi(z)\leq1 \text{ for all }z\in D,\text{ and
    }\varphi(z)=1\text{ for all }z\in K .
\end{equation}
Let us define the function $h_t$ as follows:
$$h_t:u\in(0,t)\mapsto\int_{D}\mathrm{p}^D_u(x,z) \widehat{\mathrm{p}}_{t-u}(z,y) \varphi(z) \mathrm{d}z.$$

Let us identify the limits of $h_t(u)$ when $u\rightarrow0$ and
 $u\rightarrow t$. First we have that 
$$h_t(u)=\mathbb{E}\left[ \widehat{\mathrm{p}}_{t-u}(X^x_u,y) \varphi(X_u^x) \mathbb{1}_{\tau_\partial^x>u} \right]\underset{u\rightarrow 0}{{\longrightarrow}}  \widehat{\mathrm{p}}_{t}(x,y)\varphi(x)=\widehat{\mathrm{p}}_{t}(x,y),$$
by the dominated convergence theorem using Lemma \ref{cv indicatrices
  lemma} and the continuity and boundedness of
$\widehat{\mathrm{p}}_{s}(\cdot,y)\varphi(\cdot)$ when $s$ is close to
$t$. Second, it follows from the convergence \eqref{chgt de var + cv}
in Lemma \ref{prop densite} and the boundedness and continuity of the
product $\mathrm{p}^D_u(x,\cdot)\varphi(\cdot)$ when $u$ is close to
$t$ that (remember that $y \in K$)
$$h_t(u)\underset{u\rightarrow t}{{\longrightarrow}} \mathrm{p}^D_{t}(x,y)\varphi(y)=\mathrm{p}^D_{t}(x,y).$$ 

Therefore, using the fact that $h_t\in\mathcal{C}^1((0,t))$, we have
that  
\begin{align*}
\mathrm{p}^D_t(x,y)-\widehat{\mathrm{p}}_t(x,y)&=\int_0^t\frac{\mathrm{d}h_t}{\mathrm{d} u}(u) \mathrm{d}u\\
&=\int_0^t\int_{D}\left(\partial_u [\mathrm{p}^D_u(x,z)]
                                                                                                               \widehat{\mathrm{p}}_{t-u}(z,y)
                                                                                                               +\mathrm{p}^D_u(x,z)\partial_u
                                                                                                               [\widehat{\mathrm{p}}_{t-u}(z,y)]\right)
                                                                                                               \varphi(z) \mathrm{d}z \mathrm{d}u,
\end{align*}
by the Lebesgue differentiation theorem as $\mathrm{p}^D_t(x,y)$, $\widehat{\mathrm{p}}_t(x,y)$ are smooth on $\mathbb{R}_+^*\times D\times D$ and $\varphi\in\mathcal{C}^\infty_c(D)$. 

Recall that we denote by $\widehat{\mathcal{L}} = \mathcal{L}_{0,\gamma,\sigma}$ the infinitesimal generator of $(\widehat{X}^x_t)_{t \geq 0}$. Since $\partial_t  \mathrm{p}_t^D(x,y) = \mathcal L^*_y
\mathrm{p}_t^D(x,y)$ (see Theorem~\ref{thm density intro}) and $\partial_t
\widehat{\mathrm{p}}_t (x,y) = \widehat{\mathcal{L}}_x
\widehat{\mathrm{p}}_t (x,y) $ (see
Proposition~\ref{prop:kolmo-langevin}), one has (using the notation $z=(q'',p'') \in \mathbb{R}^{2d}$),  
\begin{align*}
\mathrm{p}^D_t(x,y)-\widehat{\mathrm{p}}_t(x,y)&=\int_0^t\int_{D}\left(\mathcal{L}^*_z \mathrm{p}^D_u(x,z) \widehat{\mathrm{p}}_{t-u}(z,y)-\mathrm{p}^D_u(x,z) \widehat{\mathcal{L}}_z \widehat{\mathrm{p}}_{t-u}(z,y)\right) \varphi(z) \mathrm{d}z \mathrm{d}u\\
&=\int_0^t\int_{D}\mathrm{p}^D_u(x,z) \left(\mathcal{L}_z\left(\widehat{\mathrm{p}}_{t-u}(z,y) \varphi(z)\right)-\widehat{\mathcal{L}}_z (\widehat{\mathrm{p}}_{t-u}(z,y)) \varphi(z)\right)  \mathrm{d}z \mathrm{d}u\\
&=\int_0^t\int_{D}\mathrm{p}^D_u(x,z)
                                                                                                                                                                                                                          \left[\left(\mathcal{L}_z-\widehat{\mathcal{L}}_z\right) (\widehat{\mathrm{p}}_{t-u}(z,y)) \varphi(z)+\sigma^2\nabla_{p''}\widehat{\mathrm{p}}_{t-u}(z,y)\cdot\nabla_{p''}\varphi(z) \right] \mathrm{d}z \mathrm{d}u\\
&\quad +\int_0^t\int_{D}\mathrm{p}^D_u(x,z) \widehat{\mathrm{p}}_{t-u}(z,y) \mathcal{L}_z\varphi(z)  \mathrm{d}z \mathrm{d}u,
\end{align*}
which is the claimed mild formulation~\eqref{eq:mildptD}. We have
$\mathcal{L}_z-\widehat{\mathcal{L}}_z=F(q'')\cdot\nabla_{p''}$. Furthermore,
$\varphi\in\mathcal{C}^\infty_c(D)$, therefore its gradient is bounded
on $D$ and $\mathcal{L}\varphi\in\mathcal{C}^\infty_c(D)$. Besides, it
follows from~\eqref{maj gradient p} in Lemma \ref{prop densite}  that for any $\alpha\in(0,1)$
there exists $C_1>0$ such that for all $t\in(0,T]$, $u\in[0,t)$ and $(q'',p''),y\in\mathbb{R}^{2d}$,
$$\vert\nabla_{p''}\widehat{\mathrm{p}}_{t-u}((q'',p''),y)\vert\leq
\frac{C_1}{\sqrt{t-u}}
\widehat{\mathrm{p}}^{(\alpha)}_{t-u}((q'',p''),y) .$$
In addition,  from~\eqref{eq:ppalpha}, $\widehat{\mathrm{p}}_t(x,y)\leq \alpha^{-d} \widehat{\mathrm{p}}^{(\alpha)}_{t}(x,y)$ for all $t>0$, $x,y\in\mathbb{R}^{2d}$. Consequently, under Assumption \ref{hyp F}, there exists a constant $C_K>0$ such that
\begin{align*}
\left\vert\mathrm{p}^D_t(x,y)-\widehat{\mathrm{p}}_t(x,y)\right\vert&\leq C_K \int_0^t\int_{D}\mathrm{p}^D_u(x,z) \frac{\widehat{\mathrm{p}}^{(\alpha)}_{t-u}(z,y)}{\sqrt{t-u}} \mathrm{d}z \mathrm{d}u .
\end{align*}
Furthermore, by Corollary~\ref{Rq densite estimation} there exists $C_2>0$ such that for all $u\in(0,t)$, $t\in(0,T)$, $\mathrm{p}^D_u(x,y)\leq\mathrm{p}_u(x,y)\leq C_2 \widehat{\mathrm{p}}^{(\alpha)}_{u}(x,y)$. Hence the existence of $C'_K>0$ such that for all $t\in(0,T)$ and $x,y\in K$,
\begin{align*}
\left\vert\mathrm{p}^D_t(x,y)-\widehat{\mathrm{p}}_t(x,y)\right\vert&\leq C'_K \int_0^t\int_{D}\widehat{\mathrm{p}}^{(\alpha)}_{u}(x,z) \frac{\widehat{\mathrm{p}}^{(\alpha)}_{t-u}(z,y)}{\sqrt{t-u}} \mathrm{d}z \mathrm{d}u\\
&\leq C'_K \sqrt{t} \widehat{\mathrm{p}}^{(\alpha)}_{t}(x,y) \int_0^1\frac{\mathrm{d}s}{\sqrt{1-s}} ,
\end{align*}
since $\widehat{\mathrm{p}}^{(\alpha)}_t$ satisfies the Chapman-Kolmogorov relation \eqref{chp kolmogorov} in Lemma \ref{prop densite}. This concludes the proof of~\eqref{mild inequality}.
\end{proof}

\subsection{Boundary behavior of the transition density}\label{sec:boundary}
  The purpose of this subsection is to study the behavior of
$\mathrm{p}^D_t(x,y)$ at the boundaries
$(t,x)\in\mathbb{R}_+^*\times(\Gamma^+\cup\Gamma^0)$ and
$(t,y)\in\mathbb{R}_+^*\times\Gamma^-$ (see Proposition
\ref{comportement densite} below). This result will be useful for the proof of Theorem \ref{duality thm}, which will then allow to complete the proof of Theorem~\ref{thm density intro}.

\begin{proposition}[Boundary limits]\label{comportement densite} Let Assumptions \ref{hyp O} and \ref{hyp F1} hold. Let $t_0>0$, $x_0\in \Gamma^+\cup\Gamma^0$ and $y_0\in\Gamma^-$. Let $(t_n,x_n,y_n)_{n\geq1}$ be a sequence of points in $\mathbb{R}_+^*\times D\times D$ converging towards $(t_0,x_0,y_0)$, then one has the following convergences: 
\begin{enumerate}[label=(\roman*),ref=\roman*]
    \item\label{it:comportement-densite:1} For all $y\in D$, $\mathrm{p}^D_{t_n}(x_n,y)\underset{n\rightarrow \infty}{{\longrightarrow}}0$.
    \item\label{it:comportement-densite:2} For all $x\in D$, $\mathrm{p}^D_{t_n}(x,y_n)\underset{n\rightarrow \infty}{{\longrightarrow}}0$.
\end{enumerate} 
\end{proposition}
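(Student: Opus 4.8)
My plan is to treat the two assertions separately, using the probabilistic representation of the absorbed transition density together with the trajectorial estimates already established, rather than any PDE argument. The key point is that each assertion amounts to controlling the transition kernel $\mathrm{P}^D_t$ near a piece of the boundary, and both follow by dominating $\mathrm{p}^D_t$ by a mild-expansion quantity on an intermediate compact set and then exploiting the continuity results proven earlier.

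\textbf{Proof of~\eqref{it:comportement-densite:1}.} Fix $y \in D$. The idea is to compare $\mathrm{p}^D_{t_n}(x_n,y)$ to $\mathrm{p}^D_{t_n}$ evaluated at a point whose position component is far from $\partial\mathcal O$; since $y$ is fixed in the open set $D$, only the first variable is problematic. First I would write, using the Chapman--Kolmogorov relation for $\mathrm{p}^D$ (valid by the semigroup property of the absorbed process and Proposition~\ref{regularité densité}), for any $s \in (0,t_0/2)$ and $n$ large,
\begin{equation*}
  \mathrm{p}^D_{t_n}(x_n,y) = \int_D \mathrm{p}^D_{s}(x_n,z)\,\mathrm{p}^D_{t_n-s}(z,y)\,\mathrm{d}z.
\end{equation*}
Fix a compact set $K \subset D$ containing $y$ in its interior such that $\mathrm{d}(K,\partial D)>0$, and let $C_K := \sup\{\mathrm{p}^D_{u}(z,y): u \in [t_0/2-s,\,2t_0], z \in K'\}$ where $K'$ is a slightly larger compact neighbourhood — this is finite by the smoothness of $\mathrm{p}^D$ on $\mathbb{R}_+^*\times D \times D$ and compactness. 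Then by the Gaussian upper bound of Corollary~\ref{Rq densite estimation},
\begin{equation*}
  \mathrm{p}^D_{t_n}(x_n,y) \le C_K \int_{K'} \mathrm{p}^D_{s}(x_n,z)\,\mathrm{d}z + \sup_{z \notin K'} \mathrm{p}_{t_n-s}(z,y) \le C_K\, \mathbb{P}(X^{x_n}_s \in K', \tau^{x_n}_\partial > s) + \epsilon_{K'},
\end{equation*}
where $\epsilon_{K'}$ is made as small as we like by taking $K'$ large, uniformly in $n$, using the Gaussian decay of $\mathrm{p}_{t_n-s}(\cdot,y)$ away from $y$. Now, since $x_n \to x_0 \in \Gamma^+ \cup \Gamma^0$, Proposition~\ref{prop:tau}\eqref{it:tau:reg} states that $\tau^{x_0}_\partial = 0$ almost surely and, more precisely, $(q^{x_0}_t)$ exits $\overline{\mathcal O}$ on $[0,s]$ almost surely; by Lemma~\ref{cv indicatrices lemma} (with $t_n \equiv s$) we get $\mathbb{1}_{\tau^{x_n}_\partial > s} \to \mathbb{1}_{\tau^{x_0}_\partial > s} = 0$ almost surely, hence $\mathbb{P}(X^{x_n}_s \in K', \tau^{x_n}_\partial > s) \le \mathbb{P}(\tau^{x_n}_\partial > s) \to 0$ as $n \to \infty$. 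Letting $n \to \infty$ and then $K' \uparrow$ (so $\epsilon_{K'} \to 0$) gives $\mathrm{p}^D_{t_n}(x_n,y) \to 0$.

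\textbf{Proof of~\eqref{it:comportement-densite:2}.} Fix $x \in D$. Here the second variable tends to $y_0 \in \Gamma^-$. The natural tool is the forward Kolmogorov equation, or equivalently duality: by the Chapman--Kolmogorov relation, for $s \in (0,t_0/2)$ and $n$ large, $\mathrm{p}^D_{t_n}(x,y_n) = \int_D \mathrm{p}^D_{t_n-s}(x,z)\,\mathrm{p}^D_{s}(z,y_n)\,\mathrm{d}z$, and I would mimic the above argument with the roles of the two variables reversed, but now I need that $\int_{K'}\mathrm{p}^D_s(z,y_n)\,\mathrm{d}z \to 0$. Integrating the forward equation, $\int_{K'} \mathrm{p}^D_s(z,y_n)\,\mathrm{d}z = \mathbb{E}[\text{(something involving the adjoint/time-reversed dynamics)}]$ is not directly available at this stage, so instead I would use the mild inequality from Lemma~\ref{lemma mild inequality}: on any compact $K \ni x$ with $y_n \in K$ eventually (impossible since $y_n \to \partial D$), so that route fails. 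The correct approach is to apply Lemma~\ref{lemma mild inequality} is again unavailable because $y_n$ escapes every fixed compact. Therefore I would instead argue directly: for $z$ ranging over a fixed compact $K' \subset D$ and $y_n$ close to $y_0 \in \Gamma^-$, I use $\mathrm{p}^D_s(z,y_n) \le \mathrm{p}_s(z,y_n)$ and the explicit Gaussian bound of Corollary~\ref{Rq densite estimation}, which shows $\sup_{z \in K'} \mathrm{p}_s(z,y_n)$ stays bounded; this is not enough for a vanishing limit. The clean way is: since $y_0 \in \Gamma^-$, the exterior sphere condition (Proposition~\ref{prop:sphere}) applied at the boundary point $q_{y_0} \in \partial\mathcal O$ shows that for a point $(q'',p'')$ with $q''$ in $\mathcal O$ very close to $\partial\mathcal O$ and $p'' \cdot n(q_{y_0}) < 0$, the absorbed process started near such a point, run backwards, i.e. the value $\mathrm{p}^D_s(z,y_n)$, is governed by the behaviour of the time-reversed (adjoint) process near $\Gamma^-$, which by the analogue of Proposition~\ref{prop:tau} for the adjoint SDE exits immediately. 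Concretely, applying assertion~\eqref{it:comportement-densite:1} — which we have just proven — to the adjoint process (whose transition density is $(x,y) \mapsto \mathrm{e}^{-d\gamma t}\mathrm{p}^D_t(y_\sigma,x_\sigma)$ up to the momentum flip described in Remark~\ref{rk:kFPname}, with $\Gamma^+$ and $\Gamma^-$ interchanged) yields $\mathrm{p}^D_{t_n}(x,y_n) \to 0$ immediately.

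\textbf{Main obstacle.} The delicate point is~\eqref{it:comportement-densite:2}: the forward-in-$y$ boundary behaviour is not symmetric to the backward-in-$x$ one at the level of the Langevin SDE, because absorption is at $\Gamma^+$ for the forward process but the vanishing of $\mathrm{p}^D_t(x,\cdot)$ occurs at $\Gamma^-$. Making the duality/time-reversal argument rigorous — identifying $\mathrm{p}^D_t(x,\cdot)$ near $\Gamma^-$ with the transition density of an adjoint absorbed process near its own ``$\Gamma^+$'', so that assertion~\eqref{it:comportement-densite:1} applies — is where the real work lies, and it is presumably the reason the authors defer part of this to the adjoint-process machinery of Section~\ref{Section Adjoint process and compactness}. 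An alternative, more self-contained route for~\eqref{it:comportement-densite:2} is to use the forward Kolmogorov equation together with the exterior sphere condition and a barrier argument directly on $\mathrm{p}^D$, showing $\mathrm{p}^D_t(x,y) \le C\,\mathrm{d}_\partial(q_y)$ near $\Gamma^-$ uniformly, but this requires a quantitative boundary estimate that would itself need the parametrix expansion of Lemma~\ref{lemma mild inequality} localised carefully near the boundary.
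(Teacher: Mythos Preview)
Your argument for~\eqref{it:comportement-densite:1} is correct and close in spirit to the paper's, though you make it harder than necessary: the paper simply writes, via the Markov property at time $t_n/2$,
\[
\mathrm{p}^D_{t_n}(x_n,y)\le \Bigl(\sup_{z\in D}\mathrm{p}^D_{t_n/2}(z,y)\Bigr)\,\mathbb{P}(\tau^{x_n}_\partial>t_n/2),
\]
and the first factor is bounded uniformly in $n$ by the Gaussian upper bound (Corollary~\ref{Rq densite estimation}) while the second tends to $0$ by Lemma~\ref{cv indicatrices lemma} and Proposition~\ref{prop:tau}. No splitting into $K'$ and $D\setminus K'$ is needed.

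Your approach to~\eqref{it:comportement-densite:2} has a genuine circularity. You propose to reduce it to~\eqref{it:comportement-densite:1} for the adjoint process via the reversibility identity $\mathrm{p}^D_t(x,y)=\mathrm{e}^{d\gamma t}\widetilde{\mathrm{p}}^D_t(y,x)$. But this identity is Theorem~\ref{duality thm}, whose proof \emph{uses} Proposition~\ref{comportement densite}\eqref{it:comportement-densite:2} (to verify the boundary condition $\widetilde v=0$ on $\Gamma^-$ in the uniqueness argument). So the logical order is the reverse of what you guessed: the adjoint-process machinery of Section~\ref{Section Adjoint process and compactness} is built on top of this proposition, not the other way around.

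The paper's proof of~\eqref{it:comportement-densite:2} is instead a direct trajectorial argument, and this is where the real content lies. Starting from $\mathrm{p}^D_{t_n}(x,y_n)=\lim_{h\to0}|\mathrm{B}(y_n,h)|^{-1}\mathbb{P}(X^x_{t_n}\in\mathrm{B}(y_n,h),\tau^x_\partial>t_n)$, one splits on the event $\{Z^x_{t_n-\delta,t_n}\le M\}$ where $Z$ is the modulus of continuity of the velocity on $[t_n-\delta,t_n]$ and $M=-\tfrac13\,p_0\cdot n(q_0)>0$. On this event, since $X^x_{t_n}$ is close to $y_0\in\Gamma^-$, one shows by an explicit computation using the exterior sphere condition that $q^x_{t_n-\delta}\notin\mathcal O$, contradicting $\tau^x_\partial>t_n$; so this piece is zero for small $\delta,h$ and large $n$. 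On the complementary event $\{Z^x_{t_n-\delta,t_n}>M\}$, one applies the strong Markov property at the first time after $t_n-\delta$ where $|p^x_\cdot-p_0|\ge M/2$, and uses a Gaussian tail estimate (Lemma~\ref{lemma:majoration densite}: $\widehat{\mathrm{p}}^{(\alpha)}_s((q,p),(q',p'))\le C_0\mathrm{e}^{-\mu/s}$ when $|p-p'|\ge M/3$) to bound this piece by $C\mathrm{e}^{-\mu/\delta}$ uniformly in $n,h$. Letting $\delta\to0$ gives the result. Your ``alternative route'' via a barrier estimate $\mathrm{p}^D_t(x,y)\le C\,\mathrm{d}_\partial(q_y)$ is not pursued in the paper and would be substantially harder to make rigorous for this hypoelliptic operator.
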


The proof of this proposition relies partly on the following lemma which is proven in Appendix~\ref{pf:arg densite}.
\begin{lemma}\label{lemma:majoration densite}
Let $y_0 \in \mathbb{R}^{2d}$, $M>0$ and $\alpha \in (0,1)$. There exist $C_0>0$, $\mu>0,\delta_0>0$ such that for all $s \in (0,\delta_0]$, $(q',p') \in \mathrm{B}(y_0,M/6)$ and $(q,p) \in \mathbb{R}^{2d}$ satisfying $|p-p'| \geq M/3$,
  \begin{equation}\label{ineq lemma maj densite}
    \widehat{\mathrm{p}}^{(\alpha)}_s((q,p),(q',p')) \leq C_0\exp(-\mu/s).
  \end{equation}
\end{lemma}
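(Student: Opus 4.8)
The plan is to exploit the explicit Gaussian expression for $\widehat{\mathrm{p}}^{(\alpha)}_s$ provided by~\eqref{expr densite}--\eqref{densite p^alpha} together with the covariance decomposition of Lemma~\ref{arg densite}: the constraint $|p-p'|\ge M/3$ will be shown to force the argument of the Gaussian to be at least of order $1/s$, which beats the polynomial blow-up $s^{-2d}$ of the prefactor. Concretely, write $x=(q,p)$, $y=(q',p')$, $\delta q(s):=q'-q-sp\,\Phi_1(\gamma s)$ and $\delta p(s):=p'-p\mathrm{e}^{-\gamma s}$. Applying~\eqref{densite p^alpha}, \eqref{expr densite} and Lemma~\ref{arg densite} with $\sigma$ replaced by $\sigma/\sqrt{\alpha}$ gives $\widehat{\mathrm{p}}^{(\alpha)}_s(x,y)=K(s)\,\mathrm{e}^{-E_s}$, where $K(s)=\alpha^d\,12^{d/2}\big((2\pi)^d\sigma^{2d}s^{2d}\phi(\gamma s)^{d/2}\big)^{-1}$ and
\[
  E_s=\frac{\alpha}{2\sigma^2 s}\,A_s^2+\frac{6\alpha}{\sigma^2 s^3\phi(\gamma s)}\,B_s^2,\qquad A_s:=|\gamma\,\delta q(s)+\delta p(s)|,\quad B_s:=\Big|\Phi_1(\gamma s)\,\delta q(s)-\tfrac{s}{2}\Phi_3(\gamma s)\,\delta p(s)\Big|.
\]
Since $\Phi_1$, $\Phi_3$ and $\phi$ are positive and continuous, they stay between positive constants on $\{\gamma s:s\in(0,1]\}$; in particular $K(s)\le C\,s^{-2d}$ for $s\in(0,1]$.

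The core of the argument is a lower bound $E_s\ge\mu'/s$. First I would show that, for $s$ below a threshold depending only on $y_0$, $M$ and $\gamma$, one has $|\delta p(s)|\ge M/6$: if $|p|\le P$ for a suitable $P=P(y_0,M,\gamma)$, then $|p\mathrm{e}^{-\gamma s}-p|\le P\,|\mathrm{e}^{-\gamma s}-1|\le M/6$ for $s$ small, so $|\delta p(s)|\ge|p'-p|-M/6\ge M/6$; if $|p|>P$, then $|\delta p(s)|\ge|p|\,\mathrm{e}^{-|\gamma|}-|p'|\ge M$ by the choice of $P$ and the bound $|p'|\le|y_0|+M/6$. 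Next, the identity
\[
  \Phi_1(\gamma s)\big(\gamma\,\delta q(s)+\delta p(s)\big)-\gamma\Big(\Phi_1(\gamma s)\,\delta q(s)-\tfrac{s}{2}\Phi_3(\gamma s)\,\delta p(s)\Big)=\Big(\Phi_1(\gamma s)+\tfrac{\gamma s}{2}\Phi_3(\gamma s)\Big)\,\delta p(s),
\]
whose scalar coefficient tends to $1$ as $s\to0$ (since $\Phi_1(0)=\Phi_3(0)=1$), yields $|\delta p(s)|\le C'(A_s+B_s)$, hence $A_s^2+B_s^2\ge c\,|\delta p(s)|^2\ge c\,(M/6)^2$, for $s$ small. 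Using $s^{-3}\ge s^{-1}$ for $s\le1$ and the lower bound on $\phi$, this gives $E_s\ge\mu'/s$ with $\mu'>0$ depending only on $\alpha,\sigma,\gamma,M$, for all $s$ below some $\delta_0\le1$. Finally $\widehat{\mathrm{p}}^{(\alpha)}_s(x,y)\le C\,s^{-2d}\mathrm{e}^{-\mu'/s}\le C\big(\sup_{r>0}r^{-2d}\mathrm{e}^{-\mu'/(2r)}\big)\mathrm{e}^{-\mu'/(2s)}$, which is the claim with $\mu=\mu'/2$.

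The main obstacle is precisely the lower bound $|\delta p(s)|\ge M/6$: one must rule out the scenario where $p\mathrm{e}^{-\gamma s}$ nearly cancels $p'$ even though $p-p'$ is not small. This can happen only when $|p|\sim 1/s$, but in that regime $|p\mathrm{e}^{-\gamma s}|$ is itself of order $1/s$ and eventually exceeds $2|p'|$, so $\delta p(s)$ is actually large, not small --- hence the dichotomy on $|p|$ above. Everything else is routine manipulation of the explicit Gaussian and of the continuity and positivity of $\Phi_1$, $\Phi_3$, $\phi$ already used throughout Section~\ref{subsection Gaussian}.
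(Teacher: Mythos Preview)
Your argument is correct and follows essentially the same route as the paper: write $\widehat{\mathrm p}^{(\alpha)}_s$ via the covariance decomposition of Lemma~\ref{arg densite}, use the linear relation expressing $\delta p$ as a combination of $\Pi_1\delta x$ and $\Pi_2(s)\delta x$, and absorb the polynomial prefactor into the Gaussian tail. Two small simplifications are available: the coefficient $\Phi_1(\gamma s)+\tfrac{\gamma s}{2}\Phi_3(\gamma s)$ is \emph{exactly} $1$ (this identity is used in the paper), and your dichotomy on $|p|$ can be avoided by writing $p'-p=\mathrm{e}^{\gamma s}\delta p(s)+p'(1-\mathrm{e}^{\gamma s})$, which involves only the bounded quantity $|p'|\le |p_0|+M/6$ rather than the unbounded $|p|$---this is how the paper links $|\delta p(s)|$ and $|p'-p|$ without a case split.
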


\begin{proof}[Proof of Proposition \ref{comportement densite}]
 Since the density $\mathrm{p}^D_t$ only
depends on the values of $F$ in $\mathcal{O}$ (see
Remark~\ref{solution dans D}), we can assume that $F$ satisfies Assumption~\ref{hyp F} for the sake of simplicity.

Both proofs of~\eqref{it:comportement-densite:1} and~\eqref{it:comportement-densite:2} rely on the elementary remark that, for any $t>0$ and $x \in D$, since the function $y \mapsto \mathrm{p}^D_t(x,y)$ is continuous on $D$, we have for any $y \in D$,
\begin{equation}\label{cv densite 2}
  \mathrm{p}^D_t(x,y) = \lim_{h \to 0} \frac{\mathrm{P}^D_t(x,D \cap \mathrm{B}(y,h))}{|\mathrm{B}(y,h)|}.
\end{equation}
Notice that, here and in the sequel, we take the intersection of $\mathrm{B}(y,h)$ with $D$ because $\mathrm{P}^D_t(x,\cdot)$ is defined as a measure on $\mathcal{B}(D)$.

\medskip\noindent\textbf{Proof of~\eqref{it:comportement-densite:1}.} Let $t_0>0$, $x_0\in\Gamma^+\cup\Gamma^0$. Let $(t_n,x_n)_{n\geq1}$ be a sequence of points in $\mathbb{R}_+^*\times D$ converging towards $(t_0,x_0)$. Let $N \geq 1$ be such that, for any $n \geq N$, $t_0/2 \leq t_n \leq 3t_0/2$. For any $n \geq N$, $h>0$ and $y \in D$, the Markov property shows that
\begin{equation*}
  \mathrm{P}^D_{t_n/2}(x_n, D \cap \mathrm{B}(y,h)) = \mathbb{E}\left[\mathbb{1}_{\tau^{x_n}_\partial > t_n/2} \mathrm{P}^D_{t_n/2}(X^{x_n}_{t_n/2}, D \cap \mathrm{B}(y,h))\right].
\end{equation*}
Besides, by Corollary~\ref{Rq densite estimation}, there exists a constant $C \geq 0$ which depends on $t_0$ such that for any $n \geq N$, the transition density $\mathrm{p}^D_{t_n/2}$ is uniformly bounded on $D \times D$ by $C$, therefore
\begin{equation*}
  \frac{\mathrm{P}^D_{t_n/2}(x_n, D \cap \mathrm{B}(y,h))}{|\mathrm{B}(y,h)|} \leq C \mathbb{P}(\tau^{x_n}_\partial>t_n/2).
\end{equation*}
The right-hand side no longer depends on $h$ and vanishes when $n \to +\infty$ by Lemma \ref{cv indicatrices lemma} and Proposition \ref{prop:tau}, therefore by~\eqref{cv densite 2} we get Assertion~\eqref{it:comportement-densite:1}.

\begin{remark}\label{rk:comportement-densite:cvunif}
  The proof shows that the convergence of Assertion~\eqref{it:comportement-densite:1} is actually uniform in $y$, that is to say $\sup_{y \in D} \mathrm{p}^D_{t_n}(x_n,y) \underset{n\rightarrow \infty}{{\longrightarrow}}0$.
\end{remark}

\medskip\noindent\textbf{Proof of~\eqref{it:comportement-densite:2}.} The proof of~\eqref{it:comportement-densite:2} needs more work. Let $x\in D$, $t_0>0$ and $y_0=(q_0,p_0)\in\Gamma^-$. Let $(t_n,y_n)_{n\geq1}$, with $y_n:=(q_n,p_n)$, be a sequence of points in $\mathbb{R}_+^*\times D$ converging towards $(t_0,y_0)$. In order to prove the convergence $\mathrm{p}^D_{t_n}(x,y_n)\underset{n\rightarrow \infty}{{\longrightarrow}}0$, it is enough by \eqref{cv densite 2} to prove the following double limit 
\begin{equation}\label{lim n limsup h}
    \lim_{n\rightarrow\infty}\lim_{h\rightarrow 0}\frac{\mathrm{P}^D_{t_n}(x,D \cap \mathrm{B}(y_n,h))}{\vert \mathrm{B}(y_n,h)\vert}=0. 
\end{equation}
 Let us define for $0\leq r\leq t$ the following modulus of continuity $$Z^x_{r,t}:=\sup_{r\leq s\leq t}\vert p^x_s-p^x_t\vert.$$
 For two
   constants $\delta  \in (0,t_0/2]$ and $M>0$ to be fixed later on,
   let us rewrite the numerator in~\eqref{lim n limsup h} as follows:
   for $n$ sufficiently large so that $t_n \geq t_0/2$ (and thus $t_n -
   \delta \geq 0$),
\begin{align*}
\mathrm{P}^D_{t_n}(x, D \cap \mathrm{B}(y_n,h))&=\mathbb{P}((q_{t_n}^x,p_{t_n}^x)\in \mathrm{B}(y_n,h), \tau^x_\partial>t_n)\\
&=\mathbb{P}((q_{t_n}^x,p_{t_n}^x)\in \mathrm{B}(y_n,h), Z^x_{t_n-\delta,t_n}\leq M , \tau^x_\partial>t_n)\\
&\quad+\mathbb{P}((q_{t_n}^x,p_{t_n}^x)\in \mathrm{B}(y_n,h), Z^x_{t_n-\delta,t_n}>M , \tau^x_\partial>t_n) . \numberthis \label{terme 2}
\end{align*}

The idea of the proof of \eqref{lim n limsup h} relies on the decomposition in \eqref{terme 2} and is divided into two steps. In \textbf{Step 1}, we consider the probability corresponding to the first term in the right-hand side of the equality \eqref{terme 2}. We show that there is a value of $M$ and a $\delta_1 \in (0, t_0/2]$ such that, for all $\delta \in (0, \delta_1]$, there exist $N_1 \geq 1$ and $h_1 > 0$ such that for any $n \geq N_1$ and $h \leq h_1$, the event $\{(q_{t_n}^x,p_{t_n}^x)\in \mathrm{B}(y_n,h), Z^x_{t_n-\delta,t_n}\leq M , \tau^x_\partial>t_n\}$ has probability $0$. Indeed, for $n$ large and $h$ small the event $\{(q_{t_n}^x,p_{t_n}^x)\in \mathrm{B}(y_n,h)\}$ implies that $(q_{t_n}^x,p_{t_n}^x)$ is "close" to $y_0\in\Gamma
^-$ which is a boundary point with inward velocity. Therefore, using our control on the modulus of  continuity of the velocity, we can prove the existence of a time $s\in(0,t_n)$ such that $(q_s^x,p_s^x)$ is outside of $D$, which contradicts the fact that $\tau^x_\partial>t_n$.

In \textbf{Step 2}, we consider the second term in the right-hand side of the equality \eqref{terme 2}, divided by $\vert \mathrm{B}(y_n,h)\vert$. For the value of $M$ determined in \textbf{Step~1}, we show the existence of $\delta_2 \in (0,t_0/2]$ and $C,\mu>0$ such that, for all $\delta \in (0, \delta_2]$, there exist $N_2 \geq 1$ and $h_2 > 0$ such that, for any $n \geq N_2$ and $h \leq h_2$,
\begin{equation}\label{eq:step2 estimate}
    \frac{\mathbb{P}((q_{t_n}^x,p_{t_n}^x)\in \mathrm{B}(y_n,h),
  Z^x_{t_n-\delta,t_n}>M)}{\vert \mathrm{B}(y_n,h)\vert}\leq
C\mathrm{e}^{-\frac{\mu}{\delta}}.
\end{equation}
As a result, the two steps yield the following inequality
$$\limsup_{n\rightarrow\infty}\limsup_{h\rightarrow0}\frac{\mathrm{P}^D_{t_n}(x, D \cap \mathrm{B}(y_n,h))}{\vert \mathrm{B}(y_n,h)\vert}\leq C\mathrm{e}^{-\frac{\mu}{\delta}}$$
for any $\delta \in (0,\delta_1 \wedge \delta_2]$, then taking $\delta\rightarrow0$ we are able to conclude the proof of \eqref{lim n limsup h}.

\medskip \noindent \textbf{Step 1}. Let us prove here that one can fix $M>0$ and choose
$\delta>0$ small enough such that the first term in the
right-hand side of the equality \eqref{terme 2} vanishes for $n$
sufficiently large and $h$ sufficiently small. By Proposition~\ref{prop:sphere}, for $\eta>0$,
there exists $c_0:=c_0(\eta)>0$ such that if $q\in\mathbb{R}^d$
satisfies $(q-q_0)\cdot n(q_0)\geq\eta \vert
  q-q_0\vert$ and $\vert q-q_0\vert\leq
c_0$ then $q\notin\mathcal{O}$. Now let $M:=-\frac{p_0\cdot n(q_0)}{3} $ which is positive because
  $y_0=(q_0,p_0)\in\Gamma^-$. Let $\eta:=\frac{M}{2M+\vert p_0\vert}$
  and $c_0:=c_0(\eta)$ as defined above.
  
  Let $\delta_1:=\frac{t_0}{2} \land\frac{c_0}{2 M+\vert p_0\vert}.$ We fix $\delta\in(0,\delta_1]$ and define $h_1 := \frac{\delta M}{2(1+\delta)}$. Remembering that $t_n\underset{n\rightarrow \infty}{{\longrightarrow}}t_0$ and $y_n\underset{n\rightarrow \infty}{{\longrightarrow}}y_0$ we can choose $N_1\geq1$ such that for $n\geq N_1$, $$t_n\in[t_0/2,3t_0/2]\quad\text{and}\quad y_n=(q_n,p_n)\in \mathrm{B}\left(y_0,\frac{\delta M}{2(1+\delta)}\right).$$

Let $n \geq N_1$ and $h \in (0,h_1]$. Notice that 
\[\begin{aligned}
\mathbb{P}((q_{t_n}^x,p_{t_n}^x)\in \mathrm{B}(y_n,h), Z^x_{t_n-\delta,t_n}\leq M , \tau^x_\partial>t_n)&\leq\mathbb{P}((q_{t_n}^x,p_{t_n}^x)\in \mathrm{B}(y_n,h), Z^x_{t_n-\delta,t_n}\leq M , q^x_{t_n-\delta}\in\mathcal{O}). 
\end{aligned}\]
Therefore, the first term in \eqref{terme 2} vanishes if one can prove that $q^x_{t_n-\delta}\notin\mathcal{O}$ on the event $\{(q_{t_n}^x,p_{t_n}^x)\in\mathrm{B}(y_n,h), Z^x_{t_n-\delta,t_n}\leq M\}$. 
By \eqref{Langevin}, one has that
$$q_{t_n}^x=q^x_{t_n-\delta}+\int_{t_n-\delta}^{t_n}p^x_s \mathrm{d}s.$$
Therefore,
$$q^x_{t_n-\delta}=q^x_{t_n}-\delta  p^x_{t_n}-\int_{t_n-\delta}^{t_n}(p^x_s-p^x_{t_n}) \mathrm{d}s . $$
Let $$v_{t_n}^x:=q^x_{t_n-\delta}-\left(q_0-\delta p_0-\int_{t_n-\delta}^{t_n}(p^x_s-p^x_{t_n}) \mathrm{d}s\right)=q^x_{t_n}-q_0-\delta \left(p^x_{t_n}- p_0\right). $$
As a result, on the event $\{(q_{t_n}^x,p_{t_n}^x)\in\mathrm{B}(y_n,h)\}$, the triangle inequality ensures that
\begin{align*}
    \vert v_{t_n}^x\vert&\leq\vert q_{t_n}^x-q_n\vert +\vert q_n-q_0\vert+\delta\vert p_{t_n}^x-p_n\vert+\delta\vert p_n-p_0\vert\\
    &\leq\vert X_{t_n}^x-y_n\vert(1+\delta)+\vert y_n-y_0\vert(1+\delta)\\
    &\leq h(1+\delta)+\frac{\delta
      M}{2(1+\delta)}(1+\delta)\leq\delta M,
\end{align*}
 by definition of $N_1$ and $h_1$. Consequently, we have on the event $\{(q_{t_n}^x,p_{t_n}^x)\in\mathrm{B}(y_n,h), Z^x_{t_n-\delta,t_n}\leq M\}$,
\[\begin{aligned}
(q^x_{t_n-\delta}-q_0)\cdot n(q_0) &=\underbrace{-\delta p_0\cdot n(q_0)}_{=3 \delta M}+\underbrace{\int_{t_n-\delta}^{t_n}(p^x_{t_n}-p^x_s)\cdot n(q_0)\mathrm{d}s}_{\vert \cdot\vert\leq \delta M }+\underbrace{v_{t_n}^x\cdot n(q_0)}_{\vert \cdot\vert\leq \delta M }\geq \delta M .
\end{aligned}\]
Furthermore, on the event $\{q_{t_n}^x,p_{t_n}^x)\in\mathrm{B}(y_n,h),Z^x_{t_n-\delta,t_n}\leq M\}$,
\[\begin{aligned}
\left\vert q^x_{t_n-\delta}-q_0\right\vert&=\left\vert -\delta p_0-\int_{t_n-\delta}^{t_n}(p^x_s-p^x_{t_n})\mathrm{d}s+v_{t_n}^x\right\vert\\
&\leq\delta (\vert p_0\vert+2M) .
\end{aligned}\]
As a result, since $(q^x_{t_n-\delta}-q_0)\cdot n(q_0) \geq \delta M \geq \eta \vert q^x_{t_n-\delta}-q_0\vert$ and $\vert q^x_{t_n-\delta}-q_0\vert\leq\delta (\vert p_0\vert+2M)\leq c_0$, the exterior sphere condition ensures that $q^x_{t_n-\delta}\notin\mathcal{O}$. 

\medskip \noindent \textbf{Step 2}. Let $M > 0$ be defined as in \textbf{Step~1}. We fix a value of $\alpha \in (0,1)$, let $C_0,\mu,\delta_0 > 0$ be given by Lemma~\ref{lemma:majoration densite} and define $\delta_2 := \delta_0 \wedge (t_0/2)$. We now let $\delta \in (0,\delta_2]$, define $N_2 \geq 1$ be such that for any $n \geq N_2$, $|y_n-y_0| \leq M/12$, and finally set $h_2 := M/12$.

Let $n \geq N_2$, $h \in (0,h_2]$ and define the following stopping time  $$\tau^{(\delta)}_{n}:=\inf \{s\geq t_n-\delta: \vert p^x_s-p_0\vert\geq M/2\}.$$ On the event $\{(q_{t_n}^x,p_{t_n}^x)\in \mathrm{B}(y_n,h), Z^x_{t_n-\delta,t_n}>M\}$, one has by the triangle inequality
\begin{align*}
\sup_{t_n-\delta\leq s\leq t_n}\vert p^x_s-p_0\vert
  &\geq\sup_{t_n-\delta\leq s\leq t_n}\vert p^x_s-p^x_{t_n}\vert
    -\vert p^x_{t_n}-p_n\vert -\vert p_n-p_0\vert \geq M-h-\frac{M}{12} \geq \frac{5M}{6} > \frac{M}{2},
\end{align*}
by the definitions of $N_2$ and $h_2$. Therefore, $\tau^{(\delta)}_n\leq t_n$ and
$$\mathbb{P}((q_{t_n}^x,p_{t_n}^x)\in \mathrm{B}(y_n,h),
Z^x_{t_n-\delta,t_n}>M)\leq \mathbb{P}((q_{t_n}^x,p_{t_n}^x)\in
\mathrm{B}(y_n,h), \tau^{(\delta)}_n<t_n),$$ since $\mathbb{P}(\tau^{(\delta)}_n=t_n)\leq \mathbb{P}(\vert p^x_{t_n}-p_0\vert=M/2)=0$ because $p^x_{t_n}$ admits a density on $\mathbb{R}^d$ with respect to the Lebesgue measure by Proposition~\ref{prop:kolmo-langevin}.

Therefore, applying the strong Markov property at $\tau^{(\delta)}_{n}$, one has
\begin{equation}\label{ineq estimee 1}
    \frac{\mathbb{P}((q_{t_n}^x,p_{t_n}^x)\in \mathrm{B}(y_n,h), Z^x_{t_n-\delta,t_n}>M)}{\vert \mathrm{B}(y_n,h)\vert}\leq\mathbb{E}\left[ \mathbb{1}_{\tau^{(\delta)}_{n}< t_n} \frac{\mathbb{P}\left((q_{t_n-r}^z,p_{t_n-r}^z)\in \mathrm{B}(y_n,h)\right)\Big\vert_{z=\left(q^x_{\tau^{(\delta)}_{n}},p^x_{\tau^{(\delta)}_{n}}\right),r=\tau^{(\delta)}_{n}}}{\vert \mathrm{B}(y_n,h)\vert}\right]\\ 
\end{equation}
Let $s_n:=t_n-\tau^{(\delta)}_n$. On the event $\{\tau^{(\delta)}_{n}< t_n\}$, one has $s_n\in(0,\delta]$ and
\begin{equation}\label{ineq estimee 2}
    \mathbb{P}\left((q_{t_n-r}^z,p_{t_n-r}^z)\in \mathrm{B}(y_n,h)\right)\Big\vert_{z=\left(q^x_{\tau^{(\delta)}_{n}},p^x_{\tau^{(\delta)}_{n}}\right),r=\tau^{(\delta)}_{n}}=\int_{\mathrm{B}(y_n,h)} \mathrm{p}_{s_n}((q^x_{\tau^{(\delta)}_{n}},p^x_{\tau^{(\delta)}_{n}}),y') \mathrm{d}y'.  
\end{equation}
Besides, since $s_n\leq\delta$ and $\delta\leq t_0/2$, one has by Theorem
\ref{borne densite thm} that there exists $C'>0$
depending only on $\alpha$ and $t_0$, but not on $n$, such that for any $y'\in\mathrm{B}(y_n,h)$, 
\begin{equation}\label{ineq estimee 3}
    \mathrm{p}_{s_n}((q^x_{\tau^{(\delta)}_{n}},p^x_{\tau^{(\delta)}_{n}}),y')\leq C'\widehat{\mathrm{p}}^{(\alpha)}_{s_n}((q^x_{\tau^{(\delta)}_{n}},p^x_{\tau^{(\delta)}_{n}}),y'). 
\end{equation}

It now follows from the definition of $N_2$ and $h_2$ that $\mathrm{B}(y_n,h) \subset \mathrm{B}(y_0, M/6)$ and, from the continuity of the trajectories of $(p^x_t)_{t\geq0}$, one has almost surely that $|p^x_{\tau^{(\delta)}_n}-p_0| \geq M/2$ so that for any $y' \in \mathrm{B}(y_n,h)$,
\begin{align*}
    \left|p^x_{\tau^{(\delta)}_n}-p'\right|&\geq\left|p^x_{\tau^{(\delta)}_n}-p_0\right|-\left|p_0-p'\right| \geq M/2-M/6\geq M/3.
\end{align*}
These estimates allow to apply Lemma~\ref{lemma:majoration densite} and deduce that, on the event $\{\tau^{(\delta)}_{n}< t_n\}$,
\begin{equation*}
  \widehat{\mathrm{p}}^{(\alpha)}_{s_n}((q^x_{\tau^{(\delta)}_{n}},p^x_{\tau^{(\delta)}_{n}}),y') \leq C_0\exp(-\mu/s_n) \leq C_0\exp(-\mu/\delta),
\end{equation*}
which, combined with~(\ref{ineq estimee 1}--\ref{ineq estimee 3}), concludes to~\eqref{eq:step2 estimate}. 
\end{proof}

\begin{remark}
  A formal conditioning argument shows that, for $x,y \in D$,
  \begin{align*}
    \mathrm{p}^D_t(x,y) &= \lim_{h \to 0} \frac{\mathbb{P}(X^x_t \in \mathrm{B}(y,h), \tau^x_\partial > t)}{|\mathrm{B}(y,h)|}\\
    &= \lim_{h \to 0} \frac{\mathbb{P}(\tau^x_\partial > t | X^x_t \in \mathrm{B}(y,h))\mathbb{P}(X^x_t \in \mathrm{B}(y,h))}{|\mathrm{B}(y,h)|}\\
    &= \mathbb{P}(\tau^x_\partial > t | X^x_t=y)\mathrm{p}_t(x,y),
  \end{align*}
  so that Proposition~\ref{comportement densite} should amount to studying the limiting behavior, when $x$ or $y$ respectively approach $\Gamma^+$ or $\Gamma^-$, of the probability that the diffusion bridge associated with~\eqref{Langevin} between~$x$ and~$y$ remains in $D$. With this interpretation at hand, both convergence results~\eqref{it:comportement-densite:1} and~\eqref{it:comportement-densite:2} become very intuitive, and they seem to be the time-reversal statement of each other --- a point which will be clarified with the introduction of the adjoint process, and the proof of the reversibility relation~\eqref{duality}, in the next section.
  
 Our proof of Proposition~\ref{comportement densite}, and in particular of~\eqref{it:comportement-densite:2}, can be related to the work on diffusion bridges of Chaumont and Uribe-Bravo in~\cite{MarkovBridge}, where they study formal characterizations of the $h \to 0$ limit of such an expression as $\mathbb{P}(\tau^x_\partial > t | X^x_t \in \mathrm{B}(y,h))$. 
\end{remark}

\section{Reversibility and boundary continuity}
\label{Section Adjoint process and compactness}\sectionmark{Reversibility and compactness properties}

In this section we define the "adjoint" Langevin process, which is later shown to be closely related to the Langevin process, through a reversibility result linking both transition densities of the respective absorbed processes. This result is useful for being able to describe precisely the boundary behavior of~$\mathrm{p}^D_t$ and thereby complete the proof of Theorem~\ref{thm density intro}.

\subsection{Adjoint process}
 Let $x=(q,p)\in\mathbb{R}^{2d}$. Let us call the  "adjoint" Langevin
 process the diffusion process
 $(\widetilde{X}^x_t=(\widetilde{q}^x_t,\widetilde{p}^x_t))_{t\geq0}$
 with infinitesimal generator
 $\widetilde{\mathcal{L}}:=\mathcal{L}^*-d\gamma$
 (see~\eqref{generateur adjoint} for the definition of $\mathcal{L}^*$), satisfying  the following SDE:  
\begin{equation}\label{Langevin adjoint}
  \left\{
    \begin{aligned}
&        \mathrm{d}\widetilde{q}^x_t=-\widetilde{p}^x_t \mathrm{d}t , \\
 &       \mathrm{d}\widetilde{p}^x_t=-F(\widetilde{q}^x_t) \mathrm{d}t+ \gamma \widetilde{p}^x_t \mathrm{d}t+\sigma \mathrm{d}B_t ,\\
  &      (\widetilde{q}^x_0,\widetilde{p}^x_0)=x .
    \end{aligned}
\right.   
\end{equation} 
Let $\widetilde{\tau}^x_{\partial}$ be the first exit time from $D$ of $(\widetilde{X}^x_t)_{t \geq 0}$, i.e.
$$ \widetilde{\tau}^x_{\partial}=\inf \{t>0: \widetilde{X}^x_t\notin D\} .$$
\noindent Let $y:=(q,-p)$. Let us now define the process $(\widetilde{X}_t^{\diamond,y}=(\widetilde{q}_t^{\diamond,y},\widetilde{p}_t^{\diamond,y}))_{t\geq0}:=(\widetilde{q}_t^x,-\widetilde{p}_t^x)_{t\geq0}$,
it is easy to see that it satisfies the following SDE
\begin{equation}\label{Langevin bis}
  \left\{
    \begin{aligned}
&        \mathrm{d}\widetilde{q}^{\diamond,y}_t=\widetilde{p}^{\diamond,y}_t \mathrm{d}t , \\
 &       \mathrm{d}\widetilde{p}^{\diamond,y}_t=F(\widetilde{q}^{\diamond,y}_t) \mathrm{d}t+\gamma \widetilde{p}^{\diamond,y}_t \mathrm{d}t+\sigma \mathrm{d}B^\diamond_t ,\\
  &      (\widetilde{q}^{\diamond,y}_0,\widetilde{p}^{\diamond,y}_0)=(q,-p)=y ,
    \end{aligned}
\right.   
\end{equation}
where $(B^\diamond_t)_{t\geq0}=(-B_t)_{t\geq0}$ is a Brownian motion on $\mathbb{R}^{d}$. Its infinitesimal generator therefore writes 
\begin{equation*}
  \widetilde{\mathcal{L}}^\diamond := \mathcal{L}_{F,-\gamma,\sigma}
\end{equation*}
with the notation of~\eqref{generateur Langevin}. Hence all the results proven in the previous sections apply to $(\widetilde{X}_t^{\diamond,y})_{t\geq0}$ as well.
Furthermore, $\widetilde{X}^{\diamond,y}_t$ and $\widetilde{X}^x_t$ share the same first exit time from $D$, i.e. $$\widetilde{\tau}^{\diamond,y}_{\partial}:=\inf \{t>0: \widetilde{X}^{\diamond,y}_t\notin D\}= \widetilde{\tau}^x_{\partial}\quad\text{almost surely.}$$

Let us now write and prove the equivalent of Theorem \ref{Solution PDE} for the process $(\widetilde{X}^x_t)_{t\geq0}$. 

By Proposition~\ref{prop:tau} applied to $(\widetilde{X}^{\diamond,y}_t)_{t \geq 0}$, we have, for all $t \geq 0$, almost surely, if $\widetilde{\tau}^x_{\partial}>t$ then $\widetilde{X}^x_t \in D\cup\Gamma^+$, and
if $\widetilde{\tau}^x_{\partial}\leq t$ then $\widetilde{X}^x_{\tau^x_{\partial}} \in
\Gamma^-\cup\Gamma^0$. This ensures that the definition of the
function $\widetilde{u}$ in Equation~\eqref{v} below is legitimate. 

\begin{proposition}[Classical solution and probabilistic representation for the adjoint kinetic Fokker-Planck equation]\label{Solution adjoint PDE} Under Assumptions~\ref{hyp O} and~\ref{hyp F1}, let $\widetilde{f}\in\mathcal{C}^b(D\cup\Gamma^+)$ and $\widetilde{g}\in\mathcal{C}^b(\Gamma^-\cup\Gamma^0)$, and define the function $\widetilde{u}$ on $\mathbb{R}_+\times \overline{D}$ by
\begin{equation}\label{v}
    \widetilde{u}:(t,x)\mapsto\mathbb{E}\left[ \mathbb{1}_{\widetilde{\tau}^x_{\partial}>t} \widetilde{f}(\widetilde{X}^x_t) +\mathbb{1}_{\widetilde{\tau}^x_{\partial}\leq t} \widetilde{g}(\widetilde{X}^x_{\tau^x_{\partial}}) \right].
    \end{equation}
  Then we have the following results:
\begin{enumerate}[label={\rm(\roman*)},ref=\roman*]
  \item\label{it:ibvp:val 2} Initial and boundary values: the function $\widetilde{u}$ satisfies
 \begin{equation*}
\widetilde{u}(0,x)=\left\{
\begin{aligned}
    \widetilde{f}(x) &\quad \text{if $x\in D\cup\Gamma^+$,}\\
    \widetilde{g}(x) &\quad \text{if $x\in \Gamma^-\cup\Gamma^0$},
\end{aligned}
\right. 
\end{equation*}
and 
\begin{equation*}
  \forall t>0, \quad \forall x\in\Gamma^-\cup\Gamma^0, \qquad \widetilde{u}(t,x)=\widetilde{g}(x).
\end{equation*}
  \item\label{it:ibvp:cont 2} Continuity: $\widetilde{u} \in \mathcal{C}^b((\mathbb{R}_+\times\overline{D}) \setminus (\{0\}\times(\Gamma^-\cup\Gamma^0)) )$, and if $\widetilde{f}$ and $\widetilde{g}$ satisfy the compatibility condition
\begin{equation}\label{compatibility cond 2}
    x\in\overline{D}\mapsto\mathbb{1}_{x\in
      D\cup\Gamma^+}\widetilde{f}(x)+\mathbb{1}_{x \in \Gamma^-\cup\Gamma^0}\widetilde{g}(x)\in\mathcal{C}^b(\overline{D}),
      \end{equation}
then $\widetilde{u}\in\mathcal{C}^b(\mathbb{R}_+\times\overline{D})$.
  \item\label{it:ibvp:reg 2} Interior regularity: $\widetilde{u} \in \mathcal{C}^\infty(\mathbb{R}_+^*\times D)$ and, for all $t>0$, $x \in D$,
  \begin{equation}\label{edp adjoint}
    \partial_t\widetilde{u}(t,x)=\widetilde{\mathcal{L}}\widetilde{u}(t,x).
    \end{equation}
  \item\label{it:ibvp:uniq 2} Uniqueness: let $\widetilde{v}$ be a
    classical solution, in the sense of
    Definition~\ref{def:class}, to the Initial-Boundary Value Problem
    \begin{equation*}
      \left\{\begin{aligned}
        \partial_t \widetilde{v} &= \widetilde{\mathcal{L}} \widetilde{v} && t > 0, \quad x \in D,\\
        \widetilde{v}(0,x) &= \widetilde{f}(x) && x \in D,\\
        \widetilde{v}(t,x) &= \widetilde{g}(x) && t > 0, \quad x \in \Gamma^-.
      \end{aligned}\right.
    \end{equation*}
    If, for all $T>0$, $\widetilde{v}$ is bounded on the set $[0,T] \times D$, then $\widetilde{v}(t,x)=\widetilde{u}(t,x)$ for all $(t,x)\in (\mathbb{R}_+\times(D\cup\Gamma^-))\setminus(\{0\}\times \Gamma^-)$.
\end{enumerate}  
\end{proposition}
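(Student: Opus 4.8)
The plan is to obtain Proposition~\ref{Solution adjoint PDE} directly from Theorem~\ref{Solution PDE} by exploiting the change of variables $(q,p)\mapsto(q,-p)$ that turns the adjoint process into a genuine Langevin process. First I would record the basic dictionary: if $(\widetilde X^x_t)_{t\ge0}$ solves~\eqref{Langevin adjoint} then $(\widetilde X^{\diamond,y}_t)_{t\ge0}=(\widetilde q^x_t,-\widetilde p^x_t)_{t\ge0}$ solves~\eqref{Langevin bis}, which is exactly the SDE~\eqref{Langevin} with $\gamma$ replaced by $-\gamma$ (and driving Brownian motion $B^\diamond=-B$), hence has infinitesimal generator $\widetilde{\mathcal L}^\diamond=\mathcal{L}_{F,-\gamma,\sigma}$. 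Since Assumptions~\ref{hyp O} and~\ref{hyp F1} do not constrain the sign of $\gamma$, Theorem~\ref{Solution PDE} applies verbatim to the process $(\widetilde X^{\diamond,y}_t)_{t\ge0}$. The key geometric observation is that the reflection $\iota:(q,p)\mapsto(q,-p)$ is an involution of $\overline D$ which maps $\partial\mathcal O\times\mathbb{R}^d$ to itself and, because $p\cdot n(q)$ changes sign, exchanges $\Gamma^+$ and $\Gamma^-$ while fixing $\Gamma^0$. Consequently $\widetilde\tau^x_\partial=\widetilde\tau^{\diamond,y}_\partial$ almost surely (already noted in the excerpt), and $\widetilde X^x_t=\iota(\widetilde X^{\diamond,y}_t)$, $\widetilde X^x_{\widetilde\tau^x_\partial}=\iota(\widetilde X^{\diamond,y}_{\widetilde\tau^{\diamond,y}_\partial})$.

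Next I would translate the data. Given $\widetilde f\in\mathcal{C}^b(D\cup\Gamma^+)$ and $\widetilde g\in\mathcal{C}^b(\Gamma^-\cup\Gamma^0)$, set $f^\diamond:=\widetilde f\circ\iota$ and $g^\diamond:=\widetilde g\circ\iota$; since $\iota(D\cup\Gamma^+)=D\cup\Gamma^-$ and $\iota(\Gamma^-\cup\Gamma^0)=\Gamma^+\cup\Gamma^0$, these satisfy the hypotheses of Theorem~\ref{Solution PDE} for the process $(\widetilde X^{\diamond,y}_t)_{t\ge0}$. Wait — Theorem~\ref{Solution PDE} requires the initial datum on $D\cup\Gamma^-$ and the boundary datum on $\Gamma^+\cup\Gamma^0$, which is precisely what $f^\diamond$ and $g^\diamond$ give. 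Define $u^\diamond(t,y):=\mathbb{E}[\mathbb{1}_{\widetilde\tau^{\diamond,y}_\partial>t}f^\diamond(\widetilde X^{\diamond,y}_t)+\mathbb{1}_{\widetilde\tau^{\diamond,y}_\partial\le t}g^\diamond(\widetilde X^{\diamond,y}_{\widetilde\tau^{\diamond,y}_\partial})]$, which by Theorem~\ref{Solution PDE} is the unique bounded classical solution of~\eqref{kFP pb} with operator $\mathcal{L}_{F,-\gamma,\sigma}$, data $f^\diamond,g^\diamond$. Using the identities above, for $x=\iota(y)$ one checks $\widetilde u(t,x)=u^\diamond(t,\iota(x))$: indeed $\widetilde f(\widetilde X^x_t)=\widetilde f(\iota(\widetilde X^{\diamond,y}_t))=f^\diamond(\widetilde X^{\diamond,y}_t)$ and similarly for the boundary term. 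So $\widetilde u=u^\diamond\circ\iota$.

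Now each assertion follows by pulling back through $\iota$. For~\eqref{it:ibvp:val 2} and~\eqref{it:ibvp:cont 2}: $\iota$ is a homeomorphism of $\overline D$ with $\iota(\Gamma^+\cup\Gamma^0)=\Gamma^-\cup\Gamma^0$, so continuity of $u^\diamond$ on $(\mathbb{R}_+\times\overline D)\setminus(\{0\}\times(\Gamma^+\cup\Gamma^0))$ transfers to continuity of $\widetilde u$ on the complement of $\{0\}\times(\Gamma^-\cup\Gamma^0)$; the compatibility condition~\eqref{compatibility cond 2} for $(\widetilde f,\widetilde g)$ is exactly~\eqref{compatibility cond} for $(f^\diamond,g^\diamond)$ (since $\mathbb{1}_{x\in D\cup\Gamma^+}\widetilde f(x)+\mathbb{1}_{x\in\Gamma^-\cup\Gamma^0}\widetilde g(x)$ is the pullback by $\iota$ of $\mathbb{1}_{z\in D\cup\Gamma^-}f^\diamond(z)+\mathbb{1}_{z\in\Gamma^+\cup\Gamma^0}g^\diamond(z)$), and initial/boundary values match under $\iota$. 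For~\eqref{it:ibvp:reg 2}: $u^\diamond\in\mathcal{C}^\infty(\mathbb{R}_+^*\times D)$ and $\partial_t u^\diamond=\mathcal{L}_{F,-\gamma,\sigma}u^\diamond$; composing with the smooth diffeomorphism $\iota$ preserves smoothness, and a direct computation shows that $v\mapsto v\circ\iota$ conjugates $\mathcal{L}_{F,-\gamma,\sigma}$ into $\mathcal{L}^*_{F,-(-\gamma),\sigma}-d\gamma=\mathcal{L}^*-d\gamma=\widetilde{\mathcal L}$ — this is precisely the relation already observed in Remark~\ref{rk:kFPname} (with $\gamma\leftrightarrow-\gamma$), namely that if $w$ solves $\partial_t w=\mathcal{L}_{F,-\gamma,\sigma}w$ then $(t,(q,p))\mapsto w(t,(q,-p))$ solves $\partial_t\widetilde u=\widetilde{\mathcal L}\widetilde u$, so~\eqref{edp adjoint} holds. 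For~\eqref{it:ibvp:uniq 2}: if $\widetilde v$ is a bounded classical solution of the stated IBVP, then $\widetilde v\circ\iota$ is a bounded classical solution of~\eqref{kFP pb} with operator $\mathcal{L}_{F,-\gamma,\sigma}$ and data $f^\diamond,g^\diamond$ (the boundary condition on $\Gamma^-$ for $\widetilde v$ becomes one on $\Gamma^+$ for $\widetilde v\circ\iota$), so by Assertion~\eqref{it:ibvp:uniq} of Theorem~\ref{Solution PDE}, $\widetilde v\circ\iota=u^\diamond$ on $(\mathbb{R}_+\times(D\cup\Gamma^+))\setminus(\{0\}\times\Gamma^+)$, whence $\widetilde v=\widetilde u$ on $(\mathbb{R}_+\times(D\cup\Gamma^-))\setminus(\{0\}\times\Gamma^-)$. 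The only genuinely delicate point — and the one I would write out carefully — is the operator conjugation $(\mathcal{L}_{F,-\gamma,\sigma}v)\circ\iota=\widetilde{\mathcal L}(v\circ\iota)$: the $p\cdot\nabla_q$ and $F\cdot\nabla_p$ terms each pick up a sign from $\iota$, the $\Delta_p$ term is invariant, and the friction term $-\gamma p\cdot\nabla_p$ transforms into $+\gamma\,\mathrm{div}_p(p\,\cdot)-d\gamma$, reproducing exactly the form~\eqref{generateur adjoint} of $\mathcal{L}^*$ minus $d\gamma$; everything else is bookkeeping. I expect no real obstacle beyond keeping the signs and the $\Gamma^\pm$ swap straight throughout.
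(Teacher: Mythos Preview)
Your proposal is correct and follows exactly the paper's approach: reduce to Theorem~\ref{Solution PDE} for the process $(\widetilde X^{\diamond,y}_t)_{t\ge0}$ with generator $\mathcal{L}_{F,-\gamma,\sigma}$ via the reflection $\iota:(q,p)\mapsto(q,-p)$, using pulled-back data $f^\diamond=\widetilde f\circ\iota$, $g^\diamond=\widetilde g\circ\iota$, and then transport each assertion back through $\iota$. The paper's proof is terser (it simply states that $\widetilde u^\diamond:=\widetilde u\circ\iota$ satisfies Theorem~\ref{Solution PDE} and concludes), while you spell out the operator conjugation and the $\Gamma^\pm$ swap; one small slip in your exposition is that the friction term in $\mathcal{L}_{F,-\gamma,\sigma}$ is $+\gamma p\cdot\nabla_p$ (not $-\gamma p\cdot\nabla_p$), but your final identity $(\mathcal{L}_{F,-\gamma,\sigma}v)\circ\iota=\widetilde{\mathcal L}(v\circ\iota)$ is correct.
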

\begin{proof}
Let $\widetilde{f}\in\mathcal{C}^b(D\cup\Gamma^+)$ and
$\widetilde{g}\in\mathcal{C}^b(\Gamma^-\cup\Gamma^0)$. Let $\widetilde{f}^{\diamond}$,
$\widetilde{g}^{\diamond}$ be defined by
$$\widetilde{f}^{\diamond}(q,p)=\widetilde{f}(q,-p),\qquad\widetilde{g}^{\diamond}(q,p)=\widetilde{g}(q,-p).$$
It is easy to see that $\widetilde{f}^{\diamond}\in\mathcal{C}^b(D\cup\Gamma^-)$ and $\widetilde{g}^{\diamond}\in\mathcal{C}^b(\Gamma^+\cup\Gamma^0)$. Using the process $(\widetilde{X}^{\diamond,x}_t)_{t\geq0}$ defined in~\eqref{Langevin bis}, the function $\widetilde{u}$ defined in \eqref{v} also writes for $(q,p)\in D$ as follows:
$$\widetilde{u}(t,(q,p))=\mathbb{E}\left[
  \mathbb{1}_{\widetilde{\tau}^{\diamond,(q,-p)}_{\partial}>t}
  \widetilde{f}^{\diamond}(\widetilde{X}^{\diamond,(q,-p)}_t)
  +\mathbb{1}_{\widetilde{\tau}^{\diamond,(q,-p)}_{\partial}\leq t} \widetilde{g}^\diamond \big(\widetilde{X}^{\diamond,(q,-p)}_{\widetilde{\tau}^{\diamond,(q,-p)}_{\partial}}\big) \right] . $$

Let us define $\widetilde{u}^\diamond$ for $t\geq0$,
$(q,p)\in\overline{D}$ by $\widetilde{u}^\diamond(t,(q,p)):=\widetilde{u}(t,(q,-p))$. Then,
$\widetilde{u}^\diamond$ satisfies all the assertions of Theorem~\ref{Solution PDE} for the kinetic Fokker-Planck equation
\begin{equation*}
  \left\{\begin{aligned}
    \partial_t \widetilde{u}^\diamond(t,x) & =\widetilde{\mathcal{L}}^\diamond\widetilde{u}^\diamond(t,x) && t>0, \quad x\in D ,\\
    \widetilde{u}^\diamond(0,x) &=\widetilde{f}^\diamond(x) && x\in D ,\\
    \widetilde{u}^\diamond(t,x) &=\widetilde{g}^\diamond(x)  && t>0, \quad x\in\Gamma^+ .
  \end{aligned}\right. 
\end{equation*}

Therefore, $\widetilde{u}$ as defined in \eqref{v} satisfies all the assertions of Proposition \ref{Solution adjoint PDE}.
\end{proof}

Let us define the transition kernel
$\widetilde{\mathrm{P}}^D_t$ for the absorbed adjoint process $(\widetilde{X}^x_t)_{0 \leq t \leq \widetilde{\tau}^x_\partial}$:
$$\forall t \ge 0, \quad \forall x \in D, \quad \forall A\in\mathcal{B}(D), \qquad \widetilde{\mathrm{P}}^D_t(x,A):=\mathbb{P}(\widetilde{X}^x_t\in A,\widetilde{\tau}^x_\partial>t) .$$
In the next theorem, we show that this kernel admits a transition density $\widetilde{\mathrm{p}}_t^D$ which satisfies a simple reversibility relation with $\mathrm{p}_t^D$.  

\begin{theorem}[Reversibility]\label{duality thm} Let Assumptions \ref{hyp O} and \ref{hyp F1} hold. For all $t>0$, $x,y\in D$, let us define
\begin{equation}\label{duality}
    \widetilde{\mathrm{p}}_t^D(x,y)=\mathrm{e}^{-d \gamma t} \mathrm{p}_t^D(y,x).
\end{equation}
For any $t>0$, $x \in D$ and $A \in \mathcal{B}(D)$,
\begin{equation*}
  \widetilde{\mathrm{P}}^D_t(x,A)=\int_A\widetilde{\mathrm{p}}_t^D(x,y) \mathrm{d}y.
\end{equation*}
\end{theorem}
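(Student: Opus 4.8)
The plan is to establish the reversibility relation~\eqref{duality} by testing against smooth compactly supported functions and exploiting the probabilistic representations of Theorem~\ref{Solution PDE} and Proposition~\ref{Solution adjoint PDE}. Fix $\Phi,\Psi\in\mathcal{C}_c^\infty(D)$ and $t>0$. By Theorem~\ref{thm density intro} (whose first part, the existence and smoothness of $\mathrm{p}^D$, is already available from Propositions~\ref{existence densite} and~\ref{regularité densité}), the function $(s,x)\mapsto\int_D\mathrm{p}^D_s(x,y)\Phi(y)\,\mathrm{d}y=\mathbb{E}[\mathbb{1}_{\tau^x_\partial>s}\Phi(X^x_s)]$ solves the backward equation $\partial_s w=\mathcal{L}_x w$ on $\mathbb{R}_+^*\times D$, and likewise $(s,x)\mapsto\int_D\widetilde{\mathrm{p}}^D_s(x,y)\Psi(y)\,\mathrm{d}y=\mathbb{E}[\mathbb{1}_{\widetilde\tau^x_\partial>s}\Psi(\widetilde X^x_s)]$ solves $\partial_s\widetilde w=\widetilde{\mathcal{L}}_x\widetilde w$ by Proposition~\ref{Solution adjoint PDE}\eqref{it:ibvp:reg 2}. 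First I would form the quantity $\iint_{D\times D}\mathrm{p}^D_t(x,y)\Phi(y)\Psi(x)\,\mathrm{d}x\,\mathrm{d}y$ and try to transform it into $\iint_{D\times D}\mathrm{e}^{-d\gamma t}\mathrm{p}^D_t(y,x)\Phi(y)\Psi(x)\,\mathrm{d}x\,\mathrm{d}y$; since $\Phi,\Psi$ are arbitrary and $\mathrm{p}^D_t$ is continuous on $D\times D$, this identifies $\widetilde{\mathrm{p}}^D_t(x,y):=\mathrm{e}^{-d\gamma t}\mathrm{p}^D_t(y,x)$ as the density of $\widetilde{\mathrm{P}}^D_t(x,\cdot)$, which is exactly the statement.

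The core step is a duality computation. Consider $G(s):=\iint_{D\times D}\mathrm{p}^D_s(x,z)\,\mathrm{p}^D_{t-s}(z',z)\,\cdots$ — more precisely, define for $s\in(0,t)$
\begin{equation*}
  G(s):=\int_D\int_D \mathrm{p}^D_s(x,z)\,\Big(\mathrm{e}^{-d\gamma(t-s)}\mathrm{p}^D_{t-s}(y,z)\Big)\Big|_{\cdots}\,\mathrm{d}z,
\end{equation*}
but cleaner: fix $\Phi,\Psi\in\mathcal{C}_c^\infty(D)$ and set
\begin{equation*}
  G(s):=\int_D \mathbb{E}\big[\mathbb{1}_{\tau^x_\partial>s}\,v_{t-s}(X^x_s)\big]\Psi(x)\,\mathrm{d}x,\qquad v_r(z):=\mathbb{E}\big[\mathbb{1}_{\widetilde\tau^z_\partial>r}\,\Phi(\widetilde X^z_r)\big].
\end{equation*}
Using that $r\mapsto v_r$ solves $\partial_r v=\widetilde{\mathcal{L}}v$ and $s\mapsto\mathbb{E}[\mathbb{1}_{\tau^x_\partial>s}v_{t-s}(X^x_s)]$ is handled by the Kolmogorov equations of Theorem~\ref{thm density intro} and the smoothness of $\mathrm{p}^D$, one differentiates $G$ in $s$; the interior term collapses because $\mathcal{L}$ and $\widetilde{\mathcal{L}}=\mathcal{L}^*-d\gamma$ are formal adjoints up to the constant $-d\gamma$, so $\frac{\mathrm{d}}{\mathrm{d}s}G(s)$ reduces to boundary contributions. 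Here is where Proposition~\ref{comportement densite} is essential: the boundary terms arising in the integration by parts over $D$ involve $\mathrm{p}^D_s(x,\cdot)$ near $\Gamma^-$ and $\widetilde{\mathrm{p}}^D$ near $\Gamma^+$ (equivalently $\mathrm{p}^D$ near $\Gamma^-$ after the $p\mapsto-p$ flip of~\eqref{Langevin bis}), and Proposition~\ref{comportement densite}\eqref{it:comportement-densite:1}--\eqref{it:comportement-densite:2} forces these to vanish. Hence $G$ is constant on $(0,t)$, and letting $s\to0^+$ versus $s\to t^-$ — using Theorem~\ref{Solution PDE}\eqref{it:ibvp:val}, Proposition~\ref{Solution adjoint PDE}\eqref{it:ibvp:val 2} and the continuity up to $t=0$ of these representations on $D$ — gives $\int_D v_t(x)\Psi(x)\,\mathrm{d}x=\int_D w_t(x)\Phi(x)\,\mathrm{d}x$ with $w_t(y)=\mathbb{E}[\mathbb{1}_{\tau^y_\partial>t}\Psi(X^y_t)]$, i.e.\ $\iint\widetilde{\mathrm{p}}^D_t(x,z)\Phi(z)\Psi(x)=\mathrm{e}^{-d\gamma t}\iint\mathrm{p}^D_t(y,z)\Phi(z)\Psi(y)$ after also accounting for the $\mathrm{e}^{-d\gamma t}$ factor coming from $\widetilde{\mathcal{L}}=\mathcal{L}^*-d\gamma$ (the relation $v(t,(q,p))=\mathrm{e}^{-d\gamma t}u(t,(q,-p))$ of Remark~\ref{rk:kFPname} makes this bookkeeping transparent).

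The main obstacle I anticipate is the rigorous justification of the integration by parts on the unbounded domain $D$ and the vanishing of the boundary flux at $\partial D$: one must cut off $D$ by the sets $V_k$ of~\eqref{Vk}, integrate by parts on $V_k$, and control the flux through $\partial V_k$ both as $|p|\to\infty$ (using the Gaussian upper bound of Corollary~\ref{Rq densite estimation} on $\mathrm{p}^D$, which decays superexponentially in $|p|$, together with the gradient bound~\eqref{maj gradient p} transferred to $\mathrm{p}^D$ via the mild formula of Lemma~\ref{lemma mild inequality}) and as $\mathrm{d}_\partial(q)\to0$ (using Proposition~\ref{comportement densite}, and the fact that $\Gamma^0$ is non-attainable so the flux concentrates on $\Gamma^\pm$ where one of the two densities vanishes). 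An alternative, possibly cleaner route that avoids delicate flux estimates: apply the Itô formula directly to $s\mapsto v_{t-s}(X^x_{s\wedge\tau^x_{V_k^c}})$ as in the proof of Assertion~\eqref{it:ibvp:uniq} of Theorem~\ref{Solution PDE}, obtaining $v_t(x)=\mathbb{E}[\mathbb{1}_{\tau^x_{V_k^c}>s}v_{t-s}(X^x_s)+\cdots]$, pass to the limit $k\to\infty$, $s\to t$, and use that $X^x_{\tau^x_\partial}\in\Gamma^+$ a.s.\ while $v_r\equiv 0$ on $\Gamma^+$ (the boundary set for the adjoint problem being $\Gamma^-$, which after the $p\mapsto-p$ symmetry is $\Gamma^+$); this recovers $v_t(x)=\mathbb{E}[\mathbb{1}_{\tau^x_\partial>t}\Phi(\text{appropriate endpoint})]$ and, integrating against $\Psi$ and unfolding the expectations via the densities, yields~\eqref{duality} with the boundary cancellation handled purely probabilistically through Proposition~\ref{prop:tau}. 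I would pursue this second route, as it reuses machinery already developed in Section~\ref{section 2} and sidesteps PDE boundary-trace subtleties.
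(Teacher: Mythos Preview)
Your second route---the one you say you would pursue---has a genuine error. If $v_r(z)=\mathbb{E}[\mathbb{1}_{\widetilde\tau^z_\partial>r}\Phi(\widetilde X^z_r)]$, then $v$ satisfies $\partial_r v=\widetilde{\mathcal{L}}v=(\mathcal{L}^*-d\gamma)v$, while the process $X^x$ has generator $\mathcal{L}$. Applying It\^o to $s\mapsto v_{t-s}(X^x_s)$ therefore produces the drift $(\mathcal{L}-\widetilde{\mathcal{L}})v_{t-s}(X^x_s)$, and $\mathcal{L}-\widetilde{\mathcal{L}}=\mathcal{L}-\mathcal{L}^*+d\gamma$ is a nonzero first-order operator; you do \emph{not} get the representation $v_t(x)=\mathbb{E}[\mathbb{1}_{\tau^x_{V_k^c}>s}v_{t-s}(X^x_s)+\text{(exit term)}]$. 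Moreover, the boundary bookkeeping is also off: $v_r$ vanishes on $\Gamma^-\cup\Gamma^0$ (Proposition~\ref{Solution adjoint PDE}\eqref{it:ibvp:val 2}), not on $\Gamma^+$ where $X^x_{\tau^x_\partial}$ lands, so the exit term would not be killed either.

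The paper takes a route that fixes exactly this mismatch. Instead of pairing the adjoint semigroup with the original process, it defines the candidate function directly,
\[
\widetilde v(t,x):=\mathrm{e}^{-d\gamma t}\int_D \mathrm{p}^D_t(y,x)\,\varphi(y)\,\mathrm{d}y,
\]
and checks that $\widetilde v$ is a classical solution of the \emph{adjoint} Initial-Boundary Value Problem, so that the uniqueness statement in Proposition~\ref{Solution adjoint PDE}\eqref{it:ibvp:uniq 2} (which is It\^o for $\widetilde X$, with zero drift by construction) identifies $\widetilde v$ with $\widetilde u(t,x)=\mathbb{E}[\mathbb{1}_{\widetilde\tau^x_\partial>t}\varphi(\widetilde X^x_t)]$. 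The interior equation $\partial_t\widetilde v=\widetilde{\mathcal{L}}\widetilde v$ comes for free from the \emph{forward} Kolmogorov equation for $\mathrm{p}^D$ in its second variable (Proposition~\ref{regularité densité}), and the boundary condition $\widetilde v\to 0$ on $\Gamma^-$ follows from Proposition~\ref{comportement densite}\eqref{it:comportement-densite:2}. The point you are missing entirely is the verification of the \emph{initial} condition $\widetilde v(t_n,x_n)\to\varphi(x)$ as $(t_n,x_n)\to(0,x)$: since $\mathrm{p}^D_t(y,x)$ is not a probability density in $y$, this does not follow from weak convergence and instead relies on the local expansion $|\mathrm{p}^D_t-\widehat{\mathrm{p}}_t|\le C\sqrt t\,\widehat{\mathrm{p}}^{(\alpha)}_t$ of Lemma~\ref{lemma mild inequality} together with~\eqref{eq8}--\eqref{chgt de var + cv}. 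Your first route (the $G(s)$ duality computation) is formally correct and would in principle work, but the boundary-flux control on $\partial V_k$ requires uniform estimates near $\partial D$ that go well beyond the pointwise limits of Proposition~\ref{comportement densite}; the paper's approach avoids this difficulty altogether by never integrating by parts over $D$.
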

\begin{proof} 
Let $\varphi\in\mathcal{C}_c^\infty(D)$. Let $\widetilde{u} : \mathbb{R}_+\times\overline{D} \to \mathbb{R}$ be  defined by $$\widetilde{u}(t,x):=\mathbb{E}\left[ \mathbb{1}_{\widetilde{\tau}^x_{\partial}>t} \varphi(\widetilde{X}^x_t) \right].$$
By Assertion~\eqref{it:ibvp:cont 2} in Proposition~\ref{Solution adjoint PDE}, this function is continuous on $\mathbb{R}_+\times\overline{D}$. Let us define the  function~$\widetilde{v}$ on $\mathbb{R}_+\times\overline{D}$ by  
\begin{equation*}
\widetilde{v}(t,x)=\begin{cases}
 \displaystyle\mathrm{e}^{-d\gamma t}\int_D \mathrm{p}_t^D(y,x) \varphi(y) \mathrm{d}y & \text{if $t>0$ and $x\in D$,}\\
 \varphi(x) & \text{if $(t,x)\in(\mathbb{R}_+\times\overline{D})\setminus(\mathbb{R}^*_+\times D)$.}
\end{cases}
\end{equation*}

Let us prove that $\widetilde{v}(t,x)=\widetilde{u}(t,x)$ for $t>0$ and $x \in D$, which will ensure \eqref{duality}. In this purpose, we use the uniqueness result of Assertion \eqref{it:ibvp:uniq 2} in Proposition \ref{Solution adjoint PDE}. By Definition~\ref{def:class}, we need to check that:
\begin{enumerate}[label=(\roman*),ref=\roman*]
   \item $(t,x)\mapsto
      \widetilde{v}(t,x)\in\mathcal{C}^{1,2}(\mathbb{R}_+^{*}\times D)$
      and $\widetilde{v}$ satisfies $\partial_t \widetilde{v} = \widetilde{\mathcal{L}}\widetilde{v}$,
    \item $(t,x)\mapsto \widetilde{v}(t,x)
      \in\mathcal{C}((\mathbb{R}_+\times(
      D\cup\Gamma^-))\setminus(\{0\}\times\Gamma^-))$, $\widetilde{v} (0,\cdot)=\varphi$ on $D$ and,
      for $t>0$, $\widetilde{v} (t,\cdot)=0$ on $\Gamma^-$,
   \item $\forall  T>0$, $\sup_{t\in[0,T],x\in D}\vert \widetilde{v}(t,x)\vert<\infty$.
 \end{enumerate}
 
 Since $\varphi$ has a compact support in $D$ and, by Proposition~\ref{regularité densité}, $\mathrm{p}^D$ is $\mathcal{C}^\infty$ on $\mathbb{R}_+^* \times D \times D$ and satisfies $\partial_t \mathrm{p}^D(x,y) = \mathcal{L}_x \mathrm{p}^D(x,y)$, we deduce that $\widetilde{v}$ is $\mathcal{C}^{1,2}$ on $\mathbb{R}_+^{*}\times D$ and satisfies $\partial_t
 \widetilde{v}=(\mathcal{L}^*-d\gamma)\widetilde{v}=\widetilde{\mathcal{L}}\widetilde{v}$.

 Let $t>0$ and $x \in \Gamma^-$. By the definition of $\widetilde{v}$, we have $\widetilde{v}(t,x)=\varphi(x)=0$ since $\varphi$ has a compact support in $D$. On the other hand, if $(t_n,x_n)_{n \geq 1}$ is a sequence of elements of $(\mathbb{R}_+\times(
      D\cup\Gamma^-))\setminus(\{0\}\times\Gamma^-)$ which converge to $(t,x)$, then it follows from Assertion~\eqref{it:comportement-densite:2} in Proposition \ref{comportement densite}, Remark~\ref{Rq
   densite estimation} and the dominated convergence
 theorem that $\widetilde{v}(t_n,x_n)$ converges to $0$.
 
 Similarly, if $x \in D$ then it follows from the definition of $\widetilde{v}$ that $\widetilde{v}(0,x)=\varphi(x)$. Now let $(t_n,x_n)_{n \geq 1}$ be a sequence of elements of $(\mathbb{R}_+\times(D\cup\Gamma^-))\setminus(\{0\}\times\Gamma^-)$ which converge to $(0,x)$, and let us check that $\widetilde{v}(t_n,x_n)$ converges to $\varphi(x)$. We first remark that if $t_n=0$ then $\tilde{v}(t_n,x_n)=\varphi(x_n)$, so that along the subsequence $\{n \geq 1: t_n=0\}$, the claimed convergence is immediate. Therefore, we may now assume that $t_n>0$ for any $n \geq 1$. Let $K \subset D$ be a compact set which contains the support of $\varphi$ and an open ball centered at $x$. There exists $N_1\geq1$ such that for all $n\geq N_1$, $x_n \in K$. Moreover, there exists $N_2\geq1$ such that for $n\geq N_2$, $t_n \in (0,1]$ since $t_n\underset{n\rightarrow\infty}{{\longrightarrow}} 0$. Therefore, by Lemma \ref{lemma mild inequality}, there exist a constant $C>0$ and $\alpha\in(0,1)$ such that for all $y\in K$ and $n\geq N_1 \vee N_2$,
$$\left\vert\mathrm{p}^D_{t_n}(y,x_n)-\widehat{\mathrm{p}}_{t_n}(y,x_n)\right\vert\leq C \sqrt{t_n} \widehat{\mathrm{p}}^{(\alpha)}_{t_n}(y,x_n) . $$
Consequently, since $\varphi=0$ outside $K$,
\begin{align*}
\left\vert\widetilde{v}(t_n,x_n) -\mathrm{e}^{-d \gamma t_n}\int_{D}\widehat{\mathrm{p}}_{t_n}(y,x_n) \varphi(y) \mathrm{d}y\right\vert&\leq C\sqrt{t_n}\mathrm{e}^{-d \gamma t_n} \int_D \widehat{\mathrm{p}}^{(\alpha)}_{t_n}(y,x_n) \varphi(y) \mathrm{d}y\\
&\leq C \Vert\varphi\Vert_\infty \sqrt{t_n}\mathrm{e}^{-d \gamma t_n} \int_{\mathbb{R}^{2d}} \widehat{\mathrm{p}}^{(\alpha)}_{t_n}(y,x_n) \mathrm{d}y .
\end{align*}
By Lemma \ref{prop densite}, one has that  
$$\int_{\mathbb{R}^{2d}} \widehat{\mathrm{p}}^{(\alpha)}_{t_n}(y,x_n)
\mathrm{d}y=\mathrm{e}^{d \gamma
  t_n},\qquad\int_{D}\widehat{\mathrm{p}}_{t_n}(y,x_n) \varphi(y)
\mathrm{d}y \underset{n\rightarrow\infty}{{\longrightarrow}}
\varphi(x) . $$ Therefore,
$\widetilde{v}(t_n,x_n)\underset{n\rightarrow\infty}{{\longrightarrow}}
\varphi(x)$. 

We finally fix $T>0$ and show that $\sup_{t\in[0,T],x\in D}\vert
\widetilde{v}(t,x)\vert<\infty$. Again, Corollary~\ref{Rq densite estimation} ensures the existence of $C'>0$ such that for all $t\in (0,T]$, $x\in D$,
\begin{align*}
\left\vert\widetilde{v}(t,x)\right\vert&=\mathrm{e}^{-d \gamma t}\left\vert\int_{D}\mathrm{p}^D_t(y,x) \varphi(y) \mathrm{d}y\right\vert\\
&\leq C' \Vert\varphi\Vert_\infty\mathrm{e}^{-d \gamma t} \int_{\mathbb{R}^{2d}} \widehat{\mathrm{p}}^{(\alpha)}_{t}(y,x) \mathrm{d}y\\
&\leq C' \Vert\varphi\Vert_\infty,
\end{align*}
using  Lemma \ref{prop densite}, which concludes the proof.
\end{proof}

\subsection{Completion of the proof of Theorem~\ref{thm density intro}}

Let us now conclude this section with results on the boundary
continuity of the density $\mathrm{p}^D_t(x,y)$ for a fixed $t>0$. The
proof relies on Theorem~\ref{duality thm}, and this result will
complete the proof of Theorem~\ref{thm density intro}. It also completes the results of Proposition \ref{comportement densite} since we consider the continuity with respect to the three variables $(t,x,y)$ at the same time and extend the limit with respect to $y$ going to a point in $\Gamma^0$. 

\begin{theorem}[Boundary continuity]\label{boundary property density}
Under Assumptions \ref{hyp O} and \ref{hyp F1}, the transition density $\mathrm{p}^D$ can be extended to a function in $\mathcal{C}(\mathbb{R}_+^*\times\overline{D}\times\overline{D})$ which satisfies for all $t>0$: 
\begin{enumerate}[label=(\roman*),ref=\roman*]
    \item $\mathrm{p}^D_t(x,y)=0$ if $x\in \Gamma^+\cup\Gamma^0$ or if $y\in\Gamma^-\cup\Gamma^0$, 
    \item if Assumption~\ref{hyp:conn} holds, $\mathrm{p}^D_t(x,y)>0$ for all $x\notin \Gamma^+\cup\Gamma^0$ and $y\notin\Gamma^-\cup\Gamma^0$.
\end{enumerate} 
\end{theorem}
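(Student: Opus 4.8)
The plan is to build the continuous extension of $\mathrm{p}^D$ by a Markov ``push into the interior'', to read off its boundary values from Proposition~\ref{comportement densite} and the reversibility identity~\eqref{duality}, and to obtain positivity from the Harnack inequality of Theorem~\ref{Harnack}. Fix $0<\epsilon<t$. The Markov property of the absorbed process at time~$\epsilon$, the fact that $X^x_\epsilon\in D$ almost surely on $\{\tau^x_\partial>\epsilon\}$ (Corollary~\ref{rq densité}), and Fubini's theorem give, for all $x\in\overline D$ and $A\in\mathcal{B}(D)$,
\[
  \mathrm{P}^D_t(x,A)=\mathbb{E}\!\left[\mathbb{1}_{\tau^x_\partial>\epsilon}\,\mathrm{P}^D_{t-\epsilon}(X^x_\epsilon,A)\right]=\int_A\mathbb{E}\!\left[\mathbb{1}_{\tau^x_\partial>\epsilon}\,\mathrm{p}^D_{t-\epsilon}(X^x_\epsilon,y)\right]\mathrm{d}y.
\]
Hence $\bar{\mathrm{p}}^D_t(x,y):=\mathbb{E}[\mathbb{1}_{\tau^x_\partial>\epsilon}\,\mathrm{p}^D_{t-\epsilon}(X^x_\epsilon,y)]$ is a density of $\mathrm{P}^D_t(x,\cdot)$; it does not depend on $\epsilon$ by the Markov property, and it coincides with $\mathrm{p}^D_t(x,y)$ for $x\in D$ by Proposition~\ref{regularité densité}. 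I would take this as the definition of the extension for $x\in\overline D$, $y\in D$, and prove it continuous on $\mathbb{R}_+^*\times\overline D\times D$: given $(t_n,x_n,y_n)\to(t_0,x_0,y_0)$ with $y_0\in D$, fix $\epsilon<t_0/2$; then $X^{x_n}_\epsilon\to X^{x_0}_\epsilon$ a.s.\ (Lemma~\ref{couplage Lemma}), $\mathbb{1}_{\tau^{x_n}_\partial>\epsilon}\to\mathbb{1}_{\tau^{x_0}_\partial>\epsilon}$ a.s.\ (Lemma~\ref{cv indicatrices lemma}, licit since $\epsilon>0$, with $\{\tau^{x_0}_\partial=\epsilon\}$ null by Corollary~\ref{rq densité}), and on $\{\tau^{x_0}_\partial>\epsilon\}$ the limit $X^{x_0}_\epsilon$ lies in the open set $D$, so $\mathrm{p}^D_{t_n-\epsilon}(X^{x_n}_\epsilon,y_n)\to\mathrm{p}^D_{t_0-\epsilon}(X^{x_0}_\epsilon,y_0)$ by the interior continuity of Proposition~\ref{regularité densité}; the Gaussian bound of Corollary~\ref{Rq densite estimation} supplies a constant dominating majorant over the relevant time range, so dominated convergence concludes. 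When $x_0\in\Gamma^+\cup\Gamma^0$ one has $\tau^{x_0}_\partial=0$ a.s.\ by Proposition~\ref{prop:tau}, so this extension vanishes there.

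To handle the $y$-variable I would use the reversibility relation $\mathrm{p}^D_t(x,y)=\mathrm{e}^{d\gamma t}\widetilde{\mathrm{p}}^D_t(y,x)$ of Theorem~\ref{duality thm}. After the reflection $p\mapsto-p$, the adjoint process~\eqref{Langevin adjoint} is a Langevin process with $-\gamma$ in place of $\gamma$ (see~\eqref{Langevin bis}), so everything above applies verbatim to $\widetilde{\mathrm{p}}^D$; in particular $\widetilde{\mathrm{p}}^D$ extends continuously to $\mathbb{R}_+^*\times\overline D\times D$ and vanishes when its first argument belongs to the adjoint ``exit $\cup$ singular'' set, which is $\Gamma^-\cup\Gamma^0$ (the adjoint drift points inward precisely when $p\cdot n(q)<0$). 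Transporting through~\eqref{duality}, $\mathrm{p}^D_t(x,y)$ extends continuously to $x\in D$, $y\in\overline D$, and vanishes for $y\in\Gamma^-\cup\Gamma^0$. To cover the ``corner'' $x\in\Gamma^-$, $y\in\Gamma^+$ simultaneously, I would re-run the $\epsilon$-push in the $x$-variable, now feeding in $\mathrm{p}^D_{t-\epsilon}(z,\cdot)$, which for $z\in D$ is already known continuous up to $y\in\Gamma^+$; since on $\{\tau^{x_0}_\partial>\epsilon\}$ the point $X^{x_n}_\epsilon$ eventually sits in a fixed compact subset of $D$, the same dominated-convergence argument goes through, and the uniform-in-$y$ convergence of Proposition~\ref{comportement densite} with Remark~\ref{rk:comportement-densite:cvunif} (and their adjoint counterparts) guarantees that the various pieces agree, all along $\partial D$, with the value $0$ on $\{x\in\Gamma^+\cup\Gamma^0\}\cup\{y\in\Gamma^-\cup\Gamma^0\}$. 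This produces an element of $\mathcal{C}(\mathbb{R}_+^*\times\overline D\times\overline D)$ and proves Assertion~(i).

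For the positivity Assertion~(ii), I would argue as in Lemma~\ref{lemma irreducibilty}. For fixed $y_0\in D$, $(t,x)\mapsto\mathrm{p}^D_t(x,y_0)$ is a nonnegative distributional solution of $\partial_t u=\mathcal{L}u$ on $\mathbb{R}_+^*\times D$ (Proposition~\ref{regularité densité}), and it is not identically zero for small $t$ since $\int_D\mathrm{p}^D_t(x,y_0)\,\mathrm{d}x=\mathrm{e}^{d\gamma t}\,\mathbb{P}(\widetilde\tau^{y_0}_\partial>t)>0$ by~\eqref{duality} and Proposition~\ref{prop:tau}; applying the Harnack inequality of Theorem~\ref{Harnack} on an exhaustion of $D$ by compacts and letting the initial time tend to $0$ forces $\mathrm{p}^D_t(x,y_0)>0$ for all $t>0$, $x\in D$. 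Running the same argument for the adjoint and using~\eqref{duality} gives $\mathrm{p}^D_t>0$ on $D\times D$; then, feeding a positive $\mathrm{p}^D_{t-\epsilon}(\cdot,y)$ into the $\epsilon$-push and using $\mathbb{P}(\tau^x_\partial>\epsilon)>0$ for $x\in\Gamma^-$ (resp.\ its adjoint version for $y\in\Gamma^+$), positivity propagates up to $x\notin\Gamma^+\cup\Gamma^0$ and $y\notin\Gamma^-\cup\Gamma^0$. The main obstacle is the first step: arranging the $\epsilon$-push so that it simultaneously defines a genuine continuous extension, reproduces the correct vanishing on each of the three boundary strata, and is compatible with the reversibility identity one uses to swap the roles of $x$ and $y$ and to reach the corner — the interior regularity of Proposition~\ref{regularité densité}, the exit-event continuity of Lemma~\ref{cv indicatrices lemma}, and the Gaussian bound of Corollary~\ref{Rq densite estimation} all enter here in an essential way.
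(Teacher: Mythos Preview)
Your approach is correct and follows essentially the same route as the paper: both exploit a Chapman--Kolmogorov identity (your ``$\epsilon$-push'' is the paper's~\eqref{chapman kolmogorov 2}) to push one endpoint into the interior, use the reversibility relation~\eqref{duality} to swap the roles of $x$ and $y$, and then combine the two to reach the corner $x\in\Gamma^-$, $y\in\Gamma^+$; positivity is in both cases deduced from the Harnack inequality applied to one variable, and then propagated to the boundary via the same Chapman--Kolmogorov representation.

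The main organizational difference is that you keep the Chapman--Kolmogorov identity in its \emph{expectation} form $\mathbb{E}[\mathbb{1}_{\tau^x_\partial>\epsilon}\,\mathrm{p}^D_{t-\epsilon}(X^x_\epsilon,y)]$ throughout and use the uniform-in-$(x,y)$ Gaussian bound from Corollary~\ref{Rq densite estimation} (which, for $t-\epsilon$ in a compact interval of $\mathbb{R}_+^*$, is a genuine constant) as the dominating majorant. The paper instead rewrites the corner case as a spatial integral $\int_D\mathrm{p}^D_{t_n/3}(x_n,z)\,\mathrm{p}^D_{2t_n/3}(z,y_n)\,\mathrm{d}z$, pushes each factor into the interior separately via~\eqref{def h_1}--\eqref{def h_2}, and then has to manufacture an \emph{integrable} $z$-majorant, which is the content of the auxiliary Lemma~\ref{lem: borne densite indep de n}. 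Your formulation bypasses that lemma, which is a genuine simplification. For positivity the paper argues by contradiction on the forward equation in $y$, while you argue directly on the backward equation in $x$ using $\int_D\mathrm{p}^D_t(x,y_0)\,\mathrm{d}x=\mathrm{e}^{d\gamma t}\,\mathbb{P}(\widetilde\tau^{y_0}_\partial>t)>0$; these are dual versions of the same idea.

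One small correction: the assertion that ``on $\{\tau^{x_0}_\partial>\epsilon\}$ the point $X^{x_n}_\epsilon$ eventually sits in a fixed compact subset of $D$'' is not literally true --- the compact would depend on $\omega$ --- but this is harmless: pointwise a.s.\ convergence of $X^{x_n}_\epsilon$ to $X^{x_0}_\epsilon\in D$, continuity of the extended $\mathrm{p}^D$ on $\mathbb{R}_+^*\times D\times\overline D$ from the previous step, and the constant Gaussian bound are already enough for dominated convergence.
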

\begin{proof}
\medskip \noindent \textbf{Step 1}. We first study the behavior of $\mathrm{p}^D_t(x,y)$ when $x$ and $y$ approach $\partial D$, and show that the function $\mathrm{p}^D$ can be continuously extended on $\mathbb{R}_+^* \times \overline{D} \times \overline{D}$. Let $t_0>0$, $x_0\in \overline{D}$ and
$y_0\in\overline{D}$. Let $(t_n,x_n,y_n)_{n\geq1}$ be a sequence of
points in $\mathbb{R}_+^*\times D\times D$ converging towards
$(t_0,x_0,y_0)$. 

In the next three cases, we show that $\mathrm{p}^D_{t_n}(x_n,y_n)$ has a limit which does not depend on the sequence $(t_n,x_n,y_n)_{n \geq 1}$. If $x_0, y_0 \in D$ then by Proposition~\ref{regularité densité}, this limit coincides with $\mathrm{p}^D_{t_0}(x_0,y_0)$. Otherwise, we denote this limit by $\mathrm{p}^D_{t_0}(x_0,y_0)$, which thereby defines a continuous function $\mathrm{p}^D$ on $\mathbb{R}_+^* \times \overline{D} \times \overline{D}$.

\medskip\noindent \textbf{Case 1:} Assume that $x_0\in\Gamma^+\cup\Gamma^0$. By Assertion~\eqref{it:comportement-densite:1} in Proposition~\ref{comportement densite} and Remark~\ref{rk:comportement-densite:cvunif}, we immediately get $$\mathrm{p}^D_{t_n}(x_n,y_n)\underset{n\rightarrow\infty}{{\longrightarrow}} 0, $$
and therefore set $\mathrm{p}^D_{t_0}(x_0,y_0)=0$.

\medskip\noindent \textbf{Case 2:} Assume that $y_0=(q_0,p_0)\in\Gamma^-\cup\Gamma^0$. For any $(q,p) \in \mathbb{R}^{2d}$, let us define $\diamond (q,p) := (q,-p)$. From the definition of the process $(\widetilde{X}^{\diamond,x}_t)_{t \geq 0} = (\diamond \widetilde{X}^{\diamond x}_t)_{t \geq 0}$, we deduce that the absorbed version of the latter possesses a transition density $\widetilde{\mathrm{p}}^{\diamond, D}_t$ which satisfies
\begin{equation*}
  \widetilde{\mathrm{p}}^{\diamond, D}_t(x,y) = \widetilde{\mathrm{p}}^D_t(\diamond x, \diamond y).
\end{equation*}
As a consequence, using Theorem~\ref{duality thm} we rewrite
\begin{equation*}
  \mathrm{p}^D_{t_n}(x_n,y_n) = \mathrm{e}^{d \gamma t_n} \widetilde{\mathrm{p}}^D_{t_n}(y_n,x_n) = \mathrm{e}^{d \gamma t_n} \widetilde{\mathrm{p}}^{\diamond,D}_{t_n}(\diamond y_n,\diamond x_n).
\end{equation*}
On the one hand, $\diamond y_n \to \diamond y_0 \in \Gamma^+ \cup \Gamma^0$. On the other hand, the process $(\widetilde{X}^{\diamond,x}_t)_{t \geq 0}$ has infinitesimal generator $\widetilde{\mathcal{L}}^\diamond = \mathcal{L}_{F,-\gamma,\sigma}$, and therefore Assertion~\eqref{it:comportement-densite:1} in Proposition~\ref{comportement densite} and Remark~\ref{rk:comportement-densite:cvunif} apply to show that 
\begin{equation*}
  \sup_{x \in D} \widetilde{\mathrm{p}}^{\diamond, D}_{t_n}(\diamond y_n,x) \underset{n\rightarrow\infty}{{\longrightarrow}} 0,
\end{equation*}
from which we deduce that $\mathrm{p}^D_{t_n}(x_n,y_n)\underset{n\rightarrow\infty}{{\longrightarrow}} 0$ and set $\mathrm{p}^D_{t_0}(x_0,y_0)=0$.

\medskip\noindent\textbf{Case 3:} Assume that $x_0\in D\cup\Gamma^-$
and $y_0=(q_0,p_0)\in D\cup\Gamma^+$. 
For $h>0$, by the Markov property,
$$\forall 0 \leq s < t, \quad \forall x,y\in D,\qquad\frac{\mathrm{P}^D_{t}(x, D \cap \mathrm{B}(y,h))}{\vert\mathrm{B}(y,h)\vert} = \mathbb{E}\left[\mathbb{1}_{\tau^{x}_\partial > s}\frac{\mathrm{P}^D_{t-s}(X^{x}_{s}, D \cap \mathrm{B}(y,h))}{\vert\mathrm{B}(y,h)\vert} \right].$$
Using the Gaussian upper-bound from Corollary \ref{Rq densite estimation} and the dominated convergence theorem when $h\rightarrow0$, we obtain from the equality above the following Chapman-Kolmogorov relation: 
\begin{equation}\label{chapman kolmogorov 2}
    \forall 0 \leq s < t, \quad \forall x,y\in D, \qquad\mathrm{p}^D_t(x,y)=\mathbb{E}\left[\mathbb{1}_{\tau^{x}_\partial>s}  \mathrm{p}^D_{t-s}(X_s^x,y)\right]. 
\end{equation}
By~\eqref{chapman kolmogorov 2} applied with $s=t_n/3$, one has that 
\begin{equation}\label{chapman kolmogorov integrale}
    \mathrm{p}^D_{t_n}(x_n,y_n)=\int_D\mathrm{p}^D_{t_n/3}(x_n,z)\mathrm{p}^D_{2t_n/3}(z,y_n)\mathrm{d}z.
\end{equation}
Let us prove now the convergence of both integrands in \eqref{chapman kolmogorov integrale}. Using Theorem \ref{duality thm} and~\eqref{chapman kolmogorov 2} again, one has for all $z\in D$,
\begin{align*} 
\mathrm{p}^D_{2t_n/3}(z,y_n)&=\mathrm{e}^{2d\gamma t_n/3}\Tilde{\mathrm{p}}^D_{2t_n/3}(y_n,z)\\
&=\mathrm{e}^{2d\gamma t_n/3}\mathbb{E}\left[\mathbb{1}_{\Tilde{\tau}^{y_n}_\partial>t_n/3}  \Tilde{\mathrm{p}}^D_{t_n/3}(\Tilde{X}_{t_n/3}^{y_n},z)\right]\\ 
&=\mathrm{e}^{2d\gamma t_n/3}\mathbb{E}\left[\mathbb{1}_{\widetilde{\tau}^{\diamond, \diamond y_n}_\partial>t_n/3}  \widetilde{\mathrm{p}}^{\diamond,D}_{t_n/3}(\widetilde{X}^{\diamond, \diamond y_n}_{t_n/3},\diamond z) \right].
\end{align*}
By construction, $\widetilde{\mathrm{p}}^{\diamond,D}$ is continuous on $\mathbb{R}_+^*\times D\times D$ and is the transition density of the process $(\widetilde{X}^{\diamond,x}_t)_{t \geq 0}$ with infinitesimal generator $\widetilde{\mathcal{L}}^\diamond = \mathcal{L}_{F,-\gamma,\sigma}$. Therefore, Lemma \ref{cv indicatrices lemma} and Corollary \ref{Rq densite estimation} apply here and ensure, using the dominated convergence theorem, that for $z\in D$,
\begin{equation}\label{def h_1}
    \mathrm{p}^D_{2t_n/3}(z,y_n)\underset{n\rightarrow\infty}{\longrightarrow}h^{(1)}_t(z):=\mathrm{e}^{2d\gamma t_0/3}\mathbb{E}\left[\mathbb{1}_{\widetilde{\tau}^{\diamond, \diamond y_0}_\partial>t_0/3}  \widetilde{\mathrm{p}}^{\diamond,D}_{t_0/3}(\widetilde{X}^{\diamond, \diamond y_0}_{t_0/3},\diamond z)\right].
\end{equation} 
Furthermore, considering now the first integrand in \eqref{chapman kolmogorov integrale}, for all $z\in D$,
\begin{equation}\label{def h_2}
    \mathrm{p}^D_{t_n/3}(x_n,z)=\mathbb{E}\left[\mathbb{1}_{\tau^{x_n}_\partial>t_n/6}\mathrm{p}^D_{t_n/6}(X_{t_n/6}^{x_n},z)\right]\underset{n\rightarrow\infty}{\longrightarrow}h^{(2)}_t(z):=\mathbb{E}\left[\mathbb{1}_{\tau^{x_0}_\partial>t_0/6}\mathrm{p}^D_{t_0/6}(X_{t_0/6}^{x_0},z)\right],
\end{equation}
using the continuity of $\mathrm{p}^D$ and Lemma \ref{cv indicatrices lemma}. It remains to prove that the integral \eqref{chapman kolmogorov integrale} converges to the integral $\int_Dh^{(1)}_t(z)h^{(2)}_t(z)\mathrm{d}z$.  

Since the term $\mathrm{p}^D_{2t_n/3}(z,y_n)$ is bounded by a constant depending only on $t_n$ by Corollary \ref{Rq densite estimation} and since $t_n\underset{n\rightarrow\infty}{\longrightarrow}t_0>0$ the associated constant can easily be obtained independent of $n$. In order to use the dominated convergence theorem to the product of both integrands in  \eqref{chapman kolmogorov integrale}, it remains to obtain a bound on $\mathrm{p}^D_{t_n/3}(x_n,z)$ in $\mathrm{L}^1(D)$ which is independent of $n$. This follows from Lemma \ref{lem: borne densite indep de n} below since $(t_n,x_n)$ converges to $(t_0,x_0)\in\mathbb{R}_+^*\times \overline{D}$, therefore the sequence $(t_n,x_n)_{n\geq1}$ stays in some compact set $K$ of $\mathbb{R}_+^*\times \overline{D}$. 
We thus obtain a limit independent on the sequence $(t_n,x_y,y_n)_{n \geq 1}$. By Proposition~\ref{regularité densité}, if $x_0,y_0 \in D$, it coincides with $\mathrm{p}^D_{t_0}(x_0,y_0)$, otherwise we denote it by $\mathrm{p}^D_{t_0}(x_0,y_0)$.

\medskip \noindent\textbf{Step 2}. Let us now work under Assumption~\ref{hyp:conn} and first prove that for all $t>0$ and $x,y\in D$,
\begin{equation}\label{positivite densite}
    \mathrm{p}^D_t(x,y)>0 . 
\end{equation}
Let us argue by contradiction. Assume there exists $t_0>0$ and
$x_0,y_0\in D$ such that $\mathrm{p}^D_{t_0}(x_0,y_0)=0$. Let us
introduce $$\Psi:(t,y)\in\mathbb{R}_+^*\times D\mapsto \mathrm{e}^{-d \gamma t} \mathrm{p}^D_t(x_0,\diamond y) .$$
It follows from the forward Kolmogorov equation satisfied by
$\mathrm{p}^D_t(x_0,\cdot)$ (see Proposition \ref{regularité densité}), that on $\mathbb{R}_+^*\times D$,
$$\partial_t\Psi =\widetilde{\mathcal{L}}^\diamond\Psi.$$ 
Using the Harnack inequality stated in Theorem~\ref{Harnack}, we have
that for any $K\subset D$ compact set and $t\in(0,t_0)$ there exists
$C>0$ such that
$$\sup_{y\in K}\Psi(t,y)\leq C \inf_{y\in K}\Psi(t_0,y) .$$
In particular, it yields that for all $y\in D$, $t\in(0,t_0)$,
$\Psi(t,y)=0$. As a result, for all $t\in(0,t_0)$ and $y\in D$, $ \mathrm{p}^D_t(x_0,y)=0.$
Integrating over $y\in D$, it follows that for all $t\in(0,t_0)$,
$$\mathbb{P}(\tau^{x_0}_\partial>t)=0 .$$

Besides, it follows from Lemma \ref{cv indicatrices lemma} and
Proposition \ref{prop:tau} that
$\mathbb{P}(\tau^{x_0}_\partial>t)\underset{t\rightarrow
  0}{{\longrightarrow}}1$. This is contradiction with the equality
above, and this thus concludes the proof of~\eqref{positivite densite}.

\medskip \noindent\textbf{Step 3}. It remains to extend the result of \textbf{Step~2} to show that $\mathrm{p}^D_t(x,y)>0$ for $t>0$, $x \in D \cup \Gamma^-$ and $y \in D \cup \Gamma^+$. In this purpose, we first show that, for any $x_0 \in D \cup \Gamma^-$, for all $t>0$,
\begin{equation}\label{proba gamma -}
    \mathbb{P}(\tau^{x_0}_\partial>t)>0 . 
  \end{equation}
  Indeed, using again Lemma \ref{cv indicatrices lemma} and Proposition \ref{prop:tau}, there exists necessarily $s\in (0,t)$ such that $\mathbb{P}(\tau^{x_0}_\partial>s)>0$. As a result, the Markov property at time $s$ ensures that
$$\mathbb{P}(\tau^{x_0}_\partial>t)=\mathbb{E}\left[\mathbb{1}_{\tau^{x_0}_\partial>s}  \mathbb{P}(\tau^z_\partial>t-s)\vert_{z=X^{x_0}_{s}} \right] >0 $$
by \eqref{positivite densite}, which yields~\eqref{proba gamma -}. 

We now recall from \textbf{Case~3} in \textbf{Step~1} that for $t>0$, $x \in D \cup \Gamma^-$ and $y \in D \cup \Gamma^+$, 
\begin{equation*}
  \mathrm{p}^D_t(x,y) = \int_Dh^{(1)}_t(z)h^{(2)}_t(z)\mathrm{d}z,
\end{equation*}
where $h_1,h_2$ are defined in \eqref{def h_1} and \eqref{def h_2}.
By \textbf{Step~2} applied to $\widetilde{\mathrm{p}}^{\diamond, D}_{t/3}$, for all $z \in D$ we have $\widetilde{\mathrm{p}}^{\diamond, D}_{t/3}(\widetilde{X}^{\diamond,\diamond y}_{t/3}, \diamond z)>0$ on the event $\{\widetilde{\tau}^{\diamond,\diamond y}_\partial > t/3\}$. And by~\eqref{proba gamma -} applied to $(\widetilde{X}^{\diamond,\diamond y}_t)_{t \geq 0}$, this event has positive probability, so that $h^{(1)}_t(z)>0$. Similarly, one has $h^{(2)}_t(z)>0$. Therefore, we conclude that $\mathrm{p}^D_t(x,y)>0$.
\end{proof}

Let us now state and prove Lemma \ref{lem: borne densite indep de n}.

\begin{lemma}[Transition density domination]\label{lem: borne densite indep de n}
Let Assumptions~\ref{hyp O} and~\ref{hyp F1} hold. Let $U$ be a compact set of $\mathbb{R}_+^*\times D$. There exist a constant $C>0$ and a function $h\in\mathrm{L}^1(D)$ such that for all $(t,x)\in U$ and $y\in D$,
$$\mathrm{p}^D_{t}(x,y)\leq C h(y).$$
\end{lemma}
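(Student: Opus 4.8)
\textbf{Proof plan for Lemma~\ref{lem: borne densite indep de n}.} The plan is to use the Gaussian upper bound on $\mathrm{p}^D_t$ from Corollary~\ref{Rq densite estimation} and the explicit form~\eqref{densite p^alpha} of $\widehat{\mathrm{p}}^{(\alpha)}_t$ to control $\mathrm{p}^D_t(x,y)$ by a (constant multiple of a) fixed integrable function of $y$, uniformly over the compact set $U$. First I would fix $\alpha \in (0,1)$ and use Corollary~\ref{Rq densite estimation}, which gives a constant $C_1>0$ depending only on $\alpha$, on $\sup\{t:(t,x)\in U\text{ for some }x\}=:T_U$, and on the coefficients, such that for all $(t,x)\in U$ and $y\in D$,
\begin{equation*}
  \mathrm{p}^D_t(x,y) \leq C_1 \widehat{\mathrm{p}}^{(\alpha)}_t(x,y).
\end{equation*}
(The series in~\eqref{borne densité} is summed exactly as at the end of the proof of Theorem~\ref{borne densite thm}, using $t\leq T_U$ to bound it by a convergent series; this is where $\|F\|_{\mathrm{L}^\infty(D)}$ rather than $\|F\|_\infty$ enters, via Corollary~\ref{Rq densite estimation}.) So it suffices to bound $\widehat{\mathrm{p}}^{(\alpha)}_t(x,y)$ for $(t,x)\in U$ by $C_2 h(y)$ with $h\in\mathrm{L}^1(D)$.

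Next I would make the Gaussian bound explicit. Since $U$ is compact in $\mathbb{R}_+^* \times D$, there exist $0<t_-\leq t_+<\infty$ and $R>0$ such that $(t,x)\in U$ implies $t\in[t_-,t_+]$ and $|x|\leq R$. Write $x=(q,p)$, $y=(q',p')$. From~\eqref{densite p^alpha}, \eqref{expr densite} and the covariance decomposition~\eqref{ecriture arg densite} of Lemma~\ref{arg densite} (applied with $\sigma$ replaced by $\sigma/\sqrt\alpha$), $\widehat{\mathrm{p}}^{(\alpha)}_t((q,p),(q',p'))$ equals a prefactor $c(t):= \alpha^d\bigl((2\pi)^{2d}\frac{\sigma^4 t^4}{12\alpha^2}\phi(\gamma t)^d\bigr)^{-1/2}$ times $\exp\bigl(-\tfrac12 \delta x(t)\cdot C_\alpha^{-1}(t)\,\delta x(t)\bigr)$, where (after scaling) the quadratic form is a sum of two nonnegative squares. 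On $[t_-,t_+]$ the function $t\mapsto c(t)$ is bounded by a constant $c_+$ because $\phi$ is positive and continuous (hence bounded below away from $0$ on the compact $[-|\gamma|t_+,|\gamma|t_+]$), and the mean vector $(m_q^x(t),m_p^x(t))$ of~\eqref{loi gaussienne} stays in a bounded set, say in $\mathrm{B}(0,R')$, for $|x|\leq R$, $t\in[t_-,t_+]$, since $\Phi_1$ is continuous. Retaining only the second square in~\eqref{ecriture arg densite}, one gets a bound of the form $\widehat{\mathrm{p}}^{(\alpha)}_t(x,y)\leq c_+ \exp\bigl(-\kappa\,|\Phi_1(\gamma t)(q'-m_q^x(t)) - \tfrac{t}{2}\Phi_3(\gamma t)(p'-m_p^x(t))|^2\bigr)$ for some $\kappa>0$; this does not decay in $p'$ along the direction where the bracket vanishes, so the bound is not yet integrable in $y$ — but retaining instead the \emph{first} square $\tfrac{\alpha}{\sigma^2 t}|\gamma(q'-m_q^x(t))+(p'-m_p^x(t))|^2$ is also insufficient on its own. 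Using \emph{both} squares, i.e. keeping the full quadratic form, and noting that $C_\alpha(t)$ is a genuine (nondegenerate) covariance matrix whose smallest eigenvalue is bounded below by some $\lambda_->0$ uniformly on $[t_-,t_+]$ (again by continuity and positivity of $\phi$), yields
\begin{equation*}
  \widehat{\mathrm{p}}^{(\alpha)}_t(x,y) \leq c_+ \exp\!\left(-\frac{1}{2\lambda_+}\,\bigl|\,y - (m_q^x(t),m_p^x(t))\,\bigr|^2\right) \leq c_+ \exp\!\left(-\frac{1}{2\lambda_+}\bigl(|y|-R'\bigr)_+^2\right) =: C_2\, h(y),
\end{equation*}
where $\lambda_+$ is a uniform upper bound for the eigenvalues of $C_\alpha(t)$ on $[t_-,t_+]$, and I used $|y-(m_q^x(t),m_p^x(t))|\geq (|y|-R')_+$. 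The function $h(y)=\exp(-\tfrac{1}{2\lambda_+}(|y|-R')_+^2)$ is continuous and bounded, and since $D=\mathcal{O}\times\mathbb{R}^d$ with $\mathcal{O}$ bounded, $\int_D h(y)\,\mathrm{d}y = |\mathcal{O}|\int_{\mathbb{R}^d}\exp(-\tfrac{1}{2\lambda_+}(|q'|... )$ — more carefully, $h$ is integrable over $\mathbb{R}^{2d}$ and a fortiori over $D\subset\mathbb{R}^{2d}$ only if it decays in all $2d$ directions; since the full quadratic form controls all of $|y-(m_q^x(t),m_p^x(t))|^2$, it does. Hence $h\in\mathrm{L}^1(D)$, and with $C:=C_1 C_2$ the claim follows.

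The only delicate point is the \textbf{uniform nondegeneracy of the covariance $C_\alpha(t)$ over $t\in[t_-,t_+]$}: one needs the smallest eigenvalue bounded away from $0$ and the largest eigenvalue bounded, uniformly. This is where the explicit formula $\det C(t) = (\tfrac{\sigma^4 t^4}{12}\phi(\gamma t))^d$ and the positivity and continuity of $\phi$ on compacts are used; combined with boundedness of the entries $c_{qq},c_{qp},c_{pp}$ on $[t_-,t_+]$ (continuity of $\Phi_1,\Phi_2$), this gives uniform two-sided eigenvalue bounds by a compactness argument. Once that is in hand, the bound on the prefactor $c(t)$ and on the drift-shifted mean is routine, and the integrability of the resulting Gaussian over $D\subset\mathbb{R}^{2d}$ is immediate. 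I expect no other obstacle; the role of $U\subset\mathbb{R}_+^*\times D$ being compact (rather than merely bounded) is essential precisely to keep $t$ away from $0$, where the prefactor blows up and the Gaussian concentrates.
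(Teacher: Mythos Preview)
Your argument is correct and takes a more elementary route than the paper. Both start from Corollary~\ref{Rq densite estimation} to bound $\mathrm{p}^D_t(x,y)$ by $C_1\,\widehat{\mathrm{p}}^{(\alpha)}_t(x,y)$, but then diverge. The paper applies the Harnack inequality (Theorem~\ref{Harnack} via Remark~\ref{rmk Harnack global}) to the nonnegative solution $(t,x)\mapsto\widehat{\mathrm{p}}^{(\alpha)}_t(x,y)$ of $\partial_t=\mathcal{L}_{0,\gamma,\sigma/\sqrt{\alpha}}$ on the compact $U$, obtaining $\widehat{\mathrm{p}}^{(\alpha)}_t(x,y)\leq C_2\,\widehat{\mathrm{p}}^{(\alpha)}_{t+1}(x_0,y)$ for a fixed $x_0$; it then keeps only the first square of Lemma~\ref{arg densite}, exploiting the algebraic fact that $\Pi_1\delta x(t)=\gamma(q'-q_0)+(p'-p_0)$ is actually \emph{independent of $t$}, so that only a scalar $t$-dependent coefficient remains and is trivially bounded on the compact time interval. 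Integrability of the resulting one-directional Gaussian $\exp(-\beta|\gamma(q'-q_0)+p'-p_0|^2)$ on $D=\mathcal{O}\times\mathbb{R}^d$ uses the boundedness of $\mathcal{O}$. Your approach instead bounds the full quadratic form via uniform two-sided eigenvalue control of $C_\alpha(t)$ on $[t_-,t_+]$, yielding a genuine $2d$-dimensional Gaussian in $y$ that is integrable on all of $\mathbb{R}^{2d}$. This avoids invoking Harnack at the price of a short compactness computation; conversely, the paper's route pays the Harnack cost to get a neat $t$- and $x$-free exponent.

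One side remark: your assertion that the first square $|\gamma\delta q+\delta p|^2$ alone is ``also insufficient on its own'' is not quite right---since $q'$ ranges over the bounded set $\mathcal{O}$, for each $q'$ this square gives Gaussian decay in $p'$, hence an $\mathrm{L}^1(D)$ bound (this is exactly what the paper uses). But since you proceed with the full quadratic form anyway, this does not affect your argument.
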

\begin{proof}
Let $U$ be a compact set of $\mathbb{R}_+^*\times D$ and let $(t_0,x_0)$  be a fixed element of $U$. Let $(t,x)\in U$, by Corollary \ref{Rq densite estimation}, for any $\alpha\in(0,1)$, there exists $C_1>0$ such that 
$$\mathrm{p}^D_{t}(x,y)\leq C_1\widehat{\mathrm{p}}^{(\alpha)}_{t}(x,y).$$
Besides, by Proposition \ref{prop:kolmo-langevin} and the equality \eqref{densite p^alpha}, for all $y\in D$, the function $(t,x)\in\mathbb{R}_+^*\times\mathbb{R}^{2d}\mapsto\widehat{\mathrm{p}}^{(\alpha)}_{t}(x,y)$ satisfies $$\partial_t \widehat{\mathrm{p}}^{(\alpha)}=\mathcal{L}_{0,\gamma,\sigma/\sqrt{\alpha}}\widehat{\mathrm{p}}^{(\alpha)}.$$ As a result, the Harnack inequality in Theorem \ref{Harnack} along with Remark \ref{rmk Harnack global}, applied on the compact set $U$, ensure the existence of $C_2>0$ only depending on $U$ such that 
$$\widehat{\mathrm{p}}^{(\alpha)}_{t}(x,y)\leq C_2\widehat{\mathrm{p}}^{(\alpha)}_{t+1}(x_0,y).$$ 
Finally, one has  
\begin{equation}\label{maj p^D harnack}
    \mathrm{p}^D_{t}(x,y)\leq C_1C_2\widehat{\mathrm{p}}^{(\alpha)}_{t+1}(x_0,y),
\end{equation}
where we eliminated the dependence with respect to the variable $x$. It remains to eliminate the dependence with respect to $t$ on the time variable. Consider now the expression of $\widehat{\mathrm{p}}^{(\alpha)}$ following from \eqref{densite p^alpha} and \eqref{expr densite}. Using Lemma \ref{arg densite}, especially the first term in the right-hand side of the equality \eqref{ecriture arg densite}, one has for $x_0=(q_0,p_0)$ and $y=(q',p')\in D$, 
$$\widehat{\mathrm{p}}^{(\alpha)}_{t+1}((q_0,p_0),(q',p'))\leq\frac{\sqrt{\alpha^{2d}}}{ \sqrt{(2 \pi)^{2d} \left(\frac{\sigma^4 (t+1)^4}{12} \phi(\gamma (t+1))\right)^d}} \mathrm{e}^{-\frac{\alpha}{\sigma^2 (t+1)} \left\vert\gamma (q'-q_0)+p'-p_0\right\vert^2}.$$
Hence, since $(t,x)\in U$, $t$ is bounded from above and below by positive constants. Therefore, reinjecting into \eqref{maj p^D harnack} there exist $C_3>0$ and $\beta>0$, which do not depend on $(t,x)$, such that for any $y=(q',p') \in D$,
$$\mathrm{p}^D_{t}(x,y)\leq C_3\mathrm{e}^{-\beta \left\vert\gamma (q'-q_0)+p'-p_0\right\vert^2},$$
which is an integrable function of $y$.
\end{proof}

To complete the proof of Theorem~\ref{thm density intro}, it remains to check that, for any $t>0$, the extension of~$\mathrm{p}^D_t$ constructed in Theorem \ref{boundary property density} remains the density of the kernel $\mathrm{P}_t^D(x,\cdot)$. This is already the case for $x \in D$ by Proposition~\ref{existence densite}, and we prove this fact for $x \in \partial D$ in the next proposition.
 
\begin{proposition}[Identification of the transition density on $\partial D$]\label{prop: extension transition kernel}
Under Assumptions~\ref{hyp O} and~\ref{hyp F1}, for all $t>0$, $x\in \partial D$ and $A\in\mathcal{B}(D)$,
$$\mathrm{P}_t^D(x,A)=\int_A \mathrm{p}_t^D(x,y)\mathrm{d}y.$$
\end{proposition}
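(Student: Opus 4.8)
The plan is to split $\partial D$ according to the partition $\partial D = (\Gamma^+\cup\Gamma^0)\cup\Gamma^-$ and treat the two pieces separately. As usual, by Remark~\ref{solution dans D} we may and do assume that $F$ satisfies Assumption~\ref{hyp F}, since this changes neither $\mathrm{P}^D_t(x,\cdot)$ nor the extended density $\mathrm{p}^D_t$. If $x\in\Gamma^+\cup\Gamma^0$, then by Proposition~\ref{prop:tau} one has $\tau^x_\partial=0$ almost surely, hence $\mathrm{P}^D_t(x,A)=\mathbb{P}(X^x_t\in A,\tau^x_\partial>t)=0$ for every $t>0$ and $A\in\mathcal{B}(D)$; on the other hand, Theorem~\ref{boundary property density}~(i) gives $\mathrm{p}^D_t(x,\cdot)\equiv 0$, so the identity holds with both sides equal to $0$.

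It remains to consider $x\in\Gamma^-$, for which $\tau^x_\partial>0$ almost surely. Fix $t>0$, $A\in\mathcal{B}(D)$ and $s\in(0,t/2]$. By Proposition~\ref{prop:tau}, on the event $\{\tau^x_\partial>s\}$ one has $X^x_s\in D\cup\Gamma^-$, and $\mathbb{P}(X^x_s\in\Gamma^-)\leq\mathbb{P}(q^x_s\in\partial\mathcal{O})=0$ by Corollary~\ref{rq densité}, so $X^x_s\in D$ almost surely on that event. Applying the Markov property of the absorbed Langevin process at time $s$ (as in the derivation of~\eqref{chapman kolmogorov 2}), then Proposition~\ref{existence densite} together with Tonelli's theorem, I would obtain
\begin{equation*}
  \mathrm{P}^D_t(x,A)=\mathbb{E}\left[\mathbb{1}_{\tau^x_\partial>s}\,\mathrm{P}^D_{t-s}\!\left(X^x_s,A\right)\right]=\mathbb{E}\left[\mathbb{1}_{\tau^x_\partial>s}\int_A\mathrm{p}^D_{t-s}(X^x_s,y)\,\mathrm{d}y\right]=\int_A g_s(y)\,\mathrm{d}y,\qquad g_s(y):=\mathbb{E}\left[\mathbb{1}_{\tau^x_\partial>s}\,\mathrm{p}^D_{t-s}(X^x_s,y)\right].
\end{equation*}

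It then remains to let $s\downarrow 0$ in this identity. On the one hand, the integrand $\mathbb{1}_{\tau^x_\partial>s}\,\mathrm{p}^D_{t-s}(X^x_s,y)$ converges almost surely to $\mathrm{p}^D_t(x,y)$: indeed $X^x_s\to x$ by path continuity, $\mathbb{1}_{\tau^x_\partial>s}\to 1$ since $\tau^x_\partial>0$ almost surely, and $\mathrm{p}^D$ is continuous on $\mathbb{R}_+^*\times\overline{D}\times\overline{D}$ by Theorem~\ref{boundary property density}. On the other hand, $\mathrm{p}^D_{t-s}\leq\mathrm{p}_{t-s}$, and combining the Gaussian upper bound of Theorem~\ref{borne densite thm} with the fact that $\widehat{\mathrm{p}}^{(\alpha)}_\tau$ is a Gaussian density whose covariance determinant stays bounded below on $\tau\in[t/2,t]$ (since $\phi$ is continuous and positive), one gets a deterministic constant $C_t$ with $\mathbb{1}_{\tau^x_\partial>s}\,\mathrm{p}^D_{t-s}(X^x_s,y)\leq C_t$ for all $s\in(0,t/2]$ and $y\in D$; moreover $g_s(y)\leq\mathbb{E}[\mathrm{p}_{t-s}(X^x_s,y)]=\mathrm{p}_t(x,y)$ by the Markov property of the Langevin process and Proposition~\ref{prop:kolmo-langevin}, and $y\mapsto\mathrm{p}_t(x,y)\in\mathrm{L}^1(D)$. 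Dominated convergence then gives $g_s(y)\to\mathrm{p}^D_t(x,y)$ for every $y\in D$ (domination by the constant $C_t$) and hence $\int_A g_s(y)\,\mathrm{d}y\to\int_A\mathrm{p}^D_t(x,y)\,\mathrm{d}y$ (domination by $\mathrm{p}_t(x,\cdot)$), while the left-hand side of the displayed identity is the constant $\mathrm{P}^D_t(x,A)$; this yields the claim. The only real work lies in securing these two $s$-uniform bounds, which is routine once Theorem~\ref{borne densite thm} and the boundary continuity of $\mathrm{p}^D$ from Theorem~\ref{boundary property density} are in hand; there is no genuinely new difficulty here, this proposition being essentially a consistency check between Proposition~\ref{existence densite}, Corollary~\ref{Rq densite estimation} and Theorem~\ref{boundary property density}.
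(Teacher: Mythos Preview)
Your proof is correct, and it differs from the paper's argument in a useful way. The paper approximates $x_0\in\partial D$ by a sequence $(t_n,x_n)$ in $\mathbb{R}_+^*\times D$ and identifies the two limits of $\mathrm{P}^D_{t_n}(x_n,A)$: one via Lemmas~\ref{couplage Lemma}--\ref{cv indicatrices lemma} (giving $\mathrm{P}^D_{t_0}(x_0,A)$) and the other via dominated convergence on $\int_A\mathrm{p}^D_{t_n}(x_n,y)\,\mathrm{d}y$, with domination supplied by Lemma~\ref{lem: borne densite indep de n}. This treats all boundary points uniformly but, as written, only yields the identity for \emph{open} $A$ (the extension to $\mathcal{B}(D)$ is then a routine $\pi$--$\lambda$ argument left implicit).

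Your approach instead splits the boundary: the case $x\in\Gamma^+\cup\Gamma^0$ is disposed of trivially, and for $x\in\Gamma^-$ you use the Markov property at a small time $s$ and let $s\downarrow 0$. This avoids Lemma~\ref{lem: borne densite indep de n} altogether, replacing it by the much simpler domination $g_s(y)\leq\mathrm{p}_t(x,y)\in\mathrm{L}^1(D)$ coming from Chapman--Kolmogorov for the unabsorbed process. It also works directly for every $A\in\mathcal{B}(D)$. One minor comment: on $\{\tau^x_\partial>s\}$ you have $X^x_s\in D$ directly from the definition of $\tau^x_\partial$, so the detour through $D\cup\Gamma^-$ and Corollary~\ref{rq densité} is unnecessary (though not wrong).
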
 
\begin{proof}
Let $t_0>0$, $x_0=(q_0,p_0)\in\partial D$ and $(t_n,x_n)_{n \geq 1}$ be a sequence of points in $\mathbb{R}_+^*\times D$ converging to $(t_0,x_0)$. Let us show that for any open set $A\subset D$, $\mathrm{P}_{t_n}^D(x_n,A)$ admits two limits when $n\rightarrow\infty$ which are $\mathrm{P}_{t_0}^D(x_0,A)$ and $\int_A\mathrm{p}^D_{t_0}(x_0,y)\mathrm{d}y$. This yields the desired equality $\mathrm{P}_{t_0}^D(x_0,A)=\int_A\mathrm{p}^D_{t_0}(x_0,y)\mathrm{d}y$.

Let $A$ be an open subset of $D$. The limit $\mathrm{P}_{t_n}^D(x_n,A)\underset{n\rightarrow\infty}{\longrightarrow}\mathrm{P}_{t_0}^D(x_0,A)$ is a straightforward consequence of Lemmas \ref{couplage Lemma} and \ref{cv indicatrices lemma}, which ensure that 
$$\mathbb{P}(X^{x_n}_{t_n}\in A,\tau^{x_n}_\partial>t_n)\underset{n\rightarrow\infty}{\longrightarrow}\mathbb{P}(X^{x_0}_{t_0}\in A,\tau^{x_0}_\partial>t_0),$$
using the dominated convergence theorem since $\mathbb{P}(X^{x_0}_{t_0}\in \partial A,\tau^{x_0}_\partial>t_0)=0$. 

Let us now prove that $\mathrm{P}_{t_n}^D(x_n,A)=\int_A\mathrm{p}_{t_n}^D(x_n,y)\mathrm{d}y\underset{n\rightarrow\infty}{\longrightarrow}\int_A\mathrm{p}^D_{t_0}(x_0,y)\mathrm{d}y$. In order to do that we apply the dominated convergence theorem on the integrand of $\int_A\mathrm{p}_{t_n}^D(x_n,y)\mathrm{d}y$ which requires an upper bound of $\mathrm{p}_{t_n}^D(x_n,y)$ independent of $n$ for $n$ large enough, and integrable on $D$. Such an upper bound is provided in Lemma \ref{lem: borne densite indep de n} if for all $n\geq1$, $(t_n,x_n)$ is in a compact of $\mathbb{R}_+^*\times\overline{D}$ which is the case since  $t_n\underset{n\rightarrow\infty}{\longrightarrow}t_0>0$ and $x_n\underset{n\rightarrow\infty}{\longrightarrow}x_0\in \overline{D}$. 
\end{proof}

\begin{remark}[Strong Feller property]\label{rk:scheffe}
  Let $(t_n,x_n)_{n\geq1}$ be a sequence in $\mathbb{R}_+^* \times \overline{D}$ converging to $(t,x)\in\mathbb{R}_+^* \times \overline{D}$. By the previous construction, it follows that for any $y\in D$, $\mathrm{p}^D_{t_n}(x_n,y) \underset{n\rightarrow\infty}{\longrightarrow} \mathrm{p}^D_t(x,y)$. Futhermore, the proof of Proposition \ref{prop: extension transition kernel} shows that
  $$\int_D\mathrm{p}^D_{t_n}(x_n,y)\mathrm{d}y\underset{n\rightarrow\infty}{\longrightarrow}\int_D\mathrm{p}^D_{t}(x,y)\mathrm{d}y.$$ 
  These two convergences guarantee by Scheffé's lemma that
  $$\int_D\left\vert\mathrm{p}^D_{t_n}(x_n,y)-\mathrm{p}^D_{t}(x,y)\right\vert\mathrm{d}y\underset{n\rightarrow\infty}{\longrightarrow}0.$$ 
  
  As a consequence, the semigroup $(P^D_t)_{t \geq 0}$ defined on $\mathrm{L}^\infty(\overline{D})$ by 
  \begin{equation*}
      \forall t \geq 0, \quad \forall x \in \overline{D}, \qquad P^D_tf(x) := \mathbb{E}\left[f(X^x_t)\mathbb{1}_{\tau^x_\partial > t}\right]
  \end{equation*}
  satisfies the strong Feller property that $P^D_t f \in \mathcal{C}^b(\overline{D})$ for any $t>0$ and $f \in \mathrm{L}^\infty(\overline{D})$.
\end{remark}

\noindent \textbf{Data availability statement:} Data sharing not applicable to this article as no datasets were generated or analysed during the current study.

\medskip

\noindent \textbf{Acknowledgments:} M. Ramil is supported by the
Région Ile-de- France through a PhD fellowship of the Domaine
d'Intérêt Majeur (DIM) Math Innov.  This work also benefited from the
support of the projects ANR EFI (ANR-17-CE40-0030) and ANR QuAMProcs
(ANR-19-CE40-0010) from the French National Research Agency. Finally,
T. Lelièvre is partially funded by the European Research Council (ERC) under the European Union’s Horizon 2020 research and innovation programme (grant agreement No 810367), project EMC2.

\appendix 
\section{Proof of Lemma~\ref{cste_Harnack}}\label{pf:cste_Harnack}

\begin{proof}
Let $K\subset D$ be a compact set and $\delta_K>0$ be defined
accordingly. Let $k := \sup_{(q,p) \in K} |p|$, $K':=\{(q,p)\in D
  : \vert p\vert\leq k, d_\partial(q)\geq\delta_K \}$ and $M_K
:= k+1$. By Proposition~\ref{prop:sphere} and
Assumption~\ref{hyp:conn}, $K'$ is a connected compact subset of $D$,
and $K \subset K'$.
%
%

Let $T>0$ and $\epsilon:=\delta_K/2$. Let $C>1$ be the constant from Lemma \ref{chemin}. The compact set $K'$ can be covered by $N\geq1$ closed balls  $\overline{\mathrm{B}}(z_1,\frac{ \epsilon}{8C}),\ldots,\overline{\mathrm{B}}(z_N,\frac{ \epsilon}{8C})$ included in $D$ with $z_i=(q_i,p_i)\in K'$ for all $i\in\llbracket1,N\rrbracket$. We can take $N$ large enough so that $\Delta:=\frac{T}{N+1}\in(0,\frac{ \epsilon}{2M_KC}\land1)$. We can now build a graph $\mathcal{G}$ with $N$ vertices corresponding to the points $(z_i)_{1\leq i\leq N}$, and for every $i,j\in\llbracket1,N\rrbracket$, we link $z_i$ to $z_j$ if $\vert z_j-z_i\vert\leq \frac{ \epsilon}{4 C}$. Since the set $K'$ is connected, then so is the graph $\mathcal{G}$.

Furthermore, for all $i,j\in\llbracket1,N\rrbracket$ which are adjacent in $\mathcal{G}$, by Lemma \ref{chemin}, there exists a path $\phi_{i,j}\in\mathcal{C}^{2}([0,\Delta],\mathbb{R}^d)$ such that
\begin{enumerate}[label=(\roman*),ref=\roman*]
    \item $\left(\phi_{i,j}(0),\dot{\phi}_{i,j}(0)\right)=z_i$ and $\left(\phi_{i,j}(\Delta),\dot{\phi }_{i,j}(\Delta)\right)=z_j$
    \item $\sup_{t\in[0,\Delta]}\vert\phi_{i,j}(t)-q_i\vert\leq C(
      \vert z_j-z_i\vert(1+\Delta) +\Delta \vert p_i\vert)\leq
      C\left(2 \frac{  \epsilon}{4C}  + k \frac{ \epsilon}{2M_KC}  \right)< \epsilon$ 
    \item $\sup_{t\in[0,\Delta]}\left(\left\vert\dot{\phi}_{i,j}\right\vert+\left\vert\ddot{\phi}_{i,j}\right\vert\right)(t)\leq 
    C\left(\frac{1}{\Delta}+\frac{1}{\Delta^2}\right)(\vert z_j-z_i\vert(1+\Delta)+\Delta \vert p_i\vert) \leq \epsilon\left(\frac{1}{\Delta}+\frac{1}{\Delta^2}\right)\leq \frac{2  \epsilon}{\Delta^2}$. 
\end{enumerate} 
Since $z_i \in K'$, the second condition ensures that, for all $t \in [0,\Delta]$, $\phi_{i,j}(t)$ remains at a distance from $\partial\mathcal{O}$ strictly larger than $\delta_K-\epsilon\geq\delta_K/2$. 

Now let $x,y\in K'$. By the previous cover, there exist
$i_0,i_N\in\llbracket1,N\rrbracket$ such that $$\vert
x-z_{i_0}\vert\leq \frac{ \epsilon}{8 C},\quad\vert y-z_{i_N}\vert\leq
\frac{ \epsilon}{8 C}.$$  Using Lemma~\ref{chemin} again, we construct
$\psi_0, \psi_N : [0,\Delta] \to \mathcal{O}$ respectively joining $x$
to $z_{i_0}$ and $z_{i_N}$ to $y$ and such that
$\sup_{t\in[0,\Delta]}(\vert\dot{\psi}_0\vert+\vert\ddot{\psi}_0\vert)(t)
\leq \frac{\delta_K}{\Delta^2}$ and
$\sup_{t\in[0,\Delta]}(\vert\dot{\psi}_N\vert+\vert\ddot{\psi}_N\vert)(t)\leq
\frac{\delta_K}{\Delta^2}$. The connectedness of the graph
$\mathcal{G}$ ensures the existence of
$i_1,\dots,i_{N-1}\in\llbracket1,N\rrbracket$ such that for all $j\in\llbracket1,N\rrbracket$, $\vert z_{i_j}-z_{i_{j-1}}\vert\leq\frac{ \epsilon}{4C}$. If the path obtained on the graph $\mathcal{G}$ is smaller than $N$ then we can complete with loops around the same point. It is important here that this path in the graph have exactly $N-1$ vertices, because in the end we aim at constructing a trajectory $\phi$ by piecing together trajectories $\phi_{i_j,i_{j+1}}$, $j=0, \ldots, N$, of length $\Delta = T/(N+1)$ and we want the final trajectory $\phi$ to have exact length $T$. Let us now define the path function $\phi$ on $[0,T]$ as follows:
\begin{equation*}
\phi(t)=
\begin{cases}
    \psi_0(t) &\text{if } t\in [0,\Delta],\\
    \phi_{i_j,i_{j+1}}(t-j\Delta) &\text{if }t\in[j\Delta,(j+1)\Delta],\\
    \psi_N(t-T+\Delta)&\text{if } t\in [T-\Delta,T],
\end{cases}
\end{equation*} then it is easy to see that
$\phi\in\mathcal{H}_{T,x,y,\frac{\delta_K}{\Delta^2},\delta_K/2}$. Since
$K'$ contains the compact set $K$ this concludes the proof for the
compact set $K$. 
\end{proof} 
\section{Proof of Lemmas~\ref{arg densite},~\ref{prop densite},~\ref{lemma:majoration densite} and Proposition~\ref{existence densite}}\label{pf:arg densite}
\begin{proof}[Proof of Lemma~\ref{arg densite}]
On the one hand, from~\eqref{coeff cov} and~\eqref{def moyenne cov},
easy computations show that 
\begin{equation}\label{eq7}
    \delta x \cdot C^{-1}(t)\delta x=\frac{1}{\frac{\sigma^2 t^3}{12} \phi(\gamma t)}\left[\Phi_1(2\gamma t)\vert\delta q\vert^2-\Phi_1(\gamma t)^2\delta q\cdot t\delta p+\frac{1}{3} \Phi_2(\gamma t)\vert t\delta p\vert^2\right]. 
\end{equation}
On the other hand,
\begin{align*}
    &\frac{1}{\sigma^2 t} \left\vert\Pi_1 \delta x\right\vert^2+\frac{12}{\sigma^2 t^3 \phi(\gamma t)} \left\vert\Pi_2(t) \delta x\right\vert^2\\
    &=\frac{1}{\frac{\sigma^2 t^3}{12} \phi(\gamma t)}\left[\left(\frac{(\gamma t)^2}{12}\phi(\gamma t)+\Phi_1(\gamma t)^2\right) \vert\delta q\vert^2+\left(\frac{\gamma t}{6}\phi(\gamma t)-\Phi_1(\gamma t) \Phi_3(\gamma t)\right)\delta q\cdot t\delta p\right.\\
    & \qquad\qquad\qquad \left. + \left(\frac{1}{12}\phi(\gamma t)+\frac{1}{4} \Phi_3(\gamma t)^2\right) \vert t\delta p\vert^2\right],
\end{align*} 
so that the claimed expression follows from the identities
\begin{equation*}
  \frac{\rho^2}{12}\phi(\rho)+\Phi_1(\rho)^2 = \Phi_1(2\rho), \qquad \frac{\rho}{6}\phi(\rho) - \Phi_1(\rho)\Phi_3(\rho) = - \Phi_1(\rho)^2, \qquad \frac{1}{12}\phi(\rho) + \frac{1}{4}\Phi_3(\rho)^2 = \frac{1}{3}\Phi_2(\rho),
\end{equation*}
for all $\rho \in \mathbb{R}$.
\end{proof}

We now move on to the proof of Lemma~\ref{prop densite}.
\begin{proof}[Proof of Lemma~\ref{prop densite}]
The equality~\eqref{eq:ppalpha} easily follows from the formulas
defining $\widehat{\mathrm{p}}_t$ and $\widehat{\mathrm{p}}^{(\alpha)}_{t}$.
Moreover, since $\widehat{\mathrm{p}}^{(\alpha)}_t(x,y)$ is the transition
density of the process
$(\alpha^{-1/2}\widehat{X}^{\sqrt{\alpha}x}_t)_{t\geq0}$,
the Chapman-Kolmogorov relation \eqref{chp kolmogorov} follows from
the Markov property. Let us now prove \eqref{eq8}.  For $\alpha\in(0,1]$, 
$$\int_{\mathbb{R}^{2d}} \widehat{\mathrm{p}}^{(\alpha)}_{t}(x,y) \mathrm{d}x=\int_{\mathbb{R}^{2d}}\frac{\sqrt{\alpha^{2d}}}{ \sqrt{(2 \pi)^{2d} \mathrm{det}(C(t))}} \mathrm{e}^{-\frac{\alpha}{2}\delta x(t) \cdot C^{-1}(t)\delta x(t)} \mathrm{d}x,$$ where $\delta x(t)$ and $C^{-1}(t)$ are defined in \eqref{def moyenne cov}. Let us define the matrix $M(t)$ as follows: $$M(t):=\begin{pmatrix}
I_d & t \Phi_1(\gamma t) I_d \\
0_d & \mathrm{e}^{-\gamma t} I_d 
\end{pmatrix}$$ so that for $x=(q,p),y=(q',p')$ one has $M(t)x=\begin{pmatrix}
m^x_q(t)\\
m^x_p(t)
\end{pmatrix}$. Therefore, $\delta x(t)=y-M(t) x$. As a result, making the following change of variables,
$$x \in \mathbb{R}^{2d} \mapsto z:=y-M(t) x$$ one has $\mathrm{d} z
=\mathrm{e}^{-d \gamma t} \mathrm{d}x$ and one obtains
$$ \int_{\mathbb{R}^{2d}} \widehat{\mathrm{p}}^{(\alpha)}_{t}(x,y) \mathrm{d}x=\mathrm{e}^{d \gamma t} \int_{\mathbb{R}^{2d}}\frac{\sqrt{\alpha^{2d}}}{ \sqrt{(2 \pi)^{2d} \mathrm{det}(C(t))}} \mathrm{e}^{-\frac{\alpha}{2}z\cdot C^{-1}(t) z} \mathrm{d}z=\mathrm{e}^{d \gamma t} .$$

Let us now prove the first convergence in \eqref{chgt de var + cv}. Using the same change of variables as above, one obtains that 
\begin{equation}\label{eq9}
   \int_{\mathbb{R}^{2d}} \widehat{\mathrm{p}}^{(\alpha)}_{t}(x,y) \varphi(t,x) \mathrm{d}x=\mathrm{e}^{d \gamma t} \mathbb{E}\left[\varphi\left(t,M^{-1}(t)(y-Z(t))\right)\right], 
\end{equation} 
where $Z(t)\sim\mathcal{N}_{2d}(0,\frac{C(t)}{\alpha})\overset{\mathcal{L}}{\underset{t\rightarrow0}{\longrightarrow}}0$, since $C(t)\underset{t\rightarrow0}{\longrightarrow}0_{2d}$. Therefore, since $M(t)\underset{t\rightarrow0}{\longrightarrow}I_{2d}$, one has by Slutsky's theorem that $$\left(t,M^{-1}(t)(y-Z(t))\right)\overset{\mathcal{L}}{\underset{(t,y)\rightarrow(0,y_0)}{\longrightarrow}}(0,y_0)$$ which yields the first convergence in \eqref{chgt de var + cv} using \eqref{eq9} and the dominated convergence theorem. The second convergence follows easily with a similar change of variables.

Let us finally prove \eqref{maj gradient p}. By \eqref{expr densite}, \eqref{def moyenne cov} along with Lemma \ref{arg densite} we have 
\begin{align*}
\frac{\nabla_p\widehat{\mathrm{p}}_t((q,p),(q',p'))}{\widehat{\mathrm{p}}_t((q,p),(q',p'))}
&=-\frac{1}{2} \left(\frac{2}{\sigma^2 t}\nabla_p\left(\Pi_1 \delta x(t)\right) \Pi_1 \delta x(t)
+\frac{24}{\sigma^2 t^3\phi(\gamma t)}\nabla_p\left(\Pi_2(t) \delta x(t)\right)\Pi_2(t) \delta x(t)\right) .\numberthis\label{derivee log densite}
\end{align*}
Since $\nabla_p\Pi_1 \delta x(t)=-(\gamma t \Phi_1(\gamma
  t)+\mathrm{e}^{-\gamma t})I_d=-I_d$, the first term in the
right-hand side of the equality~\eqref{derivee log densite} multiplied
by $\widehat{\mathrm{p}}_t$ satisfies
(using~\eqref{eq:ppalpha}, and again Lemma \ref{arg densite} in the first inequality)
\begin{align*}
&\left\vert-\frac{1}{2} \frac{2}{\sigma^2 t}\nabla_p\left(\Pi_1 \delta x(t)\right) \Pi_1 \delta x(t)\right\vert \widehat{\mathrm{p}}_t((q,p),(q',p'))\\ 
&=\frac{1}{\sigma^2 t} \left\vert \Pi_1 \delta x(t) \right\vert \widehat{\mathrm{p}}_t((q,p),(q',p'))\\ 
&=\frac{1}{\sqrt{\alpha^{2d}} \sigma^2 t} \left\vert \Pi_1 \delta x(t) \right\vert \mathrm{e}^{-\frac{1-\alpha}{2}\delta x(t)\cdot C^{-1}(t)\delta x(t)} \widehat{\mathrm{p}}^{(\alpha)}_t((q,p),(q',p'))\\
&\leq\frac{1}{\sqrt{\alpha^{2d}} \sigma^2 t} \left\vert \Pi_1 \delta x(t) \right\vert \mathrm{e}^{-\frac{1-\alpha}{2 \sigma^2 t} \left\vert\Pi_1 \delta x(t)\right\vert^2} \widehat{\mathrm{p}}^{(\alpha)}_t((q,p),(q',p'))\\
&\leq\frac{\sup_{\theta\geq0}\theta \mathrm{e}^{-\frac{1-\alpha}{2} \theta^2}}{\sqrt{\alpha^{2d}} \sigma \sqrt{t}} \widehat{\mathrm{p}}^{(\alpha)}_t((q,p),(q',p')) .\numberthis\label{maj densite 1}
\end{align*} 
Let us now estimate the second term in the right-hand side of the
equality~\eqref{derivee log densite} multiplied
by $\widehat{\mathrm{p}}_t$. Since $\nabla_p\Pi_2(t) \delta
x(t)=(-t \Phi_1(\gamma t)^2+\frac{t}{2} \Phi_3(\gamma
  t)\mathrm{e}^{-\gamma t})I_d$, we have (using the same
reasoning as above)
\begin{align*}
&\left\vert-\frac{1}{2} \frac{24}{\sigma^2 t^3\phi(\gamma t)} \nabla_p\left(\Pi_2(t) \delta x(t)\right)\Pi_2(t) \delta x(t)\right\vert \widehat{\mathrm{p}}_t((q,p),(q',p'))\\   
&=\frac{12 t}{\sigma^2 t^3\phi(\gamma t)} \left\vert-\Phi_1(\gamma t)^2+\frac{1}{2} \Phi_3(\gamma t)\mathrm{e}^{-\gamma t} \right\vert \left\vert \Pi_2(t) \delta x(t) \right\vert \widehat{\mathrm{p}}_t((q,p),(q',p'))\\ 
&\leq\frac{\sqrt{12} t}{\sqrt{\alpha^{2d} \sigma^2 t^3\phi(\gamma t)}} \left\vert-\Phi_1(\gamma t)^2+\frac{1}{2} \Phi_3(\gamma t)\mathrm{e}^{-\gamma t} \right\vert  \frac{\sqrt{12}\left\vert \Pi_2(t) \delta x(t) \right\vert}{\sqrt{\sigma^2 t^3\phi(\gamma t)}} \mathrm{e}^{-\frac{12(1-\alpha)}{2\sigma^2 t^3 \phi(\gamma t)}\left\vert\Pi_2(t) \delta x(t)\right\vert^2} \widehat{\mathrm{p}}^{(\alpha)}_t((q,p),(q',p'))\\
&\leq\frac{\sqrt{12}}{\sqrt{\alpha^{2d}} \sigma\sqrt{t}} \frac{\left\vert-\Phi_1(\gamma t)^2+\frac{1}{2} \Phi_3(\gamma t)\mathrm{e}^{-\gamma t} \right\vert}{\sqrt{\phi(\gamma t)}}  \left(\sup_{\theta\geq0}\theta \mathrm{e}^{-\frac{1-\alpha}{2} \theta^2}\right) \widehat{\mathrm{p}}^{(\alpha)}_t((q,p),(q',p')) .\numberthis\label{maj densite 2}
\end{align*}
Let us now study the behavior of $\frac{\vert-\Phi_1(\rho)^2+\frac{1}{2} \Phi_3(\rho)\mathrm{e}^{-\rho} \vert}{\sqrt{\phi(\rho)}}$ for $\rho\in\mathbb{R}$. We have that
\begin{equation*}
\frac{\left\vert-\Phi_1(\rho)^2+\frac{1}{2} \Phi_3(\rho)\mathrm{e}^{-\rho} \right\vert}{\sqrt{\phi(\rho)}}\left\{
\begin{aligned}
    &\underset{\rho\rightarrow\infty}{\sim}\frac{1}{\sqrt{6} \rho},\\
    &\underset{\rho\rightarrow0}{\longrightarrow}\frac{1}{2},\\
    &\underset{\rho\rightarrow-\infty}{\sim}\frac{\sqrt{\vert\rho\vert}}{\sqrt{6}} .
\end{aligned}
\right. 
\end{equation*}
Therefore there exists a universal constant $c>0$ such that for all $\rho\in\mathbb{R}$
$$\frac{\left\vert-\Phi_1(\rho)^2+\frac{1}{2}
    \Phi_3(\rho)\mathrm{e}^{-\rho} \right\vert}{\sqrt{\phi(\rho)}}\leq
c(1+\sqrt{\rho_-}), $$ where $\rho_-$ is the negative part of
$\rho$. As a result, it follows from \eqref{derivee log densite},
\eqref{maj densite 1} and \eqref{maj densite 2} that there exists a
constant $c_\alpha>0$ depending only on $\alpha \in (0,1)$ such that
for all $t>0$ and $x,y \in \mathbb{R}^{2d}$,
$$\left\vert\nabla_p\widehat{\mathrm{p}}_t((q,p),(q',p'))\right\vert\leq\frac{c_\alpha}{\sigma \sqrt{t}} (1+\sqrt{t \gamma_-}) \widehat{\mathrm{p}}^{(\alpha)}_t((q,p),(q',p')), $$ which concludes the proof of \eqref{maj gradient p}.
\end{proof}  

Consider now the proof of Proposition~\ref{existence densite}.
\begin{proof}[Proof of Proposition~\ref{existence densite}]
 For all $t>0$ and $x \in D$, it follows from~\eqref{eq:PtDPt} that the measure $\mathrm{P}^D_t(x,\cdot)$ is absolutely continuous with respect to the measure $\mathrm{P}_t(x,\cdot)$. Since, by Proposition~\ref{prop:kolmo-langevin}, the latter measure is absolutely continuous with respect to the Lebesgue measure, by the Radon-Nikodym theorem, we deduce that $\mathrm{P}^D_t(x,\cdot)$ possesses a density $\mathrm{q}^D_t(x,\cdot)$ with respect to the Lebesgue measure on $D$. We now study the joint measurability of the mapping $(t,x,y) \mapsto \mathrm{q}_t^D(x,y)$; more precisely, we construct a measurable function $(t,x,y) \mapsto \mathrm{p}_t^D(x,y)$ such that, for all $t>0$ and $x \in D$, $\mathrm{p}^D_t(x,y) = \mathrm{q}_t^D(x,y)$, $\mathrm{d}y$-almost everywhere on $D$.

For all $r>0$, it follows from Proposition~\ref{prop:kolmo-langevin} and Lemmas~\ref{couplage Lemma} and~\ref{cv indicatrices lemma} that the function $$\varphi_r : (t,x,y)\in\mathbb{R}_+^*\times D\times D\mapsto\frac{\mathrm{P}_t^D(x,\mathrm{B}(y,r)\cap D)}{\vert \mathrm{B}(y,r)\vert} = \frac{\mathbb{P}(\vert X^x_t-y\vert<r,\tau^x_\partial>t)}{\vert \mathrm{B}(y,r)\vert}$$
is continuous.  Let $(r_q)_{q\geq1}$ be a sequence of positive real numbers decreasing towards $0$. By definition, for any $t>0$ and $x \in D$, the density $ \mathrm{q}_t^D(x,\cdot )$ is integrable on $D$. As a result, the Lebesgue differentiation theorem states that almost every $y\in D$ is a Lebesgue point, hence $$\forall  t>0, \quad \forall  x\in D, \qquad\varphi_{r_q}(t,x,y)\underset{q\longrightarrow \infty}{{\longrightarrow}}  \mathrm{q}_t^D(x,y)\quad\text{$\mathrm{d}y$-almost everywhere on $D$.}$$
As a consequence, $\mathrm{q}_t^D(x,y)$ coincides, $\mathrm{d}y$-almost everywhere, with the measurable function
\begin{equation*}
  \mathrm{p}_t^D(x,y) := \limsup_{q\rightarrow\infty}\varphi_{r_q}(t,x,y),
\end{equation*}
which completes the proof.
\end{proof}
Last, we show the proof of Lemma \ref{lemma:majoration densite}.
\begin{proof}[Proof of Lemma \ref{lemma:majoration densite}]
Let $y_0=(q_0,p_0) \in \mathbb{R}^{2d}$, $M>0$ and $\alpha \in (0,1)$. Let $\delta_0 > 0$ be small enough for the assertion
  \begin{equation}\label{def beta}
    \forall s\in(0,\delta_0], \qquad  \left(\frac{M}{6}+\vert p_0\vert\right)\left\vert\mathrm{e}^{\gamma s}-1\right\vert\leq \frac{M}{12}
\end{equation}
to hold.

Let $(q',p') \in \mathrm{B}(y_0,M/6)$ and $(q,p)\in \mathbb{R}^{2d}$ satisfying $|p-p'| \geq M/3$. For $s \in (0,\delta_0]$, we consider the transition density $\widehat{\mathrm{p}}^{(\alpha)}_s((q,p),(q',p'))$ defined in \eqref{densite p^alpha}.

One has that  
\begin{align}
  &\widehat{\mathrm{p}}^{(\alpha)}_{s}((q,p),(q',p'))\nonumber\\
  &=\frac{\sqrt{\alpha^{2d}}}{ \sqrt{(2 \pi)^{2d} \left(\frac{\sigma^4 s^4}{12} \phi(\gamma s)\right)^d}} \mathrm{e}^{-\frac{\alpha}{2\sigma^2 s} \left\vert\gamma\delta q+\delta p\right\vert^2-\frac{6\alpha}{\sigma^2 s^3 \phi(\gamma s)} \left\vert\Phi_1(\gamma s) \delta q-\frac{s}{2} \Phi_3(\gamma s)\delta p\right\vert^2},\numberthis\label{expr densite tau_delta}
\end{align}
where 
\begin{align*}
\begin{pmatrix}
\delta q\\
\delta p
\end{pmatrix}= 
\begin{pmatrix}
q'-q-s\Phi_1(\gamma s) p   \\
p'-p \mathrm{e}^{-\gamma  s} 
\end{pmatrix}.
\end{align*} 
Let us start by introducing some notations. Let $m:=1+\vert\gamma\vert\delta_0+\sup_{\vert \rho\vert\leq\vert\gamma\vert\delta_0}\frac{\vert \rho\vert}{2}\Phi_3(\rho)$ and $a:=\frac{M\mathrm{e}^{-\vert\gamma\vert\delta_0}}{4m}$. Let us prove that necessarily
\begin{equation}\label{ineq arg densite}
    \left\vert\gamma\delta q+\delta p\right\vert\geq a\quad\text{or}\quad \left\vert\Phi_1(\gamma s) \delta q-\frac{s}{2} \Phi_3(\gamma s)\delta p\right\vert\geq as, 
\end{equation}
then reinjecting this statement onto the expression \eqref{expr densite tau_delta} of $\widehat{\mathrm{p}}^{(\alpha)}_{s}((q,p),(q',p'))$ we will be able to obtain \eqref{ineq lemma maj densite}.

Assume now that 
$$\left\vert\gamma\delta q+\delta p\right\vert< a\quad\text{and}\quad \left\vert\Phi_1(\gamma s) \delta q-\frac{s}{2} \Phi_3(\gamma s)\delta p\right\vert< as,$$
we will prove that $\Big\vert p'-p\Big\vert<M/3$, thus contradicting the initial assumption on $(q,p)$ and $(q',p')$.

Using the triangle inequality, since $\Phi_1(\rho)+\frac{\rho}{2}\Phi_3(\rho)=1$ for all $\rho\in\mathbb{R}$, one has that
\begin{align*}
    \Big\vert\delta q\Big\vert&=\left\vert\delta q\left(\Phi_1(\gamma s)+\frac{\gamma s}{2}\Phi_3(\gamma s)\right)\right\vert\\
    &\leq \left\vert\delta q\Phi_1(\gamma s)-\frac{s}{2}\Phi_3(\gamma s)\delta p \right\vert+\frac{s}{2}\Phi_3(\gamma s)\Big\vert\gamma\delta q+\delta p \Big\vert\\
    &< a\left(s+\frac{s}{2}\Phi_3(\gamma s)\right).
\end{align*}
As a result,
$$  \Big\vert\delta p\Big\vert\leq\Big\vert\gamma\delta q+\delta
                              p\Big\vert+\vert\gamma\vert\Big\vert\delta
                              q\Big\vert <a\left(1+\vert\gamma\vert s+\frac{\vert\gamma\vert s}{2}\Phi_3(\gamma s)\right).$$   
Since $s \leq \delta_0$, one obtains that $\Big\vert\delta p\Big\vert< a m$. Therefore $\Big\vert\delta p\Big\vert<\frac{M\mathrm{e}^{-\vert\gamma\vert\delta_0}}{4}$.
 Furthermore, by the triangle inequality, for $(q',p') \in \mathrm{B}(y_0,M/6)$,
\begin{align*}
    \Big\vert p'-p\Big\vert&\leq \mathrm{e}^{\gamma s}\underbrace{\Big\vert p'-p\mathrm{e}^{-\gamma s}\Big\vert}_{=\Big\vert\delta p\Big\vert}+\Big\vert p'-p_0\Big\vert\Big\vert\mathrm{e}^{\gamma s}-1\Big\vert+\Big\vert p_0\Big\vert\Big\vert\mathrm{e}^{\gamma s}-1\Big\vert\\
    &< \mathrm{e}^{\vert\gamma\vert\delta_0}\frac{M\mathrm{e}^{-\vert\gamma\vert\delta_0}}{4}+\frac{M}{6}\Big\vert\mathrm{e}^{\gamma s}-1\Big\vert+\Big\vert p_0\Big\vert\Big\vert\mathrm{e}^{\gamma s}-1\Big\vert\\
    &\leq \frac{M}{4}+\left(\frac{M}{6} + |p_0|\right)\left|\mathrm{e}^{\gamma s}-1\right|\leq \frac{M}{4}+\frac{M}{12}=\frac{M}{3},
\end{align*}
by \eqref{def beta} since $s \leq \delta_0$, hence \eqref{ineq arg densite}.

Reinjecting the inequality \eqref{ineq arg densite} into \eqref{expr densite tau_delta}, we get
\begin{equation*}
  \widehat{\mathrm{p}}^{(\alpha)}_{s}((q,p),(q',p')) \leq \frac{\sqrt{\alpha^{2d}}}{\sqrt{(2\pi)^{2d}(\frac{\sigma^4 s^4}{12}\phi(\gamma s))^d}}\exp\left(-\frac{1}{s}\min\left(\frac{a^2\alpha}{2\sigma^2},\frac{6 a^2\alpha}{\sigma^2 \phi(\gamma s)}\right)\right),
\end{equation*}
and using the fact that $\phi(\gamma s)$ is a positive and bounded continuous function for $s\in[-\vert\gamma\vert \delta_0,\vert\gamma\vert \delta_0]$, it follows that there exist some constants $C_0 \geq 0$ and $\mu>0$, which only depend on $\gamma$, $\sigma$, $M$, $\alpha$ and $\delta_0$, such that for any $s \in (0,\delta_0]$,
\begin{equation*}
  \widehat{\mathrm{p}}^{(\alpha)}_{s}((q,p),(q',p')) \leq C_0 \mathrm{e}^{-\mu/s},
\end{equation*}
which completes the proof.
\end{proof}
\bibliographystyle{plain}
\bibliography{biblio}

\begin{thebibliography}{10}

\bibitem{Polidoro}
F.~Anceschi and S.~Polidoro.
\newblock A survey on the classical theory for {Kolmogorov} equation.
\newblock {\em arXiv:1907.05155}, 2019.

\bibitem{Mourrat}
S.~Armstrong and J.-C. Mourrat.
\newblock Variational methods for the kinetic {Fokker-Planck} equation.
\newblock {\em arXiv:1902.04037}, 2019.

\bibitem{Baldi}
P.~Baldi.
\newblock {\em G\'{e}od\'{e}siques et diffusions en temps petit}, volume~84 of
  {\em Ast\'{e}risque}.
\newblock Soci\'{e}t\'{e} Math\'{e}matique de France, Paris, 1981.
\newblock Probability Seminar, University of Paris VII, Paris.

\bibitem{baudel-guyader-lelievre-20}
M.~Baudel, A.~Guyader, and T.~Leli{\`e}vre.
\newblock On the {H}ill relation and the mean reaction time for metastable
  processes.
\newblock {\em arXiv:2008.09790}, 2020.

\bibitem{BG}
M.~Berger and B.~Gostiaux.
\newblock {\em Differential Geometry: Manifolds, Curves, and Surfaces:
  Manifolds, Curves, and Surfaces}, volume 115.
\newblock Springer Science \& Business Media, 2012.

\bibitem{Bon69}
J.-M. Bony.
\newblock Principe du maximum, inégalité de {H}arnack et unicité du
  problème de {C}auchy pour les opérateurs elliptiques dégénérés.
\newblock {\em {\em Ann. Inst. Fourier}, 19(1):277--304}, 1969.

\bibitem{Carillo}
J.~Carrillo.
\newblock Global weak solutions for the initial--boundary-value problems
  {Vlasov--Poisson--Fokker--Planck} system.
\newblock {\em Mathematical methods in the applied sciences}, 21(10):907--938,
  1998.

\bibitem{MarkovBridge}
L.~Chaumont and G.~U. Bravo.
\newblock Markovian bridges: weak continuity and pathwise constructions.
\newblock {\em The Annals of Probability}, 39(2):609--647, 2011.

\bibitem{Dieudonne}
J.~Dieudonn{\'e}.
\newblock Vol. 1 el{\'e}ments d’analyse.
\newblock {\em Fondements de l’analyse moderne. Gauthier-Villars Paris},
  1968.

\bibitem{DMS}
J.~Dolbeault, C.~Mouhot, and C.~Schmeiser.
\newblock Hypocoercivity for linear kinetic equations conserving mass.
\newblock {\em Trans. Amer. Math. Soc.}, 367(6):3807--3828, 2015.

\bibitem{Conditioning}
M.~Eaton.
\newblock Multivariate statistics: A vector space approach.
\newblock {\em Beachwood, Ohio, USA: Institute of Mathematical Sciences}, 2007.

\bibitem{Evans}
L.~Evans.
\newblock Partial differential equations, ams.
\newblock {\em 2nd ed. Graduate Studies in Mathematics}, 19, 2010.

\bibitem{FFF}
A.~Friedman.
\newblock Partial differential equations of parabolic type, {Englwood Cliffs
  NJ}.
\newblock {\em Prentice Hall Inc}, 1964.

\bibitem{F}
A.~Friedman.
\newblock {\em Stochastic differential equations and applications. {V}ol. 1}.
\newblock Academic Press, New York-London, 1975.
\newblock Probability and Mathematical Statistics, Vol. 28.

\bibitem{FF}
A.~Friedman.
\newblock {\em Stochastic differential equations and applications. {V}ol. 2}.
\newblock Academic Press, New York-London, 1976.
\newblock Probability and Mathematical Statistics, Vol. 28.

\bibitem{GT}
D.~Gilbarg and N.~S. Trudinger.
\newblock {\em Elliptic partial differential equations of second order}.
\newblock Classics in Mathematics. Springer-Verlag, Berlin, 2001.
\newblock Reprint of the 1998 edition.

\bibitem{Har}
F.~Golse, C.~Imbert, C.~Mouhot, and A.~F. Vasseur.
\newblock Harnack inequality for kinetic {F}okker-{P}lanck equations with rough
  coefficients and application to the {L}andau equation.
\newblock {\em Ann. Sc. Norm. Super. Pisa Cl. Sci. (5)}, 19(1):253--295, 2019.

\bibitem{PositiveDensity}
D.~P. Herzog and J.~C. Mattingly.
\newblock A practical criterion for positivity of transition densities.
\newblock {\em Nonlinearity}, 28(8):2823--2845, 2015.

\bibitem{HM}
L.~H\"{o}rmander.
\newblock Hypoelliptic second order differential equations.
\newblock {\em Acta Math.}, 119:147--171, 1967.

\bibitem{Hwang}
H.~J. Hwang, J.~Jang, and J.~Jung.
\newblock The {F}okker-{P}lanck equation with absorbing boundary conditions in
  bounded domains.
\newblock {\em SIAM Journal on Mathematical Analysis}, 50(2):2194--2232, 2018.

\bibitem{Vel}
H.~J. Hwang, J.~Jang, and J.~J.~L. Vel\'{a}zquez.
\newblock The {F}okker-{P}lanck equation with absorbing boundary conditions.
\newblock {\em Arch. Ration. Mech. Anal.}, 214(1):183--233, 2014.

\bibitem{Ikeda}
N.~Ikeda and S.~Watanabe.
\newblock {\em Stochastic Differential Equations and Diffusion Processes}.
\newblock North Holland, 1989.

\bibitem{Karatzas}
I.~Karatzas and S.~E. Shreve.
\newblock Brownian motion.
\newblock In {\em Brownian Motion and Stochastic Calculus}, pages 47--127.
  Springer, 1998.

\bibitem{Menozzi}
V.~Konakov, S.~Menozzi, and S.~Molchanov.
\newblock Explicit parametrix and local limit theorems for some degenerate
  diffusion processes.
\newblock {\em Ann. Inst. Henri Poincar\'{e} Probab. Stat.}, 46(4):908---923,
  2010.

\bibitem{Lach}
A.~Lachal.
\newblock Local asymptotic classes for the successive primitives of {B}rownian
  motion.
\newblock {\em Ann. Probab.}, 25(4):1712--1734, 1997.

\bibitem{lelievre-18}
T.~Leli\`evre.
\newblock Mathematical foundations of accelerated molecular dynamics methods,
  2018.
\newblock In Handbook of Materials Modeling, W. Andreoni and S. Yip (Eds),
  Springer.

\bibitem{LelRamRey2}
T.~Leli{\`e}vre, M.~Ramil, and J.~Reygner.
\newblock Quasi-stationary distribution for the {L}angevin process in
  cylindrical domains, part {I}: existence, uniqueness and long-time
  convergence.
\newblock {\em Stochastic Processes and their Applications}, 144:173--201,
  2022.

\bibitem{LelRouSto10}
T.~Leli\`evre, M.~Rousset, and G.~Stoltz.
\newblock {\em Free energy computations}.
\newblock Imperial College Press, London, 2010.
\newblock A mathematical perspective.

\bibitem{LelSto16}
T.~Leli\`evre and G.~Stoltz.
\newblock Partial differential equations and stochastic methods in molecular
  dynamics.
\newblock {\em Acta Numer.}, 25:681--880, 2016.

\bibitem{McK}
H.~P. McKean, Jr.
\newblock {\em Stochastic integrals}.
\newblock Probability and Mathematical Statistics, No. 5. Academic Press, New
  York-London, 1969.

\bibitem{nier-18}
F.~Nier.
\newblock {\em Boundary conditions and subelliptic estimates for geometric
  Kramers-Fokker-Planck operators on manifolds with boundaries}, volume 252.
\newblock American Mathematical Society, 2018.

\bibitem{perez-uberuaga-voter-15}
D.~Perez, B.P. Uberuaga, and A.F. Voter.
\newblock The parallel replica dynamics method--{C}oming of age.
\newblock {\em Computational Materials Science}, 100:90--103, 2015.

\bibitem{RamPhD}
M.~Ramil.
\newblock {\em Processus cinétiques dans des domaines à bord et
  quasi-stationnarité}.
\newblock PhD thesis, Ecole des Ponts ParisTech, 2020.

\bibitem{Ram}
M.~Ramil.
\newblock Quasi-stationary distribution for the {L}angevin process in
  cylindrical domains, part {II}: overdamped limit.
\newblock {\em arXiv:2103.00338}, 2021.

\bibitem{RB}
L.~Rey-Bellet.
\newblock Ergodic properties of {M}arkov processes.
\newblock In {\em Open quantum systems. {II}}, volume 1881 of {\em Lecture
  Notes in Math.}, pages 1--39. Springer, Berlin, 2006.

\bibitem{FS}
H.~D. Victory, Jr. and B.~P. O'Dwyer.
\newblock On classical solutions of {V}lasov-{P}oisson {F}okker-{P}lanck
  systems.
\newblock {\em Indiana Univ. Math. J.}, 39(1):105--156, 1990.

\bibitem{Villani}
C.~Villani.
\newblock Hypocoercivity.
\newblock {\em Mem. Amer. Math. Soc.}, 202(950), 2009.

\bibitem{WeylTube}
H.~Weyl.
\newblock On the {V}olume of {T}ubes.
\newblock {\em Amer. J. Math.}, 61(2):461--472, 1939.

\end{thebibliography}

\end{document}